\pdfoutput=1
\documentclass[11pt]{article}

\usepackage{preamble}


\title{Tree-like graphings, wallings, and median graphings of equivalence relations}
\author{Ruiyuan Chen, Antoine Poulin, Ran Tao, and Anush Tserunyan}

\date{}

\begin{document}

\maketitle

\begin{abstract}
We prove several results showing that every locally finite Borel graph whose large-scale geometry is ``tree-like'' induces a treeable equivalence relation.
In particular, our hypotheses hold if each component of the original graph either has bounded tree-width or is quasi-isometric to a tree, answering a question of Tucker-Drob.
In the latter case, we moreover show that there exists a Borel quasi-isometry to a Borel forest, under the additional assumption of (componentwise) bounded degree.
We also extend these results on quasi-treeings to Borel proper metric spaces.
In fact, our most general result shows treeability of countable Borel equivalence relations equipped with an abstract wallspace structure on each class obeying some local finiteness conditions, which we call a \emph{proper walling}.
The proof is based on the Stone duality between proper wallings and median graphs, i.e., CAT(0) cube complexes.
Finally, we strengthen the conclusion of treeability in these results to hyperfiniteness in the case where the original graph has one (selected) end per component, generalizing the same result for trees due to Dougherty--Jackson--Kechris.
\let\thefootnote=\relax
\footnotetext{2020 \emph{Mathematics Subject Classification}:
    03E15, 
    20F65, 
    20E08, 
    37A20. 
}
\footnotetext{\emph{Key words and phrases}: treeable, hyperfinite, countable Borel equivalence relation, quasi-isometry, coarse equivalence, quasi-tree, tree decomposition, tree-width, ends, cuts, median graph, pocset, wallspace, CAT(0) cube complex, Stallings theorem.}
\end{abstract}

\tableofcontents

\section{Introduction}
\label{sec:intro}

This paper is a contribution to the descriptive combinatorics and large-scale geometry of Borel equivalence relations and graphs.
A \defn{countable Borel equivalence relation (CBER)} $E$ on a standard Borel space $X$ is a Borel equivalence relation $E \subseteq X^2$ each of whose classes is countable.
CBERs have been widely studied over the past few decades in connection with many different areas, including topological and measurable dynamics, probability, operator algebras, logic and classification problems, and combinatorics.
For recent surveys, see \cite{Kcber}, \cite{Piklfg}, \cite{KMdgc}.

A recurring theme in the theory of CBERs is their stratification according to the types of graphs and other combinatorial structures that may be uniformly assigned to each equivalence class; see \cite{CKstr}, \cite{JKLcber} for the general aspects of this so-called \emph{structurability} theory.
In this paper, we are primarily concerned with graph-theoretic structurability.
A \defn{Borel graphing} of a CBER $E \subseteq X^2$ is a (locally countable) Borel graph $G \subseteq X^2$ which generates $E$ as an equivalence relation, i.e., whose connectedness relation is precisely $E$.
One regards such a $G$ as the ``Borel assignment'' to each equivalence class $C \in X/E$ of the connected component $G|C$.
By requiring each $G|C$ to be some specific type of connected graph, one then obtains various natural restricted subclasses of CBERs.
In particular, the class of \defn{treeable} CBERs, for which it is possible to find a graphing all of whose components are trees (a \defn{treeing}), plays a similar role in the theory of CBERs as free groups in group theory, and has been extensively studied from this perspective; see e.g., \cite{Adatree}, \cite{Gabcost}, \cite{JKLcber}, \cite{GLf2}, \cite{Hjotree}, \cite{Miltree}, \cite{CGMTtree}.

The main results in this paper provide new sufficient criteria for treeability of CBERs.
We work purely in the Borel context; however, our results are new even in the measurable setting (where one is allowed to discard a null set).

\subsection{Notions of tree-like graphs}
\label{sec:intro-treelike}

Two metric spaces $X, Y$ are \defn{quasi-isometric} if they are isometric up to a bounded multiplicative and additive error; see \cref{def:coarse}.
A metric space quasi-isometric to a simplicial tree is called a \defn{quasi-tree}.
Quasi-isometry of metric spaces and especially of Cayley graphs of groups has played a central role in metric geometry and geometric group theory since the work of Gromov \cite{Groggt2}.

Our work was initially motivated by a classical result in this tradition, which states that a finitely generated group which is a quasi-tree must be virtually free, i.e., contain a finite-index free subgroup; see \cite[7.19]{GHhyp}.
In the context of CBERs, a treeable CBER is the analogue of a free group, since the orbit equivalence relation of a free action of a free group is treeable;%
\footnote{In fact a converse is true as well, up to ``measuring the number of generators'' \cite{Hjocost}.}
moreover, free actions of virtually free groups are also known to be treeable \cite[3.4]{JKLcber}.
Thus, a natural question is whether the class of treeable CBERs is robust in some sense under quasi-isometry, and this was asked by Robin Tucker-Drob at the 2015 Annual North American Meeting of the ASL. We give a positive answer to this question:

\begin{theorem}[\cref{thm:cber-quasitreeable-treeable}]
\label{intro:thm:cber-quasitreeable-treeable}
If a CBER admits a locally finite Borel graphing whose components are quasi-trees, then it is Borel treeable.
\end{theorem}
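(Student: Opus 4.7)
The plan is to reduce the statement to the paper's general treeability result for countable Borel equivalence relations equipped with a \emph{proper walling}, by building a suitable Borel walling out of the quasi-tree structure of the components. Recall that Manning's bottleneck characterization says that a geodesic metric space is a quasi-tree if and only if for any two points $x,y$ there is a ``bottleneck'' point through which every path from $x$ to $y$ must pass within bounded distance; equivalently, the space admits an abundance of uniformly small finite cuts separating any given pair of far-away vertices. This is the structural fact I plan to exploit.

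First, I would associate to the Borel graphing $G$ a Borel family $\mathcal W$ of walls, i.e., Borel bipartitions of each component of $G$. The candidates are finite edge cuts: for each finite set of edges $F$ whose removal splits a component $C$ into at least two unbounded pieces, the induced bipartition is a wall. Since $G$ is locally finite and Borel, the family of all such cuts, organized for example by a canonical enumeration of finite subsets of $G$, is Borel. The quasi-tree hypothesis, via the bottleneck property, guarantees that this family of walls is rich enough to separate every pair of vertices in the same component.

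Next, I need to verify that this walling is \emph{proper} in the paper's sense, meaning some local finiteness condition on how many walls interact with any given bounded region (for instance: every pair of vertices is separated by only finitely many walls, or every vertex lies in only finitely many thin cuts). This should again follow from the quasi-tree geometry, by passing (coarsely) through the quasi-isometric tree and noting that only finitely many edge cuts of a tree separate a given pair of vertices. One has to choose the family $\mathcal W$ carefully, possibly restricting to minimal cuts, so as to avoid redundancy, but the principle is that quasi-tree components carry essentially the same wallspace structure as their approximating trees, up to a bounded error. Having established properness, the general proper walling theorem of the paper immediately yields that the induced CBER $E$ is Borel treeable, via Sageev's construction and the Stone duality between proper wallings and median graphs.

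The main obstacle I anticipate is Step~3, the properness verification: controlling the number of walls that interact with a given pair of vertices or a given finite region. Since we do not a priori have a Borel quasi-isometry to a Borel tree (that is only established later, under the additional bounded-degree assumption), the construction of walls and the proof of properness must be carried out intrinsically, using the bottleneck property rather than an external tree. A subsidiary difficulty is ensuring the walling is Borel while avoiding ``bad'' cuts that could violate local finiteness; this likely requires a canonical Borel selection of cuts, for example taking only cuts minimal subject to separating some pair at a given distance scale, and then checking that countably many such scales suffice to separate every pair in a quasi-tree.
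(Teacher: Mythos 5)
Your overall strategy coincides with the paper's: extract from each quasi-tree component a canonical Borel family of finite cuts, verify that it is a proper walling, and invoke the general result that properly wallable CBERs are treeable via the duality with median graphs. However, two steps have genuine gaps as written. First, the family of walls you propose --- bipartitions coming from finite edge cuts ``whose removal splits a component into at least two unbounded pieces'' --- cannot work: in a one-ended quasi-tree (a ray, or the one-ended graphs of \cref{ex:dense-nonquasitree-nonbddtw}) no finite cut has two unbounded sides, so the family is empty; and even in the multi-ended case such cuts can never separate two vertices lying in a common bounded region, so the walling fails the finite-blocks axiom of \cref{def:walls-proper}. The paper instead takes \emph{all} cuts whose vertex boundary has diameter $\le R$ and whose two sides are each connected; crucially this includes cuts with one finite side. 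The correct ``richness'' requirement is not that the family separates every pair of vertices, but that it is \emph{dense towards ends}, i.e., contains a neighborhood basis of every end.

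Second, and more seriously, your properness verification conflates the finitely-separating axiom of a walling (only finitely many walls separate a given pair of points) with properness itself. Properness additionally demands finite blocks, that each wall be non-nested with only finitely many others, and that each wall have only finitely many successors; none of these is addressed by ``only finitely many edge cuts of a tree separate a given pair of vertices.'' This is where the real work of the paper lies: \cref{thm:cut-proper} shows that, for a walling of connected--coconnected finite-boundary cuts, properness is \emph{equivalent} to density towards ends, and the quasi-tree hypothesis then enters only through \cref{thm:quasitree-coarse} (or the Kr\"on--M\"oller characterization, \cref{thm:kron-moller}): density towards ends of the bounded-boundary-diameter cuts is a coarse invariant and holds trivially in trees. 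Your worry about lacking a Borel quasi-isometry to a tree is misplaced at this stage --- only the \emph{family of cuts} must be Borel (which it is, being defined by a local diameter bound); the verification that it is a proper walling is performed one component at a time and may freely use the non-Borel approximating tree. Finally, since the constant $R$ varies from component to component, one must combine the countably many scales into a single Borel walling, as in \cref{rmk:walls-proper-borel}.
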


We note that the local finiteness assumption is natural, since the conclusion of treeability is equivalent to locally finite treeability \cite[3.12]{JKLcber}, and since without this assumption every CBER would admit the complete graphing as a quasi-treeing (each of whose components is quasi-isometric to a point).
By strengthening the assumption, we may refine the conclusion from the level of the equivalence relation to the graph itself:

\begin{theorem}[\cref{thm:cber-quasitreeable-bddeg}]
\label{intro:thm:cber-quasitreeable-bddeg}
If a Borel graph has every component a bounded degree quasi-tree, then it is Borel quasi-isometric to a componentwise bounded degree Borel forest.
\end{theorem}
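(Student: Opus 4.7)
My plan is to carry out a Borel, uniform version of Manning's bottleneck construction, using the walling machinery of the paper to organize the combinatorics. A standard Borel-exhaustion reduction lets me partition $X$ into countably many Borel pieces on which the quasi-isometry constants, Manning's bottleneck constant $\Delta$, and the degree bound $d$ are all uniform; since a disjoint union of componentwise bounded-degree Borel forests is one, and a disjoint union of Borel quasi-isometries is a Borel quasi-isometry, it suffices to handle each piece separately. So I may assume uniform constants $K, L, \Delta, d$ on $X$.

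Next, I would construct a Borel family of walls from Manning's bottleneck characterization of quasi-trees. For every sufficiently separated pair $x,y$ in a common component, any midpoint of an $(x,y)$-geodesic is a bottleneck and canonically separates the component into two halves up to bounded error. Fixing a Borel linear order on $X$ to pick canonical geodesics and their midpoints, this becomes a Borel walling. The uniform bounds on $d$ and $\Delta$ give local finiteness of the walling, so the Stone duality between proper wallings and median graphs yields a Borel median graph $M$ together with a canonical Borel quasi-isometry $V(G) \to V(M)$ of uniform constants and bounded degree.

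To finish I need $M$ to be a forest, not merely a median graph. The geometric point is that in a quasi-tree two walls of this form cannot cross transversally at large scale: a crossing pair would force a fat geodesic quadrilateral, violating the bottleneck property. After Borel-discarding the boundedly many walls below the crossing threshold, the remaining walls are pairwise non-crossing, so the associated median graph has no $2$-cubes and is a Borel forest. A direct check relates distances in this forest to distances in $G$ up to the uniform constants, giving the desired Borel quasi-isometry.

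The main obstacle is guaranteeing the laminar structure in a Borel manner. For Theorem~\ref{intro:thm:cber-quasitreeable-treeable} the distinction between a median graph and a forest is immaterial, since treeability of the induced CBER is insensitive to the dimension of the associated CAT(0) cube complex; but here a higher-dimensional median graph does not yield a \emph{forest} quasi-isometric to $G$. The delicate part is choosing walls in a Borel way without access to a transversal for the equivalence relation, and verifying that a canonical crossing-elimination procedure — replacing each transversely crossing pair of walls by a bounded-error nested pair — produces, Borel-uniformly, a walling whose dual median graph is $1$-dimensional while preserving the quasi-isometry type of $G$.
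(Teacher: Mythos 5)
Your overall architecture (a Borel walling extracted from the quasi-tree structure, its dual median graph, then a reduction to a forest) is the same as the paper's, and the first two stages are essentially sound: the paper obtains the walling from radial cuts via the Kr\"on--M\"oller characterization (\cref{thm:kron-moller}) rather than from Manning-style bottleneck midpoints, which avoids choosing geodesics altogether, and \cref{thm:quasitree-bddeg} then delivers exactly the bounded-degree median graph with bounded hyperplanes quasi-isometric to $G$ that you describe. Your preliminary decomposition into countably many invariant Borel pieces with uniform constants is also fine.

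The gap is in the last stage, precisely the step you yourself flag as delicate. Your plan is to discard ``the boundedly many walls below the crossing threshold'' so that the surviving walls are pairwise nested and the dual is a forest. But in a bounded-degree quasi-tree the crossings are bounded only in \emph{scale}, not in \emph{number}: non-nested pairs of small-diameter cuts occur at unboundedly many locations --- already in the bi-infinite ladder (the product of a two-way infinite path with a single edge) there are crossing ``staircase'' cuts at every position --- so the set of walls involved in at least one crossing is typically infinite, and discarding all of them can destroy density towards ends and hence the quasi-isometry. Nor can you canonically replace each crossing pair by a nested pair: producing a nested subfamily in a Borel way, without a transversal, is exactly the obstruction to the classical Stallings-type argument that the paper discusses in \cref{sec:intro-walls}. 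What the paper actually does is keep the full non-nested walling, accept the higher-dimensional median graph, and extract a spanning subforest of it by a finite Borel coloring of the non-nestedness graph on half-spaces followed by an inductive block-contraction (\cref{thm:med-subtree}, \cref{thm:cber-median}); the quantitative Lipschitz estimate of \cref{thm:med-subtree-qi}, which uses the bound on the number of colors and on hyperplane diameters, is what guarantees that the subtree inclusion is a quasi-isometry. Some argument of this kind is needed to close your proof; the crossing-elimination step as stated does not work.
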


\begin{remark}
Both preceding results admit (easy) natural extensions to Borel (pseudo)metrics which are quasi-trees on each equivalence class; see \cref{thm:cber-quasitreeable-metric}.
\end{remark}

\medskip
A different notion of ``tree-like graph'' originates from finite combinatorics, namely the graph minor theory of Robertson--Seymour \cite{RSminor10}.
A \defn{tree decomposition} of a graph $G$ over a tree $T$ is a homomorphism from $G$ to the intersection graph on the subtrees of $T$, and the graph $G$ has \defn{tree-width $<N$} if it has a tree decomposition such that each vertex of $T$ is in the images of at most $N$ vertices of $G$; see \cref{def:treedec}.
Tree decompositions have been considered by \cite{Cartw} for infinite graphs, where \emph{bounded tree-width} becomes a natural notion of ``tree mod finite''.
We prove

\begin{theorem}[\cref{thm:cber-bddtw-treeable}]
\label{intro:thm:cber-bddtw-treeable}
If a CBER admits a locally finite Borel graphing with components of bounded tree-width, then it is Borel treeable.
\end{theorem}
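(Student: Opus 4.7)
The plan is to reduce this theorem to the paper's general walling-to-treeability result (advertised in the abstract). Given a locally finite Borel graphing $G$ of a CBER $E$ with each component $G|C$ of tree-width $<N$ for some fixed $N$, the first step is to Borelly select, for each component $G|C$, a tree decomposition $(T_C, (V_t)_{t \in V(T_C)})$ with all bags of size $\le N$. The key device is canonicity: a construction of $(T_C, (V_t))$ that depends only on the isomorphism type of $G|C$ is automatically Borel in $G$. The tangle-tree decompositions of Robertson--Seymour, extended to locally finite infinite graphs (as in work of Carmesin--Diestel--Hamann--Hundertmark), are defined isomorphism-invariantly from $G|C$ and thus provide the required Borel selection.

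From each tree decomposition, I would then extract a proper walling on the class $C$ as follows. For each edge $e = \{s,t\}$ of $T_C$, the associated wall has as its two sides the vertex sets in the two components of $T_C \setminus e$, with the separator $V_s \cap V_t$ (of size $<N$) straddling both. Properness of the resulting walling follows from two observations: first, for any $u, v \in C$, the subtrees $T_u = \{t \in V(T_C) : u \in V_t\}$ and $T_v$ are at finite distance in $T_C$ (by connectivity of $G|C$ and local finiteness), so only finitely many edges of $T_C$ lie between them, and hence only finitely many walls separate $u$ from $v$; second, the uniform bag size bound together with local finiteness of $G$ gives the needed local finiteness conditions of a proper walling. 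Invoking the paper's walling-to-treeability theorem then yields Borel treeability of $E$.

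The main obstacle is Step 1: ensuring that the chosen tree decomposition is simultaneously Borel \emph{and} retains the width bound $N$. Existence of decompositions of width $<N$ is built into the hypothesis, but making the selection canonical—and therefore Borel—requires invoking (or building) an isomorphism-equivariant construction that still achieves bag size $\le N$. Tangle-theoretic canonical decompositions are the natural candidate, but one must verify both the bag-size bound and that the construction transfers cleanly to the descriptive locally finite setting. An alternative would be a direct Borel construction from the (uniformly finite) family of minimal $<\!N$-separators of $G$, assembled canonically via, e.g., a nested subsystem and the usual pocset/walling machinery that the paper develops.
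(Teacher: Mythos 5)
Your overall strategy---extract a walling from tree decompositions and feed it into the general walling-to-treeability theorem---is in the right spirit, but it founders on exactly the step you flag as the main obstacle, and that step cannot be outsourced to tangle theory by citation. The paper's proof is designed precisely to avoid ever \emph{selecting} a tree decomposition: it uses only the \emph{existence} of a width-$(N-1)$ decomposition of each component to verify a property of an intrinsically defined, hence automatically Borel, family of cuts --- namely, that the cuts whose smaller vertex boundary has size at most $N$ (restricted to connected--coconnected cuts) are dense towards the ends of the component (Proposition \ref{thm:bddtw-dense}). Corollary \ref{thm:cber-dense-treeable} then applies with no selection needed, and the per-component (rather than uniform) bound on tree-width is absorbed by Remark \ref{rmk:walls-proper-borel}. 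Note that this canonical family is \emph{not} nested, which is why the paper must pass through median graphs with finite hyperplanes rather than reading off a tree directly as you would from the edges of $T_C$.

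The gap in your Step 1 is twofold. First, canonical tangle-distinguishing decompositions in the Robertson--Seymour / Carmesin--Diestel--Hamann--Hundertmark tradition have separators of bounded order but do not in general have \emph{bags} of size $\le N$ for a graph of tree-width $<N$ (a long path has tree-width $1$ and no nontrivial tangles, so its canonical decomposition has one huge part); the assertion ``with all bags of size $\le N$'' is unjustified. For Step 2 you arguably only need small adhesion sets rather than small bags, but then the finiteness conditions of a proper walling (each vertex in finitely many bags, finitely many successors per wall, finite blocks) are no longer automatic and require the pruning carried out in Lemmas \ref{thm:treedec-fintrees} and \ref{thm:treedec-ends}. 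Second, and more fundamental here, isomorphism-invariance does \emph{not} imply Borel implementability: as Section \ref{rmk:canonical} stresses, automorphism-invariant operations (quotienting, choosing distinguished representatives) are in general not uniformly available across the classes of a CBER, and the decomposition tree $T_C$ is an abstract auxiliary object that must itself be realized on a standard Borel space. A proof along your lines appears to be possible --- the paper attributes one ``by different methods'' to Jard\'on-S\'anchez --- but it requires substantial additional argument; as written, your proposal does not close its own declared main gap.
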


Note that a bounded degree quasi-tree graph has bounded tree-width, but a general locally finite quasi-tree need not, nor conversely does bounded tree-width imply quasi-tree; see \cref{ex:dense-nonquasitree-nonbddtw}.
Thus, \cref{intro:thm:cber-quasitreeable-treeable,intro:thm:cber-bddtw-treeable} are not directly comparable.

We were informed in a private communication that a version of \cref{intro:thm:cber-bddtw-treeable} has also been proved by Héctor Jardón-Sánchez \cite{Hector}, using different methods.

\subsection{Dense families of cuts}

Both of our concrete \cref{intro:thm:cber-quasitreeable-treeable,intro:thm:cber-bddtw-treeable} are instances of the following abstract result, which aims to capture the general concept of a ``large-scale tree-like'' graph.

In both concrete results, the ``tree-likeness'' of the graph $G$ we start with is witnessed on each component $C$ by a function/binary relation $G|C -> T_C$ from that component to a genuine tree $T_C$ which ``roughly preserves'' the structure in some sense, namely a quasi-isometry/tree decomposition.
The issue is to find such $T_C$ in a canonical Borel manner simultaneously for each component $C$.
Rather than proceed directly, we use that the mere \emph{existence} of such a $G|C -> T_C$ allows us to transport the edges of the $T_C$ over to a \emph{canonical} set of ``pseudo-edges'' of $G|C$ witnessing that it is ``tree-like''.
These ``pseudo-edges'' take the form of \emph{cuts} in the graph $G|C$; and the fact that they render the graph ``tree-like'' is captured by conditions \cref{intro:thm:cber-cuts-dense-treeable:finsep,intro:thm:cber-cuts-dense-treeable:dense} in the following theorem, which is our main result concerning treeability of Borel graphs.

\begin{theorem}[\cref{thm:cber-cuts-dense-treeable}]
\label{intro:thm:cber-cuts-dense-treeable}
Let $E \subseteq X^2$ be a CBER with a locally finite Borel graphing $G$.
Suppose there exists a Borel assignment $X/E \ni C |-> \@H(C) \subseteq 2^C$ to each component $C$ of a collection of sets of vertices $H \subseteq C$ with finite boundary (the ``cuts'') such that both $H$ and $C \setminus H$ are connected, so that the following conditions hold for each $C$:
\begin{enumerate}[label=(\roman*)]
\item \label{intro:thm:cber-cuts-dense-treeable:finsep}
each vertex $x \in C$ lies on the boundary of only finitely many $H \in \@H(C)$;
\item \label{intro:thm:cber-cuts-dense-treeable:dense}
each end of the graph $G|C$ has a neighborhood basis in $\@H(C)$.
\end{enumerate}
Then $E$ is Borel treeable.
\end{theorem}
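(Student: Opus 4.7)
The plan is a direct reduction to the abstract treeability theorem for proper wallings announced in the abstract: the family of cuts $\mathcal{H}(C)$ should assemble into a proper walling on each component, after which the walling theorem applies to conclude treeability.

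For the first step, I would define, for each component $C$, the wallspace $\mathcal{W}(C)$ consisting of walls $\{H, C \setminus H\}$ for $H \in \mathcal{H}(C)$; the hypothesis that each $H$ has finite boundary with both $H$ and $C \setminus H$ connected says precisely that each wall is a bipartition induced by a finite separator of $G|C$. The assignment $C \mapsto \mathcal{W}(C)$ is Borel by hypothesis, and condition \ref{intro:thm:cber-cuts-dense-treeable:finsep} directly ensures that each vertex $x \in C$ lies on the boundary of only finitely many walls in $\mathcal{W}(C)$.

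For the second step, I would verify that $(C, \mathcal{W}(C))$ is a \emph{proper walling}. The crucial extra property --- finiteness of the set of walls separating any two given vertices $x, y \in C$ --- should follow from local finiteness of $G$ together with \ref{intro:thm:cber-cuts-dense-treeable:finsep}: take a finite $G$-path $\gamma$ from $x$ to $y$, observe that any wall separating $x$ from $y$ must have its boundary meeting $\gamma$ (otherwise $\gamma$ would stay on one side), and use \ref{intro:thm:cber-cuts-dense-treeable:finsep} at each vertex of $\gamma$ to bound the number of such walls by a finite quantity. Condition \ref{intro:thm:cber-cuts-dense-treeable:dense} plays the complementary role of a richness axiom: it prevents the walling from collapsing too much of the large-scale structure of $G|C$, so that the Stone-dual median graph reflects the end structure of $C$ and the treeing produced from it generates exactly $E$ rather than a proper subrelation or an extension.

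For the third step, I would invoke the abstract proper-walling treeability theorem (the paper's most general result) to conclude that $E$ is Borel treeable. The main obstacle is calibrating \ref{intro:thm:cber-cuts-dense-treeable:finsep} and \ref{intro:thm:cber-cuts-dense-treeable:dense} against the precise axiomatization of \emph{proper walling} in the paper. In particular, \ref{intro:thm:cber-cuts-dense-treeable:dense} is a large-scale, end-theoretic hypothesis rather than a wall-combinatorial one; translating it into the ultrafilter-theoretic language of the Stone duality between proper wallings and median graphs --- so as to ensure that the dual median graphing has the correct local finiteness and generates exactly $E$ --- is where I expect the most delicate bookkeeping to occur.
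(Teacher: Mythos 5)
Your overall strategy is exactly the paper's: turn $\mathcal{H}(C)$ into a walling, show it is a \emph{proper} walling, and invoke the abstract theorem that properly wallable CBERs are treeable. The one step you actually carry out --- finitely many walls separate $x$ from $y$, via a finite path and hypothesis \ref{intro:thm:cber-cuts-dense-treeable:finsep} at each vertex --- is correct and is precisely the paper's \cref{thm:graph-walling-fin}. Your worry that the resulting treeing might generate the wrong relation is a non-issue: the map $x \mapsto \hat{x}$ into the dual median graphing is a Borel bireduction class by class, so treeability transfers back automatically.

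The genuine gap is in your second step. Finite separation only makes $\mathcal{H}(C)$ a \emph{walling}; it is not ``the crucial extra property'' for properness. Properness (\cref{def:walls-proper}) additionally demands that (a) each $\mathcal{H}$-block is finite, (b) each cut is non-nested with only finitely many others, and (c) each cut has only finitely many successors, and verifying these from hypothesis \ref{intro:thm:cber-cuts-dense-treeable:dense} is the real content of the reduction --- it is the paper's \cref{thm:cut-nonnested,thm:cut-proper}, not bookkeeping. Concretely: (b) uses the connectedness of both $H$ and $\neg H$ in an essential way (a cut $K$ non-nested with $H$ must contain a path inside $K$ joining $H \cap K$ to $\neg H \cap K$, forcing $\partial_{\mathrm{v}} H \cap K \neq \emptyset$, and one then pins down $K$'s boundary on finitely many fixed paths between boundary points of $H$); your proposal never mentions this axiom or where connectedness of the cuts enters. (a) follows from the fact that density towards ends prevents any end of $G|C$ from mapping to a vertex of the dual median graph. (c) requires a compactness argument in the end compactification: covering the ends in $\neg\hat{H}$ by complements of finitely many successors $K_1,\dotsc,K_n$, concluding that $K_1 \cap \dotsb \cap K_n \setminus H$ is finite, and then showing any further successor is either non-nested with some $K_i$ (finitely many options by (b)) or has complement trapped in that finite set. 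Without these three verifications the appeal to the proper-walling theorem is unjustified, so as written the proposal identifies the right target but does not reach it.
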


Here in \cref{intro:thm:cber-cuts-dense-treeable:dense} we mean neighborhoods in the \emph{end compactification} $\^C^G$ of $G|C$, i.e., the compact zero-dimensional space of ends together with vertices as isolated points converging to ends.
We express this condition by saying that $\@H(C)$ is \defn{dense towards ends} of $G|C$.
The notion of \emph{Borel assignment} $C |-> \@H(C)$ can be made precise by e.g., identifying each $H \in \@H(C)$ with its boundary in the standard Borel space of finite subsets, or by considering an $E$-bundle (see \cref{def:walling}).

From the above abstract result, \cref{intro:thm:cber-quasitreeable-treeable} follows by taking $\@H(C)$ to be the collection $\@H_{\diamle R}(C)$ of cuts with boundary of diameter $\le R$ for some $R < \infty$ (see \cref{thm:quasitree-dense}).
\Cref{intro:thm:cber-bddtw-treeable} follows by taking $\@H(C)$ to be the collection $\@H_{\cardle N}(C)$ of cuts with minimum cardinality of the inner and outer boundaries $\le N$ for some $N < \infty$ (see \cref{thm:bddtw-dense}).

\begin{example}
\label{ex:dense-nonquasitree-nonbddtw}
Consider graphs of the following form:
\begin{center}
\begin{tikzpicture}[x=1in,y=0.5cm]
\node(n0) [bullet] {};
\foreach \i in {1,...,5} {
    \pgfmathsetmacro\j{int(\i-1)}
    \node(n\i) [bullet] at (\i,0) {};
    \node(g\i) [draw, circle, minimum width=.6in] at ({\i-0.5},0) {$G_\i$};
    \draw[graph edge]
        (n\j) edge (g\i.165) edge (g\i.195)
        (n\i) edge (g\i.15) edge (g\i.-15);
};
\node(g6) [circle, minimum width=.6in] at (5.5,0) {$\dotsb$};
\draw[graph edge]
    (n5) edge (g6.165) edge (g6.195);
\draw[graph cut] (2.9,1) -- (2.9,-1);
\end{tikzpicture}
\end{center}
where the $G_n$ are finite graphs which are increasingly ``complex'' in the various senses below.
Note that regardless of what the $G_n$ are, both $\@H_{\diamle 2}$ and $\@H_{\cardle 1}$ are dense towards the single end (the example may clearly be generalized to produce multi-ended graphs); the bolded line depicts a typical cut in both of these collections.
\begin{itemize}
\item
If the $G_n$ are increasingly large finite cliques, we get a quasi-tree with unbounded tree-width.
\item
If the $G_n$ are increasingly long finite cycles, we get a non-quasi-tree with bounded tree-width.
\item
If the $G_n$ contain both increasingly large cliques and increasingly long cycles (without shortcuts), we get a graph which neither is a quasi-tree nor has bounded tree-width.
\end{itemize}
This shows that \cref{intro:thm:cber-quasitreeable-treeable,intro:thm:cber-bddtw-treeable} are incomparable with each other, and that \cref{intro:thm:cber-cuts-dense-treeable} strictly generalizes both of them combined.
\end{example}

\subsection{Wallspaces and median graphs}
\label{sec:intro-walls}

In order to prove \cref{intro:thm:cber-cuts-dense-treeable}, one needs to canonically convert a ``nice'' family of cuts in a graph into a tree.
Broadly speaking, our method belongs to the tradition of Stallings' theorem on multi-ended groups \cite{Sta3d}, the techniques of which have been widely influential (see \cite{Drutu-Kapovich:GGT} for a survey), including in Borel and measurable combinatorics \cite{Ghyleaf}, \cite{GLf2}, \cite{Tsestallings}.

However, there are difficulties in directly applying the Stallings machinery to our setting, which are ultimately rooted in the precise meaning of ``canonical'' in Borel combinatorics (see \cref{rmk:canonical} below for more on this point).
In the usual Stallings proof, one reduces the initial collection of cuts to an automorphism-invariant subfamily of pairwise nested cuts, from which one builds a tree with those edges by taking the vertices to be ``ultrafilters''%
\footnote{In this paper, we prefer the term ``orientation''; see \cref{def:orientation,ftn:orientation}.}
of those cuts.
But starting from a Borel graph, such a tree will usually not be standard Borel if the nested cuts are too sparse.
On the other hand, the requirement of full automorphism-invariance is stronger than needed (again see \cref{rmk:canonical}).
This mismatch is the reason that the Borel version of Stallings' theorem in \cite{Tsestallings} does not apply in our context.

Our approach is therefore to work with the \emph{entire} (non-nested) collection of cuts $\@H(C)$ as in \cref{intro:thm:cber-cuts-dense-treeable}.
Again, we in fact prove a more general result that isolates the key features of such a collection.
By a \defn{proper wallspace}, we mean a set $X$ equipped with a family $\@H(X) \subseteq 2^X$ of subsets obeying certain ``local finiteness'' conditions; see \cref{def:walls-proper} for the precise definition.
(The term ``wallspace'' derives from metric geometry \cite{Nwalls}, \cite{CNwalls}.)
We call a CBER $E \subseteq X^2$ \defn{properly wallable} if there is a Borel assignment of such collections $X/E \ni C |-> \@H(C) \subseteq 2^C$ (called a \defn{proper walling} of $E$); see \cref{def:walling}.
We prove the following general treeability result, no longer just about graphs:

\begin{theorem}[\cref{thm:cber-wallable-treeable}]
\label{intro:thm:cber-wallable-treeable}
A CBER is properly wallable iff it is treeable.
\end{theorem}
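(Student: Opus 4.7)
The easy direction, treeable implies properly wallable, is essentially immediate: given a Borel treeing $T$ of $E$, I would define $\@H(C)$ on each class $C$ as the family of half-trees obtained by deleting a single edge of $T|C$. Each such cut has boundary a single edge, so the local-finiteness requirements in the definition of a proper walling hold trivially, and Borel-ness of the assignment $C \mapsto \@H(C)$ follows from that of $T$.

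For the substantive direction, properly wallable implies treeable, my strategy follows the Stone-duality route flagged in the introduction. Given a proper walling $C \mapsto \@H(C)$, I would first use the duality to associate to each class $C$ its dual median graph $M(C)$---the CAT(0) cube complex whose vertices are the consistent orientations of the pocset of walls, with edges between orientations differing on exactly one wall---and observe that $C$ embeds into $M(C)$ via the Borel map sending $x$ to its principal orientation $\hat{x} = \{H \in \@H(C) : x \in H\}$. The properness assumptions should ensure that $M(C)$ is locally finite and that the assignment $C \mapsto M(C)$ is Borel, so that globally it forms a locally finite Borel median graphing of an ambient CBER $\tilde{E} \supseteq E$ on a larger standard Borel space.

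Next, I would reduce treeability of $\tilde{E}$ (and hence of $E$) to the main cuts-dense theorem, \cref{intro:thm:cber-cuts-dense-treeable}. Every wall $H \in \@H(C)$ extends canonically to a hyperplane-cut of $M(C)$ with finite boundary---the edges crossing the hyperplane, finite by properness---and both sides of the hyperplane are connected in $M(C)$ by the standard convexity of halfspaces in a median graph. The local-finiteness hypothesis \cref{intro:thm:cber-cuts-dense-treeable:finsep} is then exactly one of the axioms of a proper walling, and density towards ends \cref{intro:thm:cber-cuts-dense-treeable:dense} should follow from the standard fact that in a CAT(0) cube complex any two distinct points or ends are separated by some hyperplane, combined with properness. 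The cuts-dense theorem then yields Borel treeability of $\tilde{E}$, from which $E$ inherits treeability because $E$ is the restriction of $\tilde{E}$ to the Borel complete section $X \subseteq M$, and treeings transfer through such restrictions by contracting internal paths.

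The main obstacle I anticipate is the Borel-uniform construction of the dual median graph $M$ together with the verification of its local finiteness. Infinite pocsets admit many non-principal orientations, some of which are required to make $M$ a bona fide median graph rather than merely a combinatorial skeleton, so choosing the right class of admissible orientations to simultaneously ensure standard Borel-ness, local finiteness, and compatibility with the walling is the most delicate step and is where the properness hypotheses must be used most carefully. Once this dualization is in place, the final reduction to the cuts-dense theorem should be largely routine.
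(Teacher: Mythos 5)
Your overall shape for the forward direction — dualize the proper walling to a median graphing via clopen orientations of the pocset, check local finiteness and finite hyperplanes from the properness axioms, and transfer treeability back along $x \mapsto \^x$ — matches the paper. But the step you lean on to finish is the wrong one, and it hides a circularity. You propose to treat the hyperplanes of the dual median graph as a dense family of cuts and invoke \cref{intro:thm:cber-cuts-dense-treeable}; in the paper the logical flow runs in the opposite direction: \cref{thm:cber-cuts-dense-treeable} is itself deduced \emph{from} \cref{thm:cber-wallable-treeable} (a dense family of cuts is shown to be a proper walling, which is then fed into the present theorem). So your reduction either presupposes the theorem you are proving or defers the entire combinatorial content to a black box you have not proved. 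The step you actually need is \cref{intro:thm:cber-median}/\cref{thm:cber-median}: every Borel median graphing with finite hyperplanes admits a Borel \emph{subtreeing}. That is the combinatorial heart — one countably colors the hyperplanes so that same-colored half-spaces are pairwise nested (using that non-nested half-spaces have intersecting finite boundaries), then builds the tree by successively contracting blocks and choosing one edge per adjacent pair — and nothing in your proposal engages with it. Two smaller points on the same direction: $x \mapsto \^x$ is only finite-to-one (blocks can be nontrivial), so $E$ is Borel \emph{bireducible} with the median CBER rather than a restriction of it, and one concludes via \cite[3.3]{JKLcber}; and making the space of clopen orientations standard Borel requires the smoothness of the $E$-action on half-spaces (\cref{thm:walling-smooth}), which you correctly flag as delicate but do not supply.

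The easy direction as written is also false. For a general Borel treeing, the half-trees do \emph{not} "trivially" satisfy the properness axioms: at a vertex of infinite degree, the half-space below an incident edge has infinitely many successors, violating axiom \labelcref{def:walls-proper}\cref{ax:walls-fincover} (equivalently, the dual median graph — here the tree itself — fails to be locally finite, cf.\ \cref{thm:med-hyp-locfin}). You must first replace the treeing by a locally finite one, which is possible by \cite[3.12]{JKLcber}; with that fix the rest of your argument for this direction is fine.
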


Given this, the proof of \cref{intro:thm:cber-cuts-dense-treeable} reduces to showing that conditions \cref{intro:thm:cber-cuts-dense-treeable:finsep,intro:thm:cber-cuts-dense-treeable:dense} from there imply that $C |-> \@H(C)$ is a proper walling.
This is done in \cref{thm:graph-walling-fin,thm:cut-proper}.

Finally, we discuss the proof of \cref{intro:thm:cber-wallable-treeable}.
Since we do not assume $\@H(C)$ to be nested, the ``ultrafilters'' on it form not a tree, but a generalization thereof.
A \defn{median graph} is, roughly speaking, a graph with the same well-behaved notions of ``flatness'' and ``convexity'' as in a tree, but not necessarily required to be ``one-dimensional''; see \cref{def:median} for the precise definition.
Median graphs have been well-studied in geometry as (1-skeleta of) \defn{CAT(0) cube complexes}, in which form they were previously used in a Borel combinatorics context by \cite{HSShyp};
and they are also well-known in lattice theory as interval-finite \defn{median algebras}.
See \crefrange{sec:median}{sec:duality} for detailed background and references on median graphs.

Most importantly for our purposes, there is a Stone-type duality for median graphs, due to Isbell \cite{Isbmed} and Werner \cite{Werdual}, that we review in \cref{thm:duality}.
This duality shows that a general walling $\@H(C)$ as in \cref{intro:thm:cber-wallable-treeable} provides exactly the data required to construct (not a tree but) a median graph, with the sets in $\@H(C)$ forming (not the edges but) the ``hyperplanes''.
Moreover, the additional requirement that $\@H(C)$ be proper implies that each of these hyperplanes is finite (see \cref{thm:med-hyp-locfin,def:walls-proper}).
\Cref{intro:thm:cber-wallable-treeable} thus reduces to

\begin{theorem}[\cref{thm:cber-median}]
\label{intro:thm:cber-median}
Every Borel median graphing of a CBER with finite hyperplanes has a Borel subtreeing.
\end{theorem}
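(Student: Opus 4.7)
The plan is to apply the Isbell--Werner duality (\cref{thm:duality}) to each component of the median graphing $G$, reinterpreting it as a Borel pocset of halfspaces, and then construct a Borel subtreeing via a ``parent-map'' procedure driven by this pocset structure and the finiteness of hyperplanes. Since each hyperplane is finite, the relation ``two edges share a hyperplane'' is a finite Borel equivalence relation on the countable Borel edge set of $G$, hence smooth; this yields a Borel parametrization of the set $\@H$ of hyperplanes (as finite sets of edges), and thus of the pocset of halfspaces with Borel containment and complementation. Using a fixed Borel linear order $<$ on $X$, we then Borel-orient each hyperplane $h$ by choosing a ``positive'' halfspace $H^+(h)$---for instance, the halfspace containing the $<$-minimum element of $h$'s finite frontier.

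The core construction is a Borel parent map $p : V \to V$. For each vertex $v$, consider the countable set of edges $e$ incident to $v$ whose hyperplane has $v$ on its negative side; when this set is nonempty, let $p(v)$ be the far endpoint of the $<$-minimum such edge, and otherwise set $p(v) = v$, marking $v$ as a root. Acyclicity of the resulting Borel functional graph $T := \{(v, p(v)) : p(v) \neq v\}$ is immediate: each $p$-step strictly removes exactly one hyperplane from $N(v) := \{h : v \in H^-(h)\}$, so $T$ has no finite cycle.

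The main obstacle---and the bulk of the proof---will be to verify that $T$ is a spanning \emph{tree} on each $E$-class rather than merely a Borel forest. For two vertices $u, v$ in the same component of $G$, the symmetric difference $N(u) \triangle N(v)$ is finite, of size $d_G(u,v)$, and one must show that the $p$-orbits of $u$ and $v$ eventually merge---either by reaching a common root, or by joining at a shared vertex while ``escaping to infinity'' in a common direction. This requires analyzing how the greedy $<$-minimum choice interacts with the median operation, ruling out pathologies in which the orbits diverge despite lying in the same $E$-class. The key ingredients should be the finiteness of intervals $[u,v]$ in median graphs (bounded by $2^{d_G(u,v)}$) and a careful exchange argument based on the median operation. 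As a fallback, one could instead attempt an inductive construction that contracts finite hyperplanes one at a time in a Borel way, though this approach introduces its own subtleties about Borel well-foundedness.
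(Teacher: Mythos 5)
There is a genuine gap, and it is not merely that the connectivity of $T$ is left unverified: the parent-map construction as described actually fails to produce a connected graph on each class. Consider the two-way infinite line (a median graph in which every edge is its own hyperplane, with half-spaces the two rays it determines), and suppose the Borel orientation happens to alternate, so that consecutive hyperplanes ``point'' in opposite directions. Then every other vertex lies on the positive side of both of its incident hyperplanes, hence is a root of $p$, and $T$ degenerates into a matching with infinitely many components inside a single $E$-class. Your acyclicity argument is fine (crossing an edge removes exactly its own hyperplane from $N(v)$ and adds none, so $N$ strictly decreases along orbits), but connectivity would require all $p$-orbits in a class to reach a common root or escape toward a common end, i.e., the chosen orientations of the hyperplanes would have to cohere into (roughly) a single orientation of the whole pocset per class. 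That is tantamount to a Borel selection of a point or end in each component, which is impossible in general --- indeed, when such a selection \emph{does} exist the paper gets the much stronger conclusion of hyperfiniteness (\cref{thm:cber-median-end-hypf}), so no argument along these lines can prove the general theorem. No refinement of the greedy $<$-minimum rule can repair this, because the obstruction is to the existence of any coherent family of local choices, not to a particular one.

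The paper's proof of \cref{thm:med-subtree} (implemented Borel-ly in \cref{thm:cber-median}) avoids orienting hyperplanes altogether. It uses that non-nested half-spaces have intersecting finite vertex boundaries (\cref{thm:nonnested}), so a countable Borel coloring of the intersection graph on finite sets yields a partition $\@H^*_\cvx = \bigsqcup_n \@H^*_n$ into \emph{pairwise nested} families. One then contracts: the quotient of a block by a nested family is a tree (\cref{thm:nested-tree}), and between any two adjacent blocks there are only finitely many edges (they lie on one finite hyperplane), so Lusin--Novikov selects one edge per adjacent pair; iterating over $n$ and taking the increasing union gives a spanning subtree of each component. Your ``fallback'' of contracting hyperplanes one at a time is in the right spirit, but the essential missing ingredient is precisely this nested coloring: without arranging that each family contracted at a given stage is nested, the intermediate quotients are median graphs rather than trees, and the single-edge selection would create cycles.
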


This result can be regarded as the combinatorial heart of our paper, to which all of the aforementioned treeability results reduce via abstract machinery, as summarized in \cref{fig:flowchart}.

\begin{figure}[htb]
\centering
\begin{tikzpicture}[
    every node/.append style={draw, rectangle, align=center},
    every edge/.append style={->, every node/.style={draw=none, coordinate}},
    every label/.append style={rectangle, font=\scriptsize},
]
\node(tree) {tree};
\node(median)
    [label={below:\ref{intro:thm:cber-median}~(\ref{thm:cber-median})}]
    at ($(tree)+(-2.5,0)$) {median\\graph\\w/ fin.\\hyperpl.}
    edge[""{label=below:\ref{thm:med-subtree}}]
    (tree);
\node(walls)
    [label={below:\ref{intro:thm:cber-wallable-treeable}~(\ref{thm:cber-wallable-treeable})}]
    at ($(median)+(-3.5,0)$) {proper\\wallspace}
    edge[""{label={below:\ref{thm:duality}+\ref{thm:med-hyp-locfin}}}]
    (median);
\node(cuts)
    [label={below:\ref{intro:thm:cber-cuts-dense-treeable}~(\ref{thm:cber-cuts-dense-treeable})}]
    at ($(walls)+(-3.2,0)$) {graph\\with\\dense\\cuts}
    edge[""{label={below:\ref{thm:graph-walling-fin}+\ref{thm:cut-proper}}}]
    (walls);
\node(qtree)
    [label={below:\ref{intro:thm:cber-quasitreeable-treeable}~(\ref{thm:cber-quasitreeable-treeable})}]
    at ($(cuts)+(-3,1)$) {quasi-tree}
    edge[""{label={above:\ref{thm:quasitree-dense}}}]
    (cuts);
\node(bddtw)
    [label={below:\ref{intro:thm:cber-bddtw-treeable}~(\ref{thm:cber-bddtw-treeable})}]
    at ($(cuts)+(-3,-1)$) {bounded\\tree-width}
    edge[""{label={below:\ref{thm:bddtw-dense}}}]
    (cuts);
\end{tikzpicture}
\caption{The process of converting ``tree-like'' structures into trees.
Number on arrow refers to result(s) showing that step for a countable structure.
Number on box refers to end result on treeability of CBERs.}
\label{fig:flowchart}
\end{figure}
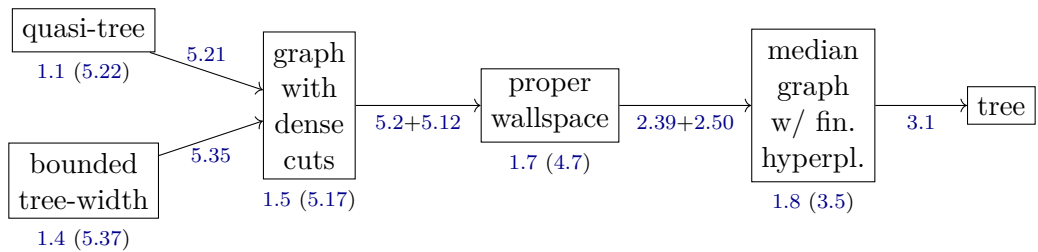

\subsection{The case of one end}

It is well-known that a CBER with a one-ended treeing, or more generally a treeing with a Borel selection of one end per component, is hyperfinite \cite{DJKhyp}.
This suggests that the same might hold for the various ``tree-like'' graphings considered above.
In \cref{sec:end-hypf} we show that this is the case:

\begin{theorem}[\cref{thm:cber-quasitreeing-bddtw-end-hypf}]
\label{intro:thm:cber-quasitreeing-bddtw-end-hypf}
If a CBER admits a locally finite Borel graphing whose components are quasi-trees or of bounded tree-width, together with a Borel selection of one end per component, then it is hyperfinite.
\end{theorem}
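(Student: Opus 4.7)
The strategy is to reduce to the classical theorem of Dougherty--Jackson--Kechris \cite{DJKhyp} that a treeing with a Borel selection of one end per component is hyperfinite. Starting from the graphing $G$, \cref{thm:quasitree-dense} (in the quasi-tree case) and \cref{thm:bddtw-dense} (in the bounded tree-width case) produce, exactly as in the proof of \cref{thm:cber-cuts-dense-treeable}, a Borel proper walling $C \mapsto \mathcal{H}(C)$ of $E$ which is dense towards the ends of $G|C$. The duality \cref{thm:duality} together with \cref{thm:med-hyp-locfin} then converts this into a Borel median graphing $M$ of $E$ with finite hyperplanes, from which \cref{thm:med-subtree} extracts a Borel subtreeing $T$ of $E$.

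The new ingredient is using the selected end $\epsilon(C)$ of $G|C$ to distinguish a Borel selection of one end per component of $T$. Because $\mathcal{H}(C)$ is dense towards the ends of $G|C$, each wall $H \in \mathcal{H}(C)$ has exactly one of its two sides containing a neighborhood of $\epsilon(C)$ in the end compactification $\widehat{C}^G$: this is forced since $H$ has finite boundary in $C$, so the end lies in the closure of precisely one side. This yields a Borel coherent orientation of $\mathcal{H}(C)$ with no minimum vertex, which under the duality is precisely the data of an end of the median graph $M|C$. Since $T$ is a spanning subgraph of $M$, any ray in $M|C$ converging to this end can be replaced with a ray in $T|C$ converging to a well-defined end of the subtreeing, producing the required Borel selection. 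Applying the Dougherty--Jackson--Kechris theorem to $T$ then concludes that $E$ is hyperfinite.

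The main obstacle will be verifying in a Borel-measurable way that the coherent orientation arising from $\epsilon(C)$ descends to a unique end of the subtreeing $T|C$. This should reduce to two observations: (a) the orientation is genuinely ``at infinity'' in $M|C$, since the intersection of the chosen sides is empty (by density of $\mathcal{H}(C)$ together with properness), so no nearest vertex exists and one gets a bona fide end rather than a vertex; and (b) spanning subtreeings of median graphs induce a Borel map on ends, essentially by converting any infinite decreasing sequence of wall-sides into a ray in the subtreeing using the tree structure. Should point (b) prove delicate, an alternative route is to bypass the subtreeing entirely and directly exhaust $E$ by an increasing sequence of finite Borel subequivalence relations defined via a combinatorial Busemann function on $\mathcal{H}(C)$ using $\epsilon(C)$, mimicking the Dougherty--Jackson--Kechris argument at the level of proper wallings; but funnelling through the subtreeing seems more in line with the pipeline summarized in \cref{fig:flowchart}.
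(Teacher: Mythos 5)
There is a genuine gap at the step where you pass from the selected end of the median graphing $M$ to an end of the subtreeing $T$. The inclusion $\iota : T \to M$ induces a map on end compactifications in the direction $\^X^T \to \^X^M$; this map is surjective but \emph{not injective} in general, so a single end of $M|C$ may be the image of many ends of $T|C$, with no canonical choice among them. The paper exhibits exactly this failure in the example immediately following \cref{thm:med-subtree}: a one-ended, bounded-degree median graph whose canonical subtreeing has two ends. Consequently your claim that ``any ray in $M|C$ converging to this end can be replaced with a ray in $T|C$ converging to a well-defined end of the subtreeing'' is false --- different rays to the same end of $M$ can yield rays to different ends of $T$, and the set of ends of $T$ lying over the chosen end of $M$ need not admit a Borel (or even canonical) selection. (Your point (a), that the transported orientation is a genuine end of $M$ and not a vertex, is fine and is what \cref{thm:cut-proper} provides; the end-preservation of $T$ can be salvaged under bounded degree and bounded hyperplane diameter via \cref{thm:med-subtree-qi}, but not in the general locally finite setting of the statement.)

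The alternative route you mention in passing --- working directly at the level of the median graph/walling with a ``move towards the end'' device, rather than funnelling through the subtreeing --- is in fact what the paper does, and it is not a mere variant but the necessary fix. In \cref{thm:end-corner} one defines a transformation $T_U$ sending each vertex to the opposite corner of the cube cut by the maximal half-spaces separating it from the selected end $U$; \cref{thm:end-finoverhypf-hypsm} then splits into two cases (finitely versus infinitely many $T_U$-orbits), using finite-index-over-hyperfinite in the first case and hypersmoothness (via finite sets of ``roots'' assigned to classes of an exhausting sequence of equivalence relations defined by half-spaces) in the second. Note that even the one-ended tree case is handled through this machinery rather than by quoting the Dougherty--Jackson--Kechris end-selection theorem as a black box applied to a subtreeing. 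To repair your argument you would need to either carry out this median-graph analogue of the DJK argument (as in \cref{thm:cber-median-end-hypf}) or prove an end-preserving version of \cref{thm:med-subtree} for general locally finite median graphs with finite hyperplanes, which the paper's example shows the given construction does not supply.
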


As before, this is in fact an instance of a more general result on Borel graphs equipped with dense families of cuts in each component (see \cref{thm:cber-cuts-end-hypf}), which in turn reduces to a core result for median graphs:

\begin{theorem}[\cref{thm:cber-median-end-hypf}]
\label{intro:thm:cber-median-end-hypf}
If a CBER admits a median graphing with finite hyperplanes and a Borel selection of one end per component, then it is hyperfinite.
\end{theorem}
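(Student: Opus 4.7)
The plan is to reduce to the Dougherty--Jackson--Kechris hyperfiniteness theorem for treeings with a Borel selection of one end per component, via Theorem~\ref{thm:cber-median} (every median graphing of a CBER with finite hyperplanes admits a Borel subtreeing). The central task is to \emph{lift} the Borel end-selection of the median graphing $G$ to a Borel end-selection of its subtreeing $T$.

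First, I would use the Borel end-selection $C \mapsto \xi_C$ to orient every hyperplane of $G$ in a Borel fashion: for each hyperplane $h$, let $h^+$ denote the unique halfspace determined by $h$ that contains $\xi_C$ as an accumulation point in the end-compactification of $G|C$. This is Borel because each hyperplane has finite wall and the end-selection is Borel, and it induces a canonical ``towards $\xi$'' direction on every edge of $G$.

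Next, apply Theorem~\ref{thm:cber-median} to obtain a Borel subtreeing $T \subseteq G$. The goal is then to Borel-select, in each component of $T$, an end of $T$ corresponding to the given end $\xi_C$ of $G$. The inclusion $T \subseteq G$ induces a continuous map from the space of ends of $T|C$ to the space of ends of $G|C$, and (assuming this map is surjective onto $\{\xi_C\}$) the fiber over $\xi_C$ is a nonempty compact subset of the zero-dimensional Polish space of ends of $T|C$, from which a Borel uniformization yields the desired end-selection on $T$. Applying DJK to $T$ equipped with this end-selection gives hyperfiniteness of the CBER generated by $T$, which equals $E$.

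The main obstacle is ensuring the surjectivity (Borel-uniformly in $C$) of the end map $\operatorname{Ends}(T|C) \to \operatorname{Ends}(G|C)$ over $\xi_C$: a priori, the subtreeing produced by Theorem~\ref{thm:cber-median} is constructed without reference to the ends, so some $G$-ends might have no preimage in $T$. To overcome this, I would either (i) modify the proof of Theorem~\ref{thm:cber-median} so that $T$ is constructed compatibly with the $\xi$-orientation---e.g., always preferring edges in the $+$-direction so that every vertex has a $T$-neighbor towards $\xi$---or (ii) bypass the subtreeing and work directly on $G$ using the \emph{median-with-end} operation $m(x,y,\xi)$, defined as the unique vertex whose hyperplane-orientation matches the majority of $\{x,y,\xi\}$. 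This $m$ exists because only finitely many hyperplanes separate $x$ from $y$, and it provides a canonical Borel common descendant of $x$ and $y$ in the ``towards $\xi$'' direction, generalizing the parent function used in the DJK proof for trees; the hyperfinite exhaustion of $E$ would then be assembled from the transitive closures of the relations $\{(x,y) : d_G(x,m(x,y,\xi)), d_G(y,m(x,y,\xi)) \le n\}$.
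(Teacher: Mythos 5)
Your main route has a genuine gap precisely at the step you describe as ``a Borel uniformization yields the desired end-selection on $T$''. First, the obstacle you flag (surjectivity of the end map over $\xi_C$) is not the issue: since $T \subseteq G$, the induced map on end compactifications exists, is surjective (its image is compact and contains the dense set of vertices), and sends vertices to vertices, so the fiber over $\xi_C$ is automatically a nonempty compact set of ends of $T|C$ --- the paper notes this right after \cref{thm:med-subtree}. The real problem is selecting one point of that fiber uniformly in $C$. The fiber can be a perfect set (the example following \cref{thm:med-subtree} already produces two ends of $T$ over a one-ended $G$, and the phenomenon iterates), and there is no canonical element of a perfect compact set of ends of a tree; standard uniformization theorems do not apply because the end space of $T|C$ is not presented, without a basepoint, as a Borel set with compact sections in a fixed standard Borel space. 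Coding ends by rays only lets you choose a ray \emph{from each vertex}, and those rays need not converge to a single common end of $T|C$. Indeed, a Borel end-selection for an arbitrary treeing is exactly as strong as the hyperfiniteness you are trying to prove (by Dougherty--Jackson--Kechris), so this step cannot be routine. Your fix (i) does not close the gap: giving every vertex a designated $T$-neighbor ``towards $\xi$'' produces forward rays that can diverge inside $T$ while all still mapping to $\xi_C$ in $G$, so again no single end of $T$ is selected.

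Your fix (ii) is much closer to the paper's actual proof: $m(x,y,\xi)$ is indeed a well-defined vertex of $[x,y]$ (only finitely many half-spaces separate $x$ from $y$, by \cref{thm:half-dist}), and the paper's \cref{thm:end-corner} builds the analogous ``flow towards $\xi$''. But the proposed exhaustion is wrong: since $m(x,y,\xi) \in [x,y]$, any $G$-adjacent $x,y$ satisfy $d(x,m), d(y,m) \le 1$, so already for $n = 1$ your relation contains every edge of $G$ and its transitive closure is all of $E$, not a finite equivalence relation. (The same failure occurs for trees; DJK argue via the tail equivalence relation of the parent function, not via finite relations of this shape.) More importantly, neither route confronts the feature that makes the median case genuinely harder than the tree case: the forward orbits of the ``move towards $\xi$'' map need not coalesce --- they can run parallel forever, as in \cref{fig:end-corner} --- so its tail equivalence relation can be a proper subrelation of $E$. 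The paper's proof (\cref{thm:end-finoverhypf-hypsm}) handles exactly this by a dichotomy: finitely many orbits per component (then $E$ is finite-index over a hyperfinite relation), versus infinitely many (then one builds a hypersmooth exhaustion from half-spaces meeting many orbits and the $T_U$-roots of \cref{thm:end-corner-root}). Some version of that extra argument is unavoidable and is missing from your proposal.
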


The proof of this result is based on defining a single transformation that ``moves towards'' the chosen end in each component (see \cref{fig:end-corner}), generalizing the same idea for trees, and further showcasing the usefulness of the rich geometry of median graphs in Borel combinatorics.

On the other hand, in the restricted case of ``tree-like'' graphings which have only one end in total, we give in \cref{sec:oneended} a much simpler proof of hyperfiniteness, that is conceptually based on a similar idea of ``moving towards'' the one end, but can be read independently of the rest of the paper, without knowing the definitions of ``median graph'', ``walling'', or even ``end''.
In fact, there we (re)formulate a simple notion of \defn{one-ended proper walling}, which easily implies hyperfiniteness, and which is easily implied by one-ended quasi-treeing or bounded tree-width graphing, all comfortably fitting onto two pages; see \ref{def:walls-oneended}--\ref{thm:cber-quasitreeing-bddtw-oneended}.

\subsection{Remark on Borel versus countable combinatorics}
\label{rmk:canonical}

As usual in countable Borel combinatorics, and as alluded to in \cref{sec:intro-walls}, our results are in some sense not about ``Borel'' structures at all, but rather about ``canonical'' constructions of countable structures.
That is, in order to prove a theorem of the form
\begin{enumerate}[label=(\alph*)]
\item \label{rmk:canonical:borel}
every CBER admitting a Borel ``$\heartsuit$ing'' (e.g., quasi-treeing) also admits a ``$\clubsuit$ing'' (e.g., treeing)
\end{enumerate}
one really proves
\begin{enumerate}[resume*]
\item \label{rmk:canonical:ctbl}
every countable ``$\heartsuit$'' can be canonically turned into a ``$\clubsuit$''
\end{enumerate}
where ``canonical'' refers to (countable) operations which can be uniformly performed on each equivalence class of a CBER.
For example, picking a single point is not allowed, nor is quotienting by an arbitrary equivalence relation (even if that relation is fully ``canonical'', e.g., automorphism-invariant).
On the other hand, certain non-automorphism-invariant operations \emph{are} allowed, such as taking a countable coloring of the intersection graph on all finite subsets \cite[7.3]{KMtopics}.

In this paper, we fully embrace this perspective, by largely working explicitly with countable structures throughout, and only occasionally pointing out why the preceding countable constructions are ``canonical'' in the above sense.
For instance, \cref{intro:thm:cber-median} is really the following result:

\begin{theorem}[\cref{thm:med-subtree}]
\label{intro:thm:med-subtree}
Let $(X,G)$ be a countable median graph with finite hyperplanes.
Then we may construct a canonical subtree $T \subseteq G$.
\end{theorem}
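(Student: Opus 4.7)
My plan is to construct the canonical spanning subtree $T \subseteq G$ in two stages: first produce a canonical labeling of the countable set $\mathcal{H}$ of hyperplanes of $G$, and then use this labeling to drive a local edge-selection rule.

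The structural facts I would rely on are standard for median graphs: each edge $e \in G$ lies in a unique hyperplane $h(e) \in \mathcal{H}$; removing the edges of $h(e)$ partitions $G$ into two convex halfspaces $h(e)^\pm$, and the edges of $h(e)$ form a perfect matching between finite vertex subsets of these halfspaces; and every cycle in $G$ is generated by squares, each dual to a pair of crossing hyperplanes. For the labeling, I would form the auxiliary graph on $\mathcal{H}$ in which $h \sim h'$ iff their (finite) carriers share a vertex. Since each vertex of $G$ meets only countably many hyperplanes, this auxiliary graph is locally countable and therefore admits a canonical Borel $\mathbb{N}$-coloring $c \colon \mathcal{H} \to \mathbb{N}$ using standard CBER-uniform coloring techniques (e.g., coloring the intersection graph on finite subsets, one of the operations singled out in \cref{rmk:canonical} as canonical). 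Crucially, since each hyperplane is finite, $c$ restricts to a canonical linear order on any finite family of hyperplanes sharing a common incident vertex.

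The second stage uses $c$ as a priority for edge selection: an edge $e$ with hyperplane $h(e)$ is placed in $T$ iff it realizes a canonical $c$-minimality condition among the edges of $h(e)$ meeting the local neighborhood of $e$'s endpoints (so that only one edge per square survives). Acyclicity would then reduce to showing that in each square, whose two pairs of parallel edges lie in two crossing hyperplanes $h_1, h_2$, the rule always excludes at least one edge, which should follow from consistently favoring the $c$-smaller of $h_1, h_2$. Connectedness requires that for any two vertices in opposite halfspaces of any hyperplane $h$, the kept edges of $h$ plus inductively available paths keep them $T$-linked.

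The main obstacle I anticipate is the global verification of connectedness: local $c$-guided choices must combine into a spanning tree without accidentally severing any halfspace. I expect this to require an inductive argument organized by $c$-color, maintaining the invariant that at each stage the halfspaces of already-processed hyperplanes remain $T$-connected, and appealing to the gated/convex decomposition theory of median graphs so that the finite matching on each newly processed hyperplane can be safely thinned to a single surviving edge in each cube. A possible supporting move is to first reduce to a ``prime'' (Cartesian-irreducible) factor, where crossings are maximally constrained and the induction simplifies. The finiteness of hyperplanes is used throughout to make each local step a finite-choice operation driven only by $c$, and hence canonical in the sense of \cref{rmk:canonical}.
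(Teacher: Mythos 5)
Your first stage is essentially the paper's first step: properly color the hyperplanes with respect to the intersection graph on their finite carriers (this is canonical by \cite[7.3]{KMtopics}, exactly as you say). But the second stage has a genuine gap. Your reduction of acyclicity to ``in each square the rule excludes at least one edge'' is false: a subgraph of a median graph that omits an edge from every square can still contain cycles. In the $3$-cube, the hexagonal $6$-cycle $000,001,011,111,110,100$ contains at most two edges of each of the six square faces, yet it is a cycle. The fact that cycles are generated by squares (\cref{thm:half-edge-square}) is a statement about the ambient graph and does not let you certify acyclicity of a subgraph square-by-square. Moreover, a purely local $c$-minimality rule on edges gives no handle on connectedness, which you acknowledge is the main obstacle but do not resolve.

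The missing idea that makes the induction by color actually work is \emph{nestedness}: since any two non-nested half-spaces have intersecting vertex boundaries (\cref{thm:nonnested}), your proper coloring automatically forces each color class $\mathcal{H}_n$ to be a pairwise nested family, and a median graph all of whose half-spaces are pairwise nested is a tree (\cref{thm:nested-tree}). The paper therefore does not select edges by a local priority rule; it builds $T=\bigcup_n T_n$ so that the components of $T_n$ are exactly the blocks of the equivalence relation ``not separated by any half-space of color $\ge n$'' (the $\mathcal{K}_n$-blocks, in the sense of \cref{thm:median-subpoc}). At stage $n+1$, each such block of the next level, quotiented by the nested family $\mathcal{H}_n$, is a genuine tree, and one adds exactly one $G$-edge between each pair of adjacent sub-blocks (a finite, hence canonical, choice, since all candidates lie on the finite separating hyperplane). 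Acyclicity and connectedness of each $T_{n+1}$-component then both follow at once from the tree structure of the quotient, with no separate global verification and no need for gatedness or prime-factor reductions. I recommend replacing your local selection rule with this block-contraction induction; your coloring already supplies everything it needs.
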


\begin{remark}
In fact the equivalence between \cref{rmk:canonical:borel} and \cref{rmk:canonical:ctbl} above can be made fully precise, by taking \cref{rmk:canonical:ctbl} to mean that there is a certain kind of interpretation in the infinitary logic $\@L_{\omega_1\omega}$ from the theory of $\clubsuit$ to the theory of $\heartsuit$ expanded with two additional pieces of structure, namely, the aforementioned countable coloring of the intersection graph on finite subsets and a countable family of subsets separating points.
By interpretation, we mean that in any countable $\heartsuit$-structure $X$ we can define a $\clubsuit$-structure in $X$ (by a collection of $\@L_{\omega_1\omega}$ formulas) and this definition does not depend on $X$.
We do not use this precise equivalence at all in this paper, but refer the interested reader to \cite[arXiv version, Appendix~B]{CKstr} and \cite{BCstr} for details.
\end{remark}

\subsection*{Acknowledgments}

We would like to thank Robin Tucker-Drob for asking and bringing to our attention the question of treeability of Borel quasi-trees, as well as
Clinton Conley, Oleg Pikhurko, Robin Tucker-Drob, and Felix Weilacher for helpful discussions and suggestions concerning our initial results and potential avenues for generalization.
We also thank Alexander Kechris and Forte Shinko for useful questions and suggestions.
Finally, we thank the anonymous referee for several helpful comments which improved our exposition.
R.C.\ was supported by NSF grant DMS-2224709.
A.P.\ was supported by NSERC CGS D 569502-2022.
R.T.\ was supported by NSERC CGS M and FRQNT Bourse de maîtrise en recherche.
A.Ts.\ was supported by NSERC Discovery Grant RGPIN-2020-07120.

\section{Preliminaries}
\label{sec:prelim}

\subsection{Graphs}
\label{sec:graph}

A \defn{graph} $G$ on vertex set $X$ will mean a symmetric irreflexive binary relation $G \subseteq X^2$, where $(x,y) \in G$ represents the oriented edge from $x$ to $y$.
We will refer to the graph by $G$ or $(X,G)$.

Given a connected graph $(X,G)$, we let $d = d_G : X^2 -> [0,\infty)$ denote the path metric, and $\Ball_r(x)$ denote the closed ball of radius $r$ around $x \in X$.
More generally, for $A \subseteq X$,
\begin{align*}
\Ball_r(A) := \bigcup_{x \in A} \Ball_r(x).
\end{align*}

\begin{definition}
\label{def:boundary}
For a subset $A \subseteq X$ of vertices, we write $\neg A := X \setminus A$.
\leavevmode
\begin{itemize}
\item  The \defn{inner vertex boundary} of $A$ is $\partial_\iv A := A \cap \Ball_1(\neg A)$.
\item  The \defn{outer vertex boundary} of $A$ is $\partial_\ov A := \Ball_1(A) \cap \neg A = \partial_\iv \neg A$.
\item  The \defn{total vertex boundary} of $A$ is $\partial_\v A := \partial_\iv \cup \partial_\ov A$.
\item  The \defn{inward edge boundary} of $A$ is $\partial_\ie A := G \cap (\neg A \times A) = G \cap (\partial_\ov A \times \partial_\iv A)$.
\item  The \defn{outward edge boundary} of $A$ is $\partial_\oe A := G \cap (A \times \neg A) = G \cap (\partial_\iv A \times \partial_\ov A) = \partial_\ie \neg A$.
\end{itemize}
Note that even though our graphs are symmetric, we always consider \emph{oriented} edge boundaries.
When the graph $G$ is not clear from context, we will specify it via superscript, e.g., $\partial^G_\iv A = \partial_\iv A$.
\end{definition}

\begin{definition}
\label{def:int}
The \defn{interval} $[x,y]$ between $x, y \in X$ is the union of all geodesics between $x, y$:
\begin{equation*}
z \in [x,y]  \coloniff  d(x,z) + d(z,y) = d(x,y).
\end{equation*}
For $z \in [x,y]$, we say $z$ is \defn{between} $x,y$, also denoted as the ternary relation
\begin{equation*}
\betw{x--z--y}  \coloniff  z \in [x,y].
\end{equation*}
\end{definition}

The following formal properties of intervals/betweenness are easily seen:
\begin{gather}
\label{eq:int-refl-sym}
x, y \in [x,y] = [y,x], \\
\label{eq:int-antisym}
(\betw{x--y--z} \AND \betw{y--x--z})  \iff  x = y, \\
\label{eq:int-assoc}
(\betw{w--x--y} \AND \betw{w--y--z}) \iff (\betw{w--x--z} \AND \betw{x--y--z}).
\end{gather}
Both sides of this last relation hold iff there exists a geodesic from $w$ to $x$ to $y$ to $z$, in which case we also write $\betw{w--x--y--z}$.
The notation $\betw{x_0--x_1--x_2--\dotsb--x_n}$ is defined similarly.

\begin{definition}
A set of vertices $A \subseteq X$ is \defn{convex} if $x, y \in A \implies [x,y] \subseteq A$, i.e., every geodesic between every two vertices in $A$ is contained in $A$.
This clearly implies $A$ is connected or empty.

For $A \subseteq X$, let $\cvx(A)$ denote the \defn{convex hull} of $A$, i.e., the smallest convex set containing $A$.
\end{definition}

For detailed information on the axiomatics of intervals and convexity in graphs and related structures, see \cite{vdVcvx}.

\begin{convention}
\label{conv:H}
We will be interested in several families $\@H \subseteq 2^X$ of subsets of the vertex set $X$ of a graph (and later, more generally an arbitrary set $X$), which have the property of being closed under the complement operation $\neg : 2^X -> 2^X$.
For example, $\@H_\cvx$ will denote the family of convex sets whose complement is also convex; see \cref{def:half}.

For such families $\@H$, we will always use the plain symbol $\@H$ to denote the family including the two \defn{trivial} subsets $\emptyset, X \in \@H$, while $\@H^* := \@H \setminus \{\emptyset, X\}$ will denote the \defn{nontrivial} elements of $\@H$.
We also write $\@H^G(X) = \@H(X) = \@H$ (e.g., $\@H_\cvx^G(X)$) when the vertex/edge set needs to be specified.
\end{convention}

\subsection{Median graphs}
\label{sec:median}

\begin{definition}
\label{def:median}
A \defn{median graph} $(X,G)$ is a connected graph such that for any $x, y, z \in X$, the intersection of the pairwise intervals between them is a singleton; this single vertex is called the \defn{median} of $x, y, z$, denoted $\ang{x,y,z}$:
\begin{equation*}
[x,y] \cap [y,z] \cap [z,x] = \{\ang{x,y,z}\}.
\end{equation*}
\end{definition}

Median graphs have been widely studied from many different perspectives and under different guises, including in combinatorics, geometry and group theory (as \emph{CAT(0) cube complexes}), universal algebra and lattice theory (as interval-finite \emph{median algebras}), and logic and computer science (in relation to the \emph{2-satisfiability} problem).
For comprehensive surveys, see \cite{BHmed}, \cite{Rolmed}, \cite{Bowmed}.
In this and the next few subsections, we give a brief, self-contained exposition of the basic theory of median graphs needed in this paper from a purely combinatorial perspective; we hope that such an account will also facilitate future applications of median graphs in Borel combinatorics.

In the rest of this subsection, let $(X,G)$ be a median graph.

\begin{remark}
For $x, y \in X$, $\ang{x,y,\blank} : X -> X$ is idempotent, with image = fixed points = $[x,y]$.
Thus the median operation and the interval relation may be (positively) defined from each other.
\end{remark}

\begin{definition}
\label{def:homom}
A \defn{median homomorphism} $f : (X,G) -> (Y,H)$ between two median graphs is a function between the vertex sets $f : X -> Y$ which preserves the ternary median operation $\ang{\blank,\blank,\blank}$, or equivalently by the preceding remark, the ternary interval relation.

(This neither implies nor is implied by being a graph homomorphism.
See however \cref{rmk:homom-cvx}.)
\end{definition}

\begin{lemma}
\label{thm:med-meet}
For any $x, y, z \in X$, we have $[x,y] \cap [x,z] = [x,\ang{x,y,z}]$; and for any $w$ in this set, we have $\ang{w,y,z} = \ang{x,y,z}$.
\end{lemma}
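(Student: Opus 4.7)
The plan is to first establish the second claim (the identity of medians) and then use it to deduce the set equality, since the ``meet'' characterization of $[x, \langle x,y,z\rangle]$ falls out naturally once we know that taking the median with $y, z$ is insensitive to replacing $x$ by any $w \in [x,y] \cap [x,z]$.

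Set $m := \langle x,y,z\rangle$, so by \cref{def:median} we have $m \in [x,y] \cap [y,z] \cap [x,z]$. For the inclusion $[x,m] \subseteq [x,y] \cap [x,z]$, I would take $w \in [x,m]$, so $\betw{x-w-m}$. Since $m \in [x,y]$ gives $\betw{x-m-y}$, associativity \cref{eq:int-assoc} (with the substitution $w \leftarrow x$, $x \leftarrow w$, $y \leftarrow m$, $z \leftarrow y$) yields $\betw{x-w-y}$, i.e., $w \in [x,y]$; symmetrically $w \in [x,z]$.

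For the reverse inclusion, suppose $w \in [x,y] \cap [x,z]$, and set $m' := \langle w, y, z\rangle$. The key subclaim is that $m' = m$. By definition $m' \in [w,y] \cap [w,z] \cap [y,z]$. From $w \in [x,y]$, i.e., $\betw{x-w-y}$, together with $\betw{w-m'-y}$, associativity gives $\betw{x-w-m'-y}$, so in particular $m' \in [x,y]$; symmetrically $m' \in [x,z]$. Combined with $m' \in [y,z]$ this places $m'$ in the triple intersection, and the uniqueness clause of \cref{def:median} forces $m' = m$. Now the same associativity trick, applied to $\betw{x-w-y}$ and $\betw{w-m-y}$ (the latter being $m = m' \in [w,y]$), gives $\betw{x-w-m-y}$, hence $w \in [x,m]$ as required.

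The second assertion of the lemma is exactly the subclaim $\langle w,y,z\rangle = \langle x,y,z\rangle$ established above. I do not anticipate any serious obstacle: the proof is essentially a careful bookkeeping exercise with the betweenness axioms \cref{eq:int-refl-sym,eq:int-antisym,eq:int-assoc} and the uniqueness-of-median axiom. The only subtlety worth watching is the directionality of the associativity rule \cref{eq:int-assoc}, which must be applied in the correct orientation of its two clauses each time; reading it as the identity $[w,y] \subseteq [x,y]$ whenever $w \in [x,y]$ may be the cleanest conceptual package to carry through the argument.
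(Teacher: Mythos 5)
Your proposal is correct and follows essentially the same route as the paper: the easy inclusion $[x,\ang{x,y,z}] \subseteq [x,y] \cap [x,z]$ via \cref{eq:int-assoc}, then for $w$ in the intersection, showing $\ang{w,y,z}$ lands in $[x,y]\cap[x,z]\cap[y,z]$ so that uniqueness of the median forces $\ang{w,y,z}=\ang{x,y,z}$, and finally one more application of \cref{eq:int-assoc} to get $w \in [x,\ang{x,y,z}]$. The bookkeeping with the orientation of \cref{eq:int-assoc} is handled correctly.
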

\begin{proof}
$[x,y] \cap [x,z] \supseteq [x,\ang{x,y,z}]$ follows from \cref{eq:int-assoc}.

Now let $w \in [x,y] \cap [x,z]$.
Then $\ang{w,y,z} \in [w,y] \cap [w,z] \cap [y,z] \subseteq [x,y] \cap [x,z] \cap [y,z]$ again by \cref{eq:int-assoc}, whence $\ang{w,y,z} = \ang{x,y,z}$.

Since $w \in [x,y]$ and $\ang{x,y,z} = \ang{w,y,z} \in [w,y]$, again by \cref{eq:int-assoc}, $w \in [x,\ang{x,y,z}]$.
\end{proof}

\begin{definition}
\label{def:cone}
For two vertices $x, y \in X$, the \defn{cone} at $y$ away from $x$ is
\begin{equation*}
\cone_x y := \{z \in X \mid \betw{x--y--z}\}.
\end{equation*}
\end{definition}

\begin{lemma}
\label{thm:cvx-up}
For any $x, y \in X$, the set $\cone_x y$ is convex.
\end{lemma}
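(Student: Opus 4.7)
The plan is to establish convexity of $\cone_x y$ by applying \cref{thm:med-meet} twice and then chaining the resulting betweenness relations via \cref{eq:int-assoc}.

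First, fix $u, v \in \cone_x y$, so that $y \in [x,u] \cap [x,v]$, and let $m := \langle x, u, v \rangle$. Applying \cref{thm:med-meet} to the triple $(x, u, v)$ yields $[x,u] \cap [x,v] = [x, m]$, so that $y \in [x, m]$, i.e., $\betw{x--y--m}$; additionally, $m \in [x, u] \cap [u, v]$ directly from the definition of the median.

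Now fix $w \in [u, v]$; the goal is to show $\betw{x--y--w}$. Consider the auxiliary median $m_w := \langle x, u, w \rangle \in [x, u] \cap [u, w] \cap [x, w]$. Since $w \in [u, v]$ we have $[u, w] \subseteq [u, v]$, hence $m_w \in [x, u] \cap [u, v]$. A second application of \cref{thm:med-meet}, this time to the triple $(u, x, v)$, gives $[u, x] \cap [u, v] = [u, m]$, so $m_w \in [u, m]$.

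It remains to chain betweenness relations using \cref{eq:int-assoc}. From $\betw{u--m_w--m}$ (since $m_w \in [u, m]$) and $\betw{u--m--x}$ (since $m \in [x, u]$), \cref{eq:int-assoc} gives $\betw{x--m--m_w}$. Combining this with $\betw{x--m_w--w}$ (since $m_w \in [x, w]$), a further application gives $\betw{x--m--w}$. Finally, $\betw{x--y--m}$ together with $\betw{x--m--w}$ yields $\betw{x--y--w}$, so $w \in \cone_x y$, as desired.

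The main insight lies in choosing the right auxiliary median: once one realizes that $m_w = \langle x, u, w \rangle$ lands in the tightly controlled interval $[u, m]$ via the symmetric application of \cref{thm:med-meet}, the remainder is a purely formal manipulation of betweenness.
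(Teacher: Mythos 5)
Your proof is correct and follows essentially the same route as the paper: you introduce the same two auxiliary medians ($m=\ang{x,u,v}$ and $m_w=\ang{x,u,w}$, which are the paper's $\ang{a,b,x}$ and $\ang{a,c,x}$), apply \cref{thm:med-meet} twice to place $y\in[x,m]$ and $m_w\in[u,m]$, and then chain betweenness via \cref{eq:int-assoc}. The only difference is cosmetic ordering of the final chain of applications of \cref{eq:int-assoc}.
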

\begin{proof}
Let $a, b \in \cone_x y$ and $c \in [a,b]$; we will show
\begin{equation*}
\betw{a--\ang{a,c,x}--\ang{a,b,x}--y--x},
\end{equation*}
which by \cref{eq:int-assoc} along with $\betw{c--\ang{a,c,x}--x}$ gives $\betw{c--y--x}$ as desired.
By \cref{eq:int-assoc}, it suffices to show:
\begin{itemize}
\item  $\betw{a--\ang{a,b,x}--x}$ by definition of $\ang{a,b,x}$.
\item  $\betw{\ang{a,b,x}--y--x}$, i.e., $y \in [x, \ang{a,b,x}]$, by \cref{thm:med-meet}.
\item  $\betw{a--\ang{a,c,x}--\ang{a,b,x}}$ again by \cref{thm:med-meet}, because $\ang{a,c,x} \in [a,x]$ and $\ang{a,c,x} \in [a,b]$, the latter because $\ang{a,c,x} \in [a,c] \subseteq [a,b]$ by \cref{eq:int-assoc}.
\qedhere
\end{itemize}
\end{proof}

\begin{proposition}
\label{thm:proj}
For any $\emptyset \ne A \subseteq X$ and $x \in X$, there is a unique point in the convex hull of $A$ which is between $x$ and every point in $A$, called the \defn{projection} $\proj_A(x)$ of $x$ toward $A$.%
\footnote{The projection is also known as the \emph{gate} \cite{vdVcvx}, \cite{Bowmed} (especially in connection with \cref{thm:proj-proj}); however, that term also has other meanings \cite{Rolmed}.}
Thus
\begin{align*}
\cvx(A) \cap \bigcap_{a \in A} [x,a] = \{\proj_A(x)\}.
\end{align*}
Moreover, $\proj_A = \proj_{\cvx(A)}$, i.e., $\proj_A(x)$ is also between $x$ and every point in $\cvx(A)$.
\end{proposition}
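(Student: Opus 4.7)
The plan is to prove this via a single closest-point argument, relying crucially on \cref{thm:cvx-up} (convexity of cones) as an auxiliary observation. The key step is to note that for any $y \in X$, the set $\{b \in X : y \in [x,b]\}$ equals $\cone_x y$, which is convex by \cref{thm:cvx-up}. Consequently, whenever $y \in \bigcap_{a\in A}[x,a]$, we get $A \subseteq \cone_x y$, and hence $\cvx(A) \subseteq \cone_x y$; in other words, $y \in [x,b]$ automatically holds for every $b \in \cvx(A)$, not just every $a \in A$. This single observation will deliver both the ``moreover'' clause and half of the uniqueness argument.

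For existence, I would pick $p \in \cvx(A)$ minimizing $d(x,\cdot)$; such $p$ exists because $\cvx(A)\neq\emptyset$ and graph distances are non-negative integers. To verify $p \in [x,a]$ for each $a\in A$, set $m := \ang{x,p,a}$. Then $m \in [p,a] \subseteq \cvx(A)$ by convexity of $\cvx(A)$, so $d(x,m) \ge d(x,p)$ by choice of $p$; but also $m\in [x,p]$ gives $d(x,m) \le d(x,p)$. Hence $d(x,m)=d(x,p)$, and since $m$ lies on a geodesic from $x$ to $p$, this forces $m=p$, i.e., $p \in [x,a]$. So $p \in \cvx(A) \cap \bigcap_{a\in A}[x,a]$.

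For uniqueness, suppose $q \in \cvx(A) \cap \bigcap_{a\in A}[x,a]$ is another such point. By the auxiliary observation applied to $p$ with $b := q \in \cvx(A)$, we get $p\in [x,q]$, i.e., $d(x,q)=d(x,p)+d(p,q)$; applied to $q$ with $b := p \in \cvx(A)$, we get $q \in [x,p]$, i.e., $d(x,p)=d(x,q)+d(q,p)$. Adding these two equations forces $d(p,q)=0$, so $p=q$. Finally, the ``moreover'' claim is immediate: $p$ lies in $\cvx(A) \cap \bigcap_{b\in\cvx(A)}[x,b]$ (the first containment by construction, the second by the auxiliary observation), so uniqueness applied to the set $\cvx(A)$ in place of $A$ identifies $p$ with $\proj_{\cvx(A)}(x)$.

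I do not expect a serious obstacle; the main subtlety is non-attainment of infima when $A$ is infinite, but this is sidestepped by integrality of graph distances. The guiding idea is to let \cref{thm:cvx-up} do the heavy lifting: once we have even one witness in $\bigcap_a[x,a]$, the cone it generates absorbs all of $\cvx(A)$, simultaneously yielding both the ``moreover'' upgrade and the uniqueness.
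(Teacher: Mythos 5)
Your proof is correct, and the uniqueness and ``moreover'' parts coincide with the paper's: both hinge on the observation that $\{b : y \in [x,b]\} = \cone_x y$ is convex (\cref{thm:cvx-up}), so a common point of $\bigcap_{a\in A}[x,a]$ absorbs all of $\cvx(A)$, and two such points are then each between $x$ and the other, forcing equality (the paper cites \cref{eq:int-antisym}; your adding of the two distance equations is the same thing). Where you genuinely diverge is existence: the paper runs an iterative median construction, starting from any $a_0 \in A$ and replacing $a_n$ by $\ang{x,a,a_n}$ whenever some $a \in A$ witnesses $a_n \notin [x,a]$, which terminates because $d(x,a_n)$ strictly decreases; you instead take a point of $\cvx(A)$ at minimal distance from $x$ and verify via a median computation that it lies in every $[x,a]$. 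Your nearest-point argument is clean and perhaps more geometrically transparent, and integrality of the graph metric does indeed dispose of the attainment issue. What the paper's construction buys, and yours does not, is the explicit representation of $\proj_A(x)$ as an iterated median expression $\dotsb\ang{x,a_2,\ang{x,a_1,a_0}}$, which is invoked verbatim later: \cref{thm:homom-proj} (median homomorphisms commute with projections) and the second half of \cref{thm:cvx-finite} both begin ``by the proof of \cref{thm:proj}''. So as a proof of the proposition as stated your argument is complete, but if it were substituted into the paper those two downstream uses would need to be rederived (e.g., \cref{thm:homom-proj} via \cref{thm:half-full} instead).
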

\begin{proof}
Pick any $a_0 \in A$, and inductively given $a_n$, if there is $a \in A$ such that $a_n \not\in [x,a]$, then put $a_{n+1} := \ang{x,a,a_n}$.
Then $\betw{a_0--a_1--a_2--\dotsb--x}$ by repeated application of \cref{eq:int-assoc}, so this sequence must terminate in at most $d(a_0,x)$ steps at a point in $\cvx(A)$ which is between $x$ and every point in $A$; this proves existence.

$\proj_A(x)$ is also between $x$ and every point in $\cvx(A)$, since $\cone_x{\proj_A(x)}$ is convex and contains $A$.
Now for uniqueness: if there were two such points $a, b \in \cvx(A)$, then we would have $\betw{x--a--b}$ and $\betw{x--b--a}$, whence $a = b$ by \cref{eq:int-antisym}.
\end{proof}

\begin{example}
$\ang{a,b,x} = \proj_{\{a,b\}}(x)$.
It follows that $[a,b] = \im(\ang{a,b,\blank}) = \cvx(\{a,b\})$ is convex.
\end{example}

In light of this example, the following generalizes \cref{thm:med-meet}, and is proved similarly:

\begin{lemma}
\label{thm:med-proj}
For any $x \in X$ and $\emptyset \ne A \subseteq X$, we have $\bigcap_{a \in A} [x,a] = [x,\proj_A(x)]$; and for any $w$ in this set, we have $\proj_A(w) = \proj_A(x)$.
\qed
\end{lemma}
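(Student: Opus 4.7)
\medskip
\noindent\emph{Proof plan.} Write $p := \proj_A(x)$, which by \cref{thm:proj} lies in $\cvx(A) \cap \bigcap_{a \in A}[x,a]$ and is also between $x$ and every point of $\cvx(A)$. I would prove the two asserted equalities by chasing the interval-associativity identity \eqref{eq:int-assoc}, exactly mirroring the proof of \cref{thm:med-meet} but replacing ``$\ang{x,y,z}$'' throughout by ``$\proj_A(x)$''. The key extra ingredient is \cref{thm:cvx-up} (convexity of cones), which takes the place of the elementary two-point argument.

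For the inclusion $[x,p] \subseteq \bigcap_{a\in A}[x,a]$, take $w \in [x,p]$ and any $a \in A$: since $p \in [x,a]$, combining $\betw{x{-}w{-}p}$ with $\betw{x{-}p{-}a}$ via \eqref{eq:int-assoc} immediately gives $\betw{x{-}w{-}a}$. For the reverse inclusion, take $w \in \bigcap_{a\in A}[x,a]$; the condition $\betw{x{-}w{-}a}$ is precisely $a \in \cone_x w$, so $A \subseteq \cone_x w$, and by \cref{thm:cvx-up} the cone is convex, so $\cvx(A) \subseteq \cone_x w$. In particular $p \in \cone_x w$, meaning $w \in [x,p]$ as desired. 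This gives the first equality.

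For the second claim, let $w \in [x,p]$. By the first part, $w \in [x,a]$ for every $a \in A$, so \eqref{eq:int-assoc} applied to $\betw{x{-}w{-}p}$ and $\betw{x{-}p{-}a}$ yields $\betw{w{-}p{-}a}$, i.e., $p \in [w,a]$ for every $a \in A$. Since $p \in \cvx(A)$, the uniqueness clause of \cref{thm:proj} forces $\proj_A(w) = p$.

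The only potential obstacle is the ``$\bigcap \subseteq [x,p]$'' direction, where one might be tempted to try an inductive argument as in the proof of \cref{thm:proj}; the cleaner route is the cone-convexity argument above, which handles arbitrary (possibly infinite) $A$ uniformly and makes explicit why \cref{thm:cvx-up} is the right generalization of the two-point case.
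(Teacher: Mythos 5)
Your proof is correct and matches the paper's intent: the lemma is stated with no written proof beyond the preceding remark that it generalizes \cref{thm:med-meet} and ``is proved similarly'', and your argument is exactly that generalization, chasing \cref{eq:int-assoc} together with the existence and uniqueness clauses of \cref{thm:proj}. The only cosmetic difference is that you obtain the inclusion $\bigcap_{a\in A}[x,a]\subseteq[x,\proj_A(x)]$ via convexity of cones (\cref{thm:cvx-up}) rather than by applying \cref{thm:proj} to the point $w$ itself (i.e., noting $\proj_A(w)\in\cvx(A)\cap\bigcap_{a\in A}[x,a]=\{\proj_A(x)\}$), but both routes are immediate and equally valid.
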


\begin{remark}
\label{thm:homom-proj}
It follows from the proof of \cref{thm:proj} that for a median homomorphism $f : (X,G) -> (Y,H)$ between median graphs, for any $A \subseteq X$ and $x \in X$, we have
\begin{align*}
f(\proj_A(x)) = \proj_{f(A)}(f(x)).
\end{align*}
Indeed, $\proj_A(x) = \dotsb\ang{x,a_3,\ang{x,a_2,\ang{x,a_1,a_0}}}$; this expression is preserved by $f$.
\end{remark}

\begin{proposition}
\label{thm:proj-homom}
$\proj_A : X ->> \cvx(A)$ is the unique median homomorphism fixing $\cvx(A)$.
\end{proposition}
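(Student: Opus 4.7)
The proposition decomposes into three parts: that $\proj_A$ fixes $\cvx(A)$ pointwise (which also yields surjectivity onto $\cvx(A)$), that $\proj_A$ is a median homomorphism, and that it is the unique such map. The first part is immediate from \cref{thm:proj}: for $x \in \cvx(A)$, the point $x$ itself lies in $\cvx(A) \cap \bigcap_{a \in A}[x,a]$, so the uniqueness clause of the projection forces $\proj_A(x) = x$.

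For the median-preservation step, fix $x, y, z \in X$ and set $p := \proj_A(x)$, $q := \proj_A(y)$, $r := \proj_A(z)$, $m := \ang{x,y,z}$, and $n := \ang{p,q,r}$. I would show $\proj_A(m) = n$ by verifying the two defining conditions of \cref{thm:proj}: $n \in \cvx(A)$ is immediate since $p, q, r \in \cvx(A)$ and $\cvx(A)$ is convex, while $n \in [m, a]$ for each $a \in A$ would follow by invoking the distributive identity $\ang{u, v, \ang{x', y', z'}} = \ang{\ang{u, v, x'}, \ang{u, v, y'}, \ang{u, v, z'}}$ for median algebras (a consequence of \cref{def:median}, derivable via \cref{eq:int-assoc} and \cref{thm:med-meet}) to expand both $m$ and $n$ inside $\ang{m, a, n}$, and then collapsing using the defining betweenness relations $p \in [x, a]$, $q \in [y, a]$, $r \in [z, a]$ to obtain $\ang{m, a, n} = n$.

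For uniqueness, suppose $f : X -> \cvx(A)$ is any median homomorphism that fixes $\cvx(A)$ pointwise. For each $x \in X$ and $a \in A$, the betweenness $p \in [x, a]$ reads $\ang{x, a, p} = p$; applying $f$ and using that $f$ fixes both $a$ and $p$ (which lie in $\cvx(A)$), we get $\ang{f(x), a, p} = p$, i.e., $p \in [f(x), a]$. Since this holds for every $a \in A$ and $p \in \cvx(A)$, \cref{thm:proj} applied to $f(x)$ forces $\proj_A(f(x)) = p$; combined with $f(x) \in \cvx(A)$ (so $\proj_A(f(x)) = f(x)$ by the first part), this yields $f(x) = p = \proj_A(x)$. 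The main obstacle is the median-preservation step: the distributive identity, while standard for median algebras, still requires justification from the graph-theoretic \cref{def:median}, and the subsequent collapse of the two-fold expansion via the projection identities needs careful bookkeeping.
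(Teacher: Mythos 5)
Your first and third parts are fine: the observation that $\proj_A$ fixes $\cvx(A)$ is immediate from the uniqueness clause of \cref{thm:proj}, and your uniqueness argument (apply $f$ to $\ang{x,a,\proj_A(x)}=\proj_A(x)$ to conclude $\proj_A(x)\in[f(x),a]$ for all $a$, then invoke uniqueness of the projection of $f(x)$) is correct and is essentially a rearrangement of the paper's.

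The gap is in the median-preservation step. The distributive identity $\ang{u,v,\ang{x,y,z}}=\ang{\ang{u,v,x},\ang{u,v,y},\ang{u,v,z}}$ says precisely that $\ang{u,v,\cdot}=\proj_{\{u,v\}}=\proj_{[u,v]}$ is a median homomorphism, i.e., it \emph{is} the case $A=\{u,v\}$ of the proposition you are proving. At this point in the development nothing of the sort has been established: \cref{eq:int-assoc} and \cref{thm:med-meet} give you associativity of betweenness and the identity $[x,y]\cap[x,z]=[x,\ang{x,y,z}]$, but deriving the distributive law from these is a genuinely nontrivial piece of median-algebra theory (the usual routes go either through exactly this gatedness/retraction result, or through the half-space duality of \cref{sec:half}, which in this paper is built \emph{on top of} \cref{thm:proj-homom} via \cref{thm:proj-proj} and Helly). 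So as written the step is circular, or at best defers the entire content of the theorem to an unproved identity. (Even granting distributivity, your "collapse" needs more than $p\in[x,a]$, $q\in[y,a]$, $r\in[z,a]$: one also needs $p\in[x,n]$, etc., from the "moreover" clause of \cref{thm:proj}, together with the second half of \cref{thm:med-meet} to replace $\ang{x,n,a}$ by $\ang{p,n,a}$.) The paper sidesteps all of this by proving preservation of the \emph{betweenness relation} instead: given $\betw{x--y--z}$, it shows the median $w$ of the three projections lies in $[\proj_A(y),x]\cap[\proj_A(y),z]=[\proj_A(y),\proj_{[x,z]}(\proj_A(y))]\subseteq[\proj_A(y),y]$, and then forces $w=\proj_A(y)$ by antisymmetry; this uses only \cref{thm:med-meet}, \cref{thm:med-proj}, and \cref{thm:proj}. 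I recommend you either adopt that argument or supply an independent proof of the distributive law from the interval axioms before using it.
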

\begin{proof}
First we check $\proj_A$ is a median homomorphism.
Let $x, y, z \in X$ with $\betw{x--y--z}$.
Then
\begin{align*}
w := \ang{\proj_A(y),\proj_A(x),\proj_A(z)}
\in [\proj_A(y),x] \cap [\proj_A(y),z] = [\proj_A(y),\ang{\proj_A(y),x,z}]
\end{align*}
since $\betw{\proj_A(y)--w--\proj_A(x)--x}$ and similarly for $z$; and
\begin{align*}
[\proj_A(y),x] \cap [\proj_A(y),z]
= [\proj_A(y),\proj_{[x,z]}(\proj_A(y))]
\subseteq [\proj_A(y),y]
\end{align*}
by \cref{thm:med-meet,thm:med-proj}.
Thus $w \in [y, \proj_A(y)]$, so by definition of $\proj_A(y)$, we have $w = \proj_A(y)$, i.e., $\betw{\proj_A(x)--{\proj_A(y)}--{\proj_A(z)}}$.

For uniqueness, note that if $f : X -> \cvx(A)$ is a median homomorphism fixing $\cvx(A)$, then for $x \in X$, for every $a \in A$, we have $\ang{x,f(x),a} \in \cvx(A)$, whence $\ang{x,f(x),a} = f(\ang{x,f(x),a}) = \ang{f(x),f(f(x)),f(a)} = \ang{f(x),f(x),a} = f(x)$, i.e., $f(x) \in [x,a]$; thus $f(x) = \proj_A(x)$.
\end{proof}

\begin{remark}
\label{rmk:homom-cvx}
For a median homomorphism $f : (X,G) -> (Y,H)$:
\begin{itemize}
\item  Preimage $f^{-1}$ is easily seen to preserve convex sets.
\item  For $A \subseteq X$, by \cref{thm:homom-proj}, we have
\begin{align*}
f(\cvx(A)) = f(\proj_A(X)) = \proj_{f(A)}(f(X)) = \cvx(f(A)) \cap f(X).
\end{align*}
Thus if $f$ is surjective, then image under $f$ also preserves convex sets.
\item  Hence, a surjective median homomorphism maps a geodesic onto a geodesic possibly with some repeated vertices, and is in particular a reflexive graph homomorphism.
\end{itemize}
These apply in particular to $\proj_A : X ->> \cvx(A)$.
\end{remark}

\begin{proposition}
\label{thm:proj-proj}
Let $\emptyset \ne A, B \subseteq X$ be two convex sets.
Then
\begin{equation*}
\proj_A \circ \proj_B \circ \proj_A = \proj_A \circ \proj_B = \proj_{\proj_A(B)} : X ->> \proj_A(B).
\end{equation*}
Thus this map and $\proj_B \circ \proj_A$ restrict to inverse graph isomorphisms $\proj_A(B) \cong \proj_B(A)$.
\begin{center}
\begin{tikzpicture}
\draw[pattern=north east lines] (0,0) -- ++(2,0) -- ++(0,2) -- ++(-2,0) -- node[left]{$A$} cycle;
\draw[pattern=north east lines] (6,-1) -- ++(2,0) -- node[right]{$B$} ++(0,2) -- ++(-2,0) -- cycle;
\draw[line width=4pt] (2,0) -- node[right]{$\proj_A(B)$} (2,1);
\node at (4,0.5) {$\cong$};
\draw[line width=4pt] (6,0) -- node[left]{$\proj_B(A)$} (6,1);
\end{tikzpicture}
\end{center}
Moreover, if $A \cap B \ne \emptyset$, then $\proj_A \circ \proj_B = \proj_B \circ \proj_A = \proj_{A \cap B}$.
\end{proposition}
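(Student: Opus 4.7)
The plan is to derive everything from the key identity $\proj_A \circ \proj_B = \proj_{\proj_A(B)}$, after first observing that $\proj_A(B)$ is convex. The convexity follows because $\proj_A$ is a surjective median homomorphism onto the convex set $A$ (\cref{thm:proj-homom}), and image under a surjective median homomorphism preserves convexity (\cref{rmk:homom-cvx}); symmetrically $\proj_B(A)$ is convex. For the identity itself, I would apply \cref{thm:homom-proj} to the median homomorphism $\proj_A$ at the subset $B \subseteq X$ and point $x \in X$ to obtain $\proj_A(\proj_B(x)) = \proj_{\proj_A(B)}(\proj_A(x))$, and then strip the inner $\proj_A$ using \cref{thm:med-proj}: since $\proj_{\proj_A(B)}(x) \in \proj_A(B) \subseteq A$, the defining property of $\proj_A(x)$ forces $\proj_A(x) \in [x, \proj_{\proj_A(B)}(x)]$, and \cref{thm:med-proj} then yields $\proj_{\proj_A(B)}(\proj_A(x)) = \proj_{\proj_A(B)}(x)$. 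Combining gives $\proj_A \circ \proj_B = \proj_{\proj_A(B)}$, and applying the same trick once more gives $\proj_A \circ \proj_B \circ \proj_A = \proj_A \circ \proj_B$.

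For the graph-isomorphism claim, since $\proj_B$ fixes $\proj_B(A) \subseteq B$ pointwise (\cref{thm:proj-homom}), the restriction of $\proj_A \circ \proj_B$ to $\proj_B(A)$ collapses to $\proj_A|_{\proj_B(A)}$, and symmetrically the restriction of $\proj_B \circ \proj_A$ to $\proj_A(B)$ is $\proj_B|_{\proj_A(B)}$. By the main identity these two restrictions are mutually inverse bijections between $\proj_A(B)$ and $\proj_B(A)$, because $\proj_B \circ \proj_A = \proj_{\proj_B(A)}$ fixes $\proj_B(A)$ (and symmetrically). Being bijective median homomorphisms between convex subsets of a median graph, they are graph isomorphisms: by \cref{rmk:homom-cvx} a surjective median homomorphism maps geodesics to geodesics with possibly repeated vertices, but bijectivity precludes repetitions, so edges go to edges in both directions.

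Finally, when $A \cap B \ne \emptyset$, I would establish $\proj_A(B) = A \cap B$. The containment $\supseteq$ is immediate since $\proj_A$ fixes $A$. For the reverse, given $c = \proj_A(b)$ with $b \in B$, pick any $a \in A \cap B$; the definition of projection gives $c \in [a, b]$, and convexity of $B$ together with $a, b \in B$ gives $[a, b] \subseteq B$, hence $c \in B$. Substituting $\proj_A(B) = \proj_B(A) = A \cap B$ into the main identity closes the proof. I expect the computational identity $\proj_A \circ \proj_B = \proj_{\proj_A(B)}$ to be the main obstacle, since it requires threading \cref{thm:homom-proj} and \cref{thm:med-proj} carefully via the key observation that $\proj_{\proj_A(B)}(x) \in A$; once secured, the remaining parts follow formally.
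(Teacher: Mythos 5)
Your proof is correct and follows essentially the same route as the paper: both hinge on the identity $\proj_A \circ \proj_B = \proj_{\proj_A(B)} \circ \proj_A$ obtained from \cref{thm:homom-proj}, together with convexity of $\proj_A(B)$ from \cref{rmk:homom-cvx}, and the $A \cap B \neq \emptyset$ case is argued identically. The only difference is cosmetic: where you strip the inner $\proj_A$ by hand via \cref{thm:med-proj}, the paper instead observes that $\proj_A \circ \proj_B$ is an idempotent median homomorphism with convex image and invokes the uniqueness clause of \cref{thm:proj-homom}; your extra care on the graph-isomorphism step (which the paper's proof leaves implicit) is also sound.
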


\begin{proof}
By \cref{thm:homom-proj}, we have
$\proj_A \circ \proj_B = \proj_{\proj_A(B)} \circ \proj_A$; thus, composing on the right with another $\proj_A$ or $\proj_B$ yields the same thing.
In particular, $\proj_A \circ \proj_B$ is an idempotent median homomorphism.
Its image is clearly $\proj_A(B)$, which is convex by \cref{rmk:homom-cvx}; thus in fact, the map is $\proj_{\proj_A(B)}$.

If $A \cap B \ne \emptyset$, then $A \cap B$ is fixed by both $\proj_A$ and $\proj_B$, thus contained in $\im(\proj_A \circ \proj_B) = \proj_A(B)$, which is clearly contained in $A$.
To see that it is also contained in $B$: let $c \in A \cap B$; then for any $b \in B$, we have $\proj_A(b) \in [c,b] \subseteq B$.
\end{proof}

\begin{corollary}[Helly's theorem]
\label{thm:helly}
Any finitely many pairwise intersecting nonempty convex sets $A_0, \dotsc, A_{n-1} \subseteq X$ have nonempty intersection.
\end{corollary}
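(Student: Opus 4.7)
The plan is to proceed by induction on $n$, with the crux being the case $n = 3$, which is where the median structure actually enters; the rest is the classical Helly-style reduction. The cases $n \le 2$ are trivial from the hypothesis that the sets are nonempty and pairwise intersecting.

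For the base case $n = 3$, given pairwise intersecting convex $A_0, A_1, A_2$, I would pick witnesses $x_{ij} \in A_i \cap A_j$ for each pair $i < j$ in $\{0,1,2\}$ and form the median $m := \ang{x_{01}, x_{02}, x_{12}}$. By definition of the median, $m$ lies in each of the three pairwise intervals $[x_{01}, x_{02}]$, $[x_{01}, x_{12}]$, $[x_{02}, x_{12}]$. Since $[x_{01}, x_{02}] \subseteq A_0$ by convexity of $A_0$ (which contains both endpoints), we get $m \in A_0$, and symmetrically $m \in A_1$ and $m \in A_2$. Thus $m \in A_0 \cap A_1 \cap A_2$.

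For the inductive step $n \ge 4$, I would assume the result for families of fewer than $n$ sets and consider the $n-1$ sets
\begin{equation*}
B_i := A_i \cap A_{n-1} \qquad (i = 0, \dotsc, n-2).
\end{equation*}
Each $B_i$ is convex (intersections of convex sets are convex) and nonempty by hypothesis. For any $i < j < n-1$, the intersection $B_i \cap B_j = A_i \cap A_j \cap A_{n-1}$ is nonempty by the $n = 3$ case applied to the (pairwise intersecting) triple $A_i, A_j, A_{n-1}$. Hence the $B_i$ are pairwise intersecting, and the inductive hypothesis yields $\bigcap_{i < n-1} B_i = \bigcap_{i < n} A_i \ne \emptyset$.

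The only substantive step is the $n = 3$ case, and even there the ``obstacle'' is really just unwinding the definition: once one recognizes that convex sets are exactly those closed under the binary interval operation and that the median of three points lies simultaneously in all three pairwise intervals, the argument writes itself. No further facts about projections or the geometry of median graphs are needed.
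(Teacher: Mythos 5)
Your proof is correct, but it takes a genuinely different route from the paper's. The paper deduces Helly's theorem as a quick corollary of the projection machinery it has just built: by \cref{thm:proj-proj}, the projections $\proj_{A_0}, \dotsc, \proj_{A_{n-1}}$ onto pairwise intersecting convex sets all commute, so applying their composite to any point lands in every $A_i$ (since each $\proj_{A_i}$ could have been applied last). You instead give the classical median-algebra argument: the base case $n=3$ follows because the median $\ang{x_{01},x_{02},x_{12}}$ of pairwise witnesses lies in all three pairwise intervals, each of which is contained in the corresponding convex set; the inductive step is the standard Helly reduction to the sets $B_i := A_i \cap A_{n-1}$, using that intersections of convex sets are convex and that the triple intersections are nonempty by the $n=3$ case. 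Your argument is more self-contained — it needs only the definition of the median and of convexity, not the existence or properties of gate projections — whereas the paper's is shorter given the machinery already in place and makes the ``commuting idempotents'' structure explicit, which it reuses elsewhere (e.g., in \cref{thm:median-dual}). Both are complete and correct.
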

\begin{proof}
All of the projections $\proj_{A_0}, \dotsc, \proj_{A_{n-1}}$ commute; thus applying the composite of all of them in any order to any point yields a point in the intersection (since each $\proj_{A_i}$ could have been applied last).
\end{proof}

\subsection{Hyperplanes and half-spaces}
\label{sec:half}

We continue to let $(X,G)$ be a median graph.

\begin{definition}
\label{def:half}
A \defn{half-space} $H \subseteq X$ is a convex set whose complement is also convex.

Let $\@H_\cvx = \@H_\cvx(X) = \@H_\cvx^G(X) \subseteq 2^X$ denote the set of all half-spaces, and $\@H_\cvx^*(X) := \@H_\cvx(X) \setminus \{\emptyset, X\}$ be the \emph{nontrivial} half-spaces (recall \cref{conv:H}).

For $H \in \@H_\cvx^*$, its inward edge boundary $\partial_\ie H \subseteq G$ is an \defn{(oriented) hyperplane}.
Thus, we have a canonical bijection $\partial_\ie$ between $\@H_\cvx^*$ and the set of hyperplanes in $G$.%
\footnote{Our ``half-spaces'' are called \emph{prime convex sets} in \cite{BHmed}; conventions vary as to whether the trivial half-spaces $\emptyset, X$ are allowed, e.g., in \cite{Isbmed}, \cite{BHmed}, \cite{Rolmed}, \cite{Bowmed}.
The term ``hyperplane'' seems to be used with several meanings which are all isomorphic to each other and to ``half-space'' (or the unoriented version), e.g., the unordered pair $\{H, \neg H\}$ \cite{Rolmed}, or the subspace of the (geometrically realized) CAT(0) cube complex cutting each boundary edge of $H$ at the midpoint \cite{HWspecial}, \cite{Bowmed}.}
\end{definition}

\begin{lemma}
\label{thm:half-edge}
Each edge $(x,y) \in G$ is on a unique hyperplane, namely the inward boundary of
\begin{equation*}
\cone_x y = \proj_{\{x,y\}}^{-1}(y) = \{z \in X \mid d(x,z) > d(y,z)\}.
\end{equation*}
Conversely, each nontrivial half-space $H \subseteq X$ is equal to $\cone_x y$ for any edge $(x,y) \in \partial_\ie H$.
Thus the inverse of the bijection $\partial_\ie$ between half-spaces and hyperplanes is induced by the quotient map
\begin{align*}
G &-->> \@H^*_\cvx(X) \\
(x,y) &|--> \cone_x y,
\end{align*}
i.e., hyperplanes are equivalence classes of edges, called \defn{hyperplane-equivalent}.
\end{lemma}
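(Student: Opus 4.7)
The plan is to first unify the three candidate descriptions of $\cone_x y$, then show it is a nontrivial half-space with $(x,y) \in \partial_\ie \cone_x y$, and finally establish uniqueness by proving that any half-space separating $x$ from $y$ coincides with $\cone_x y$. The conceptual pivot throughout is the observation that whenever $(x,y) \in G$, we have $[x,y] = \{x,y\}$ (the only geodesic from $x$ to $y$ is the edge itself), which tightly constrains any median of the form $\ang{x,y,z}$.

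For the equivalence of the three descriptions: $\cone_x y = \proj_{\{x,y\}}^{-1}(y)$ follows by unwinding the example $\proj_{\{x,y\}}(z) = \ang{x,y,z}$ after \cref{thm:proj}, noting that $\ang{x,y,z} = y$ iff $y \in [x,z]$, which is the definition of $z \in \cone_x y$. The distance description uses $d(x,y) = 1$ and the triangle inequality: $d(x,z) > d(y,z)$ forces $d(x,z) = d(y,z) + 1$, which is precisely the condition that $y$ lie on a geodesic from $x$ to $z$.

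Next I verify that $\cone_x y$ is a nontrivial half-space containing $(x,y)$ on its inward edge boundary. Convexity of $\cone_x y$ is \cref{thm:cvx-up}. The key step is showing $\neg \cone_x y = \cone_y x$, i.e., ruling out vertices equidistant from $x$ and $y$. For any $z \in X$, the median $\ang{x,y,z}$ lies in $[x,y] = \{x,y\}$; if $\ang{x,y,z} = x$ then $x \in [y,z]$ giving $d(y,z) = d(x,z) + 1$, while if $\ang{x,y,z} = y$ then symmetrically $d(x,z) = d(y,z) + 1$. Hence $X = \cone_x y \sqcup \cone_y x$, with both pieces convex by \cref{thm:cvx-up}, so $\cone_x y$ is a half-space. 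Since $y \in \cone_x y$ and $x \notin \cone_x y$, we have $(x,y) \in \partial_\ie \cone_x y$.

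For uniqueness and the converse: let $H \in \@H_\cvx^*$ satisfy $(x,y) \in \partial_\ie H$, i.e., $x \notin H$ and $y \in H$. If $z \in H$, then convexity of $H$ gives $\ang{x,y,z} \in [y,z] \subseteq H$, and since $\ang{x,y,z} \in [x,y] = \{x,y\}$ with $x \notin H$, we must have $\ang{x,y,z} = y$, so $z \in \cone_x y$. Conversely, if $z \in \cone_x y$ but $z \notin H$, then $x, z \in \neg H$ with $\neg H$ convex gives $y \in [x,z] \subseteq \neg H$, contradicting $y \in H$. Thus $H = \cone_x y$.

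The only truly nontrivial ingredient is the absence of equidistant vertices across an edge — a phenomenon genuinely special to median graphs (it fails for odd cycles) — but once isolated via the constraint $\ang{x,y,z} \in [x,y] = \{x,y\}$, every remaining step reduces to a routine application of convexity and the definition of the median.
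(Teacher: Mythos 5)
Your proof is correct, but it takes a more hands-on route than the paper's. The paper's two-line argument is structured entirely around \cref{thm:proj-homom}: for the forward direction, $\proj_{\{x,y\}}^{-1}(y)$ is a half-space simply because it is the median preimage of a half-space of the two-point graph (\cref{rmk:homom-cvx}); for the converse, the observation is that $H$ being a half-space is \emph{equivalent} to the indicator map $X \to \{x,y\}$ sending $H$ to $y$ being a median homomorphism (cf.\ \cref{thm:half-full}), and since that map fixes the convex set $\{x,y\}$, uniqueness in \cref{thm:proj-homom} forces it to be $\proj_{\{x,y\}}$, i.e., $H = \proj_{\{x,y\}}^{-1}(y)$. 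You instead verify everything at the level of elements: the partition $X = \cone_x y \sqcup \cone_y x$ via $\ang{x,y,z} \in [x,y] = \{x,y\}$, convexity of cones from \cref{thm:cvx-up}, and a direct convexity argument for uniqueness. Both are sound and of comparable length; the paper's version buys reusability of the ``half-space $=$ median homomorphism to $\{0,1\}$'' viewpoint that underlies the later pocset duality, while yours isolates the concrete geometric fact (no vertex is equidistant from the two endpoints of an edge) that makes median graphs bipartite-like, and makes the identification $H = \cone_x y$ fully explicit rather than routed through $\proj_{\{x,y\}}$.
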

\begin{proof}
For $(x,y) \in G$, $H := \proj_{\{x,y\}}^{-1}(y)$ is indeed a half-space, being a median preimage of a half-space; and clearly $(x,y) \in \partial_\ie H$.
Conversely, for any nontrivial half-space $H \subseteq X$ and $(x,y) \in \partial_\ie H$, that $H$ is a half-space means precisely that the indicator function $X ->> \{x,y\}$ taking $H$ to $y$ is a median homomorphism, which clearly fixes the convex set $\{x,y\}$, thus is equal to $\proj_{\{x,y\}}$.
\end{proof}

\begin{remark}
For a nontrivial half-space $H \subseteq X$, the inner and outer vertex boundaries $\partial_\iv H, \partial_\ov H$ are easily seen to be the convex sets $\proj_H(\neg H), \proj_{\neg H}(H)$ from \cref{thm:proj-proj}, with the canonical isomorphism between them given by the edge boundary $\partial_\ie H : \partial_\ov H -> \partial_\iv H$.
\end{remark}

\begin{lemma}
\label{thm:half-edge-square}
Hyperplane-equivalence is the equivalence relation on $G$ generated by the pairs of parallel sides of squares (4-cycles).
\end{lemma}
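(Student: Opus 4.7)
My plan is to prove the two inclusions separately, in both cases leaning on the projection calculus from \cref{thm:proj-proj} together with the characterization $\cone_x y = \proj_{\{x,y\}}^{-1}(y)$ from \cref{thm:half-edge}.

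For the inclusion $\supseteq$, suppose $x, y, y', x'$ are the vertices of a 4-cycle in cyclic order, with $(x, y)$ and $(x', y')$ the corresponding pair of parallel oriented edges (consistent with $x \sim x'$ and $y \sim y'$). Set $A := \{x, y\}$ and $B := \{x', y'\}$; both are convex as intervals. Direct distance computations in the 4-cycle give $\proj_A(x') = x$ and $\proj_A(y') = y$, and symmetrically $\proj_B(x) = x'$ and $\proj_B(y) = y'$; in particular $\proj_A(B) = A$ and $\proj_B(A) = B$. By \cref{thm:proj-proj}, $\proj_A \circ \proj_B = \proj_{\proj_A(B)} = \proj_A$ on all of $X$. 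Hence for every $z \in X$,
\[
z \in \cone_x y \iff \proj_A(z) = y \iff \proj_A(\proj_B(z)) = y \iff \proj_B(z) = y' \iff z \in \cone_{x'} y',
\]
where the second-to-last equivalence uses that $\proj_A$ restricted to $B$ is the bijection $x' \mapsto x$, $y' \mapsto y$.

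For the reverse inclusion, fix hyperplane-equivalent edges $(x, y), (x', y') \in G$ and set $H := \cone_x y = \cone_{x'} y'$. Since $\partial_\iv H = \proj_H(\neg H)$ is the image of the idempotent median endomorphism $\proj_H \circ \proj_{\neg H}$ (by \cref{thm:proj-proj}), it is convex and hence connected; pick a path $y = y_0, y_1, \dotsc, y_n = y'$ in $\partial_\iv H$ and set $x_i := \proj_{\neg H}(y_i)$. The crucial observation, which I expect to be the main technical point, is that for \emph{any} edge $(x^*, y_i) \in \partial_\ie H$ one necessarily has $x^* = \proj_{\neg H}(y_i)$: the projection lies on the geodesic $[y_i, x^*]$ and is at positive distance from $y_i \in H$, so the unit length of $(y_i, x^*)$ forces equality. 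In particular $x_0 = x$ and $x_n = x'$, and each $(x_i, y_i)$ is the unique edge of $\partial_\ie H$ based at $y_i$.

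Finally, \cref{thm:proj-proj} supplies a graph isomorphism $\proj_{\neg H} \colon \partial_\iv H \to \partial_\ov H$, so $(y_i, y_{i+1}) \in G$ implies $(x_i, x_{i+1}) \in G$, yielding the 4-cycle $x_i - y_i - y_{i+1} - x_{i+1} - x_i$ in which $(x_i, y_i)$ and $(x_{i+1}, y_{i+1})$ are parallel oriented sides. Chaining these relations for $i = 0, \dotsc, n-1$ realizes the square-equivalence between $(x, y) = (x_0, y_0)$ and $(x', y') = (x_n, y_n)$, completing the proof.
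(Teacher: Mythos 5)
Your proof is correct and follows essentially the same route as the paper: parallel sides of a square lie on the same hyperplane by convexity of the two halves, and conversely two edges on a common hyperplane are joined by a strip of squares obtained by taking a path in one vertex boundary and matching it across via the boundary isomorphism. The only difference is cosmetic — you phrase both halves through the projection calculus of \cref{thm:proj-proj}, where the paper argues the first half directly from betweenness and works with the outer rather than inner boundary in the second.
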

\begin{equation*}
\begin{tikzpicture}
\foreach \i in {0,...,5} {
    \pgfmathsetmacro\j{\i+1}
    \draw[graph edge] (\i,0) -- (\j,0);
    \draw[graph edge] (\i,1) -- (\j,1);
    \draw[graph edge, mid arrow=.4:] (\i,0) -- (\i,1);
    \draw[graph edge, mid arrow=.4:] (\j,0) -- (\j,1);
}
\node[below] at (0,0) {$a$};
\node[above] at (0,1) {$b$};
\node[below] at (6,0) {$c$};
\node[above] at (6,1) {$d$};
\draw[graph cut cross] (-2,.5) -- node[pos=.9,above]{$H$} node[pos=.9,below]{$\neg H$} (9,.5);
\end{tikzpicture}
\end{equation*}
\begin{proof}
Given a square, each vertex is between its neighbors, from which it is easily seen that a hyperplane containing one edge must also contain the parallel edge; thus hyperplane-equivalence contains said equivalence relation.
Conversely, if $(a,b), (c,d) \in \partial_\ie H$ for some $H \in \@H_\cvx^*(X)$, find a geodesic between $a,c$, which must lie in the convex set $\partial_\ov H$, hence be matched via $\partial_\ie H$ to a geodesic between $b,d$, which together with the matching forms the desired strip of squares.
\end{proof}

\begin{definition}
\label{def:nested}
Two half-spaces $H, K \in \@H_\cvx(X)$ are \defn{nested} if $H$ is comparable (under inclusion) with one of $K, \neg K$, or equivalently, one of $H, \neg H$ is disjoint from one of $K, \neg K$.

The \defn{corners} of $H, K$ are $\neg^a H \cap \neg^b K$, where $a, b \in \{0,1\}$; that is, the four sets
\begin{equation*}
H \cap K, H \cap \neg K, \neg H \cap K, \neg H \cap \neg K.
\end{equation*}
Thus $H, K$ are nested iff they have an empty corner.
\end{definition}

(Here and below, $\neg^0 H := H$ and $\neg^1 H := \neg H$.)

\begin{lemma}
\label{thm:nonnested}
Half-spaces $H_0, \dotsc, H_{n-1} \in \@H_\cvx$ are pairwise non-nested iff there exists an embedding $\vec{a} |-> x_{\vec{a}}$ of the Hamming cube graph on $\{0,1\}^n$ such that $x_{\vec{a}} \in \neg^{a_0} H_0 \cap \dotsb \cap \neg^{a_{n-1}} H_{n-1}$.
\end{lemma}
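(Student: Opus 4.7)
The plan is to handle the ``if'' direction directly and prove the ``only if'' direction by induction on $n$, using Helly's theorem (\cref{thm:helly}) to promote pairwise non-nestedness to the existence of points in all $2^n$ corners, and using the canonical isomorphism $\partial_\iv H \cong \partial_\ov H$ across a hyperplane to reflect a lower-dimensional cube into a full $n$-cube.

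For the ``if'' direction, given the embedding, fix $i \ne j$ and $(a,b) \in \{0,1\}^2$, and pick any $\vec{c} \in \{0,1\}^n$ with $c_i = a, c_j = b$; then $x_{\vec{c}} \in \neg^a H_i \cap \neg^b H_j$, so all four corners of $H_i, H_j$ are nonempty, witnessing non-nestedness. For the ``only if'' direction, I induct on $n$, with trivial bases $n \le 1$. Assuming pairwise non-nestedness of $H_0, \dotsc, H_{n-1}$, my strategy is to work inside the convex set $\partial_\iv H_{n-1}$, itself a median subgraph, and apply the inductive hypothesis to the restrictions of $H_0, \dotsc, H_{n-2}$ there.

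The crucial verification is that $H_0 \cap \partial_\iv H_{n-1}, \dotsc, H_{n-2} \cap \partial_\iv H_{n-1}$ are pairwise non-nested half-spaces of $\partial_\iv H_{n-1}$: the half-space property is clear from intersecting convex sets, so the content is showing that $\neg^a H_i \cap \neg^b H_j \cap \partial_\iv H_{n-1} \ne \emptyset$ for $i, j < n-1$ and $a, b \in \{0,1\}$. Pairwise non-nestedness makes $\neg^a H_i, \neg^b H_j, \neg H_{n-1}$ have nonempty pairwise intersections, and similarly for $\neg^a H_i, \neg^b H_j, H_{n-1}$, so by \cref{thm:helly} I pick $y \in \neg^a H_i \cap \neg^b H_j \cap \neg H_{n-1}$ and $y' \in \neg^a H_i \cap \neg^b H_j \cap H_{n-1}$. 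Then $z := \proj_{H_{n-1}}(y)$ lies in $\partial_\iv H_{n-1} = \proj_{H_{n-1}}(\neg H_{n-1})$ (by the remark following \cref{thm:half-edge}), and $z \in [y, y'] \subseteq \neg^a H_i \cap \neg^b H_j$ by convexity.

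By the inductive hypothesis applied in $\partial_\iv H_{n-1}$, there is an $(n-1)$-cube $\{x_{\vec a, 0} : \vec{a} \in \{0,1\}^{n-1}\} \subseteq \partial_\iv H_{n-1}$ with $x_{\vec a, 0} \in \bigcap_{i<n-1} \neg^{a_i} H_i \cap H_{n-1}$. I then reflect this cube across the hyperplane $\partial_\ie H_{n-1}$ using the canonical graph isomorphism $\partial_\iv H_{n-1} \cong \partial_\ov H_{n-1}$ supplied by \cref{thm:proj-proj} (with $A = H_{n-1}, B = \neg H_{n-1}$): let $x_{\vec a, 1} \in \partial_\ov H_{n-1}$ be the unique $\partial_\ie H_{n-1}$-partner of $x_{\vec a, 0}$. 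Because the isomorphism is a graph isomorphism, the $x_{\vec a, 1}$ form an $(n-1)$-cube in $\partial_\ov H_{n-1}$; because each bridging edge $(x_{\vec a, 0}, x_{\vec a, 1})$ lies on the unique hyperplane $\partial_\ie H_{n-1}$, its endpoints agree on every other half-space, so $x_{\vec a, 1} \in \bigcap_{i<n-1} \neg^{a_i} H_i \cap \neg H_{n-1}$. Combining gives the required Hamming $n$-cube embedding. The main obstacle is the Helly-and-projection step of the preceding paragraph, ensuring non-nestedness descends to the median subgraph $\partial_\iv H_{n-1}$; the reflection step is then a clean use of the ``cylinder'' structure of a hyperplane guaranteed by \cref{thm:half-edge-square} and \cref{thm:proj-proj}.
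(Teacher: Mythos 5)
Your proof is correct, but it takes a genuinely different route from the paper's. The paper argues non-inductively: it picks $x$ in the corner $\neg H_0 \cap \dotsb \cap \neg H_{n-1}$ (nonempty by Helly, since pairwise non-nestedness gives pairwise intersections), sets $x_{\vec1} := \proj_{H_0 \cap \dotsb \cap H_{n-1}}(x)$, and then defines \emph{every} corner at once as $x_{\vec a} := \proj_{\neg^{a_0}H_0 \cap \dotsb \cap \neg^{a_{n-1}}H_{n-1}}(x_{\vec1})$, verifying adjacency via \cref{thm:proj-proj}. You instead induct on $n$, descend to the median subgraph $\partial_\iv H_{n-1}$, and double the resulting $(n-1)$-cube across the hyperplane $\partial_\ie H_{n-1}$. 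The step you correctly identify as the crux — that pairwise non-nestedness of $H_0,\dotsc,H_{n-2}$ with each other and with $H_{n-1}$ descends to pairwise non-nestedness of the traces $H_i \cap \partial_\iv H_{n-1}$ inside $\partial_\iv H_{n-1}$ — is handled soundly: Helly on the triples $\neg^a H_i, \neg^b H_j, \neg^{c} H_{n-1}$ produces $y, y'$, and $\proj_{H_{n-1}}(y) \in \partial_\iv H_{n-1} \cap [y,y']$ lands in the desired corner by convexity. What each approach buys: the paper's construction is shorter and produces all $2^n$ corners in one stroke, but leaves the adjacency check as an exercise in \cref{thm:proj-proj}; your induction is geometrically more transparent (a cube is a prism over a cube), and your descent step is essentially a proof of the correspondence between half-spaces non-nested with $H$ and nontrivial half-spaces of $\partial_\iv H$, which the paper later extracts from this very lemma in \cref{thm:med-hyp-fin}. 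One small point worth a sentence in a final write-up (the paper is equally terse here): the map you build is a priori only a graph homomorphism landing in the prescribed corners, and injectivity plus induced-ness follow because corners differing in $k$ coordinates are separated by $k$ of the $H_i$, hence lie at distance at least $k$ by \cref{thm:half-dist}.
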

(Recall that the \defn{Hamming cube} on $\{0,1\}^n$ is the graph with edges precisely between any two strings differing in a single bit.)
\begin{proof}
Given such an $n$-cube, the $n$ hyperplanes cutting it are clearly pairwise non-nested.
Conversely, given pairwise non-nested $H_0, \dotsc, H_{n-1} \in \@H_\cvx(X)$, pick $x \in \neg H_0 \cap \dotsb \cap \neg H_{n-1}$, let $x_{\vec1} := \proj_{H_0 \cap \dotsb \cap H_{n-1}}(x) = (\proj_{H_0} \circ \dotsb \circ \proj_{H_{n-1}})(x)$, and let $x_{\vec{a}} := \proj_{\neg^{a_0} H_0 \cap \dotsb \cap \neg^{a_{n-1}} H_{n-1}}(x_{\vec1})$ for each $\vec{a} \in \{0,1\}^n$; using \cref{thm:proj-proj}, this is easily seen to be an $n$-cube.
\end{proof}

\begin{corollary}
\label{thm:nested-tree}
$G$ is a tree iff all of its half-spaces are pairwise nested.
\end{corollary}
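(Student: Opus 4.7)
The forward direction is immediate from \cref{thm:nonnested} applied with $n = 2$: if $G$ is a tree and two half-spaces $H, K$ were non-nested, then the lemma would give an embedded Hamming 2-cube, i.e., a 4-cycle, contradicting acyclicity.

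For the backward direction, I again invoke \cref{thm:nonnested} with $n = 2$ to conclude that if all half-spaces are pairwise nested then $G$ contains no 4-cycle (no square). So the whole content of the corollary reduces to the following claim, which I would prove as a lemma:

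\begin{quote}
\emph{A square-free median graph $G$ is a tree.}
\end{quote}

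I would prove this by ruling out cycles of all lengths, in two steps.

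\textbf{Step 1: $G$ is bipartite (no odd cycles).} Assume toward contradiction a shortest odd cycle $C = a_0 a_1 \cdots a_{n-1}$ exists, with $n$ odd; by minimality $C$ has no chords, so distances along $C$ agree with $d_G$ between its vertices. The case $n = 3$ is handled directly, since for a triangle $a_0, a_1, a_2$ we have $[a_i, a_j] = \{a_i, a_j\}$, whose triple intersection is empty, contradicting the existence of a median. For $n \geq 5$, I would compute the median of $a_0$ with the two ``antipodal'' adjacent vertices $a_{(n-1)/2}$ and $a_{(n+1)/2}$. The interval $[a_{(n-1)/2}, a_{(n+1)/2}]$ reduces to the edge $\{a_{(n-1)/2}, a_{(n+1)/2}\}$, and a distance computation on the cycle shows neither endpoint lies in both of the other two intervals, so the median cannot exist.

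\textbf{Step 2: No even cycles either.} By hypothesis, $G$ has no 4-cycle; suppose toward contradiction that there is a shortest even cycle $C = a_0 a_1 \cdots a_{n-1}$ of length $n \geq 6$. By minimality $C$ has no chords. Crucially, I would argue that square-freeness forces each geodesic $a_i a_{i+1} a_{i+2}$ (along $C$) to be the \emph{unique} geodesic between its endpoints: a second geodesic $a_i b a_{i+2}$ with $b \ne a_{i+1}$ would form a 4-cycle. Hence $[a_0, a_2], [a_2, a_4], [a_0, a_4]$ (where indices are taken within $C$ in the case $n = 6$, or $[a_0, a_4]$ contains the geodesic $a_0 a_1 a_2 a_3 a_4$ for $n \geq 8$) are explicit, disjoint, three-vertex arcs. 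I would then check that the pairwise intersections of these three intervals are empty, contradicting the existence of $\langle a_0, a_2, a_4 \rangle$.

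\textbf{Main obstacle.} The conceptual work is all in Step 2: translating ``square-free'' into uniqueness of short geodesics, and then choosing the right triple $a_0, a_2, a_4$ on a smallest cycle whose intervals can be pinned down. The length $n = 6$ case is where the argument is tightest, since for larger $n$ the triple $a_0, a_2, a_4$ sits on a geodesic and the interval picture becomes easier, whereas for $n = 6$ one must use the ``short way'' around the cycle for $[a_0, a_4]$ and verify that no external shortcut exists (again via square-freeness plus the no-chord property of a shortest cycle).
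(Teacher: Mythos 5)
Your reduction of the backward direction to ``a square-free median graph is a tree'' is a legitimate alternative to the paper's argument. The paper instead argues: a cycle must cross some hyperplane in two distinct edges, and since hyperplane-equivalence is generated by parallel sides of squares (\cref{thm:half-edge-square}), this forces a $4$-cycle, hence a non-nested pair by \cref{thm:nonnested}. Your Step~1 is fine (and could be replaced by the shorter standard observation that for an edge $(x,y)$ the median $\ang{x,y,z}\in[x,y]=\{x,y\}$ forces $|d(x,z)-d(y,z)|=1$, so $G$ is bipartite), and your Step~2 works for $n=6$, modulo the slip that it is the \emph{triple} intersection of the three intervals that is empty, not the pairwise intersections (those are the singletons $\{a_0\},\{a_2\},\{a_4\}$).

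However, Step~2 has a genuine gap for $n\ge 8$. There the triple $a_0,a_2,a_4$ lies on a common geodesic, so $[a_0,a_2]\cap[a_2,a_4]=\{a_2\}$ and $a_2\in[a_0,a_4]$: the median of this triple exists (it is $a_2$), and no contradiction arises. Your remark that ``the interval picture becomes easier'' for large $n$ has it backwards --- the argument becomes vacuous. The correct generalization mirrors your Step~1: on a shortest (even, chordless, isometric) cycle of length $n=2k\ge 6$, take the triple $a_0,\,a_{k-1},\,a_{k+1}$. Square-freeness gives $[a_{k-1},a_{k+1}]=\{a_{k-1},a_k,a_{k+1}\}$, so the median would have to be one of these three vertices; but $a_{k\pm1}\in[a_0,a_{k\mp1}]$ fails since $(k-1)+2\ne k-1$, and $a_k\in[a_0,a_{k-1}]$ fails since $d(a_0,a_k)=k\ne k-2$. (For $n=6$ this is exactly your triple, which is why that case works.) With this repair your proof is correct, though the paper's route via \cref{thm:half-edge-square} avoids the case analysis on cycle lengths entirely.
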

\begin{proof}
$\Longrightarrow$:
Two non-nested half-spaces yield a 4-cycle by the preceding lemma.

$\Longleftarrow$:
A cycle has two distinct edges on a hyperplane, whence there is a 4-cycle by \cref{thm:half-edge-square}.
\end{proof}

\begin{definition}
\label{def:half-cover}
$H \in \@H_\cvx(X)$ is a \defn{successor} of $K \in \@H_\cvx(X)$ if $H \supsetneq K$ but there is no half-space strictly in between, or equivalently, $H \cup \neg K = X$, $H \cap \neg K \ne \emptyset$, and $H, \neg K$ are each minimal as such.
\end{definition}

\begin{lemma}
For $H, K \in \@H_\cvx^*$, $H$ is a successor of $K$ iff $H \supseteq K$ and $\partial_\iv H \cap \partial_\ov K \ne \emptyset$.
\end{lemma}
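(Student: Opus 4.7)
For the $(\Leftarrow)$ direction, the key input is the bijection between nontrivial half-spaces and their hyperplanes from \cref{thm:half-edge}. Given $v \in \partial_\iv H \cap \partial_\ov K$, the vertex $v$ lies in $H \setminus K$, witnessing $H \supsetneq K$, and has neighbors $u \in \neg H$ and $w \in K$. For any half-space $L$ with $K \subseteq L \subseteq H$, the vertex $v$ lies in exactly one of $L, \neg L$. If $v \in L$, the edge $(u,v)$---with $u \in \neg H \subseteq \neg L$ and $v \in L \cap H$---lies on both hyperplanes $\partial_\ie L$ and $\partial_\ie H$, forcing $L = H$ by \cref{thm:half-edge}; symmetrically, if $v \in \neg L$, the edge $(v,w)$ lies on both $\partial_\ie L$ and $\partial_\ie K$, forcing $L = K$. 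Hence $H$ is a successor of $K$.

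For the $(\Rightarrow)$ direction, I argue the contrapositive: assuming $H \supsetneq K$ but $\partial_\iv H \cap \partial_\ov K = \emptyset$, I will construct a half-space $L$ with $K \subsetneq L \subsetneq H$. Let $S := \partial_\iv H$ and $T := \partial_\ov K$; both are nonempty (since $H, K$ are nontrivial and the graph is connected) and convex (by the identifications $S = \proj_H(\neg H)$ and $T = \proj_{\neg K}(K)$ noted after \cref{thm:half-edge}). Moreover, $S, T \subseteq H \cap \neg K$: if, e.g., some $s \in S$ lay in $K$, its neighbor in $\neg H \subseteq \neg K$ would give an edge lying on both $\partial_\ie H$ and $\partial_\ie K$, forcing $H = K$ via \cref{thm:half-edge}; the argument for $T \subseteq H$ is symmetric.

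Next, both $K \cup T$ and $\neg H \cup S$ are convex. For $K \cup T$: given $k \in K$, $t \in T$, and $z \in [k,t] \cap \neg K$, any geodesic from $k$ to $t$ through $z$ must cross the hyperplane $\partial_\ie K$ at some edge $(v_{j-1}, v_j)$ with $v_j \in \partial_\ov K = T$; since $z$ lies on the sub-geodesic from $v_j$ to $t$, convexity of $T$ gives $z \in [v_j, t] \subseteq T$. The argument for $\neg H \cup S$ is symmetric. Moreover, $K \cup T$ and $\neg H \cup S$ are disjoint: the four intersections $K \cap \neg H$, $K \cap S$, $T \cap \neg H$, $S \cap T$ are all empty (respectively because $K \subseteq H$, by the previous paragraph, by the previous paragraph, and by hypothesis).

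Finally, applying the classical convex separation theorem for median graphs---any two disjoint nonempty convex sets are separated by some half-space, which follows from \cref{thm:proj-proj} by taking the hyperplane through the first edge of a geodesic between a matched pair of points in the isomorphic sets $\proj_A(B), \proj_B(A)$---yields a half-space $L$ with $K \cup T \subseteq L$ and $\neg H \cup S \subseteq \neg L$. Then $T \subseteq L$ with $T \ne \emptyset$ and $T \cap K = \emptyset$ gives $K \subsetneq L$, and $S \subseteq \neg L$ with $S \ne \emptyset$ and $S \subseteq H$ gives $L \subsetneq H$, contradicting $H$ being a successor of $K$. The main obstacle is invoking (or separately proving) the convex separation theorem; the rest of the argument is elementary bookkeeping with edges, hyperplanes, and \cref{thm:half-edge}.
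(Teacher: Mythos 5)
Your proof is correct. The $(\Leftarrow)$ direction is essentially the paper's argument verbatim: the paper also takes $x \in \partial_\iv H \cap \partial_\ov K$ and observes that any $H \supseteq L \supseteq K$ either contains $x$ but not its neighbor $\proj_{\neg H}(x) \in \partial_\ov H$ (forcing $L = H$) or contains $\proj_K(x) \in \partial_\iv K$ but not $x$ (forcing $L = K$), which is exactly your edge-on-two-hyperplanes argument via \cref{thm:half-edge}.

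For the $(\Rightarrow)$ direction the paper simply writes ``is clear'' and gives no argument, so here you have supplied something genuinely extra. Your contrapositive — showing $\Ball_1(K) = K \cup \partial_\ov K$ and $\Ball_1(\neg H) = \neg H \cup \partial_\iv H$ are disjoint nonempty convex sets and separating them — is valid; the single-crossing argument for convexity of $\Ball_1(K)$ (via \cref{thm:half-dist} and convexity of $\partial_\ov K$) checks out. One small simplification: the ``convex separation theorem'' you flag as the main obstacle is not an obstacle at all — it is literally \cref{thm:half-sep} of the paper, already proved two lemmas earlier, so you can cite it directly rather than re-deriving it from \cref{thm:proj-proj}.
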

\begin{proof}
$\Longrightarrow$ is clear.
Conversely, suppose $H \supseteq K$ and $x \in \partial_\iv H \cap \partial_\ov K$; then clearly $H \supsetneq K$, and any $H \supseteq L \supseteq K$ must either contain $x$ but not $\proj_{\neg H}(x) \in \partial_\ov H$, hence equal $H$, or contain $\proj_K(x) \in \partial_\iv K$ but not $x$, hence equal $K$.
\end{proof}

\begin{corollary}
\label{thm:hyp-adj}
For $H, K \in \@H_\cvx^*$, we have $\partial_\v H \cap \partial_\v K \ne \emptyset$ iff $H, K$ are equal, or complementary, or non-nested, or one of $H, \neg H$ is a successor of one of $K, \neg K$, i.e., either none of $H, \neg H$ is comparable with $K, \neg K$, or one pair is comparable but there is no third half-space strictly in between.
\end{corollary}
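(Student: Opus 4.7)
The plan is to dispatch the biconditional by case analysis on the four stated alternatives. The reverse direction is routine: if $H = K$ or $H = \neg K$ then $\partial_\v H = \partial_\v K$ is nonempty since $H$ is nontrivial; if $H, K$ are non-nested then \cref{thm:nonnested} embeds a 2-cube $\{x_{ab}\}_{a,b \in \{0,1\}}$ with $x_{ab} \in \neg^a H \cap \neg^b K$, whose vertex $x_{00}$ witnesses $\partial_\v H \cap \partial_\v K \ne \emptyset$ via its edges to $x_{10}$ (which cuts $H$) and $x_{01}$ (which cuts $K$); and if one of $H, \neg H$ is a successor of one of $K, \neg K$, the preceding lemma directly supplies a point in the relevant $\partial_\iv \cap \partial_\ov \subseteq \partial_\v H \cap \partial_\v K$.

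For the forward direction, suppose $\partial_\v H \cap \partial_\v K \ne \emptyset$ and that $H, K$ are nested (else we are in the non-nested case). Then some corner $\neg^a H \cap \neg^b K$ is empty. Since $\partial_\v$ is invariant under complementation, and the desired conclusion is symmetric under independently swapping $H \leftrightarrow \neg H$ and $K \leftrightarrow \neg K$ (as this merely interchanges ``equal'' with ``complementary'' and permutes the four successor orientations), we may assume without loss of generality that $H \subseteq K$, i.e., the empty corner is $H \cap \neg K$. Pick any $x \in \partial_\v H \cap \partial_\v K$ and split into three subcases based on the location of $x$: either $x \in H$, or $x \in \neg H \cap K$, or $x \in \neg H \cap \neg K$.

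In the first subcase $x \in H$, we have $x \in \partial_\iv H \cap \partial_\iv K$, so $x$ has a neighbor $z \in \neg K \subseteq \neg H$; the single edge $(x,z)$ then lies on both the $H$- and $K$-hyperplanes, forcing $H = K$ by the uniqueness part of \cref{thm:half-edge}. In the second subcase $x \in \neg H \cap K$, we have $x \in \partial_\iv K \cap \partial_\ov H$ directly, so the preceding lemma (applied with the roles of $H$ and $K$ swapped) gives that $K$ is a successor of $H$. In the third subcase $x \in \neg H \cap \neg K$, the membership $x \in \partial_\ov H$ yields a neighbor $y \in H \subseteq K$, and the edge $(x, y)$ again lies on both hyperplanes (since $y \in H \cap K$ while $x \in \neg H \cap \neg K$), forcing $H = K$. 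The main subtlety is noticing that the two ``diagonal'' subcases---where $x$ lies on the same side of $H$ and $K$---must collapse to $H = K$ via the uniqueness of the hyperplane through any given edge; the only subcase producing genuinely new information is the ``in-between'' one, where the preceding lemma immediately identifies $K$ as a successor of $H$.
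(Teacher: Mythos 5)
Your proof is correct and takes essentially the same route as the paper's: pick a witness point in $\partial_\v H \cap \partial_\v K$, normalize by flipping $H,K$, and then either exhibit a single edge lying on both hyperplanes (forcing equality/complementarity via the uniqueness in \cref{thm:half-edge}) or invoke the preceding successor lemma, with non-nestedness as the remaining alternative. The only organizational differences are that you normalize to $H \subseteq K$ and case on the position of $x$, whereas the paper normalizes $x$ into $\partial_\iv H \cap \partial_\ov K$ and cases on which corners are empty, and that you spell out the routine reverse direction, which the paper leaves implicit.
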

If these hold, we say the hyperplanes $\partial_\ie H, \partial_\ie K$ are \defn{adjacent}.
\begin{proof}
By flipping $H, K$ if necessary, we may assume there is $x \in \partial_\iv H \cap \partial_\ov K \subseteq H \cap \neg K$.
If $\proj_K(x) \in \neg H$ or $\proj_{\neg H}(x) \in K$, then these points are equal and $H = \neg K = \cone_{\proj_K(x)}{x}$.
Otherwise, $H \cap K \ne \emptyset$ and $\neg H \cap \neg K \ne \emptyset$.
If also $\neg H \cap K \ne \emptyset$, then $H, K$ are non-nested.
Otherwise, $K \subseteq H$ and $x \in \partial_\iv H \cap \partial_\ov K$, so $H$ is a successor of $K$.
\end{proof}

\begin{lemma}
\label{thm:half-dist}
For any $x, y \in X$, $d(x,y) = \abs{\{H \in \@H_\cvx(X) \mid x \not\in H \ni y\}}$.
\end{lemma}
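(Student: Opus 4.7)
The plan is to prove both inequalities by analyzing a fixed geodesic between $x$ and $y$ and matching its edges bijectively with the separating half-spaces. Choose any geodesic $x = z_0, z_1, \dotsc, z_n = y$ with $n = d(x,y)$, and for each $i < n$ let
\begin{equation*}
H_i := \cone_{z_i}{z_{i+1}} = \{z \in X \mid d(z_i,z) > d(z_{i+1},z)\},
\end{equation*}
which is the unique half-space whose boundary contains $(z_i,z_{i+1})$ by \cref{thm:half-edge}.

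For $d(x,y) \le \abs{\{H \mid x \notin H \ni y\}}$, I would check that the $H_i$ are $n$ distinct half-spaces each separating $x$ from $y$. Using the geodesic identity $d(z_i, z_j) = \abs{i-j}$, a direct computation shows $z_j \in H_i$ iff $j > i$. Specializing $j = 0$ and $j = n$ gives $x \notin H_i \ni y$, while varying $j$ at $j = i{+}1$ versus $j = k{+}1$ for $i \ne k$ distinguishes $H_i$ from $H_k$.

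For the reverse inequality, I would show that every separating half-space $H$ arises as some $H_i$. Given $H \in \@H_\cvx$ with $x \notin H \ni y$, convexity of both $H$ and $\neg H$ applied along the subgeodesics of $z_0, \dotsc, z_n$ forces the set $\{j \mid z_j \in H\}$ to be an ``upward-closed'' interval of $\{0, \dotsc, n\}$: once the path enters $H$ it cannot leave, since any return would witness two endpoints of a geodesic inside $\neg H$ with an intermediate vertex outside. Hence there is a unique index $i$ with $z_i \notin H$ and $z_{i+1} \in H$, meaning $(z_i, z_{i+1}) \in \partial_\ie H$. By the second half of \cref{thm:half-edge}, a nontrivial half-space is determined by any boundary edge, so $H = \cone_{z_i}{z_{i+1}} = H_i$.

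The only conceptual step is the monotonicity argument in the second direction, i.e., that a geodesic crosses each half-space at most once; this is immediate from convexity of $H$ and $\neg H$ and needs no new machinery. Combining the two directions yields a bijection between $\{0,\dotsc,n-1\}$ and $\{H \in \@H_\cvx \mid x \notin H \ni y\}$, giving the claimed equality.
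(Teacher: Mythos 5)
Your proof is correct and follows the same approach as the paper: fix a geodesic from $x$ to $y$ and exhibit a bijection between its $n$ edges and the separating half-spaces, using that each separating half-space is crossed exactly once (convexity) and that each edge determines a unique half-space (\cref{thm:half-edge}). You have simply spelled out the details that the paper's one-line proof leaves implicit.
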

\begin{proof}
Let $x = x_0 \mathrel{G} \dotsb \mathrel{G} x_n = y$ be any geodesic, so $n = d(x,y)$.
Then each half-space $x \not\in H \ni y$ has $x_i \not\in H \ni x_{i+1}$ for a unique $i < n$, so we get a bijection between the set of such $H$ and $n$.
\end{proof}

\begin{lemma}
\label{thm:half-sep}
Two disjoint convex sets $A, B \subseteq X$ can be separated by a half-space $A \subseteq H \subseteq \neg B$.
\end{lemma}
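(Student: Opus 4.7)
The plan is to construct the separating half-space $H$ explicitly via a double projection between $A$ and $B$. Assume both $A$ and $B$ are nonempty (otherwise take $H := \emptyset$ or $H := X$). Fix any $a_0 \in A$ and set $b := \proj_B(a_0) \in B$ and $a := \proj_A(b) \in A$; these projections exist by \cref{thm:proj}. Since $A \cap B = \emptyset$ we have $a \ne b$, so $n := d(a,b) \ge 1$, and we may fix a geodesic $a = x_0, x_1, \dotsc, x_n = b$. The candidate separating half-space is $H := \cone_{x_0} x_1$, which by \cref{thm:half-edge} is a nontrivial half-space containing $x_1$ but not $x_0$.

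For $A \subseteq \neg H$: given any $c \in A$, the defining property $a = \proj_A(b)$ yields $\betw{c--a--b}$, so concatenating a geodesic from $c$ to $a$ with the chosen geodesic $a = x_0, x_1, \dotsc, x_n = b$ is still a geodesic, of the form $\betw{c--\dotsb--x_0--x_1--\dotsb--x_n}$. This forces $d(c,x_1) = d(c,x_0) + 1$, so $c \notin \cone_{x_0} x_1 = H$ by the distance characterization in \cref{thm:half-edge}.

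For $B \subseteq H$: given any $d \in B$, the defining property $b = \proj_B(a_0)$ gives $\betw{a_0--b--d}$, while applying the previous paragraph with $c := a_0$ gives $\betw{a_0--a--b}$. By \cref{eq:int-assoc} these combine to $\betw{a--b--d}$, so the chosen geodesic extends to $\betw{x_0--x_1--\dotsb--x_n{=}b--\dotsb--d}$, forcing $d(x_0,d) = d(x_1,d) + 1$ and hence $d \in H$.

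The only real subtlety is verifying that $\proj_B(a_0)$ and $\proj_A(b)$ exist for the possibly infinite convex sets $B$ and $A$, but this is exactly what \cref{thm:proj} guarantees for any nonempty subset, since the median graph $(X,G)$ is connected and all pairwise distances are finite. Past that point the argument is routine interval bookkeeping via \cref{eq:int-assoc}, and I do not anticipate any genuine obstacle.
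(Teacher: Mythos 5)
Your proof is correct and takes essentially the same approach as the paper: both select a geodesic joining mutually-projected points of $A$ and $B$ and use the half-space cut by its first edge (the paper takes a geodesic realizing $d(A,B)$, which makes $x_0,x_n$ mutual projections directly, whereas you obtain the same betweenness facts via the two-step projection $a_0 \mapsto \proj_B(a_0) \mapsto \proj_A(\proj_B(a_0))$). The only cosmetic difference is that your $H$ satisfies $B \subseteq H \subseteq \neg A$ rather than $A \subseteq H \subseteq \neg B$, which is fixed by replacing $H$ with $\neg H$.
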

\begin{proof}
If $A$ or $B$ is empty, this is trivial; so suppose not.
Pick a geodesic $A \ni x_0 \mathrel{G} \dotsb \mathrel{G} x_n \in B$, where $n = d(A, B)$.
Then $H := \cone_{x_1}{x_0}$ works.
Indeed, note that $x_0 = \proj_A(x_n)$ (being the closest point in $A$ to $x_n$), whence $A \subseteq \cone_{x_n}{x_0} \subseteq \cone_{x_1}{x_0}$ by \cref{eq:int-assoc}, and similarly $B \subseteq \cone_{x_0}{x_n} \subseteq \cone_{x_0}{x_1}$.
\end{proof}

\begin{corollary}
\label{thm:cvx-finite}
Every interval $[x,y]$ in a median graph is finite.
More generally, a convex hull $\cvx(\{x_0,\dotsc,x_n\})$ of finitely many points is finite.
\end{corollary}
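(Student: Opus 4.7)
The plan is to prove the general convex-hull statement; the interval case is the special instance $A = \{x,y\}$, since the example following \cref{thm:proj} identifies $[x,y] = \cvx(\{x,y\})$. The strategy is to embed $\cvx(A)$, for $A := \{x_0,\dotsc,x_n\}$, into the power set of a finite collection of half-spaces that ``cut'' $A$, using \cref{thm:half-dist,thm:half-sep} as the two essential tools.

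First, let
\[
\@S := \{H \in \@H_\cvx(X) \mid H \cap A \ne \emptyset \text{ and } \neg H \cap A \ne \emptyset\},
\]
the half-spaces separating at least two points of $A$. Each such $H$ satisfies $x_i \notin H \ni x_j$ for some ordered pair $(i,j)$ with $i \ne j$, and by \cref{thm:half-dist} there are exactly $d(x_i,x_j) < \infty$ half-spaces separating any fixed ordered pair of points. Summing over the finitely many pairs of indices yields $|\@S| < \infty$.

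Next, define $\varphi : \cvx(A) \to 2^{\@S}$ by $\varphi(z) := \{H \in \@S \mid z \in H\}$, and observe that $\varphi$ is injective: given distinct $z, z' \in \cvx(A)$, \cref{thm:half-sep} applied to the disjoint convex singletons $\{z\}, \{z'\}$ yields a half-space $H$ with $z \in H \not\ni z'$. Convexity of $\cvx(A)$ forbids both $A \subseteq H$ and $A \subseteq \neg H$, as either would place $z$ and $z'$ on the same side, so $H \in \@S$ and $\varphi(z) \ne \varphi(z')$. Therefore $|\cvx(A)| \le 2^{|\@S|} < \infty$. No substantive obstacle is anticipated: once the half-space duality of \cref{sec:half} is in place, the argument is essentially a counting argument, and in fact gives the crude quantitative bound $|\cvx(A)| \le 2^{\sum_{i \ne j} d(x_i,x_j)}$.
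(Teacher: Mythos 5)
Your proof is correct. For the interval case it coincides with the paper's argument: finitely many half-spaces separate $x$ from $y$ (\cref{thm:half-dist}), and a point of $[x,y]$ is determined by which of them contain it (\cref{thm:half-sep}, plus the observation that a half-space separating two points of $[x,y]$ must separate $x$ from $y$). For the general convex hull, however, you take a genuinely different route: the paper does \emph{not} redo the half-space counting for $\cvx(A)$, but instead invokes the construction in the proof of \cref{thm:proj} to write every point of $\cvx(A)$ as an iterated median $\ang{x,x_n,\ang{x,x_{n-1},\dotsb}}$, so that $\cvx(A)$ is covered by finitely many intervals of intervals of \ldots, each finite by the first part. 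Your version instead observes that any half-space separating two points of $\cvx(A)$ must already separate two points of $A$ (because $H$ and $\neg H$ are convex, so $A\subseteq H$ would force $\cvx(A)\subseteq H$), and there are at most $\sum_{i\ne j} d(x_i,x_j)$ such half-spaces; injectivity of $z\mapsto\{H\in\@S\mid z\in H\}$ then gives the explicit bound $\abs{\cvx(A)}\le 2^{\abs{\@S}}$. Your argument is more uniform and yields a quantitative bound; the paper's is shorter given that the machinery of \cref{thm:proj} is already in place and records the structural fact that the hull sits inside iterated intervals. One cosmetic point: the step you attribute to ``convexity of $\cvx(A)$'' is really the minimality of the convex hull together with convexity of $H$ and $\neg H$ — worth phrasing precisely, but not a gap.
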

\begin{proof}
By \cref{thm:half-dist}, there are finitely many half-spaces $H \subseteq [x,y]$; by \cref{thm:half-sep}, a vertex $z \in [x,y]$ is determined by the half-spaces containing it, whence $[x,y]$ is finite.
By the proof of \cref{thm:proj}, $\cvx(\{x_0,\dotsc,x_n\})$ consists of points of the form $\ang{x,x_n,\ang{x,x_{n-1},\ang{\dotsb\ang{x,x_2,\ang{x,x_1,x_0}}}}}$ which are in an interval between $x_n$ and a point in an interval between $x_{n-1}$ and \ldots.
\end{proof}

\begin{corollary}
\label{thm:half-full}
For two median graphs $(X,G), (Y,H)$, a function $f : X -> Y$ is a median homomorphism iff preimage $f^{-1}$ preserves convex sets (or even just half-spaces).
\end{corollary}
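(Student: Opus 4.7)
The plan is to prove the two directions separately, with the forward direction being essentially a restatement of \cref{rmk:homom-cvx}, and the backward direction being the substantive content.

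For the forward direction, assume $f$ is a median homomorphism. Then $f$ preserves the betweenness relation, so for any convex $B \subseteq Y$ and $x, y \in f^{-1}(B)$ with $z \in [x,y]$, we have $f(z) \in [f(x), f(y)] \subseteq B$, giving $z \in f^{-1}(B)$. Hence $f^{-1}(B)$ is convex, and in particular $f^{-1}$ preserves half-spaces (since both a half-space and its complement are convex).

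For the backward direction, the key observation is a ``majority rule'' characterization of medians in terms of half-spaces: for any $a, b, c \in X$ and $H \in \@H_\cvx(X)$,
\begin{equation*}
\ang{a,b,c} \in H  \iff  \abs{\{a,b,c\} \cap H} \ge 2.
\end{equation*}
The ($\Leftarrow$) direction follows from convexity of $H$: if e.g.\ $a, b \in H$, then $\ang{a,b,c} \in [a,b] \subseteq H$. For ($\Rightarrow$), if at most one of $a,b,c$ lies in $H$, then at least two lie in the convex set $\neg H$, so by the same argument $\ang{a,b,c} \in \neg H$.

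Now assume $f^{-1}$ preserves half-spaces, and fix $x,y,z \in X$. For any half-space $K \in \@H_\cvx(Y)$, $f^{-1}(K) \in \@H_\cvx(X)$, so applying the majority rule in both $X$ and $Y$:
\begin{align*}
f(\ang{x,y,z}) \in K
&\iff \ang{x,y,z} \in f^{-1}(K) \\
&\iff \abs{\{x,y,z\} \cap f^{-1}(K)} \ge 2 \\
&\iff \abs{\{f(x),f(y),f(z)\} \cap K} \ge 2 \\
&\iff \ang{f(x),f(y),f(z)} \in K.
\end{align*}
Finally, distinct points in a median graph are separated by some half-space: this is \cref{thm:half-sep} applied to the two disjoint singletons (which are convex as $[p,p] = \{p\}$). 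Hence $f(\ang{x,y,z}) = \ang{f(x),f(y),f(z)}$, so $f$ is a median homomorphism.

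The only step that required any real thought was the majority rule characterization, which is a direct consequence of convexity and the fact that half-spaces have convex complements; everything else reduces to the separation \cref{thm:half-sep}. No serious obstacle is anticipated.
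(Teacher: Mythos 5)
Your proof is correct in both directions; the forward direction is exactly the paper's (it is \cref{rmk:homom-cvx}), while for the converse you take a genuinely different route. The paper shows directly that $f$ preserves the betweenness relation, by contrapositive: if $f(x) \notin [f(y),f(z)]$, then \cref{thm:half-sep} separates $f(x)$ from the convex set $[f(y),f(z)]$ by a half-space $H$, and $f^{-1}(H)$ then witnesses $x \notin [y,z]$; preserving intervals is equivalent to preserving medians by the remark following \cref{def:median}. You instead verify preservation of the median operation head-on, via the majority-rule description of medians by half-spaces together with separation of distinct points. Your route makes the majority rule explicit (a useful fact in its own right) and works directly with the median operation, at the cost of an extra lemma; the paper's argument is shorter because it only needs to separate a point from an interval. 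One cosmetic caveat: the majority rule should be read as ``at least two of the three (not necessarily distinct) points $a, b, c$ lie in $H$,'' counted with multiplicity, rather than as the set cardinality $|\{a,b,c\} \cap H| \ge 2$; as literally written the set version fails when, say, $a = b \in H$ and $c \notin H$, and the same care is needed in your displayed chain of equivalences when $f$ identifies two of $x, y, z$. With that reading, every step goes through and the conclusion via \cref{thm:half-sep} applied to singletons is sound.
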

\begin{proof}
$\Longrightarrow$ follows from \cref{rmk:homom-cvx}.
Conversely, suppose $f^{-1}$ preserves half-spaces, and let $x, y, z \in X$ with $f(x) \not\in [f(y),f(z)]$; then there is a half-space $H \subseteq Y$ containing $f(x)$ but not $f(y), f(z)$, whence $x \in f^{-1}(H) \not\ni y, z$, whence $x \not\in [y,z]$.
\end{proof}

\subsection{Pocsets and duality}
\label{sec:duality}

The half-spaces $\@H_\cvx^G(X)$ of a median graph $(X,G)$ have the following structure:

\begin{definition}
A \defn{pocset} $P = (P,{\le},{\neg},0)$ is a poset equipped with an order-reversing involution and a least element such that $0 \ne \neg 0$, and for any $p \in P$, $0$ is the only lower bound of $p, \neg p$.%
\footnote{We use the term ``pocset'' from \cite{Rolmed}; other terms include \emph{proset} \cite{Bowmed} and \emph{binary message} \cite{Isbmed}, with differing conventions as to whether $0$ and its complement are included.}

A \defn{profinite pocset} is a pocset equipped with a compact topology making $\neg$ continuous and satisfying the \defn{Priestley separation axiom}%
\footnote{also known as \emph{totally order-disconnected}; see e.g., \cite{Jstone}}%
: for any $p \not\le q$, there exists a clopen upward-closed $U \subseteq P$ containing $p$ but not $q$.
This easily implies that the topology is Hausdorff zero-dimensional, and that the partial order $\le$ is a closed set in $P^2$.
\end{definition}

\begin{example}
$2 = \{0,1\}$ is easily seen to be a profinite pocset, whence so are its powers $2^X$, whence so are the closed subpocsets of $2^X$; this includes $\@H_\cvx(X)$ for a median graph $(X,G)$.

For a median homomorphism $f : (X,G) -> (Y,H)$, taking preimage under $f$ is easily seen to yield a continuous pocset homomorphism $f^* : \@H_\cvx(Y) -> \@H_\cvx(X)$, thereby making $(X,G) |-> \@H_\cvx(X)$ into a contravariant functor.
\end{example}

\begin{lemma}
Every nontrivial half-space $H \in \@H_\cvx^*(X)$ is an isolated point in $\@H_\cvx(X) \subseteq 2^X$.
\end{lemma}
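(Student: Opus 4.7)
The plan is to use \cref{thm:half-edge}, which says that each edge of $G$ lies on a unique hyperplane. Since $H$ is nontrivial and $(X,G)$ is connected (being a median graph), the inward edge boundary $\partial_\ie H$ is nonempty, so we may pick some $(x,y) \in \partial_\ie H$, i.e., $x \in \neg H$ and $y \in H$ with $(x,y) \in G$.

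I would then consider the basic clopen neighborhood of $H$ in $2^X$ given by
\begin{equation*}
U := \{K \in 2^X \mid x \notin K \text{ and } y \in K\},
\end{equation*}
which clearly contains $H$. The claim is that $U \cap \@H_\cvx(X) = \{H\}$. Indeed, any $K \in U \cap \@H_\cvx(X)$ is automatically nontrivial (it is neither $\emptyset$ nor $X$, since it contains $y$ but misses $x$), and $(x,y) \in \partial_\ie K$ by construction. By \cref{thm:half-edge}, the nontrivial half-space $K$ is uniquely determined by any one edge of its inward boundary, namely $K = \cone_x y$, and likewise $H = \cone_x y$, so $K = H$.

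This immediately shows $H$ is isolated in $\@H_\cvx(X)$. There is no real obstacle here; the only subtlety is checking that $\partial_\ie H \ne \emptyset$, which follows since $H$ is a nonempty proper subset of the connected vertex set $X$, so any geodesic between a point of $H$ and a point of $\neg H$ must traverse some edge in $\partial_\ie H$.
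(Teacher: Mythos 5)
Your proof is correct and is essentially the paper's own argument: both pick an edge $(x,y) \in \partial_\ie H$ and invoke \cref{thm:half-edge} to conclude that $H$ is the unique half-space containing $y$ but not $x$, which is exactly the statement that the basic clopen set you write down isolates $H$. The extra remark about $\partial_\ie H \ne \emptyset$ is a fine (if routine) bit of added care.
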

\begin{proof}
For any $(x,y) \in \partial_\ie H$, by \cref{thm:half-edge}, $H$ is the unique half-space containing $y$ but not $x$.
\end{proof}

\begin{theorem}[Isbell \cite{Isbmed}, Werner \cite{Werdual}]
\label{thm:duality}
We have a contravariant equivalence of categories
\begin{align*}
\brace*{\begin{gathered}\text{median graphs,}\\\text{median homomorphisms}\end{gathered}} &--> \brace*{\begin{gathered}\text{profinite pocsets with every nontrivial element isolated,}\\\text{continuous pocset homomorphisms}\end{gathered}} \\
(X,G) &|--> \@H_\cvx(X) \\
f : (X,G_X) -> (Y,G_Y) &|--> \paren*{\begin{aligned}
    f^* : \@H_\cvx(Y) &-> \@H_\cvx(X) \\
    H &|-> f^{-1}(H)
\end{aligned}}.
\end{align*}
\end{theorem}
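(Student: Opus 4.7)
The plan is to construct a quasi-inverse functor $P \mapsto X(P)$ and then verify that the unit and counit are natural isomorphisms. Given a profinite pocset $P$ with every nontrivial element isolated, I would define an \emph{orientation} of $P$ to be a clopen subset $\sigma \subseteq P$ that is upward-closed, contains $\neg 0$, omits $0$, and contains exactly one of each complementary pair $\{p,\neg p\}$ for nontrivial $p$. Let $X(P)$ be the set of orientations, with edges between $\sigma,\tau$ iff $\sigma \triangle \tau = \{p,\neg p\}$ for a single nontrivial $p$, and with median given by pointwise majority: $\ang{\sigma,\tau,\rho}$ contains $p$ iff at least two of $\sigma,\tau,\rho$ do. The first technical step is to check that majority yields a valid orientation (a violation of upward-closure or of consistency $\{p,\neg p\}$ in the output would require a corresponding violation in at least two inputs) and that $X(P)$ satisfies the median graph axioms, with intervals $[\sigma,\tau]$ identifiable with the Boolean lattice on $\sigma \triangle \tau$ modulo $\neg$.

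Next I would check that the forward functor $(X,G) \mapsto \@H_\cvx(X)$ lands in the target category. The set $\@H_\cvx(X) \subseteq 2^X$ is cut out by closed conditions (convexity of $H$ and of $\neg H$), hence compact; the preceding lemma provides isolation of nontrivial elements; $\neg$ is evidently continuous; and Priestley separation follows since $H \not\le K$ yields some $x \in H \setminus K$, witnessed by the clopen upward-closed set $\{L : x \in L\}$. For a median homomorphism $f : X \to Y$, \cref{thm:half-full} confirms $f^*$ lands in half-spaces, and continuity is immediate from the product topology.

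The unit $\eta_X(x) := \{H : x \in H\}$ produces a clopen orientation, injective by \cref{thm:half-sep}. The counit $\varepsilon_P(p) := \{\sigma \in X(P) : p \in \sigma\}$ is a pocset homomorphism by construction. For surjectivity of $\eta_X$, given a clopen orientation $\sigma$ of $\@H_\cvx(X)$, I would note that any finite subfamily of $\sigma$ consists of pairwise intersecting convex sets (by consistency of $\sigma$), hence has common vertices by Helly (\cref{thm:helly}); compactness of $\sigma$ combined with \cref{thm:half-dist} (which bounds finite-support differences between principal orientations) should then force the intersection to be a single realizing vertex. For surjectivity of $\varepsilon_P$, half-spaces of $X(P)$ correspond to hyperplanes via \cref{thm:half-edge}, hyperplanes are edge-equivalence classes by \cref{thm:half-edge-square}, and each edge in $X(P)$ is canonically labeled by the $p \in P$ that is flipped between its endpoints, exhibiting the preimage under $\varepsilon_P$.

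The main obstacle will be the surjectivity of $\eta_X$: ruling out clopen orientations of $\@H_\cvx(X)$ that do not come from vertices (for instance, ``end-at-infinity'' orientations in an infinite graph). Resolving this requires carefully leveraging the interplay between the profinite topology, the isolation of nontrivial elements, and Helly's theorem, to force every clopen orientation to be ``principal'' in the sense of being finitely supported relative to a chosen basepoint. Once this is established, naturality of $\eta$ and $\varepsilon$ follows routinely from the definitions, since both functors are defined by preimage/evaluation operations that commute with arbitrary morphisms.
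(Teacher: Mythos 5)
Your overall architecture is the same as the paper's: orientations of the pocset as vertices, pointwise majority as the median, Helly plus clopenness for surjectivity of the unit, and the hyperplane--edge correspondence for surjectivity of the counit. However, you have located the ``main obstacle'' in the wrong place, and the step you skip over is a genuine gap. Surjectivity of $\eta_X$ goes through essentially as you sketch and as in \cref{thm:median-dual}: a clopen orientation $U$ of $\@H_\cvx(X)$ is determined by its trace on a finite (convex) set $A \subseteq X$, its members pairwise intersect and all meet $A$, and Helly produces a realizing vertex. The missing ingredient is on the pocset side: nowhere do you establish that a general profinite pocset $P$ with nontrivial elements isolated admits \emph{any} clopen orientation, let alone enough of them. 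This is needed (a) for $X(P)$ to be nonempty, and (b) more importantly for $\varepsilon_P$ to be injective and order-reflecting: given $p \not\le q$ you must exhibit a clopen orientation containing $p$ but not $q$. The paper isolates exactly this as \cref{thm:orient-sep} (every closed \emph{partial} orientation extends to a clopen orientation), whose proof is not routine --- it combines Priestley separation with a compactness covering argument and a finite iteration to repair the consistency condition $\neg(U) = \neg U$ --- and it cannot be bypassed, since for infinite $P$ the existence of orientations is a prime-ideal-type statement. Applied to $F = \{r \in P \mid p \le r \text{ or } \neg q \le r\}$, it produces precisely the separating orientation that makes $\varepsilon_P$ an embedding; without it your verification of the counit reduces to surjectivity only.

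A smaller but related omission: your identification of $[\sigma,\tau]$ with the Boolean lattice on $\sigma \triangle \tau$ tacitly assumes $\sigma \setminus \tau$ is finite, i.e., that $X(P)$ is connected with $d(\sigma,\tau) = \abs{\sigma \setminus \tau}$. This is true, but only because $\sigma \setminus \tau$ is a \emph{compact} set consisting of \emph{isolated} points --- this is where the isolation hypothesis earns its keep (cf.\ \cref{thm:poc-dual}) --- and it should be stated rather than assumed.
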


We devote the rest of this subsection to constructing the inverse of this functor.

\begin{definition}
\label{def:orientation}
Let $P$ be a pocset.
An \defn{orientation}%
\footnote{\label{ftn:orientation}%
We borrow ``orientation'' from \cite{Rolmed}, which, however, does not include the upward-closure condition.
That work calls upward-closed orientations \emph{ultrafilters}, while noting that these are weaker than the usual usage of that term in Boolean algebras, and in particular, \emph{need not be filters in the usual order-theoretic sense}.
We have chosen our present terminology since we will also be dealing extensively with Boolean ultrafilters (see \cref{sec:ends}), and since we will never need non-upward-closed orientations.
The equivalent term \emph{flow} is used in \cite{Bowmed}.}
on $P$ is an upward-closed subset $U \subseteq P$ containing exactly one of $p, \neg p$ for each $p \in P$.
This last condition means that, letting $\neg(U)$ denote the image of $U$ under $\neg : P -> P$ while $\neg U := P \setminus U$ denotes the complement, we have
\begin{equation*}
\neg(U) = \neg U.
\end{equation*}
If only $\subseteq$ holds, we call $U$ a \defn{partial orientation}.
We let $\@U(P) \subseteq 2^P$ denote the set of orientations.

If $P$ is a topological pocset, then note that the \defn{clopen orientations} $U \subseteq P$ are equivalently the closed or open orientations, since $\neg : P -> P$ is a homeomorphism.
We denote the set of these by $\@U^\circ(P) \subseteq \@U(P)$.

Two orientations $U, V \subseteq P$ are \defn{adjacent} if $\abs{U \setminus V} = 1$ (equivalently, $\abs{V \setminus U} = 1$).
\end{definition}

Intuitively, if $P$ is thought of as a collection of ``abstract half-spaces'', then an orientation is a consistent and complete description of where a ``point'' lies in relation to each ``half-space'' in $P$.

\begin{example}
For a median graph $(X,G)$, each vertex $x \in X$ determines a \defn{principal orientation} $\^x := \{H \in \@H_\cvx(X) \mid x \in H\} \subseteq \@H_\cvx(X)$, which is clopen.
Moreover, by \cref{thm:half-sep}, $\bigcap \^x = x$, while by \cref{thm:half-dist}, $x \mathrel{G} y$ iff $\^x, \^y$ are adjacent, so that
\begin{equation*}
\begin{aligned}
X &--> \@U^\circ(\@H_\cvx(X)) \\
x &|--> \^x
\end{aligned}
\end{equation*}
is a graph embedding.
\end{example}

\begin{proposition}
\label{thm:median-dual}
For any median graph $(X,G)$, the above embedding $x |-> \^x$ is an isomorphism.
\end{proposition}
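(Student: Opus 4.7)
The plan is to prove surjectivity of $x \mapsto \^x$ onto $\@U^\circ(\@H_\cvx(X))$, since the preceding Example already gives injectivity and the graph-embedding property (and each $\^x$ is itself clopen because $\{H \in 2^X \mid x \in H\}$ is a subbasic clopen in $2^X$). So fix a clopen orientation $U$. My first step will be to extract a finite \emph{witness} $F \subseteq X$ such that every half-space $H \supseteq F$ lies in $U$, by choosing a basic open neighborhood of $X$ contained in $U$; equivalently (applying this to $\neg H$), every nontrivial $H \in U$ meets $F$. Setting $C := \cvx(F)$, which is finite by \cref{thm:cvx-finite} and itself a median graph since convex in $X$, the goal reduces to locating $x$ within $C$.

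The heart of the argument will be the following rigidity lemma: for every $K \in \@H_\cvx^*(C)$, the only $H \in \@H_\cvx^*(X)$ with $H \cap C = K$ is $\proj_C^{-1}(K)$. For $H \subseteq \proj_C^{-1}(K)$, given $y \in H$ pick any $c \in K \subseteq H$; by \cref{thm:proj}, $\proj_C(y)$ lies on a geodesic from $y$ to $c$, hence in the convex set $H$, and also in $C$, so $\proj_C(y) \in H \cap C = K$, i.e., $y \in \proj_C^{-1}(K)$. The reverse inclusion follows by applying the same argument to $\neg H$, whose restriction to $C$ is the nontrivial $C \setminus K$, yielding $\neg H \subseteq \proj_C^{-1}(C \setminus K) = \neg \proj_C^{-1}(K)$. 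I expect this lemma---which uses the gate property of projections to recover the potentially infinite $H$ from its finite ``shadow'' $K$ in $C$---to be the main obstacle.

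Given the lemma, $U$ pulls back to an orientation $U|_C := \{K \in \@H_\cvx(C) \mid \proj_C^{-1}(K) \in U\}$ of the finite pocset $\@H_\cvx(C)$. This $U|_C$ is principal: its nontrivial members pairwise intersect in $C$ (else upward-closure would place a half-space and its complement together in $U$), so \cref{thm:helly} and finiteness of $C$ yield a common point $x \in C$ with $U|_C = \^x^C$. Finally, I verify $\^x = U$ on each $H \in \@H_\cvx^*(X)$ by cases on $K := H \cap C$: if $K = C$ then $H \supseteq F$ forces $H \in U$, and also $x \in C \subseteq H$; if $K = \emptyset$ then $\neg H \supseteq F$ forces $H \notin U$, and $x \notin H$; if $K$ is nontrivial in $C$, the lemma gives $H = \proj_C^{-1}(K)$, whence $H \in U \iff K \in U|_C \iff x \in K \iff x \in H$.
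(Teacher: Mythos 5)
Your proof is correct and rests on the same core mechanism as the paper's: use clopenness of $U$ to extract a finite convex set, observe that the relevant convex sets pairwise intersect, and apply Helly's theorem (\cref{thm:helly}) to produce the vertex $x$. The only organizational difference is that you take a weaker finite witness (a neighborhood of $X$ in $U$) and compensate with the rigidity lemma $H = \proj_{C}^{-1}(H \cap C)$, restricting $U$ to the finite median graph $C$, whereas the paper chooses a finite convex $A$ on which membership in $U$ is entirely determined and applies Helly directly to the members of $U$ together with $A$; both routes are sound.
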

\begin{proof}
We must show that every clopen orientation $U \subseteq \@H_\cvx(X)$ is $\^x$ for some $x \in X$.
Since $U$ is clopen, there is a finite $A \subseteq X$, which we may assume to be convex by \cref{thm:cvx-finite}, such that for every $H \in \@H_\cvx(X)$, $H \in U$ iff there is $K \in U$ with $H \cap A = K \cap A$.
If $K \cap A = \emptyset$ for some $K \in U$, then $\emptyset \in U$, a contradiction; thus $A$ intersects every element of $U$.
Also, every $H, K \in U$ have $H \cap K \ne \emptyset$, or else $H \subseteq \neg K \implies \neg K \in U$.
Thus by \cref{thm:helly}, there is $x \in \bigcap U \cap A$, whence $U \subseteq \^x$, whence $U = \^x$ since both are orientations.
\end{proof}

To complete the proof of \cref{thm:duality}, it remains to show that starting from an abstract profinite pocset $P$ with every nontrivial element isolated, $\@U^\circ(P)$ is a median graph with $\@H_\cvx(\@U^\circ(P)) \cong P$.

\begin{lemma}
\label{thm:orient-sep}
Let $P$ be a profinite pocset.
Then every closed partial orientation $F \subseteq P$ extends to a clopen orientation $F \subseteq U \subseteq P$.
\end{lemma}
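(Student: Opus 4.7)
Plan: The strategy is to combine the Priestley separation axiom with a Zorn's-lemma maximality argument.

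\emph{Initial clopen approximation.} Since $F$ is a closed up-set and $\neg(F)$ is a closed down-set (being the image of $F$ under the order-reversing homeomorphism $\neg$), and these are disjoint by the partial orientation condition, I would apply the Priestley separation axiom pointwise and use compactness of $P$ to produce a clopen up-set $H$ with $F \subseteq H$ and $H \cap \neg(F) = \emptyset$. Setting $H_0 := H \cap \sigma(H)$, where $\sigma(A) := P \setminus \neg(A)$, then yields a clopen partial orientation containing $F$: upward-closedness is immediate, since $\sigma(H)$ is the complement of the down-set $\neg(H)$; the disjointness $H_0 \cap \neg(H_0) = \emptyset$ follows from $\sigma(H_0) = \sigma(H) \cup H \supseteq H_0$ (using that $\sigma$ is an anti-involution sending intersections to unions); and $F \subseteq H_0$ because $F \subseteq H$ and $F \subseteq \sigma(H)$, the latter being equivalent via $\sigma$ to $H \subseteq \sigma(F)$.

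\emph{Maximality.} Next, I would apply Zorn's lemma to the poset of clopen partial orientations containing $H_0$, ordered by inclusion, and obtain a maximal element $U$. To verify $U$ is a full orientation, suppose toward contradiction some $p \in P$ has $p, \neg p \notin U$. Then $U \cup {\uparrow}p$ is a closed up-set (since ${\uparrow}p$ is closed in a Priestley space), and it is disjoint from the closed down-set $\neg(U) \cup {\downarrow}(\neg p)$: the cross terms $U \cap {\downarrow}(\neg p)$ and ${\uparrow}p \cap \neg(U)$ are empty because $U$ is upward-closed and $\neg p \notin U$, while ${\uparrow}p \cap {\downarrow}(\neg p) = \emptyset$ follows from the pocset axiom that $0$ is the unique common lower bound of $p$ and $\neg p$ (the corner case $p = 0$ is handled by adjoining the top element $\neg 0$ instead). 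Repeating the separation-plus-$\sigma$ construction then produces a strictly larger clopen partial orientation, contradicting the maximality of $U$.

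\emph{Main obstacle.} The delicate point is verifying that Zorn's lemma's hypothesis actually holds: a chain of clopen partial orientations under inclusion need not admit a clopen upper bound, since unions of clopen sets are open but not generally closed, and taking closures can introduce limit points where both a vertex and its negation accumulate, violating the $\neg$-disjointness condition. I would expect this to be addressed either by passing to the Priestley-dual distributive lattice $L$ of clopen up-sets equipped with the anti-involution $\sigma$ induced by $\neg$, and locating a fixed point of $\sigma$ within the principal filter $\{H \in L : F \subseteq H\}$ via lattice-theoretic filter-extension techniques, or by a profinite / inverse-limit argument that builds $U$ compatibly over the finite pocset quotients of $P$ and applies Tychonoff-style compactness.
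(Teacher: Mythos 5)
Your first step --- reducing to the case where $F$ is clopen, by Priestley-separating the closed up-set $F$ from the closed down-set $\neg(F)$ and then intersecting the separating clopen up-set $H$ with $\sigma(H) = P \setminus \neg(H)$ --- is exactly the paper's opening move, and your verification of it is correct. The gap is precisely where you locate it yourself: the Zorn's lemma step. The poset of clopen partial orientations is not chain-complete (a union of a chain of clopen sets is open but in general not closed, and its closure can pick up a point together with its complement), and neither of your two proposed repairs is actually carried out. The first ("locating a fixed point of $\sigma$ via lattice-theoretic filter-extension techniques") is essentially a restatement of the lemma, since an orientation containing $F$ \emph{is} by definition a $\sigma$-fixed clopen up-set above $F$; the second would require first establishing that a profinite pocset is an inverse limit of finite pocsets and that $F$ pushes forward compatibly, which is not in the paper and not immediate. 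So as written the argument is incomplete at its central step. (Your verification that a \emph{maximal} clopen partial orientation would have to be a full orientation is fine; the problem is only the existence of the maximal element.)

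The paper closes this gap without Zorn, by a finite compactness argument. Having reduced to clopen $F$, it observes that every $0 < p \in P$ lies in some clopen partial orientation (apply the same reduction step to the closed partial orientation $\{q \in P \mid q \ge p\}$, which is a partial orientation precisely because $0$ is the only lower bound of $q, \neg q$), and that one may assume $\neg 0 \in F$, so that $0 \notin \neg\neg(F)$. The clopen set $\neg\neg(F)$ contains every $p$ for which neither $p$ nor $\neg p$ has yet been decided, and by compactness it is covered by finitely many clopen partial orientations $G_0, \dotsc, G_{n-1}$. The finite greedy induction $F_0 := F$, $F_{i+1} := F_i \cup (G_i \cap \neg\neg(F_i))$ then terminates in a clopen orientation after $n$ steps: each $F_{i+1}$ is still a clopen partial orientation, and any $p$ with $p, \neg p \notin F_n$ would lie in some $G_i \cap \neg\neg(F_i)$ and hence in $F_{i+1}$, a contradiction. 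If you want to keep your maximality framing, the fix is to note that the finite subcover bounds in advance the number of enlargement rounds needed, so no chain or transfinite argument is required.
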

\begin{proof}
First, note that compactness together with the Priestley separation axiom implies that a closed upward-closed $F \subseteq P$ disjoint from a closed downward-closed $D \subseteq P$ may be separated by a clopen upward-closed $F \subseteq U \subseteq \neg D$, by a standard finite covering argument.

Now for a closed partial orientation $F \subseteq P$, we have $F \subseteq \neg \neg(F)$, whence there is a clopen upward-closed $F \subseteq U \subseteq \neg \neg(F)$, whence also $F = \neg(\neg(F)) \subseteq \neg(\neg U) = \neg \neg(U)$, whence $F$ is contained in the clopen partial orientation $U \cap \neg \neg(U)$.
Thus we may assume to begin with that $F$ is clopen.

Taking $F = \{q \in P \mid q \ge p\}$ shows that each $0 < p \in P$ is in a clopen partial orientation.

Thus, we may assume $\neg 0 \in F$, i.e., $0 \not\in \neg \neg(F)$.
So each $p \in \neg \neg(F)$ is in a clopen partial orientation, and so by compactness $\neg \neg(F)$ is contained in a finite union of such sets $G_0, \dotsc, G_{n-1} \subseteq P$.
Now put $F_0 := F$ and $F_{i+1} := F_i \cup (G_i \cap \neg \neg(F_i))$; then $U := F_n$ works.
\end{proof}

\begin{proposition}
\label{thm:poc-dual}
Let $P$ be a profinite pocset with every nontrivial element isolated.
Then the adjacency graph on $\@U^\circ(P)$ is a median graph, whose:
\begin{itemize}
\item  Neighbors of $U \in \@U^\circ(P)$ are $U \triangle \{p, \neg p\}$ for each minimal $p \in U \setminus \{\neg 0\}$.
\item  Path metric $d(U,V) = \abs{U \setminus V} = \abs{V \setminus U}$.
\item  Intervals $W \in [U,V] \iff U \cap V \subseteq W \iff W \subseteq U \cup V \iff (U \setminus W) \cup (W \setminus V) \subseteq U \setminus V$.
\item  Medians $\ang{U,V,W} = \{p \in P \mid p \text{ belongs to at least two of } U, V, W\}$.
\end{itemize}
\end{proposition}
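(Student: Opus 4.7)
The plan is to establish the four claims in order: first that $|U \triangle V|$ is always finite, then the neighbor characterization, then the distance formula, and finally the interval and median formulas, with each later claim leveraging the earlier ones.

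I would first observe that for any two clopen orientations $U, V \subseteq P$, the symmetric difference $U \triangle V$ is clopen (a Boolean combination of clopens) and disjoint from $\{0, \neg 0\}$, since both orientations contain $\neg 0$ and neither contains $0$. Because every nontrivial element of $P$ is isolated, $U \triangle V$ is a clopen subset of $P$ consisting entirely of isolated points, and by compactness of $P$ it is therefore \emph{finite}. The involution $\neg$ restricts to a bijection $U \setminus V \to V \setminus U$, so $|U \setminus V| = |V \setminus U|$; denote this finite common value by $d'(U,V)$.

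For the neighbor characterization, if $V = (U \setminus \{p\}) \cup \{\neg p\}$ is an orientation, then its upward-closedness forces $p$ to be minimal in $U$: otherwise some $q < p$ with $q \in U$ would lie in $V$, requiring $p \in V$ as well. The exclusion $p \ne \neg 0$ is automatic since $\neg 0$ lies in every orientation. Conversely, given a minimal $p \in U \setminus \{\neg 0\}$, note that $p, \neg p$ must be incomparable ($p \ne \neg 0$ gives $\neg p \ne 0$, and any common lower bound of $p, \neg p$ is forced to be $0$); this, together with minimality of $p$, makes it routine to verify that $V := U \triangle \{p, \neg p\}$ is upward-closed, and it is clopen because $p, \neg p$ are isolated.

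For the distance formula, $d(U,V) \ge d'(U,V)$ is immediate from the inclusion $U \triangle V \subseteq \bigcup_i (U_i \triangle U_{i+1})$ applied to any path $U = U_0, U_1, \dotsc, U_n = V$. For $d(U,V) \le d'(U,V)$ I would induct on $d'(U,V)$: pick any minimal element $p$ of the finite set $U \setminus V$, and crucially observe that $p$ is then minimal in all of $U$. Indeed, if $q < p$ and $q \in U$, then $q \notin U \setminus V$ by minimality, so $q \in V$; upward-closedness of $V$ then forces $p \in V$, contradicting $p \in U \setminus V$. Thus $U' := U \triangle \{p, \neg p\}$ is an orientation adjacent to $U$, and $|U' \setminus V| = |U \setminus V| - 1$, completing the induction. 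This promotion of minimality within $U \setminus V$ to minimality within $U$ is the main technical point of the proof.

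Finally, for intervals I would use the disjoint-union identity $U \setminus V \subseteq (U \setminus W) \sqcup (W \setminus V)$, which combined with the distance formula shows $W \in [U,V]$ iff the inclusion is an equality iff $(U \setminus W) \cup (W \setminus V) \subseteq U \setminus V$. Unpacking this last condition yields both $U \cap V \subseteq W$ and $W \subseteq U \cup V$, and these two are equivalent for orientations since $p \in (U \cap V) \setminus W$ would give $\neg p \in W \setminus (U \cup V)$, and conversely. For medians, set $M := \{p \in P : p \text{ lies in at least two of } U, V, W\}$. Each $p$ lies in some $k \in \{0,1,2,3\}$ of the three orientations while $\neg p$ lies in $3-k$, so exactly one of $p, \neg p$ lies in $M$; upward-closedness of $M$ is inherited coordinate-wise from $U, V, W$; and $M$ is clopen as a Boolean combination. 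The inclusion $U \cap V \subseteq M$ and its symmetric variants place $M$ in all three pairwise intervals, and any other $M'$ in this triple intersection must contain $U \cap V$, $V \cap W$, and $U \cap W$, hence contain $M$, whence $M' = M$ since both are orientations.
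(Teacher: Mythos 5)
Your proposal is correct and follows essentially the same route as the paper's proof: finiteness of $U \triangle V$ by compactness plus isolation of nontrivial elements, the same neighbor characterization, the distance formula obtained by flipping minimal elements one at a time (your observation that a minimal element of $U \setminus V$ is automatically minimal in $U$ is precisely the detail the paper leaves implicit), and the same set-theoretic identities for intervals and medians. The only point the paper additionally records is that $\@U^\circ(P)$ is nonempty (by \cref{thm:orient-sep} applied to $\{\neg 0\}$), which you should include so that the connectedness claim is not vacuous.
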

\begin{proof}
There is at least one vertex, by \cref{thm:orient-sep} applied to $\{\neg 0\}$.

By definition, $U, V$ are neighbors iff $U \setminus V$ is a single vertex, which must then be minimal in $U$ since $V$ was upward-closed.
Conversely, for minimal $p \in U \setminus \{\neg 0\}$, $V := U \triangle \{p, \neg p\}$ is still an orientation (since $p \in U$ is minimal and $\neg p \in \neg(U) = \neg U$ is maximal), and clopen since $p$ is isolated.

It follows that $d(U,V) = \abs{U \setminus V}$: $\ge$ because a path may modify one element at a time; $\le$ because we may modify $U$ to $V$ in $\abs{U \setminus V}$ steps by flipping minimal elements.
In particular, note that $d(U,V) < \infty$, i.e., the graph is connected, since $U \setminus V$ is a compact set of isolated points.

The conditions $U \cap V \subseteq W$ and $W \subseteq U \cup V$ are equivalent since $U, V, W$ are orientations (take image under $\neg : P -> P$ followed by complement).
The condition $(U \setminus W) \cup (W \setminus V) \subseteq U \setminus V$ is equivalent to $U \setminus W \subseteq \neg V$ and $W \setminus V \subseteq U$, i.e., both of the former conditions.
This third condition is equivalent to $d(U,W) + d(W,V) = d(U,V)$, i.e., $W \in [U,V]$.

Thus, $M \in [U,V] \cap [V,W] \cap [W,U]$ iff $(U \cap V) \cup (V \cap W) \cup (W \cap U) \subseteq M \subseteq (U \cup V) \cap (V \cup W) \cap (W \cup U)$ iff $M = \{p \in P \mid p \text{ belongs to at least two of } U, V, W\}$; and this set is easily seen to still be a clopen orientation if $U, V, W$ are.
\end{proof}

\begin{proposition}
\label{thm:pocset-dual}
Let $P$ be a profinite pocset with every nontrivial element isolated.
Then
\begin{align*}
P &--> \@H_\cvx(\@U^\circ(P)) \\
p &|--> \^p := \{U \in \@U^\circ(P) \mid p \in U\}
\end{align*}
is a topological pocset isomorphism.
\end{proposition}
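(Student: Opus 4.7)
The plan is to verify four properties of $p \mapsto \^{p}$: (a) each $\^{p}$ is a half-space of $\@U^\circ(P)$ (with the trivial $0$ and $\neg 0$ mapped to the trivial half-spaces); (b) the map preserves $\le$ and $\neg$ and is injective; (c) every half-space arises this way; and (d) the bijection is a homeomorphism. For (a), I would use the interval characterization from \cref{thm:poc-dual}, $W \in [U,V] \iff U \cap V \subseteq W$: if $p \in U \cap V$ then $p \in W$, so $\^{p}$ is convex, and symmetrically $\^{\neg p}$ is convex. Moreover, $0$ lies in no orientation (upward-closure from $0$ would force $U = P$, violating the requirement that $U$ contain only one of $0, \neg 0$), so $\^{0} = \emptyset$ and $\^{\neg 0} = \@U^\circ(P)$.

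For (b), $p \le q \Rightarrow \^{p} \subseteq \^{q}$ is upward-closure, and $\^{\neg p} = \neg(\^{p})$ is definitional. The content is the reverse: given $p \not\le q$, I would build a closed partial orientation containing both $p$ and $\neg q$, namely $F := \{r \mid r \ge p\} \cup \{r \mid r \ge \neg q\}$, which is upward-closed and closed in $P$ (since $\le$ is a closed relation). A short case analysis on a hypothetical $r \in F \cap \neg(F)$, using the pocset axiom that $0$ is the only common lower bound of $r$ and $\neg r$, derives $p \le q$ in every case and gives a contradiction; so $F$ is a partial orientation. By \cref{thm:orient-sep} it extends to a clopen $U \in \@U^\circ(P)$ with $U \in \^{p} \setminus \^{q}$, giving injectivity and order-reflection simultaneously.

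For (c), given any nontrivial $H \in \@H_\cvx^*(\@U^\circ(P))$, pick an edge $(U,V) \in \partial_\ie H$; by \cref{thm:poc-dual}, $V = U \triangle \{p, \neg p\}$ for a unique $p \in V$. Then $\^{p}$ is a half-space with $V \in \^{p}$ and $U \notin \^{p}$, and by \cref{thm:half-dist} applied to $d(U,V) = 1$ it is the \emph{unique} such half-space, so \cref{thm:half-edge} gives $H = \cone_{U} V = \^{p}$.

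For (d), continuity at any nontrivial (hence isolated) $p$ is automatic, since $\^{p}$ is also isolated in $\@H_\cvx$ by the lemma preceding \cref{thm:duality}; continuity at $0$ (and dually at $\neg 0$) follows since each $U \in \@U^\circ(P)$ is clopen in $P$ and does not contain $0$, so any net $p_\alpha \to 0$ eventually leaves each $U$, giving pointwise convergence $\^{p_\alpha} \to \emptyset$ in the product topology on $2^{\@U^\circ(P)}$. A continuous bijection from the compact space $P$ to the Hausdorff space $\@H_\cvx(\@U^\circ(P)) \subseteq 2^{\@U^\circ(P)}$ is automatically a homeomorphism, completing the proof. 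The main obstacle I anticipate is the case analysis in (b): one must carefully invoke both halves of the pocset axiom (that $0 \ne \neg 0$ and that $0$ is the only common lower bound of $r, \neg r$) to rule out each of the four bad configurations for $r$.
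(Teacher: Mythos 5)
Your proposal is correct and follows essentially the same route as the paper: the order-embedding is obtained from exactly the same closed partial orientation $F = \{r \mid p \le r \text{ or } \neg q \le r\}$ extended via \cref{thm:orient-sep}, and surjectivity from picking an edge $(U,V)$ on a given hyperplane and identifying $H$ with $\^p$ for the unique $p \in V \setminus U$ via \cref{thm:poc-dual} and \cref{thm:half-edge}. The paper's proof is terser — it leaves implicit the verification that each $\^p$ is a half-space and the topological (homeomorphism) part — but your added details for those steps are sound.
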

\begin{proof}
It is an order-embedding, since for $p \not\le q \in P$, $F := \{r \in P \mid p \le r \OR \neg q \le r\}$ is a closed partial orientation, hence extends by \cref{thm:orient-sep} to $U \in \@U^\circ(P)$ such that $U \in \^p \setminus \^q$.
To see that it is surjective: take a nontrivial half-space $H \subseteq \@U^\circ(P)$ and an edge $(U,V) \in \partial_\ie H$; then by \cref{thm:poc-dual}, $V \setminus U = \{p\}$ is a singleton, and so by \cref{thm:half-edge}, $H = \cone_U V$ which by \cref{thm:poc-dual} is the set of $W \in \@H_\cvx(\@U^\circ(P))$ such that $V \setminus U \subseteq W$, i.e., $H = \^p$.
\end{proof}

\begin{corollary}
Let $P, Q$ be profinite pocsets with every nontrivial element isolated, $g : P -> Q$ be a continuous pocset homomorphism.
Then
\begin{align*}
g^* : \@U^\circ(Q) &--> \@U^\circ(P) \\
U &|--> g^{-1}(U)
\end{align*}
is a median homomorphism.
\end{corollary}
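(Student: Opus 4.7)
The plan is to verify two things: first, that $g^*$ is well-defined, i.e., that $g^{-1}(U) \in \@U^\circ(P)$ whenever $U \in \@U^\circ(Q)$; and second, that $g^*$ preserves the median operation, using the explicit formula from \cref{thm:poc-dual}.

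For well-definedness, I would check the three defining properties of a clopen orientation directly from the fact that $g$ is a continuous pocset homomorphism. Upward-closure of $g^{-1}(U)$ follows from $g$ being order-preserving and $U$ being upward-closed. The exact-one-of-$\{p, \neg p\}$ property follows because $g(\neg p) = \neg g(p)$ (since $g$ commutes with $\neg$), and $U$ contains exactly one of $g(p), \neg g(p)$, so $g^{-1}(U)$ contains exactly one of $p, \neg p$. Clopenness is immediate from continuity of $g$ and clopenness of $U$.

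For median preservation, the key observation is that the formula in \cref{thm:poc-dual} expresses the median in purely pointwise terms: $\ang{U,V,W}$ consists of exactly those elements of $P$ (resp.\ $Q$) belonging to at least two of the three orientations. Thus $p \in g^{-1}(\ang{U,V,W})$ iff $g(p)$ belongs to at least two of $U, V, W$ iff $p$ belongs to at least two of $g^{-1}(U), g^{-1}(V), g^{-1}(W)$, which says exactly that $p \in \ang{g^{-1}(U), g^{-1}(V), g^{-1}(W)}$. So $g^*$ preserves medians.

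There is no real obstacle here; the result is essentially a bookkeeping verification that reduces to the pointwise description of the median in terms of ``majority vote'' among the three orientations. Combined with \cref{thm:median-dual,thm:poc-dual,thm:pocset-dual}, this corollary completes the construction of the inverse functor $P \mapsto \@U^\circ(P)$ and hence establishes the equivalence claimed in \cref{thm:duality}.
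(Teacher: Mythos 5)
Your proof is correct, but it takes a different route from the paper's. The paper's proof is a one-liner via duality: by \cref{thm:half-full}, $g^*$ is a median homomorphism iff $(g^*)^{-1}$ maps half-spaces of $\@U^\circ(P)$ to half-spaces of $\@U^\circ(Q)$, and under the canonical identifications $P \cong \@H_\cvx(\@U^\circ(P))$ and $Q \cong \@H_\cvx(\@U^\circ(Q))$ from \cref{thm:pocset-dual}, the map $(g^*)^{-1}$ is just $g$ itself, so this is automatic. Your argument instead verifies everything by hand: well-definedness of $g^* $ (upward-closure, the exactly-one-of-$\{p,\neg p\}$ property via $g(\neg p)=\neg g(p)$, and clopenness via continuity), and then median preservation via the pointwise ``majority vote'' formula $\ang{U,V,W} = \{p \mid p \text{ in at least two of } U,V,W\}$ from \cref{thm:poc-dual}, which commutes with preimage because membership in at least two of three sets is a pointwise condition. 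Both are valid; the paper's version is shorter and makes transparent that the construction is functorial and dual to $g$ (which is the point of \cref{thm:duality}), while yours is more elementary and self-contained, and has the minor merit of explicitly checking that $g^{-1}(U)$ is a clopen orientation, a step the paper leaves implicit.
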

\begin{proof}
By \cref{thm:half-full}, this is equivalent to $(g^*)^{-1} : 2^{\@U^\circ(P)} -> 2^{\@U^\circ(Q)}$ mapping half-spaces to half-spaces.
Indeed, the composite of $(g^*)^{-1}$ with the canonical isomorphism $P \cong \@H_\cvx(\@U^\circ(P)) \subseteq 2^{\@U^\circ(P)}$ is easily seen to be $g$ composed with $Q \cong \@H_\cvx(\@U^\circ(Q))$.
\end{proof}

This completes the construction of the inverse functor in \cref{thm:duality}.

\begin{remark}
\Cref{thm:duality} is a special case of the duality between \emph{median algebras} and profinite pocsets (not necessarily with nontrivial elements isolated) due to Isbell~\cite{Isbmed} and Werner~\cite{Werdual}.
This duality in turn belongs to the general theory of Stone-type dualities induced by homomorphisms to $2 = \{0,1\}$; see \cite[VI~\S3]{Jstone}.
\end{remark}

\subsection{Finiteness conditions on median graphs and pocsets}
\label{sec:finiteness}

\begin{corollary}
A median graph is finite iff it has finitely many half-spaces.
\end{corollary}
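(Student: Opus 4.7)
The forward direction is immediate: every half-space is a subset of the vertex set, so if $X$ is finite then $\@H_\cvx(X) \subseteq 2^X$ is finite as well.

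For the converse, the plan is to show that the vertex set embeds into the powerset of $\@H_\cvx(X)$ via the principal orientation map $x \mapsto \^x = \{H \in \@H_\cvx(X) \mid x \in H\}$. Injectivity of this map follows directly from \cref{thm:half-sep} applied to the two disjoint convex singletons $\{x\}, \{y\}$ for distinct $x, y \in X$: there is some $H \in \@H_\cvx(X)$ with $x \in H$ and $y \notin H$, hence $\^x \ne \^y$. Therefore $\abs{X} \le 2^{\abs{\@H_\cvx(X)}}$, and finiteness of $\@H_\cvx(X)$ forces finiteness of $X$.

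Alternatively, and more in line with the duality framework just developed, one may invoke \cref{thm:median-dual} to identify $X$ with $\@U^\circ(\@H_\cvx(X)) \subseteq 2^{\@H_\cvx(X)}$; when the ambient pocset is finite, there are at most finitely many orientations on it. Either route yields the claim as an essentially one-line corollary, so there is no real obstacle here beyond selecting which of the preceding tools to cite.
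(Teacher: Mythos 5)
Your proof is correct, and your second route (via \cref{thm:median-dual}) is exactly the paper's proof. Your first route is a mild simplification that only needs injectivity of $x \mapsto \^x$ (from \cref{thm:half-sep}) rather than the full isomorphism with $\@U^\circ(\@H_\cvx(X))$, but both arguments are essentially the same in substance.
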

\begin{proof}
$\Longrightarrow$ is obvious; $\Longleftarrow$ by \cref{thm:median-dual}.
\end{proof}

\begin{corollary}
\label{thm:med-hyp-fin}
A median graph $(X,G)$ has finite hyperplanes iff each half-space is non-nested with finitely many others.
\end{corollary}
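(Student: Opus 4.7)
The proof reduces the hyperplane-finiteness statement to the ``absolute'' half-space count through the inner vertex boundary $\partial_\iv H$, which is convex in $X$ (by the remark following \cref{thm:half-edge}, as $\partial_\iv H = \proj_H(\neg H)$) and hence is itself a median graph inheriting the ambient median operation. Since $\partial_\iv H$ is in canonical bijection with $\partial_\ie H$ via $x \mapsto (\proj_{\neg H}(x), x)$, having $\partial_\ie H$ finite is equivalent to having $\partial_\iv H$ finite, which by the previous corollary is in turn equivalent to $\partial_\iv H$ having only finitely many half-spaces. The plan is thus to compare, for a fixed $H \in \@H_\cvx^*(X)$, the set of $K \in \@H_\cvx^*(X)$ non-nested with $H$ against the set of nontrivial half-spaces of the median graph $\partial_\iv H$, via an injection in one direction and a surjection in the other.

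For the direction $\Longrightarrow$, assume $\partial_\ie H$ is finite, so that the induced edge set $G \cap (\partial_\iv H)^2$ is finite. For each $K$ non-nested with $H$, \cref{thm:nonnested} produces a witnessing $2$-cube embedding with one vertex in each of the four corners $\neg^a H \cap \neg^b K$ for $(a,b) \in \{0,1\}^2$. The unique edge of this $4$-cycle connecting the two vertices in $H$ lies in $G \cap (\partial_\iv H)^2$ and, being a side of the square, lies on the hyperplane $\partial_\ie K$. Since each edge lies on a unique hyperplane (\cref{thm:half-edge}) and distinct $K$ yield distinct hyperplanes, distinct non-nested $K$'s contribute distinct elements of the finite set $G \cap (\partial_\iv H)^2$, bounding their number.

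For the direction $\Longleftarrow$, I will contrapose and assume $\partial_\iv H$ is infinite, whence by the previous corollary it has infinitely many nontrivial half-spaces $A \subseteq \partial_\iv H$. For each such $A$, the proof of \cref{thm:half-sep} applied inside the median graph $\partial_\iv H$ yields an edge $(z_0, z_1)$ with $z_0 \in A$, $z_1 \in \partial_\iv H \setminus A$, and the half-space $K_A := \cone_{z_1}{z_0} \in \@H_\cvx^*(X)$ satisfies $A = K_A \cap \partial_\iv H$ (so distinct $A$'s yield distinct $K_A$). Setting $z_i' := \proj_{\neg H}(z_i) \in \partial_\ov H$ for $i = 0, 1$, the graph isomorphism $\partial_\iv H \cong \partial_\ov H$ given by $\partial_\ie H$ (from \cref{thm:proj-proj}) forces $z_0' \mathrel{G} z_1'$, so we obtain a $4$-cycle $(z_0, z_1, z_1', z_0')$; by \cref{thm:half-edge-square} the parallel pair $(z_0, z_1), (z_0', z_1')$ lie on the same hyperplane $\partial_\ie K_A$, placing the four vertices in the four corners of $H, K_A$ and witnessing non-nestedness. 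This produces infinitely many half-spaces non-nested with $H$, as required. The main subtlety is precisely this final square-completion step, which relies on the convexity of both $\partial_\iv H$ and $\partial_\ov H$ and their isomorphism via $\proj_{\neg H}$ to lift an edge inside $\partial_\iv H$ to a parallel edge inside $\partial_\ov H$.
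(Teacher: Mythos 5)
Your proof is correct and takes essentially the same approach as the paper: the paper's proof is the one-line observation that half-spaces non-nested with $H$ correspond to nontrivial half-spaces of the median graph $\partial_\iv H$ (via \cref{thm:nonnested}), combined with the preceding corollary, and your argument is a careful elaboration of exactly that correspondence, with the injection into edges of $\partial_\iv H$ in one direction and the square-completion via $\proj_{\neg H}$ in the other.
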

\begin{proof}
Half-spaces non-nested with $H \in \@H_\cvx(X)$ correspond to nontrivial half-spaces of $\partial_\iv H$, by \cref{thm:nonnested}.
\end{proof}

\begin{corollary}
\label{thm:med-hyp-locfin}
For a median graph $(X,G)$, the following are equivalent:
\begin{enumerate}[label=(\roman*)]
\item \label{thm:med-hyp-locfin:adj}
The adjacency graph (from \cref{thm:hyp-adj}) on hyperplanes of $G$ is locally finite.
\item \label{thm:med-hyp-locfin:nest-cover}
Each nontrivial half-space is non-nested with or a successor of finitely many others.
\item \label{thm:med-hyp-locfin:hyp-locfin}
$G$ is locally finite, and all hyperplanes are finite.
\end{enumerate}
\end{corollary}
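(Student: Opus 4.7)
The plan is to establish $(i) \iff (ii)$ and $(i) \iff (iii)$ separately, using \cref{thm:hyp-adj} for the structural description of hyperplane adjacency together with the correspondence from \cref{thm:half-edge} between edges at a vertex $x$ and half-spaces $H$ with $x \in \partial_\ov H$.

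For $(i) \iff (ii)$, I would unfold \cref{thm:hyp-adj}: the hyperplanes adjacent to $\partial_\ie H$ correspond to those $K \in \@H_\cvx^*$ equal to $H$ or $\neg H$, or non-nested with $H$, or such that one of $H, \neg H$ is a successor of one of $K, \neg K$. The forward direction $(i) \implies (ii)$ is immediate since both the non-nested class and the predecessors of $H$ lie in this neighborhood. For the converse, the key observation is that $K$ is a successor of $\neg H$ iff $\neg K$ is a predecessor of $H$ (by complementing the cover relation) and that non-nestedness is invariant under complementing either argument. Applying $(ii)$ to $\neg H$ therefore yields finiteness of the successors of $H$, which together with the data from $(ii)$ applied to $H$ exhausts all the mixed successor/predecessor conditions in \cref{thm:hyp-adj}, giving $(i)$.

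For $(iii) \implies (i)$, I would argue that $\partial_\v H = \partial_\iv H \sqcup \partial_\ov H$ is finite whenever $\partial_\ie H$ is (the canonical projection $\partial_\ov H \to \partial_\iv H$ from the remark after \cref{thm:half-edge} shows all three boundaries have the same cardinality); and for each $x \in \partial_\v H$, the set of $K \in \@H_\cvx^*$ with $x \in \partial_\v K$ has size at most twice the $G$-degree of $x$, since by \cref{thm:half-edge} each edge $(x,y) \in G$ determines the unique half-space $K = \cone_x y$ with $x \in \partial_\ov K$, while $x \in \partial_\iv K$ iff $x \in \partial_\ov \neg K$. Summing over the finitely many $x \in \partial_\v H$ gives finiteness of the adjacency at $H$.

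For $(i) \implies (iii)$, finiteness of all hyperplanes follows from \cref{thm:med-hyp-fin} applied to the non-nested subset of the adjacency. For local finiteness of $G$ at $x \in X$, I consider $N(x) := \{H \in \@H_\cvx^* : x \in \partial_\ov H\}$, which is in bijection with the $G$-neighbors of $x$ via $H \mapsto \proj_H(x)$. Any two distinct $H, K \in N(x)$ satisfy $x \in \partial_\v H \cap \partial_\v K$, so $\partial_\ie H, \partial_\ie K$ are adjacent hyperplanes; and $H \ne \neg K$ since $\partial_\ov H \cap \partial_\iv H = \emptyset$. Hence $N(x)$ is a clique in the hyperplane adjacency graph, which is finite by $(i)$ whenever nonempty. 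The main book-keeping obstacle is $(ii) \implies (i)$, where the complementation symmetries relating successor-, predecessor-, and non-nesting-conditions between $H$ and $\neg H$ must be unpacked carefully; everything else reduces to direct verification.
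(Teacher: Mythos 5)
Your proposal is correct and follows essentially the same route as the paper: (i)$\iff$(ii) via the complementation symmetry of the successor relation, finiteness of hyperplanes via \cref{thm:med-hyp-fin}, and the equivalence with local finiteness via the correspondence between edges at a vertex $x$ and half-spaces with $x$ on their boundary (the paper phrases the direction (iii)$\implies$(i) contrapositively by pigeonhole rather than by your direct $2\deg(x)$ count, but the underlying facts are identical).
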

\begin{proof}
We easily have \cref{thm:med-hyp-locfin:adj}$\iff$\cref{thm:med-hyp-locfin:nest-cover} (using that $H$ is a successor of $K$ iff $\neg K$ is a successor of $\neg H$).
By the preceding corollary, it thus remains to show, assuming $G$ has finite hyperplanes, that \cref{thm:med-hyp-locfin:adj} iff $G$ is locally finite.

Suppose \cref{thm:med-hyp-locfin:adj} holds.
Then $G$ is locally finite since the neighbors of $x$ correspond to half-spaces containing $x$ on the inner boundary, which correspond to a clique of adjacent hyperplanes by \cref{thm:hyp-adj}.

Now suppose \cref{thm:med-hyp-locfin:adj} fails; let $\partial_\ie H$ for $H \in \@H_\cvx^*(X)$ be adjacent to infinitely many other hyperplanes.
Then some $x \in \partial_\v H$ belongs to infinitely many other hyperplanes, whence $x$ has infinite degree.
\end{proof}

\begin{remark}
Without assuming finite hyperplanes, the above shows that if the adjacency graph on hyperplanes of $G$ has no infinite cliques, then $G$ is locally finite.
The converse is false.
Consider two consecutive edges, the second of which is glued to one end of a strip of two squares, the second of which is glued to one end of a stack of two cubes, etc.
There are infinitely many pairwise non-nested half-spaces; but the ``common points on their boundaries'' are ``median ends'' rather than vertices (see \cref{def:med-ends}).
\end{remark}

\subsection{Subpocsets and wallspaces}
\label{sec:subpoc}

We now describe ways in which \cref{thm:duality} can be applied to construct a median graph, as the dual of a pocset satisfying the required conditions.
The most common examples of pocsets are subpocsets of powersets $2^X$.
The following characterizes when these yield median graphs:

\begin{proposition}
\label{thm:subpoc-isolated}
Let $X$ be a set, $\@H \subseteq 2^X$ be a subpocset.
The following are equivalent:
\begin{enumerate}[label=(\roman*)]
\item \label{thm:subpoc-isolated:isolated}
$\@H$ is closed (in the Cantor space $2^X$) and every nontrivial element of $\@H$ is isolated.
\item \label{thm:subpoc-isolated:finsep}
$\@H$ is \defn{finitely separating}: for every $x, y \in X$, there are only finitely many $H \in \@H$ containing $x$ but not $y$.
\end{enumerate}
\end{proposition}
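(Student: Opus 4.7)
The plan is to leverage the product topology on $2^X$, in which a neighborhood basis of any $H$ consists of the clopen sets $\{K \in 2^X : K \cap F = H \cap F\}$ indexed by finite $F \subseteq X$, together with the observation that, by the very definition of subpocset, $\@H$ automatically contains the trivial elements $\emptyset, X$.

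For the direction from closedness plus isolated nontrivial elements to finite separation, I would fix $x, y \in X$ and consider the clopen set $U_{x,y} := \{K \in 2^X : x \in K,\ y \notin K\}$. Then $U_{x,y} \cap \@H$ is clopen in $\@H$, which is itself compact since it is closed in the compact space $2^X$; hence $U_{x,y} \cap \@H$ is compact. Every element of $U_{x,y} \cap \@H$ is nontrivial (since $x \in H$ rules out $\emptyset$ and $y \notin H$ rules out $X$), and therefore isolated in $\@H$ by hypothesis. A compact space all of whose points are isolated must be finite, giving finite separation.

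For the converse, I would first verify closedness. Let $H \in 2^X$ be a limit point of $\@H$. If $H$ is trivial, it already belongs to $\@H$; otherwise pick $x \in H$ and $y \notin H$, so that $H$ lies in $U_{x,y}$. By finite separation, $U_{x,y} \cap \@H$ is finite, hence closed in the Hausdorff space $2^X$, which forces $H \in \@H$. Next, to show each nontrivial $H \in \@H$ is isolated, I would again pick $x \in H$ and $y \notin H$, enumerate $U_{x,y} \cap \@H$ as $H = H_0, H_1, \dotsc, H_n$ with $H_0 = H$, choose for each $i \ge 1$ a distinguishing point $z_i \in H \triangle H_i$, and observe that the basic open neighborhood of $H$ specifying the values on $\{x, y, z_1, \dotsc, z_n\}$ to match those of $H$ is a neighborhood of $H$ in $2^X$ meeting $\@H$ only at $H$.

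No serious obstacle stands in the way here; the one point worth keeping in mind is that the subpocset convention supplies $\emptyset$ and $X$ for free, which is precisely what lets the closedness argument restrict to nontrivial limit points, where finite separation can then be invoked via $U_{x,y}$.
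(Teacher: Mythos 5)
Your proof is correct. The direction from finite separation to ``closed with nontrivial elements isolated'' is essentially the paper's own argument: given a nontrivial $H$ with $x \in H \not\ni y$, use the finitely many other members of $\mathcal{H}$ separating $x$ from $y$ to pick finitely many distinguishing coordinates and cut out a basic clopen neighborhood meeting $\mathcal{H}$ only in $H$ (the paper runs this for an arbitrary nontrivial $A \in 2^X$, handling closedness and isolation in one stroke, but the content is identical). Where you genuinely diverge is the forward direction: the paper deduces finite separation by citing the duality result for profinite pocsets (the path metric $d(\hat{x},\hat{y}) = \lvert \hat{x} \setminus \hat{y} \rvert$ on the dual median graph of clopen orientations is finite), whereas you argue directly that $\{K : x \in K,\ y \notin K\} \cap \mathcal{H}$ is a compact set all of whose points are isolated, hence finite. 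Your route is more elementary and self-contained; it is in fact the same compactness observation that the paper buries inside the proof of connectedness of the dual median graph (``$U \setminus V$ is a compact set of isolated points''), so you have effectively inlined the one ingredient of the duality machinery that this proposition actually needs. The only point worth making explicit is that a point isolated in $\mathcal{H}$ is also isolated in the open subset $U_{x,y} \cap \mathcal{H}$, which is immediate.
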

\begin{proof}
\cref{thm:subpoc-isolated:isolated}$\implies$\cref{thm:subpoc-isolated:finsep} follows from \cref{thm:poc-dual} applied to the principal orientations $\^x, \^y \in \@U^\circ(\@H)$.
Conversely, assume \cref{thm:subpoc-isolated:finsep}, let $A \in 2^X \setminus \{\emptyset, X\}$, and let $x \in A \not\ni y$.
For every $H \in \@H \setminus \{A\}$ such that $x \in H \not\ni y$, we have either some $x_H \in A \setminus H$ or some $y_H \in H \setminus A$.
Then the set of all $B \in 2^X$ containing $x$ and each $x_H$ but not $y$ or any $y_H$ is a clopen neighborhood of $A$ disjoint from $\@H \setminus \{A\}$.
Thus the limit points of $\@H$ are $\subseteq \{\emptyset, X\}$, proving \cref{thm:subpoc-isolated:isolated}.
\end{proof}

\begin{definition}
\label{def:walls}
A \defn{wallspace} \cite{Nwalls}, \cite{CNwalls} is a set $X$ equipped with an arbitrary subpocset $\@H = \@H(X) \subseteq 2^X$ satisfying the equivalent conditions of \cref{thm:subpoc-isolated}; we call $\@H(X)$ the \defn{walling} of the wallspace $X$.
Per \cref{conv:H}, we also put $\@H^*(X) := \@H(X) \setminus \{\emptyset,X\}$.
\end{definition}

Thus, for a median graph $(X,G)$, we have a canonical choice of walling, namely $\@H_\cvx(X)$; while from an \emph{arbitrary} walling $\@H(X)$, we may construct by \cref{thm:duality} the dual median graph $\@U^\circ(\@H(X))$ equipped with the principal orientations map $x |-> \^x$ from $X$, which may be thought of as a ``median completion'' of $X$.

Note however that this ``completion'' need not extend $X$ because the map $x |-> \^x$ is not injective in general.
We call the preimage of a principal orientation an \defn{$\@H$-block}; in other words, $\@H$-blocks are equivalence classes of points contained in exactly the same sets in $\@H$.
We denote the set of $\@H$-blocks by $X/\@H$, and the $\@H$-block containing $x \in X$ by $[x]_{\@H}$.

In fact, when $\@H$ above is a subset of the half-spaces of some median graph on $X$, this ``median completion'' of $(X,\@H)$ is just a quotient of $X$:

\begin{proposition}
\label{thm:median-subpoc}
Let $(X,G)$ be a median graph, $\@H \subseteq \@H_\cvx(X)$ be a subpocset of half-spaces.
Then $\@H$ is a walling, and the median homomorphism $X \cong \@U^\circ(\@H_\cvx(X)) -> \@U^\circ(\@H)$ induced by the inclusion $\@H `-> \@H_\cvx(X)$ is surjective.
Hence, $\@U^\circ(\@H)$ may be constructed up to isomorphism as the set of $\@H$-blocks, equipped with the $G$-adjacency graph.
\end{proposition}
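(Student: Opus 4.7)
The approach has three parts, which I would address in turn. First, $\@H$ is a walling by \cref{thm:half-dist}: for any $x, y \in X$, only $d_G(x, y) < \infty$ many half-spaces in $\@H_\cvx(X)$ contain $x$ but not $y$, so a fortiori only finitely many $H \in \@H$ do, verifying the finite-separation hypothesis of \cref{thm:subpoc-isolated}. The candidate map $\varphi \colon X \to \@U^\circ(\@H)$ sending $x$ to $\^x \cap \@H$ is the composition of the isomorphism $X \cong \@U^\circ(\@H_\cvx(X))$ of \cref{thm:median-dual} with the restriction map $\@U^\circ(\@H_\cvx(X)) \to \@U^\circ(\@H)$ dual (via \cref{thm:duality}) to the inclusion $\@H \hookrightarrow \@H_\cvx(X)$, and is thus a median homomorphism.

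For surjectivity of $\varphi$, I would repeat almost verbatim the Helly-type argument from the proof of \cref{thm:median-dual}. Given $U \in \@U^\circ(\@H)$, clopenness in $2^X$ yields a finite $A_0 \subseteq X$ on which membership in $U$ is already determined; replace $A_0$ by the still finite (\cref{thm:cvx-finite}) convex hull $A := \cvx(A_0)$. Each $H \in U$ meets $A$ (else $\emptyset \in U$, contradicting orientation), and any two $H, K \in U$ intersect in $X$ (else $H \subseteq \neg K$ forces $\neg K \in U$). Since the distinct traces $H \cap A$ for $H \in U$ are finite in number, Helly's theorem (\cref{thm:helly}) applied to $A$ together with a finite family of representatives produces some $x \in A \cap \bigcap U$; then $U \subseteq \^x \cap \@H$, with equality because both are orientations. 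The fibers of $\varphi$ are by construction the $\@H$-blocks.

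It remains to match adjacencies. By \cref{thm:poc-dual}, two clopen orientations are adjacent iff they differ by a single element. If $x \mathrel{G} y$, \cref{thm:half-dist} supplies a unique $K \in \@H_\cvx(X)$ with $x \notin K \ni y$: when $K \in \@H$, $\^x \cap \@H$ and $\^y \cap \@H$ are adjacent in $\@U^\circ(\@H)$, and otherwise they coincide. Conversely, if $\^x \cap \@H$ and $\^y \cap \@H$ differ only by some $H \in \@H$ with $x \notin H \ni y$, take a geodesic $x = x_0, \dotsc, x_n = y$ in $(X, G)$; the crossed half-spaces $K_0, \dotsc, K_{n-1}$ are distinct (\cref{thm:half-dist}) and exactly one, say $K_i$, equals $H$, with the rest lying outside $\@H$. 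Hence $\^{x_i} \cap \@H = \^x \cap \@H$ and $\^{x_{i+1}} \cap \@H = \^y \cap \@H$, and $x_i \mathrel{G} x_{i+1}$ exhibits the required $G$-adjacent representatives of the two blocks. The main obstacle is the Helly step, already handled for \cref{thm:median-dual}; the rest is routine bookkeeping with geodesics and \cref{thm:half-dist}.
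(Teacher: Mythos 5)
Your proof is correct. It differs from the paper's in two places, both minor but genuine. For surjectivity, the paper does not rerun the Helly argument: it takes the upward-closure of $U \in \@U^\circ(\@H)$ inside $\@H_\cvx(X)$ (closed by compactness of the order relation, a partial orientation since $U$ is), extends it to a clopen orientation of $\@H_\cvx(X)$ via the extension lemma \cref{thm:orient-sep}, and then invokes \cref{thm:median-dual} to recognize that extension as some $\^x$; your route instead repeats the Helly computation of \cref{thm:median-dual} directly on $\@H$. The two are essentially equivalent in content — both ultimately rest on Helly/compactness — but the paper's phrasing makes explicit that restriction $\@U^\circ(\@H_\cvx(X)) \to \@U^\circ(\@H)$ is surjective as a map of orientation spaces, which is the form in which the fact is reused later (e.g., it is what lets one speak of the quotient median graph), whereas yours shows directly that every clopen orientation of $\@H$ is principal. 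For the adjacency matching, the paper only spells out the direction that an edge of $\@U^\circ(\@H)$ between two blocks forces a $G$-edge between them (via the interval $[[x]_{\@H},[y]_{\@H}] = \{[x]_{\@H},[y]_{\@H}\}$, hence $[x,y] \subseteq [x]_{\@H} \cup [y]_{\@H}$); your geodesic/crossed-half-space bookkeeping proves both directions explicitly and is a perfectly good, arguably more self-contained, substitute. Also note your first step (finite separation via \cref{thm:half-dist}) differs cosmetically from the paper's (closedness of $\@H$ because its complement in $\@H_\cvx(X)$ consists of isolated points); these are the two equivalent conditions of \cref{thm:subpoc-isolated}.
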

\begin{proof}
Since $\@H_\cvx(X) \setminus \@H$ consists only of isolated points, $\@H \subseteq \@H_\cvx(X)$ is closed.
So given $U \in \@U^\circ(\@H)$, the upward-closure of $U$ in $\@H_\cvx(X)$ is closed by compactness of $\{(H,K) \mid H \subseteq K\} \subseteq \@H_\cvx(X)^2$, and a partial orientation since $U$ is, hence extends by \cref{thm:orient-sep} to a clopen orientation $U \subseteq V \subseteq \@H_\cvx(X)$, so that $U \subseteq V \cap \@H$, whence $U = V \cap \@H$ since both are orientations.
This shows that $\@U^\circ(\@H_\cvx(X)) -> \@U^\circ(\@H)$ is surjective.

The composite $X \cong \@U^\circ(\@H_\cvx(X)) ->> \@U^\circ(\@H)$ is given by $x |-> \^x \cap \@H$, which identifies two vertices iff they belong to the same half-spaces in $\@H$, i.e., the same $\@H$-block.
If two $\@H$-blocks $[x]_{\@H}, [y]_{\@H}$ are joined by an edge, then that means the interval $[[x]_{\@H}, [y]_{\@H}]$ consists of only $\{[x]_{\@H}, [y]_{\@H}\}$, whence $[x,y] \subseteq [x]_{\@H} \cup [y]_{\@H}$, so any geodesic from $x$ to $y$ must contain an edge between $[x]_{\@H}, [y]_{\@H}$.
\end{proof}

\begin{remark}
This says that embeddings on the right side of the duality \ref{thm:duality} correspond to surjections on the left.
Conversely, it is clear that a surjective median homomorphism $f : (X,G) ->> (Y,H)$ between median graphs induces a topological pocset embedding $f^* : \@H_\cvx(Y) `-> \@H_\cvx(X)$.
\end{remark}

\subsection{Ends}
\label{sec:ends}

\begin{definition}
\label{def:ends}
Let $(X,G)$ be a connected graph.
Let $\@H_\fin(X) = \@H_\fin^G(X) \subseteq 2^X$ be the Boolean algebra of all sets with finite boundary.
The \defn{end compactification} $\^X = \^X^G$ is the Stone space of ultrafilters on $\@H_\fin(X)$; thus, a clopen set in $\^X$ is the set $\^A$ of all ultrafilters containing some $A \in \@H_\fin(X)$.
We identify each $x \in X$ with the corresponding principal ultrafilter $\^x \in \^X$; these are dense in $\^X$.
The nonprincipal ultrafilters are called \defn{ends} of $(X,G)$.%
\footnote{While the definition makes sense regardless, if $G$ is not locally finite, e.g., if $G = K_\infty$, $x |-> \^x$ may not be injective.}

For another connected graph $(Y,H)$ and a map $f : X -> Y$, the \defn{induced map} is
\begin{align*}
\^f := (f^*)^{-1} : \^X &--> \^Y \\
U &|--> \{A \in \@H_\fin(Y) \mid f^{-1}(A) \in U\}.
\end{align*}
The induced map exists iff $f^*$ preserves sets with finite boundary, e.g., if $f$ is an injective (or more generally finite-to-one) graph homomorphism.
\end{definition}

See \cite{DKends} for general information on ends of graphs, including the equivalence in the locally finite case between the point-set topological approach we have adopted (dating back to Freudenthal \cite{Freends} and Hopf \cite{Hopends}) and the more common approach via rays (due to Halin \cite{Halends}).

\begin{definition}
\label{def:med-ends}
For a median graph $(X,G)$, the space $\@U(\@H_\cvx(X)) \subseteq 2^{\@H_\cvx(X)}$ of \emph{all} (not necessarily clopen) orientations is called the \defn{profinite median completion} of $(X,G)$.%
\footnote{The nonprincipal orientations $\@U(\@H_\cvx(X)) \setminus \@U^\circ(\@H_\cvx(X))$ are called the \defn{Roller boundary}; see \cite[\S11.12]{Bowmed}.}
\end{definition}

\begin{lemma}
\label{thm:med-fin-cvx}
For a median graph $(X,G)$ with finite hyperplanes, $\@H_\fin(X) \subseteq 2^X$ is the Boolean subalgebra generated by $\@H_\cvx(X)$.
\end{lemma}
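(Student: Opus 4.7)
The plan is to prove both inclusions. For the easy direction $\@H_\cvx(X) \subseteq \@H_\fin(X)$, it suffices (since $\@H_\fin(X)$ is a Boolean subalgebra of $2^X$ by definition) to check each half-space has finite boundary: for nontrivial $H \in \@H_\cvx^*(X)$, the inner boundary $\partial_\iv H$ is the image of the hyperplane $\partial_\ie H$ under the second-coordinate projection, hence finite by hypothesis; similarly for $\partial_\ov H = \partial_\iv \neg H$. The trivial half-spaces have empty boundary.

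For the reverse inclusion, given $A \in \@H_\fin(X)$, I will exhibit a finite set of half-spaces whose Boolean combinations contain $A$. Set $F := \partial_\v A$, a finite set. The main definition is
\begin{equation*}
\@S := \set*{H \in \@H_\cvx(X) | \exists x, y \in F,\ x \in H \not\ni y}.
\end{equation*}
By \cref{thm:half-dist}, for each pair $(x,y) \in F \times F$ there are exactly $d(x,y) < \infty$ half-spaces separating them, so $\@S$ is a finite union of finite sets, hence finite. Let $\@B_F$ denote the finite Boolean subalgebra of $2^X$ generated by $\@S$. I claim $A$ is a union of atoms of $\@B_F$, which immediately puts it in the Boolean subalgebra generated by $\@H_\cvx(X)$. (If $F = \emptyset$, then $A$ is clopen in $G$, so by connectedness $A \in \{\emptyset, X\}$ and we are done; so assume $F \ne \emptyset$.)

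The key step: suppose for contradiction that some atom of $\@B_F$ contains both $a \in A$ and $b \in \neg A$, i.e., $a, b$ belong to exactly the same elements of $\@S$. Fix a geodesic $a = a_0 \mathrel{G} a_1 \mathrel{G} \dotsb \mathrel{G} a_n = b$. Since $a \in A$ and $b \in \neg A$, there exists some $i$ with $a_{i-1} \in A$ and $a_i \in \neg A$, so $a_{i-1} \in \partial_\iv A \subseteq F$ and $a_i \in \partial_\ov A \subseteq F$. Then $H := \cone_{a_{i-1}} a_i \in \@H_\cvx^*(X)$ distinguishes the pair $a_{i-1}, a_i \in F$, so $H \in \@S$. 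Now I invoke \cref{thm:half-dist} for the geodesic itself: a length-$n$ geodesic has exactly $n = d(a,b)$ edges and crosses exactly the $d(a,b)$ hyperplanes separating $a$ from $b$, each at most once; so the hyperplane $\partial_\ie H$ is crossed only at the edge $(a_{i-1}, a_i)$. Consequently the subgeodesic from $a$ to $a_{i-1}$ stays in $\neg H$, giving $a \in \neg H$, while the subgeodesic from $a_i$ to $b$ stays in $H$, giving $b \in H$. Thus $H \in \@S$ distinguishes $a$ and $b$, contradicting that they lie in the same atom.

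I do not expect any serious obstacle here: the only nontrivial ingredient is picking the right finite collection $\@S$, and the geodesic argument leverages \cref{thm:half-dist} in exactly the way it is intended.
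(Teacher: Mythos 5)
Your proof is correct, but it takes a genuinely different route from the paper's. The paper sets $Y := \cvx(\partial_\v A)$, which is finite by \cref{thm:cvx-finite}, writes $A \cap Y$ as a Boolean combination of half-spaces of the finite median graph $Y$ via \cref{thm:half-sep}, pulls these back along the gate map $\proj_Y$ (so they become half-spaces of $X$), and then shows $A = \proj_Y^{-1}(A \cap Y)$ by arguing that a geodesic from $x \in A$ to $\proj_Y(x)$ cannot exit $A$ before reaching $Y$. You instead bypass convex hulls and projections entirely: you take the finite family $\@S$ of half-spaces separating pairs of boundary vertices (finite by \cref{thm:half-dist}) and show the atoms of the Boolean algebra it generates refine $\{A, \neg A\}$, because any geodesic from $A$ to $\neg A$ crosses a boundary edge whose half-space then lies in $\@S$ and separates the endpoints. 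Both arguments hinge on the same geodesic-crossing observation; yours is more elementary and self-contained, while the paper's reuses its projection machinery and yields the structural byproduct that $A$ is pulled back from a finite median graph. One small presentational point: your claim that the geodesic crosses the hyperplane of $H = \cone_{a_{i-1}} a_i$ only at the edge $(a_{i-1}, a_i)$ is most cleanly justified not by citing the statement of \cref{thm:half-dist} (which concerns hyperplanes already known to separate $a$ from $b$) but by convexity of $H$ and $\neg H$: the indices $k$ with $a_k \in H$ and those with $a_k \in \neg H$ each form an interval, so the geodesic switches sides exactly once, at edge $i$, forcing $a \in \neg H \ni\mathrel{\mkern-14mu}/\; b$... that is, $a \notin H \ni b$. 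This is a wording issue only; the step is valid.
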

\begin{proof}
Let $A \in \@H^*_\fin$; we must show that $A$ is a finite Boolean combination of half-spaces.
By \cref{thm:cvx-finite}, $Y := \cvx(\partial_\v A) \subseteq X$ is finite; thus (by \cref{thm:half-sep}) $A \cap Y$ is a finite Boolean combination of half-spaces in $Y$, whence $\proj_Y^{-1}(A \cap Y)$ is a finite Boolean combination of half-spaces in $X$.
We claim that $A = \proj_Y^{-1}(A \cap Y)$.
The intersections of both with $Y$ are clearly the same; thus it suffices to show that for $x \in X \setminus Y$, $x \in A \iff x \in \proj_Y^{-1}(A \cap Y)$.
Indeed, if $x \in A$, then $\proj_Y(x) \in A \cap Y$, or else the last vertex in $A$ on a geodesic from $x$ to $\proj_Y(x)$ would be on the inner boundary of $A$ but not in $Y$, contradicting the definition of $Y$.
Similarly if $x \not\in A$.
\end{proof}

\begin{proposition}
\label{thm:med-ends}
For a median graph $(X,G)$ with finite hyperplanes, we have a homeomorphism
\begin{align*}
\^X &--> \@U(\@H_\cvx(X)) \\
U &|--> U \cap \@H_\cvx(X).
\end{align*}
\end{proposition}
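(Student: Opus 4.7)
The plan is to show that the given map $\phi \colon U \mapsto U \cap \@H_\cvx(X)$ is a continuous bijection between compact Hausdorff spaces; this automatically upgrades to a homeomorphism. The key structural input is \cref{thm:med-fin-cvx}, which tells us $\@H_\cvx(X)$ generates $\@H_\fin(X)$ as a Boolean algebra.

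First I would check that $\phi$ is well-defined, i.e., that $\phi(U)$ is always an orientation: upward-closure under $\subseteq$ holds because $U$ is a filter, and for each $H \in \@H_\cvx(X)$, the ultrafilter $U$ contains exactly one of $H, \neg H$. Continuity is immediate: the preimage under $\phi$ of a subbasic clopen $\{V \in \@U(\@H_\cvx(X)) : H \in V\}$ is $\^H \subseteq \^X$, which is clopen. Since $\@U(\@H_\cvx(X)) \subseteq 2^{\@H_\cvx(X)}$ is closed, both sides are compact Hausdorff, so it remains to establish bijectivity.

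Injectivity is straightforward: if $U_1, U_2$ agree on $\@H_\cvx(X)$, then, viewing each $U_i$ as a Boolean homomorphism $\@H_\fin(X) \to \{0,1\}$, they agree on all finite Boolean combinations; by \cref{thm:med-fin-cvx} this exhausts $\@H_\fin(X)$, so $U_1 = U_2$.

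The real content is surjectivity, which is where I would invoke the median structure. Given an orientation $V \subseteq \@H_\cvx(X)$, the trick is to produce an ultrafilter on $\@H_\fin(X)$ restricting to $V$ by verifying the finite intersection property for $V$ itself. For any $H, K \in V$, if $H \cap K = \emptyset$ then $H \subseteq \neg K$, and upward-closure of $V$ would force $\neg K \in V$, contradicting that $V$ is an orientation. Thus the convex sets in $V$ are pairwise intersecting, so by Helly's theorem (\cref{thm:helly}) every finite subfamily of $V$ has nonempty intersection. Hence $V$ generates a proper filter in the Boolean algebra $\@H_\fin(X)$, which extends to an ultrafilter $U \in \^X$. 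Any such $U$ satisfies $\phi(U) = V$: certainly $V \subseteq U \cap \@H_\cvx(X)$, and the reverse inclusion holds because if some $H \in \@H_\cvx(X) \setminus V$ lay in $U$, then $\neg H \in V \subseteq U$, contradicting that $U$ is an ultrafilter.

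The main conceptual step, and the only place the median hypothesis is really used beyond \cref{thm:med-fin-cvx}, is the appeal to Helly's theorem to upgrade pairwise intersection (which follows from the pocset structure of orientations) to the finite intersection property; everything else is formal Stone duality.
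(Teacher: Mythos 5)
Your proof is correct and follows essentially the same route as the paper: injectivity via \cref{thm:med-fin-cvx} (half-spaces generate $\@H_\fin(X)$ as a Boolean algebra) and surjectivity via Helly's theorem (\cref{thm:helly}) to get the finite intersection property and extend to an ultrafilter. The only difference is that you spell out the well-definedness, continuity, and compactness bookkeeping that the paper leaves implicit, which is fine.
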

\begin{proof}
Injectivity: for $U, V \in \^X$, the set of $A \in \@H_\fin(X)$ such that $A \in U \iff A \in V$ is a Boolean algebra, hence if it contains $\@H_\cvx(X)$, must be all of $\@H_\fin(X)$ by the above lemma.
Surjectivity: $V \in \@U(\@H_\cvx(X))$ has the finite intersection property by \cref{thm:helly}, hence extends to an ultrafilter $V \subseteq U \subseteq \@H_\fin(X)$ such that $V = U \cap \@H_\cvx(X)$.
\end{proof}

\begin{remark}
The above homeomorphism is clearly compatible with the respective inclusions of $X$, in that the following triangle commutes:
\begin{equation*}
\begin{tikzcd}
X \dar[hook, "x \mapsto \^x"'] \drar[hook, "x \mapsto \^x"] \\
\^X \rar["\cong"] & \@U(\@H_\cvx(X))
\end{tikzcd}
\end{equation*}
\end{remark}

\begin{lemma}
\label{thm:med-ends-half-basis}
For a locally finite median graph $(X,G)$ with finite hyperplanes, each end has a neighborhood basis of half-spaces.
\end{lemma}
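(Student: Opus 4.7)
The plan is to reduce, via \cref{thm:med-ends} and \cref{thm:med-fin-cvx}, to the following combinatorial claim: for every end $V \in \^X$ (equivalently, a non-principal orientation of $\@H_\cvx(X)$) and finitely many half-spaces $H_1, \dotsc, H_n \in V$, there is a single half-space $H \in V$ with $H \subseteq H_1 \cap \dotsb \cap H_n$. Indeed, \cref{thm:med-fin-cvx} implies that each basic clopen in $\^X$ is a Boolean combination of $\^H$'s with $H \in \@H_\cvx(X)$, and since $V$ is an ultrafilter, any such clopen neighborhood of $V$ can be refined to a conjunctive form $\^{H_1} \cap \dotsb \cap \^{H_n}$ with each $H_j \in V$.

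To prove the reduced claim, I would construct a geodesic ray $x_0, x_1, x_2, \dotsc$ in $G$ converging to $V$ in $\^X$ via the standard K\"onig-lemma argument in locally finite graphs: for each $r$, $V$ selects a unique infinite component of $X \setminus \Ball_r(x_0)$, and these nested components yield a ray. For each $i$, set $K_i := \cone_{x_i}(x_{i+1})$, the half-space with $(x_i, x_{i+1}) \in \partial_\ie K_i$ (\cref{thm:half-edge}). Three properties of the chain $K_0 \supseteq K_1 \supseteq \dotsb$ will drive the proof, all following from associativity of betweenness along the geodesic: (i) each $K_i \in V$, since $x_j \in K_i$ for every $j > i$, making the clopen $\{W \in \^X : K_i \in W\}$ a neighborhood of the tail $\^{x_j}$ and hence of $V$; (ii) the half-spaces $K_i$ have pairwise distinct hyperplanes, because a geodesic crosses each hyperplane at most once (\cref{thm:half-dist}); (iii) $\bigcap_i K_i$ contains no vertex, since $z \in K_i$ forces $d(x_0, z) \ge i+1$.

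The crux is then the claim: for each fixed $H \in V$, $K_i \subseteq H$ for all sufficiently large $i$. I would prove this by case analysis on the relation between the hyperplanes of $K_i$ and $H$. If they coincide, then $K_i = H$ (otherwise $K_i = \neg H$ would force both $H, \neg H$ into $V$); by (ii), this happens for at most one $i$. If they are distinct and non-nested, then by \cref{thm:med-hyp-fin} and (ii) this case arises for only finitely many $i$. In the remaining nested subcase, upward-closure of $V$ forces $K_i \cap H \ne \emptyset$, ruling out one of the four empty-corner configurations; the two other undesired configurations $H \subseteq K_i$ and $\neg H \subseteq K_i$ each require $K_i$ to contain a fixed nonempty set, contradicting (iii) for large $i$. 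Only $K_i \subseteq H$ survives. Applying this claim to each of $H_1, \dotsc, H_n$ and taking $i$ past all resulting thresholds yields $K_i \in V$ with $K_i \subseteq \bigcap_j H_j$, as required.

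The main obstacle I anticipate is this final case analysis, and specifically the use of finite hyperplanes (via \cref{thm:med-hyp-fin}) to eliminate the non-nested case; this is the unique point at which that hypothesis is essential. The remaining pieces—the end/orientation correspondence, construction of a geodesic ray via K\"onig's lemma, and the betweenness manipulations along the geodesic—are either standard or already established earlier in the paper.
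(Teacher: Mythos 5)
Your overall strategy is genuinely different from the paper's and can be made to work, but it contains a false intermediate claim on which the final case analysis leans. The assertion that $K_0 \supseteq K_1 \supseteq \dotsb$ is wrong: consecutive cones along a geodesic in a median graph need not be nested. If $x_i, x_{i+1}, x_{i+2}$ are three consecutive corners of a square, then $\cone_{x_i} x_{i+1}$ and $\cone_{x_{i+1}} x_{i+2}$ are the two transverse half-spaces cutting that square, and neither contains the other; this happens whenever the ray ``turns a corner'', which it typically must (e.g., in the staircase-of-squares graph of \cref{fig:oneended-pruning}). Consequently your (iii) is misjustified ($z \in K_i$ alone does not give $d(x_0,z) \ge i+1$; that needs $z \in K_0 \cap \dotsb \cap K_i$), and, more seriously, the step ``$H \subseteq K_i$ forces $K_i$ to contain a fixed nonempty set, contradicting (iii) for large $i$'' breaks: a fixed vertex lying in infinitely many $K_i$ does not contradict $\bigcap_i K_i = \emptyset$ unless the chain is decreasing.

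The repair is short and uses only what you already have. None of the $K_i$ contains $x_0$ (since $d(x_i,x_0) = i < i+1 = d(x_{i+1},x_0)$), and by (ii) they are pairwise distinct half-spaces; hence every $K_i$ containing a given vertex $z$ separates $x_0$ from $z$, so by \cref{thm:half-dist} each vertex lies in only finitely many $K_i$. This strengthened form of (iii) is exactly what the case analysis needs: each of the configurations $H \subseteq K_i$ and $\neg H \subseteq K_i$ places a fixed nonempty set inside $K_i$ and therefore occurs for only finitely many $i$. With that substitution, the rest of your argument—the reduction via \cref{thm:med-ends} and \cref{thm:med-fin-cvx}, the ray construction, properties (i) and (ii), and the elimination of the non-nested case via \cref{thm:med-hyp-fin}—is correct.

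For comparison, the paper's proof is considerably shorter and avoids rays entirely: given a clopen neighborhood $\^A$ of the end with $A \in \@H_\fin(X)$, the set $\cvx(\partial_\ov A)$ is finite by \cref{thm:cvx-finite} and is a finite intersection of half-spaces by \cref{thm:med-fin-cvx}; one such half-space $H$ omits the end, and then $\neg H$ is a half-space neighborhood of the end contained in $A$, being convex, meeting $A$, and disjoint from $\partial_\ov A$.
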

\begin{proof}
Let $U \in \^X \setminus X$ be an end, $\^A \ni U$ be a clopen neighborhood where $A \in \@H_\fin(X)$.
By \cref{thm:cvx-finite}, $\cvx(\partial_\ov A) \subseteq X$ is finite.
By \cref{thm:med-fin-cvx} (applied to it and each point on its outer boundary), it is a finite intersection of half-spaces.
One such half-space $H$ must not contain $U$, since $U \not\in \cvx(\partial_\ov A)$.
Then $U \in \neg \^H$, whence $A \cap \neg H \ne \emptyset$, whence $\neg H \subseteq A$ since $\partial_\ov A \cap \neg H = \emptyset$.
\end{proof}

\begin{corollary}
\label{thm:walls-ends-dense}
For a wallspace $X$ such that $\@U^\circ(\@H(X))$ has finite hyperplanes, each end $U \in \^{\@U^\circ(\@H(X))} \cong \@U(\@H(X))$ is a limit of points in $X$ (not just in $\@U^\circ(\@H(X))$).
\end{corollary}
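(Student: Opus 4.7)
The plan is to unfold both sides of the identification $\^{\@U^\circ(\@H(X))} \cong \@U(\@H(X))$ and, for each end $U$ and each basic clopen neighborhood of $U$, exhibit a principal orientation $\^x$ with $x \in X$ lying inside that neighborhood. Write $Y := \@U^\circ(\@H(X))$. By \cref{thm:med-ends} (applicable because $Y$ has finite hyperplanes), an end $U \in \^Y \setminus Y$ corresponds to a non-clopen orientation of $\@H(X)$, which I will also denote $U$; and a basic clopen neighborhood of $U$ in $\^Y$ has the form $\^A$ for some $A \in \@H_\fin(Y)$ with $A \in U$ (as ultrafilter).

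My first move would be to apply \cref{thm:med-ends-half-basis} to refine $\^A$ to a neighborhood of the form $\^{\^H}$ with $\^H$ a half-space of $Y$. Under the duality of \cref{thm:pocset-dual}, the half-spaces of $Y$ are exactly the sets $\^H = \{V \in Y : H \in V\}$ indexed by $H \in \@H(X)$; and $U \in \^{\^H}$ translates to $H \in U$ as orientation. This reduces the task to: given $H \in \@H(X) \cap U$, find $x \in X$ with $\^x \in \^H$, i.e., $x \in H$. And the last step is essentially free: since $U$ is an orientation, $\emptyset \notin U$, so $H \neq \emptyset$, and any $x \in H$ works --- $H \in \^x$ gives $\^x \in \^H$, hence $\^x \in \^{\^H} \subseteq \^A$, so letting the neighborhood shrink produces a net $(\^{x_\alpha})$ with $x_\alpha \in X$ converging to $U$ in $\^Y$.

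The main obstacle is justifying the application of \cref{thm:med-ends-half-basis}, whose stated hypotheses include local finiteness of the median graph in addition to finite hyperplanes, a property not automatic under the corollary's assumption. I would either verify that its proof in fact goes through assuming only finite hyperplanes (its appeal to \cref{thm:cvx-finite} and \cref{thm:med-fin-cvx} to express $\cvx(\partial_\ov A)$ as a finite intersection of half-spaces needs scrutiny here), or sidestep \cref{thm:med-ends-half-basis} by working directly with corner neighborhoods of the form $\^{\^{H_1} \cap \dotsb \cap \^{H_n}}$ produced by \cref{thm:med-fin-cvx} and establishing separately that each finite subset of $U$ has nonempty intersection in $X$ --- via Helly's theorem (\cref{thm:helly}) in $Y$ together with non-principality of $U$ to exclude the pathological case of the corresponding corner being populated exclusively by non-principal clopen orientations.
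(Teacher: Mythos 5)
Your main argument is exactly the paper's: its entire proof reads ``the half-space neighborhoods of $U$ form a filter base in $X$ converging to $U$,'' i.e.\ apply \cref{thm:med-ends-half-basis} to get a basis of half-space neighborhoods $\^{\^H}$ with $H \in U$, and observe that each such $H$ is a nonempty subset of $X$ because $U$ is an orientation --- precisely your steps. The local-finiteness mismatch you flag is real as a matter of hypothesis bookkeeping (the corollary states only ``finite hyperplanes'' while \cref{thm:med-ends-half-basis} also assumes local finiteness), but the paper makes the same silent leap, so this is a quibble with the paper's statement rather than a gap in your argument relative to it; just note that your proposed Helly fallback would not by itself close that gap, since Helly only yields a vertex of $\@U^\circ(\@H(X))$ in the corner $\^{H_1} \cap \dotsb \cap \^{H_n}$, and whether that corner contains a \emph{principal} orientation (i.e.\ whether $H_1 \cap \dotsb \cap H_n \neq \emptyset$ in $X$) is exactly the point at issue.
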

\begin{proof}
The half-space neighborhoods of $U$ form a filter base in $X$ converging to $U$.
\end{proof}

\subsection{Quasi-isometries and coarse equivalences}
\label{sec:coarse}

We now review some notions from metric geometry.

\begin{definition}
A \defn{proper pseudometric} is one whose balls are all finite.
\end{definition}

\begin{example}
The path metric on a connected locally finite graph is a proper metric.
\end{example}

\begin{definition}
\label{def:coarse}
Let $X, Y$ be pseudometric spaces.
A \defn{bornologous map} $f : X -> Y$ is one with
\begin{equation*}
\forall R < \infty\, \exists S < \infty\, \forall x, x' \in X\, (d_X(x,x') \le R \implies d_Y(f(x),f(x')) \le S).
\end{equation*}
A \defn{quasi-inverse} of $f$ is a map $g : Y -> X$ with uniform distance $d(1_X, g \circ f), d(1_Y, f \circ g) < \infty$.
If $f$ is a bornologous map with a bornologous quasi-inverse, then $f$ is a \defn{coarse equivalence}; if such $f$ exists, then $X, Y$ are \defn{coarsely equivalent}.
We say that $f$ is a \defn{coarse embedding} if it is a coarse equivalence onto its image.
This is easily seen to be equivalent to: $f$ is bornologous, and moreover
\begin{equation*}
\forall S < \infty\, \exists R < \infty\, \forall x, x' \in X\, (d_Y(f(x), f(x')) \le S \implies d_X(x, x') \le R).
\end{equation*}
Note that the above $\forall\exists\forall$ conditions are equivalent to, respectively:
\begin{gather*}
\forall R < \infty\, \exists S < \infty\, \forall A \subseteq X\, (\diam(A) \le R \implies \diam(f(A)) \le S), \\
\forall S < \infty\, \exists R < \infty\, \forall A \subseteq X\, (\diam(f(A)) \le S \implies \diam(A) \le R).
\end{gather*}

A \defn{quasi-isometry} is a coarse equivalence $f$ such that $S$ above can be taken to be a linear function of $R$, and such that $f$ has a quasi-inverse obeying the same condition.
\end{definition}

See \cite{Roecoarse} or \cite{Drutu-Kapovich:GGT} for background on large-scale geometric notions such as these.

\begin{lemma}
A coarse embedding between proper pseudometric spaces is finite-to-one.
Indeed, diameters of fibers of such a map are uniformly bounded.
\end{lemma}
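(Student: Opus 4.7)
The plan is to extract the uniform bound on fiber diameters directly from the second $\forall\exists\forall$ condition in the definition of coarse embedding, and then invoke properness of $X$ to upgrade ``bounded diameter'' to ``finite''.

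First I would apply the coarse embedding condition with $S = 0$: this yields some $R < \infty$ such that for all $x, x' \in X$, if $d_Y(f(x), f(x')) \le 0$ then $d_X(x,x') \le R$. In particular, whenever $f(x) = f(x')$, we have $d_Y(f(x), f(x')) = 0$, so $d_X(x, x') \le R$. Equivalently, for every $y \in Y$, the fiber $f^{-1}(y)$ has diameter at most $R$, and this bound $R$ does not depend on $y$. This already gives the ``indeed'' part of the statement.

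Next, to conclude finite-to-oneness, I would fix any $y$ in the image and any $x_0 \in f^{-1}(y)$. Then $f^{-1}(y) \subseteq \Ball_R(x_0)$ in $X$, since every other point of the fiber is within $d_X$-distance $R$ of $x_0$ by the previous step. By the assumption that $X$ is proper, $\Ball_R(x_0)$ is finite, so $f^{-1}(y)$ is finite. (Fibers over points not in the image are empty, hence trivially finite.)

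There is no real obstacle here; the argument is essentially a one-line unpacking of the definitions plus the finiteness of balls. The only thing to be slightly careful about is that we are working with pseudometrics rather than metrics, so ``diameter $\le R$'' does not mean ``at most one point up to equality'', but this does not matter because properness is stated in terms of finiteness of balls, which handles pseudometrics uniformly.
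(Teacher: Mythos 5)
Your proof is correct and follows the same route as the paper: take $S=0$ in the second $\forall\exists\forall$ condition to get a uniform bound $R$ on fiber diameters, then use finiteness of balls in the proper space $X$ to conclude finite-to-oneness. The paper's proof is just the first of these two steps (leaving the properness step implicit), so your write-up is essentially an expanded version of the same argument.
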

\begin{proof}
Let $f : X -> Y$ be such a map, and $R < \infty$ with $d(f(x), f(x')) \le 0 \implies d(x, x') \le R$.
\end{proof}

\begin{lemma}
\label{thm:ends-coarse}
Let $(X,G), (Y,H)$ be connected locally finite graphs.
\begin{enumerate}[label=(\alph*)]
\item \label{thm:ends-coarse:induce}
For a coarse embedding $f : X -> Y$ and $A \in \@H_\fin(Y)$, $\diam(\partial_\v f^{-1}(A))$ is uniformly bounded in terms of $\diam(\partial_\v A)$.
In particular, $f^{-1}(A) \in \@H_\fin(X)$, i.e., $f$ induces a map $\^f : \^X -> \^Y$.
\item \label{thm:ends-coarse:bddist}
For coarse embeddings $f, g : X -> Y$ with $d(f, g) < \infty$, for $A \in \@H_\fin(Y)$, $\diam(f^{-1}(A) \triangle g^{-1}(A))$ is uniformly bounded in terms of $\diam(\partial_\v A)$.
In particular, $\^f, \^g : \^X -> \^Y$ agree on ends.
\item \label{thm:ends-coarse:coarseq}
Thus, a coarse equivalence $f : X -> Y$ induces a homeomorphism on ends $\^f : \^X \setminus X \cong \^Y \setminus Y$.
\end{enumerate}

\end{lemma}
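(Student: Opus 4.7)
The plan is to prove all three parts by tracking how a coarse embedding carries boundaries of finite-boundary sets. The geometric heart of the argument is the following observation: if an edge $(x,x') \in G$ straddles $f^{-1}(A)$, then the pair $(f(x), f(x'))$ is at bounded $Y$-distance (by bornologousness) and straddles $A$, so any $Y$-geodesic between them meets $\partial_\v A$; consequently $f$ sends $\partial_\v f^{-1}(A)$ into a bounded $Y$-neighborhood of $\partial_\v A$.

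For \cref{thm:ends-coarse:induce}, let $S$ witness bornologousness of $f$ at scale $1$. The observation gives $f(\partial_\v f^{-1}(A)) \subseteq \Ball_S(\partial_\v A)$, so $\diam(f(\partial_\v f^{-1}(A))) \le \diam(\partial_\v A) + 2S$. Applying the coarse-embedding inequality in its ``$\forall S_0\,\exists R_0$'' form with $S_0 := \diam(\partial_\v A) + 2S$ yields a bound $R_0$, depending only on $\diam(\partial_\v A)$, on $\diam(\partial_\v f^{-1}(A))$; local finiteness of $X$ then turns bounded diameter into finiteness, so $f^{-1}(A) \in \@H_\fin(X)$ and $\^f$ is well-defined. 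Part \cref{thm:ends-coarse:bddist} applies the same template with $D := d(f,g)$ in place of $S$ and $f^{-1}(A) \triangle g^{-1}(A)$ in place of $\partial_\v f^{-1}(A)$: any $x$ in this symmetric difference has $f(x), g(x)$ on opposite sides of $A$ at distance $\le D$, so the symmetric difference has bounded diameter and is hence finite. Since an end $U \in \^X \setminus X$ is a nonprincipal ultrafilter it contains every cofinite set, and so judges $f^{-1}(A)$ and $g^{-1}(A)$ identically, giving $\^f(U) = \^g(U)$.

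For \cref{thm:ends-coarse:coarseq}, let $g : Y \to X$ be a bornologous quasi-inverse of $f$. Direct unwinding of the definition yields $\^g \circ \^f = \widehat{g \circ f}$, and since $d(g \circ f, 1_X) < \infty$, part \cref{thm:ends-coarse:bddist} applied to the two coarse embeddings $g \circ f, 1_X : X \to X$ gives $\widehat{g \circ f} = \widehat{1_X} = 1_{\^X}$ on $\^X \setminus X$; symmetrically $\^f \circ \^g$ restricts to the identity on $\^Y \setminus Y$. Moreover $\^f$ does send ends to ends: were $\^f(U) = \^y$ principal, then $\{y\} \in \^f(U)$ would force $f^{-1}(\{y\})$ (finite by the preceding lemma, since coarse embeddings between proper spaces are finite-to-one) into $U$, contradicting non-principality. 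Each of $\^f, \^g$ is continuous because the preimage of a basic clopen set $\^B$ is $\^{f^{-1}(B)}$, so their restrictions to the compact Hausdorff subspaces $\^X \setminus X$ and $\^Y \setminus Y$ are mutually inverse continuous bijections, yielding the desired homeomorphism.

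The only nonroutine step is the geometric bound underlying \cref{thm:ends-coarse:induce}: once one knows that $f(\partial_\v f^{-1}(A)) \subseteq \Ball_S(\partial_\v A)$, everything else is a formal shuffle of the coarse-embedding inequality, local finiteness of the ambient graph, and the cofinite-filter property of nonprincipal ultrafilters.
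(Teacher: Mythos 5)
Your proposal is correct and follows essentially the same route as the paper: the key step in both is that an edge (or point) straddling the preimage maps to a bounded-distance pair straddling $A$, forcing the image of the boundary (resp.\ symmetric difference) into $\Ball_S(\partial_\v A)$, after which the coarse-embedding inequality and the cofinite-filter property of nonprincipal ultrafilters finish parts \cref{thm:ends-coarse:induce} and \cref{thm:ends-coarse:bddist}, and a quasi-inverse gives \cref{thm:ends-coarse:coarseq}. Your write-up is in fact slightly more detailed than the paper's (e.g., in checking that ends map to ends and that the induced maps are continuous), but there is no substantive difference.
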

\begin{proof}
\cref{thm:ends-coarse:induce}
Let $S < \infty$ such that $d(x, x') \le 1 \implies d(f(x), f(x')) \le S$.
For $A \subseteq Y$ and $(x,x') \in \partial_\ie f^{-1}(A)$, we have a path of length $\le S$ between $f(x) \not\in A$ and $f(x') \in A$, whence $d(f(x), \partial_\v A), d(f(x'), \partial_\v A) \le S$, i.e., $f(\partial_\v f^{-1}(A)) \subseteq \Ball_S(\partial_\v A)$, whence $\diam(f(\partial_\v f^{-1}(A))) \le \diam(\partial_\v A) + 2S$.
Since $f$ is a coarse embedding, this is enough.

\cref{thm:ends-coarse:bddist}
For $x \in f^{-1}(A) \setminus g^{-1}(A)$, we have a path of length $\le d(f, g)$ between $f(x) \in A$ and $g(x) \not\in A$, whence $d(f(x), \partial_\v A), d(g(x), \partial_\v A) \le d(f, g)$.
Swapping $f, g$, the same holds for $x \in g^{-1}(A) \setminus f^{-1}(A)$.
Thus $f(f^{-1}(A) \triangle g^{-1}(A)) \subseteq \Ball_{d(f,g)}(\partial_\v A)$, which similarly to \cref{thm:ends-coarse:induce} implies the claim.

\cref{thm:ends-coarse:coarseq}
follows by taking a quasi-inverse $g$, so that $d(1_X, g \circ f), d(1_Y, f \circ g) < \infty$.
\end{proof}

\subsection{CBERs and graphings}
\label{sec:cber}

We briefly review here the main definitions we need from descriptive set theory and Borel combinatorics.
For detailed background, see \cite{Kcber}, \cite{KMtopics}.

A \defn{countable Borel equivalence relation (CBER)} $E$ on a standard Borel space $X$ is a Borel equivalence relation $E \subseteq X^2$ with countable equivalence classes, denoted $[x]_E \subseteq X$ for $x \in X$.
More generally, $[A]_E \subseteq X$ denotes the $E$-saturation of $A \subseteq X$.

Every locally countable Borel graph $G \subseteq X^2$ generates a CBER $\#E_G$ whose classes are the $G$-connected components.
Given a CBER $E \subseteq X^2$, we call a Borel graph $G$ with $\#E_G = E$ a \defn{(Borel) graphing} of $E$; we will only ever consider Borel graphings in this paper, and henceforth drop the prefix ``Borel''.
If $\clubsuit$ is a class of connected graphs, then ``$\clubsuit$ing'' will mean a graphing each of whose components is in $\clubsuit$.
In particular, a \defn{treeing} is a graphing which is a forest, i.e., each of whose components is a tree; if a treeing of $E$ exists, then $E$ is called \defn{treeable}.

A CBER $E \subseteq X^2$ is \defn{hyperfinite} if it is a countable increasing union $E = \bigcupup_n F_n$ of \defn{finite Borel equivalence relations (FBERs)} $F_n \subseteq X^2$, meaning with finite equivalence classes.
Such a sequence $(F_n)_n$ is called a \defn{witness to hyperfiniteness} of $E$.

A \defn{Borel homomorphism} $f : (X,E) -> (Y,F)$ between two CBERs is a Borel map $f : X -> Y$ which descends to a map between the quotients $X/E -> Y/F$.

If the descended map is injective (respectively bijective), then $f$ is a \defn{Borel (bi)reduction}; if such $f$ exists, we say $E$ is \defn{Borel (bi)reducible} to $F$.

A CBER is \defn{smooth} if it admits a Borel reduction to equality on a standard Borel space, or equivalently, it admits a Borel selection of a single point in each equivalence class.

A homomorphism between equivalence relations $f : (X,E) -> (Y,F)$ is \defn{class-bijective} if for each $E$-class $C \in X/E$, $f$ restricts to a bijection $C \cong [f(C)]_F$.

A Borel class-bijective homomorphism between CBERs $f : (Y,F) -> (X,E)$ is essentially the same thing as a \defn{Borel action} of the equivalence relation (or groupoid) $E$ on the \defn{Borel bundle} $f : Y -> X$, where each $(x,x') \in E$ acts via the bijection between fibers
\begin{align*}
f^{-1}(x) &--> f^{-1}(x') \\
y &|--> (x,x') \cdot y := \text{the unique $y' \in [y]_F \cap f^{-1}(x')$}.
\end{align*}
Conversely, we can recover $F$ from this action as its orbit equivalence relation.

\section{Treeing median graphs with finite hyperplanes}
\label{sec:cyclecut}

\begin{theorem}
\label{thm:med-subtree}
Let $(X,G)$ be a countable median graph with finite hyperplanes.
Then we may construct a canonical%
\footnote{In the precise sense of \cref{rmk:canonical}; see \cref{thm:cber-median}.}
subtree $T \subseteq G$.
\end{theorem}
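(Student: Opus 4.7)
The idea is to canonically construct a spanning tree $T \subseteq G$ by cutting cycles, exploiting the cubical structure of median graphs together with the finiteness of hyperplanes. By \cref{thm:nested-tree}, a median graph is a tree iff its half-spaces are pairwise nested, and by \cref{thm:nonnested}, $n$ pairwise non-nested half-spaces embed an $n$-cube in $G$ (a 4-cycle for $n=2$). By \cref{thm:med-hyp-fin}, the finite-hyperplane hypothesis means each half-space is non-nested with only finitely many others, so the crossing graph on $\@H_\cvx^*(X)$ is locally finite; consequently, each maximal embedded cube in $G$ is finite-dimensional.

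First I would obtain a canonical coloring $c : \@H_\cvx^*(X) \to \mathbb{N}$ of hyperplanes, via the standard Borel coloring of the intersection graph on finite subsets (per \cref{rmk:canonical}), applied to hyperplanes viewed as finite edge sets. Next, for each maximal collection of pairwise non-nested hyperplanes meeting at a common vertex (finite by \cref{thm:med-hyp-fin}), the embedded cube $Q$ from \cref{thm:nonnested} inherits an ordering on its cutting hyperplanes via $c$, which canonically determines a spanning subtree of $Q$ (say, a lexicographic one). I would then define $T$ to be the union of these local spanning subtrees over all maximal cubes in $G$.

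Verifying that $T$ is a spanning tree would have two parts. \emph{Acyclicity}: any cycle in $T$ would, via the median property, decompose into squares; since each square lies in some maximal cube whose local subtree is acyclic, this yields a contradiction. \emph{Spanning-ness}: each omitted edge lies in some cube whose other edges provide an alternative path, and such detours can be chained globally via Helly's theorem (\cref{thm:helly}) together with the projection machinery of \cref{thm:proj-proj,thm:proj-homom}.

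The main obstacle is the gluing step: distinct maximal cubes can share convex sub-cubes (faces), and the canonical local spanning subtrees must agree on these intersections so that their union is well-defined as a forest. The natural route is to show that the local cutting rule commutes with the projection $\proj_Q$ onto common sub-cubes. A secondary subtlety is that the coloring $c$ distinguishes distinct hyperplanes but does not canonically orient a complementary pair $\{H,\neg H\}$; hence the selection rule within a cube must be invariant under complementation of each individual hyperplane, further constraining the form of the construction.
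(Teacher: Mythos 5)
There is a genuine gap, and it sits exactly where you flag "the main obstacle": the passage from local spanning subtrees of maximal cubes to a global subtree is not a technicality but the point where this approach breaks, and your proposed acyclicity verification does not work. Even granting the standard fact that the cycle space of a median graph is generated by its $4$-cycles, a cycle lying in $T$ and being a mod-$2$ sum of squares does not place any of those squares inside $T$; so the observation that each square sits in some maximal cube with an acyclic local subtree yields no contradiction. Concretely, take $G = P_3 \times P_3$, a $2\times2$ grid of unit squares (all four hyperplanes are finite, and the four maximal cubes are the four unit squares, so there is no gluing along faces to worry about). Each local spanning subtree omits exactly one edge of its square. If each square omits one of the four edges incident to the central vertex, then the outer $8$-cycle — which is the sum of all four squares but contains none of them — survives entirely in $T$; if all four inner edges are omitted, $T$ is moreover disconnected at the centre. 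Nothing in a per-cube selection rule prevents this, and the same configuration embeds in infinite examples (a staircase of such grids) where the theorem has content. The shared-face issue you raise compounds the problem rather than being separable from it: since $T$ is a union, an edge kept by one maximal cube but discarded by an overlapping one ends up in $T$, so two $3$-cubes sharing a square face whose local trees restrict to different $3$-edge paths on that face already force all four edges of the face, hence a $4$-cycle, into $T$. "Showing the cutting rule commutes with $\proj_Q$" is stated as a hope, not carried out, and even if it were, the $P_3\times P_3$ example shows that local consistency does not imply global acyclicity.

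For contrast, the paper's proof makes no cube-by-cube choices. It first colors the nontrivial half-spaces into countably many families $\@H_n$, each pairwise nested and closed under complement (possible because non-nested half-spaces have intersecting finite vertex boundaries, so one colors the intersection graph on boundaries — this is where finite hyperplanes enter). Setting $\@K_n := \bigcup_{m \ge n} \@H_m$, it builds $T$ as an increasing union of forests $T_n$ whose connected components are exactly the $\@K_n$-blocks: at each stage the quotient of a $\@K_{n+1}$-block by the nested family $\@H_n$ is a tree by \cref{thm:nested-tree}, and one adds a single edge (chosen from the finitely many edges of the separating hyperplane) between each pair of adjacent blocks. Acyclicity is thus enforced by a global stratification of the half-spaces into nested layers, which is precisely the control that a local rule on maximal cubes cannot supply. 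If you want to salvage your approach, you would need to replace "maximal cubes" by some globally coherent scheme for selecting, for each independent square, which edge to delete — and that is essentially what the paper's nested coloring accomplishes.
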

\begin{proof}
Let $\@H_\cvx^*(X) = \bigsqcup_{n \in \#N} \@H_n^*$ be a countable coloring of the nontrivial half-spaces such that each $H, \neg H$ receive the same color, so that $\@H_n := \@H_n^* \cup \{\emptyset, X\} \subseteq \@H_\cvx(X)$ is a subpocset, and each $\@H_n$ consists of pairwise nested half-spaces.
To see that such a coloring exists: by \cref{thm:nonnested}, if $H, K \in \@H_\cvx^*(X)$ are non-nested, then in particular, their vertex boundaries obey $\partial_\v H \cap \partial_\v K \ne \emptyset$; so it suffices to take a countable coloring of the intersection graph on these boundaries.

Now let $\@K_n^* := \bigcup_{m \ge n} \@H_m^*$ and $\@K_n := \@K_n^* \cup \{\emptyset, X\}$ for each $n \in \#N$, and consider the quotient median graphs $X/\@K_n$ of $\@K_n$-blocks for each $n$ (in the sense of \cref{thm:median-subpoc}).
Each $(x,y) \in G$ lies in a $\@K_n$-block for sufficiently large $n$, namely for $n$ such that $\cone_x y \in \@H_{n-1}^*$ (and all larger $n$), since $\cone_x y$ is the unique half-space with $x \not\in \cone_x y \ni y$.
Thus the increasing union of the $\@K_n$-block equivalence relations is $X^2$.

We will inductively construct subforests $T_0 \subseteq T_1 \subseteq T_2 \subseteq \dotsb \subseteq G$ such that the connected components of $T_n$ are the $\@K_n$-blocks.
The $\@K_0 = \@H_\cvx(X)$-blocks are singletons (by \cref{thm:half-sep}); thus put $T_0 := \emptyset$.
Suppose we are given a subforest $T_n$ whose components are precisely the $\@K_n$-blocks.
Each $\@K_{n+1}$-block $Y \in X/\@K_{n+1}$ is not separated by any half-spaces in $\@H_m$ for $m > n$, but is separated by $\@H_n$ into the $\@K_n$-blocks contained in $Y$.
Since $\@H_n$ is nested, $Y/\@H_n$ is a tree (\cref{thm:nested-tree}).
For each $Y \in X/\@K_{n+1}$ and each two $G$-adjacent $A, B \in Y/\@H_n$, there are only finitely many edges between $A, B$ (since they are contained in the hyperplane separating $A, B$); we may thus choose one and add it to $T_{n+1}$.
Then $T_{n+1}$ is acyclic, being a forest of blocks which are trees (the $T_n$-components) with a single edge between every pair of adjacent blocks.
Put $T := \bigcupup_n T_n$.
\end{proof}

\begin{example}
The tree $T$ constructed above might not preserve the ends of $G$, in that the map
\begin{align*}
\^\iota : \^X^T --> \^X^G
\end{align*}
induced by the inclusion $\iota : (X,T) -> (X,G)$ might not be injective (it is always surjective since $X$ is dense in $\^X^G$).
In the following bounded degree one-ended median graph (thin black lines), the hyperplanes (bold crossing lines) have been countably colored (from beneath to above), and the edges between adjacent $A, B \in Y/\@H_n$ in the above construction have been chosen, in such a way that the resulting $T$ (thick highlighted edges) has two ends:
\begin{center}
\begin{tikzpicture}
\begin{scope}[rotate=-45]
\foreach \i in {0,...,4} {
    \draw[graph edge] (\i,\i) -- ++(1,0) -- ++(0,1) -- ++(-1,0) -- ++(0,-1)
        ++(1,0) -- ++(1,0) -- ++(0,1) -- ++(-1,0) -- ++(0,-1);
}
\node at (6,6) {$\dotsb$};
\draw[graph cut cross] (.5,0) -- (.5,1);
\foreach \i in {0,...,4} {
    \draw[graph cut cross] (\i,{\i+.5}) -- ({\i+2},{\i+.5});
    \draw[graph cut cross] ({\i+1.5},\i) -- ({\i+1.5},{\i+2});
}
\draw[graph tree]
    (5,5)
    foreach \i in {4,...,0} {-- (\i,{\i+1}) -- (\i,\i)}
    -- (1,0)
    foreach \i in {0,...,4} {-- ({\i+2},\i) -- ({\i+2},{\i+1})};
\end{scope}
\end{tikzpicture}
\end{center}
\end{example}

In spite of this example, we may tweak the above construction so that

\begin{proposition}
\label{thm:med-subtree-qi}
If $G$ has bounded degree, and hyperplanes in $G$ also have bounded diameters, then the tree $T$ in \cref{thm:med-subtree} may be constructed so that the inclusion $\iota : T -> G$ is a quasi-isometry.
\end{proposition}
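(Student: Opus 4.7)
The plan is to modify the construction of \cref{thm:med-subtree} by using a \emph{finite} coloring of the hyperplanes, and then to bound the $T$-stretch along each $G$-edge by an induction over the finitely many color levels.

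To produce a finite coloring: let $\Delta, D$ bound the degree of $G$ and the hyperplane diameter, respectively. Each vertex lies on at most $\Delta$ hyperplanes (one per incident edge), and each $\partial_\v H$ sits in a $G$-ball of radius $D$, so has size at most some $B = B(\Delta, D)$. By \cref{thm:hyp-adj}, non-nested half-spaces satisfy $\partial_\v H \cap \partial_\v K \ne \emptyset$, so each hyperplane is non-nested with at most $B\Delta$ others. The non-nested graph on $\neg$-pairs of hyperplanes thus has bounded degree, and admits a proper coloring with some $N = N(\Delta, D) < \infty$ colors, yielding a $\neg$-invariant partition $\@H_\cvx^*(X) = \bigsqcup_{n=0}^{N-1} \@H_n^*$ with each $\@H_n$ pairwise nested. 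Running the proof of \cref{thm:med-subtree} with this \emph{finite} coloring terminates after $N$ steps: $T_0 \subseteq \dotsb \subseteq T_N = T$, with $T_N$ a spanning tree of $X$ since $\@K_N = \{\emptyset, X\}$.

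To bound the stretch, define
\[
L_n := \sup\{d_{T_n|A}(x,y) : A \text{ is a } \@K_n\text{-block, } (x,y) \in G|A\},
\]
with $\sup\emptyset = 0$. Since $\@K_0 = \@H_\cvx(X)$-blocks are singletons by \cref{thm:half-sep}, $L_0 = 0$. I claim $L_{m+1} \leq 2DL_m + 1$. Fix a $G$-edge $(x,y)$ in a $\@K_{m+1}$-block $Y$ with $H := \cone_x y \in \@H_m$, so $x, y$ lie in adjacent $\@K_m$-sub-blocks $A_x \subseteq \neg H$ and $A_y \subseteq H$ of $Y$; let $(a, a')$ with $a \in A_x$, $a' \in A_y$ be the edge chosen at step $m+1$ to connect them. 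Both $x, a$ lie in $\partial_\v H \cap A_x = \partial_\ov H \cap A_x$, the intersection of two convex sets, hence convex of $G$-diameter $\leq D$. A $G$-geodesic from $x$ to $a$ therefore stays inside this set, in particular inside the $\@K_m$-block $A_x$; each of its $\leq D$ edges then contributes at most $L_m$ to $d_{T_m|A_x}$, giving $d_{T_m|A_x}(x, a) \leq DL_m$ and symmetrically $d_{T_m|A_y}(a', y) \leq DL_m$. Concatenating through $(a, a')$,
\[
d_{T_{m+1}|Y}(x,y) \leq 2DL_m + 1.
\]
(If instead $(x,y)$ lies in a single $\@K_m$-sub-block, we trivially have $d_{T_{m+1}|Y}(x,y) \leq L_m \leq 2DL_m + 1$.) Iterating $N$ times bounds $L_N$ by a constant $C = C(\Delta, D, N)$, so $d_T(x,y) \leq C$ for every $G$-edge, and $d_T \leq C \cdot d_G$ by concatenation. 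Combined with the obvious $d_G \leq d_T$, this shows $\iota : T \to G$ is bilipschitz, in particular a quasi-isometry.

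The main obstacle is the finite-coloring step: justifying the degree bound on the non-nested graph under the precise definition of hyperplane diameter, and (for the later Borel applications) producing the finite coloring canonically as in \cref{rmk:canonical}, via standard Borel coloring results for bounded-degree graphs. Once finitely many colors are in hand, the telescoping stretch bound proceeds automatically and does not depend on any delicate choice of the connecting edges added at each level.
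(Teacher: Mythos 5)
Your proposal is correct and follows essentially the same route as the paper: bound the degree of the non-nestedness graph on hyperplanes using the degree and diameter bounds to get a finite coloring, then prove the telescoping recursion $L_{m+1}\le 2DL_m+1$ by routing each $G$-edge through the single chosen edge on its hyperplane and two geodesics of length $\le D$ inside the adjacent $\@K_m$-blocks. Your explicit use of convexity of $\partial_\ov H\cap A_x$ to keep the geodesic inside the block is exactly the (implicit) justification in the paper's induction step.
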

In particular, by \cref{thm:ends-coarse}, $\^\iota$ must then be a homeomorphism.
\begin{proof}
We follow the notation and setup from the proof of \cref{thm:med-subtree}.

If $D \ge 2$ is a degree bound for $G$, and $R$ is a bound on the diameters of hyperplanes, then $N := 3^{D^{2R+1}}$ is a bound on the number of colors needed for the coloring of $\@H^*_\cvx$.
Indeed, given $H \in \@H^*_\cvx$, any $K \in \@H^*_\cvx$ non-nested with it must share a boundary vertex with it by \cref{thm:nonnested}.
Thus $K$ is determined by its inner and outer boundaries, which is a pair of disjoint subsets of $\Ball_R(\partial_\v H)$. Because of our diameter bound, $\Ball_R(\partial_\v H) \subseteq \Ball_{2R}(x)$ for any fixed $x \in \partial_\v H$, so
\begin{equation*}
\abs{\Ball_R(\partial_\v H)} \le 1 + D + D^2 + \dotsb + D^{2R} \le D^{2R+1}.
\end{equation*}
Hence, the non-nestedness graph on half-spaces has degree $\le 3^{D^{2R+1}}-1 = N-1$, whence it has an $N$-coloring.

Now we claim that for each $n \le N$ and $\@K_n$-block $Y \in X/\@K_n$, any $G$-adjacent $x, y \in Y$ have
\begin{align*}
d_{T_n}(x, y) \le 1 + 2R + 4R^2 + \dotsb + (2R)^{n-1}
=: M_n.
\end{align*}
In other words, the inverse of the inclusion $(Y,T_n|Y) -> (Y,G|Y)$ (where $|Y$ denotes restriction to $Y$) is $M_n$-Lipschitz.
This is trivial for $n = 0$.
Suppose it holds for $n$; we show it for $n+1$.
If $x, y$ are in the same $\@K_n$-block, then we are done by the induction hypothesis.
Otherwise, the $\@K_n$-blocks containing $x, y$ are separated by the half-space $H := \cone_x y \in \@H_n$.
Let $(x', y') \in \partial_\ie H$ be the unique edge in $Y$ kept during the construction of $T_{n+1}$ from $T_n$.
Since $x, y, x', y' \in \partial_\v H$,
\begin{align*}
d_G(x, x'), d_G(y, y') &\le R, \\
\intertext{whence by the induction hypothesis,}
d_{T_n}(x, x'), d_{T_n}(y, y') &\le RM_n \\
\shortintertext{and so}
d_{T_{n+1}}(x, y) &\le d_{T_{n+1}}(x, x') + d_{T_{n+1}}(x', y') + d_{T_{n+1}}(y', y) \\
&\le 2RM_n + 1 = M_{n+1}.
\end{align*}
Taking $n = N$ completes the proof, since there is a single $\@K_N$-block.
\end{proof}

We now bring these notions into the Borel context:

\begin{definition}
A \defn{(Borel) median graphing} of a CBER $(X,E)$ is a Borel graphing each component of which is a median graph.
\end{definition}

By implementing the proofs of \cref{thm:med-subtree,thm:med-subtree-qi} in a Borel manner, we have

\begin{theorem}
\label{thm:cber-median}
Let $(X,E)$ be a CBER, $G$ be a median graphing of $E$ with finite hyperplanes.
Then there is a Borel subtreeing $T \subseteq G$ of $E$.
If moreover $G$ has bounded degree and hyperplanes with bounded diameters, then $T$ may be constructed so that the inclusion $T -> G$ is a quasi-isometry.
\end{theorem}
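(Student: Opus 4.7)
The plan is to execute the constructions of \cref{thm:med-subtree,thm:med-subtree-qi} Borel-uniformly across the $E$-classes. The first step is to set up a Borel parametrization of half-spaces. Since $G$ has finite hyperplanes, each nontrivial half-space $H$ in an $E$-class is uniquely determined by its finite inward edge boundary $\partial_\ie H$ (indeed, by any single edge in it, via $H = \cone_x y$; see \cref{thm:half-edge}). Hence the bundle $\{(x,H) \mid x \in X,\ H \in \@H_\cvx^*([x]_E)\}$ becomes standard Borel when each $H$ is encoded by $\partial_\ie H$, viewed as a point in the standard Borel space of finite subsets of $G$; and all of the derived operations we will need --- complementation $H \mapsto \neg H$, nestedness testing, hyperplane-equivalence, and the $\@H_n$- and $\@K_n$-block quotients at each stage --- are then Borel functions/relations in this encoding.

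Second, I produce a Borel countable coloring $c \colon \@H_\cvx^* \to \#N$ satisfying $c(H) = c(\neg H)$ and sending any two non-nested half-spaces to distinct colors. By \cref{thm:nonnested} applied with $n = 2$, non-nested $H, K$ satisfy $\partial_\v H \cap \partial_\v K \ne \emptyset$, so such a $c$ is obtained by pulling back, along the Borel map $H \mapsto \partial_\v H$, a Borel countable coloring of the intersection graph on finite subsets of $X$ (which exists by \cite[7.3]{KMtopics} applied to $E$); the symmetry $c(H) = c(\neg H)$ is automatic because $\partial_\v H = \partial_\v \neg H$. With $c$ in hand, I carry out the inductive construction of \cref{thm:med-subtree} to build Borel forests $T_0 \subseteq T_1 \subseteq \dotsb \subseteq G$. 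The only other (finite) choice in that proof --- picking one of the finitely many edges between two adjacent $A, B \in Y/\@H_n$ within a $\@K_{n+1}$-block $Y$ --- is resolved Borel-uniformly by fixing once and for all a Borel linear order on $X$, inducing a lexicographic Borel order on $G$, and always selecting the minimum edge. The union $T := \bigcupup_n T_n$ is then a Borel subtreeing of $E$.

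For the quasi-isometry refinement, the bounded-degree and bounded-hyperplane-diameter hypotheses force the non-nestedness graph on $\@H_\cvx^*$ to have degree $< N := 3^{D^{2R+1}}$ (as computed in \cref{thm:med-subtree-qi}). A Borel $N$-coloring of this bounded-degree Borel graph can then be obtained by standard Borel graph-coloring techniques (for instance by a Borel greedy argument along the order on half-spaces coming from their encoding as finite subsets of $G$), so the induction terminates by stage $N$ uniformly across components. The componentwise Lipschitz bound of \cref{thm:med-subtree-qi} then holds with a uniform constant, yielding a componentwise quasi-isometry $T \hookrightarrow G$. The main obstacle throughout is the first step: arranging the Borel encoding of $\@H_\cvx^*$ and of the associated half-space operations so that every subsequent manipulation --- complementation, nestedness, block quotients, and edge selection --- remains Borel. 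Once the finite-hyperplane assumption is used to tie each half-space to a finite subset of $G$, the combinatorial arguments of \cref{thm:med-subtree,thm:med-subtree-qi} transfer essentially verbatim.
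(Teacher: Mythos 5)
Your proposal follows the paper's proof in essentially the same way: encode each nontrivial half-space by its finite boundary data to obtain a standard Borel space of half-spaces, produce a Borel countable coloring whose classes are $\neg$-symmetric and pairwise nested, run the induction of \cref{thm:med-subtree} with Borel edge selections, and pass to a finite coloring in the bounded-degree case so that \cref{thm:med-subtree-qi} applies. However, your coloring step has a genuine gap. A proper coloring of the intersection graph on finite subsets only separates two half-spaces whose total vertex boundaries are \emph{distinct} intersecting sets; but two distinct, non-complementary, non-nested half-spaces can have \emph{equal} total vertex boundaries --- the two crossing half-spaces of a single $4$-cycle already do --- and these correspond to a single vertex of the intersection graph, hence receive the same color under your pullback. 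The resulting color class is then not pairwise nested, and the construction of \cref{thm:med-subtree} breaks there (the quotient $Y/\@H_n$ need not be a tree). This is exactly why the paper invokes Lusin--Novikov in addition to \cite[7.3]{KMtopics}: the map $H \mapsto \partial_\v H$ is finite-to-one, so one can Borel-enumerate the unordered pairs $\{H,\neg H\}$ within each fiber and refine the pulled-back coloring by this index while keeping $c(H)=c(\neg H)$.

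A second, smaller defect: in the bounded-degree case, a greedy coloring along an arbitrary Borel linear order on the half-spaces is not well-defined, since the recursion ``use the least color not used by earlier neighbors'' need not be well-founded on an infinite component of the non-nestedness graph; this is precisely the obstruction that makes finite Borel colorings of bounded-degree Borel graphs a theorem rather than a triviality. The correct tool is \cite[4.6]{KSTchrom}, which gives a Borel $N$-coloring of a Borel graph of degree $<N$, and is what the paper cites. With these two repairs your argument coincides with the paper's.
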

\begin{proof}
Let $\@H_\cvx^*(X)$ be the standard Borel space of nontrivial half-spaces in a component of $G$, where we identify a nontrivial half-space $H$ with the pair $(\partial_\iv H, \partial_\ov H)$ of nonempty sets of vertices.
By \cite[7.3]{KMtopics} and Lusin--Novikov, there is a Borel countable coloring $\@H_\cvx^*(X) = \bigsqcup_{n \in \#N} \@H_n^*$ such that each $H, \neg H$ receive the same color and any two distinct non-complementary half-spaces with intersecting vertex boundary receive distinct colors.
Define inductively Borel subforests $\emptyset =: T_0 \subseteq T_1 \subseteq \dotsb \subseteq G$ by taking $T_{n+1}$ to be $T_n$ together with a single edge on the hyperplane separating any two $G$-adjacent $T_n$-components in the same $(\@K_n = \bigcup_{m \ge n} \@H_m)$-block, chosen using Lusin--Novikov, and put $T := \bigcup_n T_n$; this works by the proof of \cref{thm:med-subtree}.

In the bounded-degree case, use \cite[4.6]{KSTchrom} to choose a finite Borel coloring $\@H_\cvx^*(X) = \bigsqcup_{n < N} \@H_n^*$ to begin with, where $N$ is as in the proof of \cref{thm:med-subtree-qi}.
\end{proof}

\section{Proper wallspaces}
\label{sec:walls}

In the rest of the paper, we apply the results of the previous section to show treeability of CBERs equipped with various kinds of geometric structures from which a median graph may be constructed in a canonical manner (in the precise sense of \cref{rmk:canonical}).
We begin in this section with a fairly general kind of such structure, before specializing in the following section to ``tree-like'' graphs.

Recall \cref{def:walls} of \emph{wallspace}: a set $X$ equipped with a subpocset $\@H = \@H(X) \subseteq 2^X$, called the \emph{walling} of the wallspace, which is \emph{finitely separating}, meaning
\begin{enumerate}[label=(\roman*)]
\item \label{ax:walls-finsep}
For any $x, y \in X$, there are only finitely many $H \in \@H(X)$ with $x \in H \not\ni y$.
\end{enumerate}
By \cref{thm:subpoc-isolated}, this means that $\@H(X) \subseteq 2^X$ is closed and has nontrivial elements isolated.

\begin{definition}
\label{def:walls-proper}
We call a wallspace $X$ \defn{proper} if it additionally satisfies:
\begin{enumerate}[resume*]
\item \label{ax:walls-finblock}
For any $x \in X$, there are only finitely many $y \in X$ with $\forall H \in \@H(X)\, (x \in H \iff y \in H)$, i.e., each $\@H$-block is finite.
\item \label{ax:walls-finnest}
For any $H \in \@H(X)$, there are only finitely many $K \in \@H(X)$ non-nested with $H$.
\item \label{ax:walls-fincover}
For any $H \in \@H^*(X)$, there are only finitely many successors $H \subsetneq K \in \@H^*(X)$.
\end{enumerate}
By \cref{thm:med-hyp-locfin}, the last two conditions mean that the dual median graph $\@U^\circ(\@H(X))$ is locally finite and has finite hyperplanes.
Condition \cref{ax:walls-finblock} means that $x |-> \^x : X -> \@U^\circ(\@H(X))$ is finite-to-one.
\end{definition}

\begin{definition}
\label{def:walling}
A \defn{Borel walling} of a CBER $(X,E)$ is a ``Borel assignment'' of a walling $\@H(C) \subseteq 2^C$, i.e., subpocset satisfying \cref{def:walls}, for each $E$-class $C \in X/E$.

To make this precise, we may first construct the Borel bundle (see \cref{sec:cber}) of powersets
\begin{align*}
2^E_X := \bigsqcup_{x \in X} 2^{[x]_E} := \{(x,A) \mid x \in X \AND A \subseteq [x]_E\}
\end{align*}
equipped with the first projection $p : \bigsqcup_{x \in X} 2^{[x]_E} -> X$, and the Borel structure generated by declaring, for any partial Borel map $f : X \rightharpoonup X$ with graph contained in $E$, the set
\begin{align*}
\{(x,A) \mid x \in \dom(f) \AND f(x) \in A\} \subseteq 2^E_X
\end{align*}
to be Borel.
Equivalently, given a Borel family of bijections $(e_x : \#N \cong [x]_E)_{x \in X}$ via the Lusin--Novikov uniformization theorem, we may identify $2^E_X$ with $X \times 2^{\#N}$ with the product Borel structure via the isomorphism $(x,A) |-> (x,e_x^{-1}(A))$.
Note that $E$ acts on this bundle $p : 2^E_X -> X$ by transporting between fibers $p^{-1}(x), p^{-1}(y)$ over $E$-related points $(x, y) \in E$:
\begin{equation*}
(x,y) \cdot (x,A) := (y,A).
\end{equation*}
Now the condition for calling the walling Borel is that the set
\begin{align*}
\@H_X(E) := \bigsqcup_{x \in X} \@H([x]_E) = \{(x,H) \mid H \in \@H([x]_E)\} \subseteq 2^E_X
\end{align*}
is Borel.
Note that it is then an $E$-invariant Borel subset of $2^E_X$, to which the action $E \actson 2^E_X$ restricts; the orbit equivalence relation of this action is then a CBER on the space $\@H_X(E)$.

A \defn{proper Borel walling} is a Borel walling $\@H_X(E)$ such that for each $C \in X/E$, $\@H(C)$ is a proper walling on $C$.
We call a CBER $E$ \defn{properly wallable} if it admits a proper Borel walling.
\end{definition}

\begin{lemma}
\label{thm:walling-smooth}
Let $\@H = \@H_X(E) \subseteq 2^E_X$ be a proper Borel walling of a CBER $(X,E)$.
Then the orbit equivalence relation of the action $E \actson \@H_X^*(E)$ is smooth.
\end{lemma}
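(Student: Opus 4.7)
The plan is to witness smoothness by producing a Borel reduction of the orbit equivalence relation $F$ of $E \actson \@H^*_X(E)$ to equality on a standard Borel space. The starting observation is that the $E$-action on $\@H^*_X(E)$ only changes the basepoint, so $(x, H) \mathrel{F} (x', H')$ iff $x \mathrel{E} x'$ and $H = H'$ as subsets of $[x]_E$; moreover, since $H \in \@H^*([x]_E)$ is nonempty and contained in a single $E$-class, that class is already determined by $H$ itself, and the condition collapses to just $H = H'$ as subsets of $X$. Hence each $F$-orbit is fully determined by the bare subset $H \subseteq X$.

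To Borel-encode such an $H$ into a standard Borel space, I would refine the Polish topology on $X$ (preserving its standard Borel structure) so that every $E$-class becomes closed and discrete---a routine descriptive set theoretic maneuver---after which each $H \in \@H^*([x]_E)$ is closed in $X$ and so defines a point of the Effros Borel space $F(X)$. Fixing a Lusin--Novikov enumeration $(e_x)_{x \in X}$ of the $E$-classes, the map $c : (x, H) \mapsto H \in F(X)$ would then be Borel: for open $U \subseteq X$,
\[
c^{-1}\{F \in F(X) : F \cap U \ne \emptyset\} = \bigcup_n \bigl(\{(x, H) : e_x(n) \in H\} \cap \{(x, H) : e_x(n) \in U\}\bigr),
\]
where the first factor is Borel by the very definition of the Borel structure on $\@H_X(E) \subseteq 2^E_X$ (applied to the Borel map $x \mapsto e_x(n)$, whose graph lies in $E$) and the second is Borel as a preimage. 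By the opening observation, $c$ is then constant on $F$-orbits and separates them, so it Borel-reduces $F$ to equality on the standard Borel space $F(X)$, witnessing smoothness.

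The main step requiring care in this plan is the topology refinement, which is standard but slightly off-topic; an alternative that explicitly uses properness is to first pass to a Borel transversal of the smooth FBER $\sim_\@H$ (whose finite blocks are ensured by~\ref{ax:walls-finblock}) to reduce to the singleton-block case, and then to encode each $H$ via its canonical finite hyperplane $\partial_\ie \^H$ in the dual median graph $\@U^\circ(\@H(C))$ (finite by \cref{thm:med-hyp-locfin})---a finite Borel datum which determines $H$ and depends only on the $F$-orbit, reducing $F$ to an FBER on a space of finite subsets and hence to smooth.
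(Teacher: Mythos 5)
Your main route has a genuine gap at the change-of-topology step. You cannot, in general, refine the Polish topology of $X$ (keeping the same Borel structure) so that every $E$-class becomes closed: if every $E$-class were closed, then $x \mapsto [x]_E$ would be a Borel map to the Effros Borel space of closed subsets of $X$ (Borel because $\{x \mid [x]_E \cap U \neq \emptyset\} = [U]_E$ is Borel for a CBER), and this map reduces $E$ to equality, so $E$ would be smooth --- false for, say, $E = E_0$. The deeper obstruction is that your opening reduction ``each orbit is determined by the bare countable set $H \subseteq X$'' cannot be completed without invoking properness: the equivalence relation of ``equal range'' on $X^{\#N}$ is not smooth (mapping $\alpha$ to an enumeration of its $E_0$-class reduces $E_0$ to it), so there is no Borel way to code arbitrary countable subsets of $X$, presented with a basepoint, into a standard Borel space. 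Some finiteness supplied by \cref{def:walls-proper} is indispensable, and your first argument never uses it.

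Your alternative is closer in spirit to the paper's proof but is circular as stated. The hyperplane $\partial_\ie \^H$ in the dual median graph $\@U^\circ(\@H(C))$ is a finite set of edges between clopen \emph{orientations} of $\@H(C)$; these orientations are infinite families of half-spaces, and (as the paper's remark following the lemma points out) the boundary of $\^H$ need not contain any principal orientations $\^x$, so you cannot identify its vertices with points of $X$ even after passing to a transversal of the $\@H$-blocks. Turning such an orientation into a ``finite Borel datum'' --- say, its finite set of minimal elements --- presupposes the standard Borel space $\@H_X^*(E)/E$ of half-spaces, whose existence is exactly what the lemma is needed for. The paper's proof instead selects, inside each orbit, the finitely many pairs $(x,H)$ for which $\^x \in \^H$ is at minimal distance from $\neg\^H$ in $\@U^\circ(\@H([x]_E))$ --- an ``approximate boundary'' consisting of actual points of $X$; properness guarantees this set is finite and nonempty, yielding a Borel complete section meeting each orbit in a finite nonempty set, whence smoothness.
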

\begin{proof}
We have a Borel complete section $S \subseteq \@H_X^*(E)$ selecting finitely many points from each orbit, namely all those $(x,H) \in \@H_X^*(E)$, i.e., $H \in \@H^*([x]_E)$, such that in the median graph $\@U^\circ(\@H([x]_E))$, the corresponding principal orientation $\^x \in \^H$ is at minimal distance from the set of all $\^y \in \neg \^H$; there exist such $\^x, \^y$ since $H \ne \emptyset, X$, the set of nearest $\^x \in \@U^\circ(\@H([x]_E))$ is finite for each $H$ by the axioms of \cref{def:walls-proper}, and the entire set $S$ is easily seen to be Borel using \cref{thm:half-dist}.
\end{proof}

\begin{remark}
The set $S$ in the proof above can be seen as selecting the points $x$ ``approximately on the inner boundary'' of each half-space $H$.
The ``approximate'' is needed because the genuine inner boundary in the median graph $\@U^\circ(\@H([x]_E))$ may not contain any principal orientations, as in \cref{fig:wall-boundary-nonunique}, which also shows that this ``approximate boundary'' $S$ may not uniquely identify $H$.

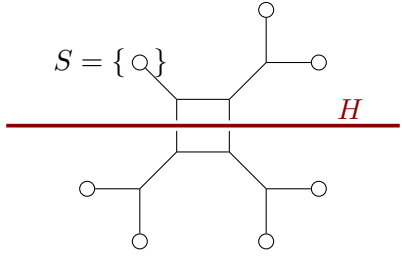
\begin{figure}[htb]
\centering
\begin{tikzpicture}[every edge/.style={graph edge}, scale=.7, every node/.style={inner sep=2pt}]
\path
    (0,0) coordinate(a)
        ++(-.7,-.7) coordinate(a0) edge (a)
            +(-1,0) node[draw,circle](a00){} edge (a0)
            +(0,-1) node[draw,circle](a01){} edge (a0)
    (1,0) coordinate(b) edge (a)
        ++(.7,-.7) coordinate(b0) edge (b)
            +(1,0) node[draw,circle](b00){} edge (b0)
            +(0,-1) node[draw,circle](b01){} edge (b0)
    (1,1) coordinate(c) edge (b)
        ++(.7,.7) coordinate(c0) edge (c)
            +(1,0) node[draw,circle](c00){} edge (c0)
            +(0,1) node[draw,circle](c01){} edge (c0)
    (0,1) coordinate(d) edge (a) edge (c)
        ++(-.7,.7) node[draw,circle,label={left:$S=\{$},label=right:$\}$](d0){} edge (d);
\draw[graph cut cross] (-3,.5) -- node[pos=.9,above]{$H$} (4,.5);
\end{tikzpicture}
\caption{A dual median graph with a half-space $H$ with no principal orientations (circled vertices) on its boundary, and such that the set $S$ of principal orientations in $H$ closest to its boundary do not uniquely identify it (the left side of the vertical hyperplane down the middle would have the same $S$).}
\label{fig:wall-boundary-nonunique}
\end{figure}

While such uniqueness is not required for smoothness of $E \actson \@H_X^*(E)$, we would like to point out that it \emph{is} possible to define a more involved (but still canonical) finite ``approximate boundary'' of $H \in \@H^*(X)$ in a proper wallspace $X$ that uniquely identifies $H$.
Namely, we may take the union of all minimal-diameter subsets of $X$ that generate $\partial_\v H$ under the median operation $\ang{\blank,\blank,\blank}$, and then remember the restrictions of $H, \neg H$ to this finite set.
\end{remark}

\begin{remark}
\label{rmk:walling-concrete}
The significance of \Cref{thm:walling-smooth} is that we may construct a standard Borel space $\@H_X^*(E)/E$ of all nontrivial half-spaces in all equivalence classes, rather than merely a bundle of such spaces varying over a basepoint.
This space is key in the main \cref{thm:cber-wallable-treeable} of this section.

However, in concrete examples of wallings where there is a natural notion of finite ``boundary'', such as cuts in graphs as in \cref{sec:cuts}, it is easier to bypass the bundle $\@H_X^*(E)$ and directly construct the standard Borel space of the half-spaces in the given walling by representing each half-space $H$ via its finite ``boundary''.
\end{remark}

\begin{theorem}
\label{thm:cber-wallable-treeable}
A CBER $(X,E)$ is properly wallable iff it is treeable.
\end{theorem}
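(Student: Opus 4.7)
For $(\Leftarrow)$, given a Borel treeing of $E$, which we may assume locally finite by \cite[3.12]{JKLcber}, the plan is to take $\@H(C)$ on each class $C$ to consist of the half-spaces of the tree $T|_C$ together with $\emptyset$ and $C$. The four properness axioms of \cref{def:walls-proper} then follow from standard tree facts collected earlier: finite separation from \cref{thm:half-dist}, singleton $\@H$-blocks from \cref{thm:half-sep}, nestedness of all pairs from \cref{thm:nested-tree}, and finitely many successors $\cone_x y \cup \cone_y z$ indexed by $T$-neighbors $z$ of $y$ other than $x$. Borelness is immediate because each half-space is encoded by its oriented boundary edge in $T$.

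For $(\Rightarrow)$, the plan is to apply \cref{thm:cber-median} to the bundle of dual median graphs and then transfer the resulting treeing back to $X$. By properness and \cref{thm:med-hyp-locfin}, each dual median graph $\tilde C := \@U^\circ(\@H(C))$ is countable, locally finite, and has finite hyperplanes. These assemble into a standard Borel bundle $\tilde X = \bigsqcup_C \tilde C$ analogously to \cref{def:walling}: start with the Borel set $\{(x,V) \in 2^E_X : V \in \@U^\circ(\@H([x]_E))\}$ and quotient by the $E$-action on basepoints, which is smooth since each orbit has a canonical representative $(x,V)$ with $\^x$ minimizing the median-graph distance to $V$ (finitely many candidates by local finiteness of $\tilde C$ and finiteness of $\@H$-blocks, broken by a Borel linear order on $X$). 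The bundle carries a Borel CBER $\tilde E$ with classes the $\tilde C$ and a Borel median graphing $\tilde G$ with finite hyperplanes, so \cref{thm:cber-median} provides a Borel subtreeing $\tilde T \subseteq \tilde G$.

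To descend $\tilde T$ to $E$, note that $\pi : X \to \tilde X$, $x \mapsto \^x$, is Borel with finite fibers equal to the $\@H$-blocks, and has Borel image $X^\circ \subseteq \tilde X$. Define $T^\circ \subseteq (X^\circ)^2$ by declaring $(\^x,\^y) \in T^\circ$ iff the unique $\tilde T$-path between them meets $X^\circ$ only at its endpoints; cutting $\tilde T$-paths at their $X^\circ$-hits yields connectivity on each $\tilde E|_{X^\circ}$-class, while lifting any would-be cycle of $T^\circ$ back to $\tilde T$ yields acyclicity, so $T^\circ$ Borel-treeings $\tilde E|_{X^\circ}$. Letting $F \subseteq E$ be the FBER whose classes are the $\@H$-blocks, $\pi$ descends to a Borel isomorphism $X/F \cong X^\circ$ that carries $E/F$ onto $\tilde E|_{X^\circ}$, so $E/F$ is Borel treeable. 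A treeing of $E$ itself is then obtained by lifting through the FBER $F$: add a Borel spanning tree inside each finite $F$-class, and for each edge of the treeing of $E/F$ add one Lusin--Novikov-selected representative edge of $X^2$ between the corresponding $F$-classes; a tree-of-trees vertex/edge count on each $E$-class confirms the result is a treeing.

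The main obstacle is that $\pi$ need not be surjective, since a proper walling can admit non-principal clopen orientations (already when $X = \{a,b,c\}$ with all subsets as half-spaces, whose dual median graph is a tripod with one non-principal central vertex). Routing the argument through $\tilde X$ and then restricting to $X^\circ$ is precisely what circumvents this; the remaining work is the bundle parameterization and FBER bookkeeping sketched above, together with the direct appeal to \cref{thm:cber-median}.
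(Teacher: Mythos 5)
Your overall route is the same as the paper's: both directions run through the dual median graph bundle and \cref{thm:cber-median}, your backward direction is correct, and your Borel parameterization of $\bigsqcup_C \@U^\circ(\@H(C))$ via canonical representatives is a legitimate variant of the paper's use of \cref{thm:walling-smooth}. The genuine gap is in your final descent from the subtreeing $\tilde T$ to a treeing of $\tilde E|_{X^\circ}$. The graph $T^\circ$ you define --- joining $\^x,\^y \in X^\circ$ when the $\tilde T$-geodesic between them meets $X^\circ$ only at its endpoints --- need not be acyclic, and your acyclicity argument fails: lifting a $T^\circ$-cycle to $\tilde T$ produces only a closed \emph{walk}, which is no contradiction with $\tilde T$ being a forest, since the walk can backtrack at vertices of $\tilde X \setminus X^\circ$. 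Your own tripod example realizes exactly this failure: for $X=\{a,b,c\}$ with $\@H = 2^X$, the dual median graph is the star with leaves $\^a,\^b,\^c$ and a non-principal center, $\tilde T$ is that star, and each pair of leaves is joined by a geodesic meeting $X^\circ$ only at its endpoints, so $T^\circ$ is a triangle. The contraction is wrong precisely at branch vertices of $\tilde T$ lying outside $X^\circ$.

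The repair is what the paper does: $x \mapsto \^x$ is a Borel (bi)reduction of $E$ to the CBER generated by the median graphing, and one then invokes \cite[3.3]{JKLcber}, i.e., that treeability is closed under Borel reducibility. The proof of that closure (treeability of the restriction to a Borel subset) requires a more careful contraction than yours, e.g., resolving each branch vertex outside $X^\circ$ by a Borel choice of a distinguished direction before collapsing. If you prefer to keep the argument self-contained, you must supply such a construction; as written, the step ``$T^\circ$ is a treeing of $\tilde E|_{X^\circ}$'' is false. Your subsequent lift through the finite $\@H$-blocks (spanning trees inside blocks plus one selected edge per quotient edge) is fine, though it too is subsumed by the citation once one notes the map is a reduction.
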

\begin{proof}
Given a proper walling $\@H_X(E) \subseteq 2^E_X$, let $\@U^\circ_X(\@H_X(E))/E$ be the Borel (locally finite) median graph with finite hyperplanes given by the disjoint union of $\@U^\circ(\@H(C))$ for each $C \in X/E$, where each clopen orientation $U \in \@U^\circ(\@H(C))$ is represented as its finite set of minimal elements in the standard Borel space of half-spaces $\@H_X^*(E)/E$.
(Equivalently, we first apply $\@U^\circ$ fiberwise to the bundle of pocsets $\@H_X(E)$, and then quotient by the $E$-action, hence the notation.)
By \cref{thm:cber-median}, the quotient median graph on $\@U^\circ_X(\@H_X(E))/E$ is treeable, and Borel bireducible with $E$ via $X \ni x |-> \^x \in \@U^\circ(\@H([x]_E))$, whence $E$ is also treeable by \cite[3.3]{JKLcber}.

Conversely, if $E$ is treeable, then it is locally finite treeable by \cite[3.12]{JKLcber}; the half-spaces of such a treeing yield a proper walling.
\end{proof}

\begin{remark}
\label{rmk:walls-proper-borel}
Given a CBER $(X,E)$ and a Borel assignment $C |-> \@H(C) \subseteq 2^C$ as in \cref{def:walling}, the union of those $C \in X/E$ in which $\@H(C)$ forms a proper walling is Borel; this easily follows from \cref{def:walls-proper}.
Thus, if we have countably many such Borel assignments $(\@H_i)_{i \in \#N}$, such that each $C \in X/E$ admits at least one $\@H_i(C)$ which is a proper walling, then we may combine these $\@H_i$ into a single $\@H$ which is a proper walling, whence $E$ is again treeable.
\end{remark}

\section{Graphs with dense families of cuts}
\label{sec:cuts}

In this section, we provide a general method for identifying proper wallings consisting of \emph{cuts} in graphs (i.e., ways of splitting the vertex set into two pieces) obeying various graph-theoretic properties.
We then apply this method to two specific classes of graphs, namely, quasi-trees and bounded tree-width graphs, to deduce that such graphings are treeable.

\subsection{Wallings of cuts}

Let $(X,G)$ be a connected locally finite graph.
As in \cref{sec:ends}, $\@H_\fin(X) = \@H_\fin^G(X) \subseteq 2^X$ denotes the Boolean algebra of subsets with finite $G$-boundary, whose Stone space $\^X$ is the end compactification of $X$.

\begin{remark}
\label{rmk:graph-walls}
Given a graph $(X,G)$ as above and a walling $\@H \subseteq 2^X$, to say that such $\@H$ is contained in $\@H_\fin(X)$ (which is \emph{not} usually a walling, due to not being closed in $2^X$) means precisely that the principal orientations map $X -> \@U^\circ(\@H)$ extends continuously to the end compactifications:
\begin{equation*}
\begin{tikzcd}
X \dar[hook] \rar & \@U^\circ(\@H) \dar[hook] \\
\^X \rar[dashed] & \^{\@U^\circ(\@H)}
\end{tikzcd}
\end{equation*}
Indeed, such an extension exists iff preimage under $X -> \@U^\circ(\@H)$ preserves $\@H_\fin$, which by \cref{thm:med-fin-cvx} means precisely that the preimage $H$ of each half-space $\^H \subseteq \@U^\circ(\@H)$ is in $\@H_\fin(X)$.

Note also that if $\@U^\circ(\@H)$ has finite hyperplanes, so that $\^{\@U^\circ(\@H)} \cong \@U(\@H)$ by \cref{thm:med-ends}, the induced map above becomes simply
\begin{align*}
2^{\@H_\fin} \supseteq \^X &--> \@U(\@H) \subseteq 2^{\@H} \\
U &|--> \{H \in \@H \mid H \in U\}.
\end{align*}
By \cref{thm:walls-ends-dense}, this map is surjective on ends.
\end{remark}

\begin{lemma}
\label{thm:graph-walling-fin}
For a subpocset $\@H \subseteq \@H_\fin(X)$, the following are equivalent:
\begin{enumerate}[label=(\roman*)]
\item \label{thm:graph-walling-fin:walling}
$\@H$ is a walling, i.e., any $x, y \in X$ are separated by only finitely many $H \in \@H$.
\item \label{thm:graph-walling-fin:fin}
Each $x \in X$ is on the boundary of only finitely many $H \in \@H$.
\end{enumerate}
\end{lemma}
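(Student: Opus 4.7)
The plan is to prove both implications directly, leveraging only the connectedness and local finiteness of $G$ together with the definition of the vertex boundary $\partial_\v H = \partial_\iv H \cup \partial_\ov H$.

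For \cref{thm:graph-walling-fin:fin}$\implies$\cref{thm:graph-walling-fin:walling}, I would fix $x, y \in X$ and choose a finite $G$-path $x = x_0, x_1, \dotsc, x_n = y$, which exists by connectedness. If $H \in \@H$ separates $x$ from $y$, then some consecutive pair $x_i, x_{i+1}$ lies on opposite sides of $H$, so at least one of them (indeed both) belongs to $\partial_\v H$. Hence the set of $H \in \@H$ with $x \in H \not\ni y$ embeds into $\bigcup_{i \le n} \{H \in \@H \mid x_i \in \partial_\v H\}$, which is a finite union of finite sets by hypothesis.

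For \cref{thm:graph-walling-fin:walling}$\implies$\cref{thm:graph-walling-fin:fin}, I would argue contrapositively: suppose $x \in X$ lies on $\partial_\v H$ for infinitely many $H \in \@H$. For each such $H$, by definition of $\partial_\v H$ there is some neighbor $y$ of $x$ in $G$ separated from $x$ by $H$. Since $G$ is locally finite, $x$ has only finitely many neighbors, so by pigeonhole some single neighbor $y$ is separated from $x$ by infinitely many $H \in \@H$, contradicting \cref{thm:graph-walling-fin:walling}.

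There is no real obstacle here; the statement is essentially a reformulation that exchanges the ``pairwise separation'' count for a ``pointwise boundary'' count, and both directions are one-line pigeonhole arguments once connectedness and local finiteness are used. The only subtlety is remembering that $\partial_\v H$ includes both the inner and outer vertex boundaries, so that a vertex on either side of the cut counts as a witness in the forward direction.
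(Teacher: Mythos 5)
Your proof is correct and follows essentially the same route as the paper's: a path-plus-finite-union argument for \cref{thm:graph-walling-fin:fin}$\implies$\cref{thm:graph-walling-fin:walling}, and local finiteness of $G$ at $x$ (stated contrapositively via pigeonhole, where the paper argues directly over the finitely many neighbors) for the converse. Both directions match the paper's reasoning, so there is nothing to add.
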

\begin{proof}
If \cref{thm:graph-walling-fin:walling} holds, then each $x \in X$ is separated from each of its finitely many neighbors by only finitely many $H \in \@H$, proving \cref{thm:graph-walling-fin:fin}.
Conversely, if \cref{thm:graph-walling-fin:fin} holds, then for $x, y \in X$, pick any path between them; $H \in \@H$ separating $x, y$ must separate some edge along this path, and there are only finitely many such $H$ for each edge, proving \cref{thm:graph-walling-fin:walling}.
\end{proof}

This gives a graph-theoretic reformulation of axiom \ref{def:walls-proper}\cref{ax:walls-finsep} of walling.
To reformulate the remaining axioms of proper walling, we need to impose connectedness:

\begin{definition}
Let $\@H_\conn(X) = \@H^G_\conn(X) \subseteq 2^X$ denote the subpocset of $A \subseteq X$ such that $A, \neg A$ are connected or empty, and as usual (\cref{conv:H}), $\@H_\conn^* := \@H_\conn \setminus \{\emptyset, X\}$.
\end{definition}

\begin{lemma}
\label{thm:cut-limit}
No $A \in \@H_\fin^*(X)$ is a limit point (in $2^X$) of $\@H_\conn(X)$.
\end{lemma}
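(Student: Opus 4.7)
The plan is to prove the contrapositive by a direct limit-point argument exploiting the connectedness conditions. Suppose for contradiction that $A \in \@H_\fin^*(X)$ is a limit point of $\@H_\conn(X)$ in $2^X$. Since $2^X$ is metrizable, fix a sequence $H_n \in \@H_\conn(X) \setminus \{A\}$ with $H_n \to A$ pointwise.

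First I would set up the relevant finite data. Since $A$ is nontrivial, fix points $a \in A$ and $b \in \neg A$. Since $\partial_\v A$ is finite and pointwise convergence is uniform on finite sets, for all sufficiently large $n$ we have
\[
H_n \cap (\partial_\v A \cup \{a,b\}) = A \cap (\partial_\v A \cup \{a,b\}).
\]
In particular, for such $n$, $H_n$ agrees with $A$ on $\partial_\v A$, $a \in H_n$, and $b \in \neg H_n$, so $H_n \in \@H_\conn^*(X)$ and both $H_n$ and $\neg H_n$ are nonempty and connected. Restrict attention to such $n$.

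Now for each such $n$, since $H_n \neq A$, pick $x_n \in H_n \triangle A$; necessarily $x_n \notin \partial_\v A$. Split into cases by which side of the symmetric difference $x_n$ lies on. In the case $x_n \in A \setminus H_n$, we have $x_n \in \neg H_n$, while $b \in \neg H_n$; by connectedness of $\neg H_n$, choose a path in $\neg H_n$ from $x_n$ to $b$. This path starts in $A$ and ends in $\neg A$, hence contains an edge $(u,v)$ with $u \in A$, $v \in \neg A$. Then $u \in \partial_\iv A \subseteq \partial_\v A$, so by the agreement above $u \in H_n$, contradicting that the path lies in $\neg H_n$. The case $x_n \in H_n \setminus A$ is symmetric, using connectedness of $H_n$ to join $x_n$ to $a$ and producing an edge of the path crossing from $\neg A$ to $A$ at a point of $\partial_\ov A \subseteq \partial_\v A$.

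This forces $H_n = A$ for all sufficiently large $n$, contradicting that all $H_n$ were chosen distinct from $A$. There is no real obstacle here: the only mildly delicate point is remembering to also trap the two auxiliary points $a,b$ in the finite agreement set to guarantee $H_n \in \@H_\conn^*$ and to have concrete endpoints for the connecting paths; the crossing-edge argument then closes immediately because the inner and outer boundaries of $A$ are locked by agreement on $\partial_\v A$.
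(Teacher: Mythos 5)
Your proof is correct and is essentially the paper's argument: the paper exhibits the clopen neighborhood of $A$ consisting of all $B$ with $\partial_\iv A \subseteq B$ and $B \cap \partial_\ov A = \emptyset$, and the same crossing-edge/connectedness reasoning shows this neighborhood meets $\@H_\conn(X)$ only in $A$. Your sequential phrasing and the auxiliary points $a,b$ are cosmetic variations on the same idea.
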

\begin{proof}
$A$ has a clopen neighborhood of all $B \subseteq X$ containing $\partial_\iv A$ and disjoint from $\partial_\ov A$, which is disjoint from $\@H_\conn(X) \setminus \{A\}$.
\end{proof}

\begin{corollary}
$\@H_\fin(X) \cap \smash{\-{\@H_\conn(X)}} = \@H_\fin(X) \cap \@H_\conn(X)$, with nontrivial elements isolated.
\qed
\end{corollary}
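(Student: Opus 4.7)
The plan is to deduce this corollary almost immediately from the preceding \cref{thm:cut-limit}. The key observation is that that lemma already does all the real work: for every nontrivial $A \in \@H_\fin^*(X)$, the explicit clopen set $\{B \subseteq X \mid \partial_\iv A \subseteq B \subseteq \neg \partial_\ov A\}$ is a neighborhood of $A$ which meets $\@H_\conn(X)$ in at most $\{A\}$ itself. So I just need to unpack the two inclusions and the isolation claim as consequences of this fact.

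First I would prove the inclusion $\@H_\fin(X) \cap \overline{\@H_\conn(X)} \subseteq \@H_\fin(X) \cap \@H_\conn(X)$ (the reverse inclusion is trivial). Fix $A$ on the left-hand side. If $A$ is one of the trivial sets $\emptyset$ or $X$, then by \cref{conv:H} we have $A \in \@H_\conn(X)$. Otherwise $A \in \@H_\fin^*(X)$, and \cref{thm:cut-limit} says $A$ is not a limit point of $\@H_\conn(X)$; since $A \in \overline{\@H_\conn(X)}$, this forces $A \in \@H_\conn(X)$, as required.

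Next, for the ``nontrivial elements isolated'' clause, let $A$ be any nontrivial element of the common set $\@H_\fin(X) \cap \@H_\conn(X)$. Applying \cref{thm:cut-limit} again, $A$ has a clopen neighborhood $U \subseteq 2^X$ disjoint from $\@H_\conn(X) \setminus \{A\}$, so $U \cap \@H_\conn(X) = \{A\}$. Then no point of $U \setminus \{A\}$ can be a limit of $\@H_\conn(X)$, so $U \cap \overline{\@H_\conn(X)} = \{A\}$ as well, which shows $A$ is isolated in $\@H_\fin(X) \cap \overline{\@H_\conn(X)}$.

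There is essentially no obstacle here; this is a bookkeeping corollary pulling together the preceding lemma and the convention on trivial elements. The only mild subtlety is remembering that ``isolated'' is being asserted inside $\overline{\@H_\conn(X)}$ (or equivalently inside $\@H_\fin(X) \cap \overline{\@H_\conn(X)}$) rather than inside $2^X$, but the clopen witness supplied by \cref{thm:cut-limit} already does the job for either interpretation.
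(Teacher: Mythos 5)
Your proof is correct and matches the paper's intent: the paper states this corollary with no written proof, treating it as an immediate unpacking of \cref{thm:cut-limit} together with the convention that $\emptyset, X$ belong to both families, which is exactly what you do. The one step you gloss slightly — that no point of $U \setminus \{A\}$ lies in $\overline{\@H_\conn(X)}$ — is fine because singletons are closed in $2^X$, so $U \setminus \{A\}$ is itself an open neighborhood disjoint from $\@H_\conn(X)$.
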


\begin{corollary}
\label{thm:cut-walling}
A subpocset $\@H \subseteq \@H_\fin \cap \@H_\conn$ is a walling iff it is closed in $2^X$.
\qed
\end{corollary}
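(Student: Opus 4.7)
The plan is to prove this by directly invoking the characterization of wallings from \cref{thm:subpoc-isolated} together with the isolation statement in the unnumbered corollary immediately preceding \cref{thm:cut-walling}.

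For the forward direction, if $\@H$ is a walling, then by \cref{def:walls} it satisfies the conditions of \cref{thm:subpoc-isolated}, in particular condition \cref{thm:subpoc-isolated:isolated}, which says that $\@H$ is closed in $2^X$. So this direction is essentially tautological.

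For the reverse direction, assume $\@H \subseteq \@H_\fin \cap \@H_\conn$ is a subpocset that is closed in $2^X$. To conclude that it is a walling, I need to verify condition \cref{thm:subpoc-isolated:isolated}: closedness is given, so all that remains is to show every nontrivial element of $\@H$ is isolated in $\@H$. This is exactly what \cref{thm:cut-limit} provides: any nontrivial $A \in \@H^* \subseteq \@H_\fin^*(X)$ is not a limit point of $\@H_\conn(X)$ in $2^X$, hence not a limit point of the smaller set $\@H$, hence isolated in $\@H$.

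No step should pose any real obstacle here; the whole content of the corollary is that the previous two facts (the definitional characterization of walling via \cref{thm:subpoc-isolated} and the isolation statement \cref{thm:cut-limit}) combine to eliminate the ``isolated'' axiom once one restricts to subpocsets of $\@H_\fin \cap \@H_\conn$, reducing the walling condition to mere topological closedness. The proof can be written in two sentences.
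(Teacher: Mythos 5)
Your proof is correct and is exactly the argument the paper intends (the corollary is stated with \verb|\qed| precisely because it follows by combining \cref{thm:subpoc-isolated} with \cref{thm:cut-limit} in the way you describe). Nothing to add.
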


We have two main examples of families $\@H$ for which the above conditions are obvious:

\begin{example}
\label{ex:cut-diam}
For any $R \ge 0$, $\@H_{\diamle R} := \{H \subseteq X \mid \diam(\partial_\v H) \le R\}$ is clearly closed in $2^X$, and also clearly finitely separating.
Thus, $\@H_{\diamle R} \cap \@H_\conn$ is a walling.
\end{example}

\begin{example}
\label{ex:cut-card}
For any $N \ge 0$, $\@H_{\cardle{N}} := \{H \subseteq X \mid \min(\abs{\partial_\iv H}, \abs{\partial_\ov H}) \le N\}$ is closed in $2^X$.
Thus, $\@H_{\cardle{N}} \cap \@H_\conn$ is a walling.
(However, $\@H_{\cardle{N}}$ may not be, e.g., in a $\#Z$-line, since $\#N \in \@H_{\cardle2}$ is not an isolated point, being the limit of $\#N \cup \{-1\}, \#N \cup \{-2\}, \dotsc \in \@H_{\cardle2}$.)
\end{example}

We now turn to proper wallings, i.e., axioms \ref{def:walls-proper}\cref{ax:walls-finblock}, \cref{ax:walls-finnest}, and \cref{ax:walls-fincover}; among these, \cref{ax:walls-finnest} is automatic:

\begin{lemma}
\label{thm:cut-nonnested}
For a walling $\@H \subseteq \@H_\fin \cap \@H_\conn$, each $H \in \@H_\fin$ is non-nested with only finitely many $K \in \@H$.
Thus by \cref{thm:med-hyp-fin}, the median graph $\@U^\circ(\@H)$ has finite hyperplanes.
\end{lemma}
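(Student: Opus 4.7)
The plan is to use the connectedness built into $\@H_\conn$ to force each non-nested $K$ to visibly cross the finite inner boundary of $H$, and then to invoke the finite-separation property of the walling. Since $H \in \@H_\fin$ and the ambient graph $G$ is locally finite, the inner vertex boundary $\partial_\iv H$ is finite; this finite set will be the site on which all the relevant data of any non-nested $K$ must appear.

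First I would establish a crossing property: if $K \in \@H$ is non-nested with $H$, then $K$ separates some pair of vertices in $\partial_\iv H$. Indeed, non-nestedness says that all four corners $\neg^a H \cap \neg^b K$ are nonempty, so in particular $K$ meets both $H$ and $\neg H$. Since $K$ is connected (as $K \in \@H_\conn$), any path in $K$ from a vertex of $K \cap H$ to a vertex of $K \cap \neg H$ must contain an edge with one endpoint in $\partial_\iv H$ and one in $\partial_\ov H$, giving a vertex of $K$ in $\partial_\iv H$. Applying the same argument to the connected complement $\neg K$ (which is also in $\@H_\conn$) produces a vertex of $\neg K$ in $\partial_\iv H$, witnessing that $K$ separates that pair.

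Finally, I would invoke the walling axiom: by \cref{thm:graph-walling-fin}, each fixed pair of vertices is separated by only finitely many elements of $\@H$, and summing over the finitely many pairs in $\partial_\iv H$ bounds the number of non-nested $K$'s, yielding the lemma. The ``Thus'' clause follows because $\@H \subseteq \@H_\fin$, so the bound applies to every $H \in \@H$ itself, which via the duality of \cref{thm:duality} corresponds to every half-space of $\@U^\circ(\@H)$; \cref{thm:med-hyp-fin} then delivers finite hyperplanes. The argument is short and I foresee no genuine obstacle; the only subtlety is that one must exploit connectedness of \emph{both} $K$ and $\neg K$, since using only the connectedness of $K$ would place $K$ on $\partial_\iv H$ but not necessarily place a vertex of $\neg K$ there as well.
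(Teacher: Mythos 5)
Your proof is correct and follows essentially the same approach as the paper: both arguments use connectedness of $K$ and of $\neg K$ to show that each meets the finite boundary of $H$, and then invoke the finiteness built into the walling. The only (harmless) difference is the final count: you apply the finitely-separating axiom directly to the pairs of vertices of $\partial_\iv H$ that $K$ is shown to separate, whereas the paper fixes a path $p_{xy}$ between each pair of boundary vertices of $H$ and counts the $K$'s whose boundary $\partial_\v K$ meets one of these finitely many paths, using the equivalent condition \ref{thm:graph-walling-fin}(ii); your shortcut is slightly more direct and equally valid.
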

\begin{proof}
Let $H \in \@H_\fin$ be non-nested with $K \in \@H$.
There is a path in $K$ between $H \cap K, \neg H \cap K$, whence $\partial_\v H \cap K \ne \emptyset$; similarly $\partial_\v H \cap \neg K \ne \emptyset$.
Now any path between $\partial_\v H \cap K, \partial_\v H \cap \neg K$ must contain a vertex on $\partial_\v K$.
Thus, fixing for each $x, y \in \partial_\v H$ a path $p_{xy}$ between them, we have that any $K$ non-nested with $H$ must contain some $z$ on some $p_{xy}$ on its boundary $\partial_\v K$.
So given $H$, there are finitely many $x, y \in \partial_\v H$, for each of which there are finitely many $z$ on $p_{xy}$, for each of which there are finitely many $K \in \@H$ with $z \in \partial_\v K$ by \cref{thm:graph-walling-fin}.
\end{proof}

The following proposition characterizes the remaining axioms (\ref{def:walls-proper}\cref{ax:walls-finblock} and \cref{ax:walls-fincover}) of proper walling for cuts in graphs. For this, we introduce a density notion for cuts.

\begin{definition}
We call a family $\@H \subseteq \@H_\fin$ \defn{dense towards ends} (of $G$) if $\@H$ contains a neighborhood basis for every end.
\end{definition}

\begin{remark}
For $\@H$ as above, being dense towards ends is strictly weaker than ``$\@H$ forms a basis for the topology of $\^X$'' (since it needs only separate vertices of $X$ into finite classes), but is strictly stronger than ``$\@H$ restricts to a basis for the topology of $\^X \setminus X$'' (e.g., in a one-ended graph, any single cofinite $H \subseteq X$ yields a basis for the single end in $\^X \setminus X$, but $\@H$ that is dense towards ends must be infinite).
\end{remark}

\begin{proposition}
\label{thm:cut-proper}
For a walling $\@H \subseteq \@H_\fin \cap \@H_\conn$, the following are equivalent:
\begin{enumerate}[label=(\roman*)]
\item \label{thm:cut-proper:proper}
$\@H$ is a proper walling, i.e., obeys \labelcref{def:walls-proper}\cref{ax:walls-finblock} and \labelcref{def:walls-proper}\cref{ax:walls-fincover} (and also \labelcref{def:walls-proper}\cref{ax:walls-finnest} by \cref{thm:cut-nonnested}).
\item \label{thm:cut-proper:dense}
$\@H$ is dense towards ends of $G$.
\item \label{thm:cut-proper:inj}
The induced map $\^X -> \^{\@U^\circ(\@H)} \cong \@U(\@H)$ from \cref{rmk:graph-walls} takes ends to ends, and is injective on ends (hence bijective on ends by \cref{thm:walls-ends-dense}).
\end{enumerate}
\end{proposition}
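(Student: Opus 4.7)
My plan is to prove the three conditions equivalent via the cycle \cref{thm:cut-proper:proper}$\Rightarrow$\cref{thm:cut-proper:dense}$\Rightarrow$\cref{thm:cut-proper:inj}$\Rightarrow$\cref{thm:cut-proper:proper}. A useful initial observation is that axiom \cref{ax:walls-finnest} of \cref{def:walls-proper} holds automatically for any walling $\@H \subseteq \@H_\fin \cap \@H_\conn$ by \cref{thm:cut-nonnested}. Moreover, any two distinct successors $K_i, K_j$ of a nontrivial $H$ in such an $\@H$ must either be pairwise non-nested (whose number is bounded by \cref{thm:cut-nonnested}) or pairwise satisfy $K_i \cup K_j = X$ (the other nested cases being ruled out by minimality and $H \ne \emptyset$), which can be leveraged together with end-accumulation to force axiom \cref{ax:walls-fincover}.

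The central implication is \cref{thm:cut-proper:proper}$\Rightarrow$\cref{thm:cut-proper:dense}. Fix an end $U \in \^X$ and a clopen neighbourhood $\^A \ni U$ with $A \in \@H_\fin$, and set $V := U \cap \@H \in \@U(\@H)$. I first show $V$ is an end of the dual median graph $\@U^\circ(\@H)$: propriety makes $\@U^\circ(\@H)$ locally finite with finite hyperplanes, so if $V$ were principal at some block $B$ (finite by \cref{ax:walls-finblock}), the isolation of $V$ in $\@U(\@H)$ together with compactness gives $\{V\} = \^{H_1} \cap \dotsb \cap \^{H_n}$ with $H_i \in V$, whence $\pi^{-1}(\{V\}) = \^{H_1 \cap \dotsb \cap H_n}$ in $\^X$. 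Identifying principal ultrafilters on both sides via block-constancy yields $H_1 \cap \dotsb \cap H_n = B$, so this fibre is $\^B$ in $\^X$, which consists only of the finite set of principal ultrafilters $\{\^x : x \in B\}$---contradicting $U$'s nonprincipality. With $V$ now an end, \cref{thm:med-ends-half-basis} applied in $\@U^\circ(\@H)$ yields a half-space neighbourhood of $V$ disjoint from the finite set $\{\^x : x \in \partial_\v A\} \subseteq \@U^\circ(\@H)$, giving $H \in V$ with $H \cap \partial_\v A = \emptyset$. Because $H$ is connected (by $\@H \subseteq \@H_\conn$) and disjoint from the boundary of $A$, either $H \subseteq A$ or $H \subseteq \neg A$; the latter is incompatible with $H, A \in U$, so $H \subseteq A$, giving $U \in \^H \subseteq \^A$.

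The remaining implications are much more direct. For \cref{thm:cut-proper:dense}$\Rightarrow$\cref{thm:cut-proper:inj}: density separates an end $U$ from any principal $\^x$ by choosing $H \in U \cap \@H$ with $H \subseteq X \setminus \{x\}$, and separates two distinct ends $U_1, U_2$ by refining disjoint clopen neighbourhoods to half-space neighbourhoods which (being disjoint in $X$) are nested in $\@H$. For \cref{thm:cut-proper:inj}$\Rightarrow$\cref{thm:cut-proper:proper}: \cref{ax:walls-finblock} follows because an infinite block $B$ would accumulate (by local finiteness of $G$ and compactness of $\^X$) to some end $W \in \^X$, and block-constancy would force $W \cap \@H$ to equal the principal orientation at $B$, contradicting ``takes ends to ends''; \cref{ax:walls-fincover} then follows by a similar end-accumulation applied to the pairwise-disjoint connected sets $\{\neg K_i\}$ identified in the initial observation.

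The main obstacle will be the auxiliary step inside \cref{thm:cut-proper:proper}$\Rightarrow$\cref{thm:cut-proper:dense} of showing that $V = U \cap \@H$ is an end when $U$ is: pinning down the fibre $\pi^{-1}(\{V\})$ exactly requires a compactness-plus-isolation argument in $\@U(\@H)$ together with an explicit set-theoretic identification via block-constancy. Once this is in hand, the connectedness condition $\@H \subseteq \@H_\conn$ does the remaining work in one clean step, converting the ``avoidance of $\partial_\v A$'' provided by \cref{thm:med-ends-half-basis} directly into the containment $H \subseteq A$.
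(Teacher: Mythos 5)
Your cyclic decomposition \cref{thm:cut-proper:proper}$\implies$\cref{thm:cut-proper:dense}$\implies$\cref{thm:cut-proper:inj}$\implies$\cref{thm:cut-proper:proper} differs from the paper's (which proves \cref{thm:cut-proper:dense}$\iff$\cref{thm:cut-proper:inj} first, then \cref{thm:cut-proper:dense}+\cref{thm:cut-proper:inj}$\implies$\cref{thm:cut-proper:proper}, then \cref{thm:cut-proper:proper}$\implies$\cref{thm:cut-proper:inj}). Your central implication \cref{thm:cut-proper:proper}$\implies$\cref{thm:cut-proper:dense} is correct and is a genuinely different route: properness makes $\@U^\circ(\@H)$ locally finite with finite hyperplanes, so \cref{thm:med-ends-half-basis} lets you separate $U \cap \@H$ from the finitely many vertices $\^x$, $x \in \partial_\v A$, and connectedness of the resulting $H \in \@H$ then forces $H \subseteq A$; the paper instead separates the image of $U$ from the image of the compact set $\neg\^A$, using injectivity. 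Your \cref{thm:cut-proper:dense}$\implies$\cref{thm:cut-proper:inj} and the \cref{ax:walls-finblock} half of \cref{thm:cut-proper:inj}$\implies$\cref{thm:cut-proper:proper} coincide with the paper's arguments.

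The gap is the \cref{ax:walls-fincover} half of \cref{thm:cut-proper:inj}$\implies$\cref{thm:cut-proper:proper}. Your reduction to infinitely many successors $K_a$ of $H$ with pairwise disjoint complements is fine (the non-nestedness graph is locally finite by \cref{thm:cut-nonnested}, so an infinite set of successors has an infinite ``co-disjoint'' subset). But after picking $x_a \in \neg K_a$ and accumulating to an end $W$, all you get is $K_a \in W$ for every $a$ and $\neg H \in W$; unlike the block case, it is not at all clear that $W \cap \@H$ is clopen, so no contradiction with ``ends map to ends'' has been reached. (The paper's proof of \cref{ax:walls-fincover} uses density \cref{thm:cut-proper:dense} in an essential way, covering the ends of $\neg\^H$ by complements of finitely many successors; density is not available at this point of your cycle.) The step can be completed, but it needs a genuine argument. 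For instance: since the only non-isolated points of $\@H$ are $\emptyset, X$ (\cref{thm:subpoc-isolated}), $W \cap \@H$ fails to be clopen iff for every finite $F$ some $L \in W \cap \@H$ is disjoint from $F$; taking $F = \partial_\v H$, connectedness of $L$ and $\neg H \in W$ give $\emptyset \ne L \subseteq \neg H$ with $H \subsetneq \neg L$; by finite separation only finitely many half-spaces lie between $H$ and $\neg L$, so some successor $M$ of $H$ has $M \subseteq \neg L$; since $\^L$ is a neighborhood of the accumulation point $W$, $L$ contains $x_a$ for infinitely many $a$, so $\neg M \cap \neg K_a \ne \emptyset$ for infinitely many $a$, and your dichotomy then makes $M$ non-nested with infinitely many $K_a$, contradicting \cref{thm:cut-nonnested}. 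Alternatively and more economically, the same ``non-clopen means avoiding every finite set'' observation applied with $F = \partial_\v A$ shows that the first half of \cref{thm:cut-proper:inj} already implies \cref{thm:cut-proper:dense} via your connectedness trick, after which the paper's covering argument for \cref{ax:walls-fincover} is available.
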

\begin{proof}
\cref{thm:cut-proper:dense}$\implies$\cref{thm:cut-proper:inj}:
An end cannot map to a vertex, since it is non-isolated, hence does not have a minimal neighborhood in $\@H$ by \cref{thm:cut-proper:dense}.
And two ends cannot map to the same end, since they are separated by some $H \in \@H$ by \cref{thm:cut-proper:dense}.

\cref{thm:cut-proper:inj}$\implies$\cref{thm:cut-proper:dense}:
By \cref{thm:cut-nonnested}, $\@U^\circ(\@H)$ has finite hyperplanes.
Thus by \cref{thm:med-ends-half-basis}, half-spaces form a neighborhood basis for ends in $\^{\@U^\circ(\@H)}$.
Now given an end $U \in \^X \setminus X$ and a clopen neighborhood $\^A \ni U$, by \cref{thm:cut-proper:inj}, the image of $U$ under the induced map is not in the image of $\neg \^A$, and the latter is closed by compactness, hence is disjoint from some half-space $\^H$ containing $U$.

\cref{thm:cut-proper:dense} + \cref{thm:cut-proper:inj}$\implies$\cref{thm:cut-proper:proper}:
By \cref{thm:cut-nonnested}, it remains to check \ref{def:walls-proper}\cref{ax:walls-finblock} that each $\@H$-block is finite and \ref{def:walls-proper}\cref{ax:walls-fincover} each $H \in \@H^*$ has only finitely many successors.
The former follows from the fact that no end in $\^X$ maps to a vertex in $\@U^\circ(\@H)$.
For the latter, let $H \in \@H^*$.
For any end $U \in \neg \^H$, since $\neg \^H$ is not a minimal neighborhood of $U$, there is some $H \subsetneq K \in \@H^*$ with $U \in \neg \^K$; by taking $K$ minimal (which is possible since $H$ is isolated), we get $U \in \neg \^K$ for some successor $K$ of $H$.
Thus by compactness of $\neg \^H \setminus X$, there are finitely many successors $H \subsetneq K_1, \dotsc, K_n \in \@H^*$ such that every end in $\neg \^H$ is in some $\neg \^K_i$.
It follows by compactness that $K_1 \cap \dotsb \cap K_n \setminus H$ must be finite.
Now let $H \subsetneq K \in \@H^*$ be another successor not equal to any $K_i$; thus for each $i$, we have $K \setminus K_i, K_i \setminus K, K \cap K_i \ne \emptyset$ (the last because $H \subseteq K \cap K_i$).
If $\neg K \cap \neg K_i \ne \emptyset$ for some $i$, then $K, K_i$ are non-nested; by \cref{thm:cut-nonnested}, there are only finitely many possibilities for $K$ for each $i$.
Otherwise, we have $\neg K \subseteq K_1 \cap \dotsb \cap K_n \setminus H$, so there are again only finitely many such $K$.

\cref{thm:cut-proper:proper}$\implies$\cref{thm:cut-proper:inj}:
Since each $\@H$-block is finite by \ref{def:walls-proper}\cref{ax:walls-finblock}, no end in $\^X$ maps to a vertex in $\@U^\circ(\@H)$.
Suppose two distinct ends $U, V \in \^X$ map to the same end $W \in \^{\@U^\circ(\@H)}$.
Since half-spaces $\^H \ni W$ form a neighborhood basis for $W$ by \cref{thm:med-ends-half-basis}, it follows that the collection of $H \in \@H$ containing both $U, V$ (as opposed to neither) is an ultrafilter basis.
But then the intersection of all such $H$ is a set of vertices in $X$ not separated by any element of $\@H$, which is infinite because e.g., for any $K \in \@H$ separating $U$ from $V$, each such $H$ must meet $\partial_\v K$ which is a finite set, and we can find infinitely many such $K$ with pairwise disjoint boundaries.
So \ref{def:walls-proper}\cref{ax:walls-finblock} fails.
\end{proof}

\Cref{thm:cut-proper} is formulated only for $\@H \subseteq \@H_\conn$, which is needed in order for \cref{thm:cut-walling,thm:cut-nonnested} to go through (see \cref{ex:cut-card}).
However, a general $\@H \subseteq \@H_\fin$ dense towards ends can be converted into one contained in $\@H_\conn$:

\begin{lemma}
\label{thm:cut-flipflip}
Let $\@H \subseteq \@H_\fin$ be dense towards ends.
Then there is $\@H' \subseteq \@H_\fin \cap \@H_\conn$ which is also dense towards ends, and such that every $H' \in \@H'$ has $\partial_\ie H' \subseteq \partial_\ie H$ for some $H \in \@H$.
\end{lemma}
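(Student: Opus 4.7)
The plan is to construct $\@H'$ by a component-extraction operation on elements of $\@H$. For each $H \in \@H$, consider the \emph{component graph} $K_H$: its vertices are the connected components of $G|H$ and of $G|\neg H$, with a $K_H$-edge between two such vertex-sets whenever a $G$-edge crosses between them (necessarily in $\partial_\v H$). Since $\partial_\v H$ is finite and $G$ is connected, $K_H$ is a finite, connected, bipartite graph. For each non-cut vertex $v$ of $K_H$ (i.e., $K_H - v$ is connected), include the corresponding subset $v \subseteq X$ in $\@H'$.

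I would then verify that each such $H' := v$ lies in $\@H_\fin \cap \@H_\conn$ with the correct boundary containment. The set $H'$ is $G$-connected as a single graph component. Its complement $X \setminus H'$ is $G$-connected, because the $K_H$-connectivity of $K_H - v$ translates into $G$-connectivity of the union of the remaining components: any two of them may be joined by a $G$-path going through the $\partial_\v H$-edges tracked by $K_H - v$. Finally, $\partial_\ie H' \subseteq \partial_\ie H$ if $v$ is an $H$-component (any edge entering $v$ must come from a $\neg H$-component adjacent to $v$ in $K_H$, since distinct $H$-components have no $G$-edges between them), or $\partial_\ie(\neg H)$ if $v$ is a $\neg H$-component; in either case this is the edge boundary of an element of $\@H$, using that $\@H$ is closed under $\neg$.

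For density of $\@H'$ towards ends, given an end $U$ and an open neighborhood $N \ni U$ in $\^X$, first use density of $\@H$ to obtain $H_0 \in \@H$ with $U \in \^{H_0} \subseteq N$, and let $C_0$ be the component of $G|H_0$ containing $U$. If $C_0$ is a non-cut vertex of $K_{H_0}$, take $H' := C_0$ (so $\^{C_0} \subseteq \^{H_0} \subseteq N$). Otherwise, iteratively refine: since $\partial_\v C_n$ is a finite vertex set not containing the end $U$, the set $\^{C_n} \setminus \partial_\v C_n$ is an open neighborhood of $U$; density of $\@H$ then provides $H_{n+1} \in \@H$ with $\^{H_{n+1}}$ inside this neighborhood, forcing $H_{n+1} \subseteq C_n \setminus \partial_\iv C_n$ to sit in the graph-theoretic interior of $C_n$. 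Let $C_{n+1}$ be the $U$-component of $H_{n+1}$ and continue, noting $\^{C_{n+1}} \subseteq N$ throughout.

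The main obstacle is showing that this iteration terminates at some $n$ with $C_n$ a non-cut vertex of $K_{H_n}$. The intuition is that the $C_n$ shrink toward $U$ while $X \setminus C_n$ accumulates progressively thicker ``rim'' regions around previous $C_i$'s, eventually connecting all pieces of the complement through those rims. A rigorous argument would proceed by contradiction using compactness of $\^X$: assuming $K_{H_n} - C_n$ is disconnected for all $n$, a K\"onig-type limit argument on the nested sequence $C_0 \supseteq C_1 \supseteq \cdots$ would extract a persistent separation of $\^X \setminus \{U\}$, contradicting the connectedness of $G$.
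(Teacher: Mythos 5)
There is a genuine gap: the family $\@H'$ you construct need not be dense towards ends, and the iteration in your density argument can genuinely fail to terminate, so the step you acknowledge as missing cannot be filled in. Concretely, let $(X,G)$ be the one-ended ladder $X = \#N \times \{0,1\}$ with edges $(m,i) \sim (m+1,i)$ and $(m,0) \sim (m,1)$, and let $\@H := \{H_n\}_{n \ge 1}$ where $H_n := \{(m,i) \mid m \ge n\} \setminus \{(n+1,1)\}$. Each $H_n$ is connected with finite boundary, contains the tail $\{(m,i) \mid m \ge n+2\}$, and is contained in $\{(m,i) \mid m \ge n\}$, so $\@H$ is dense towards the unique end $U$. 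But $\neg H_n$ has exactly two components, the singleton $\{(n+1,1)\}$ and $\{(m,i) \mid m < n\}$, and these are not adjacent to each other; so in your component graph $K_{H_n}$ the unique component of $H_n$ (the one whose closure contains $U$) is a cut vertex, and the only non-cut vertices are the two finite components of $\neg H_n$. Hence your $\@H'$ consists entirely of finite sets and contains no neighborhood of $U$ whatsoever, and your iteration produces cut vertices $C_n$ forever.

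The fix is not to discard the component $C$ of $H$ containing $U$ when $\neg C$ is disconnected, but to \emph{fill in its holes}: the paper takes $\@H' := \{\neg D \mid H \in \@H,\ C \text{ a component of } H,\ D \text{ a component of } \neg C\}$. Each such $\neg D$ is automatically connected (it is $C$ together with the remaining components of $\neg C$, each of which is adjacent to $C$) and co-connected, with $\partial_\ie \neg D \subseteq \partial_\ie C \subseteq \partial_\ie H$; and density follows in one step, with no iteration, by choosing $H \in \@H$ inside the complement of a finite connected $B \supseteq \partial_\v A$ and letting $D$ be the component of $\neg C$ containing $B$, so that $\neg D$ is connected, disjoint from $\partial_\v A$, and meets $A$, hence lies in $A$. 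In the ladder example this yields $\neg D = \{(m,i) \mid m \ge n\}$, exactly the neighborhood your family misses. Note that $\neg D$ is in general neither a vertex of $K_H$ nor the complement of one, so enlarging your $\@H'$ by complements of non-cut vertices would not recover the paper's family either. (A smaller point: the lemma does not assume $\@H$ is closed under $\neg$, which your treatment of the $\neg H$-components uses; the paper's construction only ever passes through components of $H$ itself.)
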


\begin{figure}[htb]
\centering
\begin{tikzpicture}
\coordinate (G);
\coordinate (GO) at ($(G) + (8,0)$);
\coordinate (GL) at ($(GO) + (-7,0)$);
\coordinate (GL1) at ($(GL) + (0,1.5)$);
\coordinate (GL2) at ($(GL) + (0,-1)$);
\coordinate (GR) at ($(GO) + (5,0)$);
\coordinate (GR1) at ($(GR) + (0,1.5)$);
\coordinate (GR2) at ($(GR) + (0,-1.5)$);
\def\toparc{($(GR1) + (0,.5)$)
    to ($(GL1) + (0,.5)$)}
\def\leftarc{($(GL1) + (0,-.5)$)
    to[out=0,in=90,looseness=.2] ($(GO) + (-3,1)$)
    to[out=-90,in=0,looseness=.5] ($(GL2) + (0,.5)$)}
\def\botarc{($(GL2) + (0,-.5)$)
    to[out=0,in=180,looseness=1] ($(GO) + (0,1)$)
    to[out=0,in=0,looseness=10] ($(GO) + (0,.5)$)
    to[out=180,in=180,looseness=3] ($(GO) + (0,-2)$)
    to ($(GR2) + (0,-.5)$)}
\def\rightarc{($(GR2) + (0,.5)$)
    to ($(GO) + (0,-1)$)
    to[out=180,in=180,looseness=8] ($(GO) + (0,-.5)$)
    to[out=0,in=180,looseness=1] ($(GR1) + (0,-.5)$)}
\fill[graph interior]
    \toparc -- \leftarc -- \botarc -- \rightarc -- cycle;
\draw \toparc \leftarc \botarc \rightarc;
\node at (GR1) {$\quad \longrightarrow U$};
\draw[graph cut]
    ($(GL1) + (2,1.25)$) rectangle ($(GL2) + (2,-1.25)$);
\node[graph cut,draw=none,below right] at ($(GL1) + (2,1.25)$) {$A$};
\node[graph cut,draw=none,below left] at ($(GL1) + (2,1.25)$) {$\neg A$};
\pattern[pattern=north east lines, pattern color=black, xshift=1cm, yscale=-1, yshift=-3.15cm]
    svg "M 51.023438 33.04248 L 51.023438 58.510254 C 54.82942 58.326809 58.559971 58.135647 62.362793 57.990234 L 62.362793 49.731445 C 70.989469 49.794886 84.537068 50.084814 97.136719 51.492188 C 105.01446 52.372127 112.42856 53.708265 117.62842 55.574707 C 120.22834 56.507928 122.25787 57.587227 123.47607 58.628906 C 124.69429 59.670585 125.07275 60.464575 125.07275 61.390137 C 125.07275 63.962138 122.28551 67.771407 116.97217 71.707031 C 111.65881 75.642656 104.21324 79.65903 96.298828 83.427246 C 84.227854 89.174485 71.215464 94.275807 62.362793 98.308594 L 62.362793 92.438965 C 58.599646 93.529844 54.817564 94.549142 51.023438 95.503418 L 51.023438 122.39648 C 54.88518 120.88493 58.661813 119.24642 62.362793 117.49805 L 62.362793 103.83984 C 70.650594 99.951648 85.128247 94.264683 98.441895 87.925781 C 106.49717 84.090498 114.14663 79.999418 119.93701 75.710449 C 125.7274 71.421482 130.05322 66.888431 130.05322 61.390137 C 130.05322 58.792267 128.65668 56.502719 126.71484 54.842285 C 124.77302 53.181851 122.25609 51.940676 119.31299 50.884277 C 113.4268 48.771478 105.76676 47.444765 97.688965 46.54248 C 84.754006 45.097654 71.041613 44.810454 62.362793 44.749512 L 62.362793 33.04248 L 51.023438 33.04248 z";
\node[right] at ($(GO) + (-2.5,1)$) {$B$};
\draw[graph cut]
    ($(GO) + (0,2.75)$) rectangle ($(GO) + (0,-2.25)$);
\node[graph cut,draw=none,below right] at ($(GO) + (0,2.75)$) {$H$};
\node[graph cut,draw=none,below left] at ($(GO) + (0,2.75)$) {$\neg H$};
\begin{scope}
\clip ($(GO) + (0,-.75)$) rectangle ($(GR1) + (0,1)$);
\pattern[pattern=north west lines, pattern color=black]
    \toparc -- \leftarc -- \botarc -- \rightarc -- cycle;
\end{scope}
\node[right,xshift=1em] at (GO |- GR1) {$C$};
\end{tikzpicture}
\caption{Shrinking a neighborhood $A$ of an end $U$ to a connected-coconnected subneighborhood.}
\label{fig:cut-flipflip}
\end{figure}
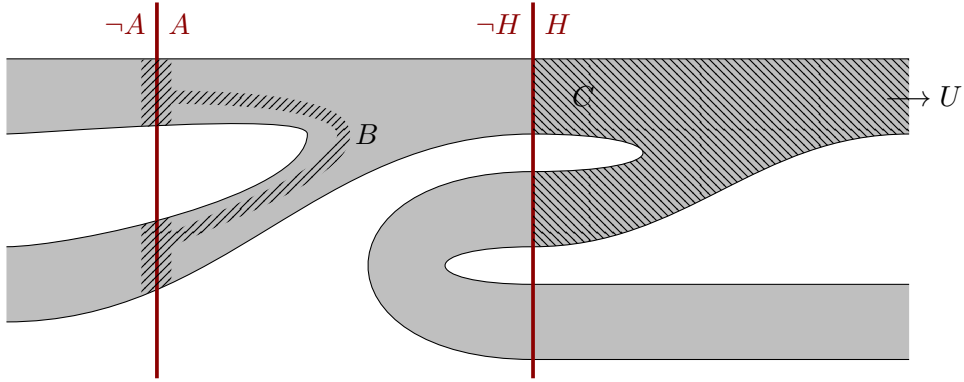

\begin{proof}
Let
\begin{equation*}
\@H' := \{\neg D \mid H \in \@H \AND C \in H/G \AND D \in \neg C/G\}
\end{equation*}
(where $H/G$ denotes the set of $G$-components of $H$).
Then every such $\neg D \in \@H'$ has $\partial_\ie \neg D \subseteq \partial_\ie C \subseteq \partial_\ie H$.
Now fix an end $U \in \^X \setminus X$ and a neighborhood $A \in \@H_\fin$ of $U$; we must find a smaller neighborhood in $\@H'$.
Let $\partial_\v A \subseteq B \subseteq X$ be finite connected.
Then $\neg B$ is a neighborhood of $U$, so there is $\neg B \supseteq H \in \@H$ such that $U \in \^H$.
Let $C \subseteq H$ be the component containing $U$, and $D \subseteq \neg C$ be the component containing $B$.
Then $\neg D \in \@H'$, with $U \in \^C \subseteq \neg \^D$, and $\neg D \subseteq A$ since $\neg D$ is connected, disjoint from $\partial_\v A \subseteq B$, and contains $U \in \^A$.
(See \cref{fig:cut-flipflip}.)
\end{proof}

\begin{corollary}
\label{thm:cut-boundary}
Let $\@H \subseteq \@H_\fin$ be a closed subpocset which is ``downward-closed under boundary inclusion'', i.e., if $H \in \@H$ and $H' \subseteq X$ with $\partial_\ie H' \subseteq \partial_\ie H$, then $H' \in \@H$.
If $\@H$ is dense towards ends, then so is $\@H \cap \@H_\conn$, which is thus a proper walling.
\qed
\end{corollary}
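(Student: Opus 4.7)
The plan is to deduce the corollary from Lemma~\ref{thm:cut-flipflip} together with Proposition~\ref{thm:cut-proper}. The key observation is that the family $\@H'$ produced by Lemma~\ref{thm:cut-flipflip} is already contained in $\@H \cap \@H_\conn$ under the boundary-downward-closure hypothesis, so it witnesses density towards ends for $\@H \cap \@H_\conn$ itself.

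First I would verify that $\@H \cap \@H_\conn$ is a walling. Since $\@H$ and $\@H_\conn$ are both subpocsets of $2^X$ (closed under complement, containing $\emptyset, X$), so is their intersection. We have $\@H \subseteq \@H_\fin$, and by the corollary after Lemma~\ref{thm:cut-limit}, the subset $\@H_\fin \cap \@H_\conn \subseteq 2^X$ has every nontrivial element isolated, so the same holds for $\@H \cap \@H_\conn$. Combined with closedness of $\@H$ in $2^X$ (and that of $\@H_\fin \cap \@H_\conn$), we see $\@H \cap \@H_\conn$ is closed in $2^X$, hence a walling by Proposition~\ref{thm:subpoc-isolated}, or directly by Corollary~\ref{thm:cut-walling}.

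Next, the main step: density of $\@H \cap \@H_\conn$ towards ends. Apply Lemma~\ref{thm:cut-flipflip} to $\@H$ to produce $\@H' \subseteq \@H_\fin \cap \@H_\conn$, dense towards ends, such that every $H' \in \@H'$ satisfies $\partial_\ie H' \subseteq \partial_\ie H$ for some $H \in \@H$. By the boundary-downward-closure hypothesis, each such $H'$ lies in $\@H$, so $\@H' \subseteq \@H \cap \@H_\conn$. Since a superfamily of a family dense towards ends is itself dense towards ends, $\@H \cap \@H_\conn$ is dense towards ends.

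Finally, Proposition~\ref{thm:cut-proper} (in the direction \ref{thm:cut-proper:dense}$\Rightarrow$\ref{thm:cut-proper:proper}) applied to the walling $\@H \cap \@H_\conn \subseteq \@H_\fin \cap \@H_\conn$ yields that it is a proper walling. I do not anticipate a real obstacle here: the content of Lemma~\ref{thm:cut-flipflip} was precisely engineered to produce connected-coconnected cuts with boundaries inside the boundaries of the original ones, which is exactly the hypothesis the corollary is built around; everything else is bookkeeping with the definitions.
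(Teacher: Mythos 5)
Your proposal is correct and is exactly the argument the paper intends (the corollary carries a \qed precisely because it follows by feeding the output $\@H'$ of Lemma~\ref{thm:cut-flipflip} back into $\@H$ via the boundary-downward-closure hypothesis, then invoking Corollary~\ref{thm:cut-walling} and Proposition~\ref{thm:cut-proper}). All steps, including the closedness check via Lemma~\ref{thm:cut-limit} and the monotonicity of ``dense towards ends'' under superfamilies, are sound.
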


\begin{example}
$\@H_{\diamle R}$ and $\@H_{\cardle{N}}$ are closed subpocsets of $\@H_\fin$ (by \cref{ex:cut-diam,ex:cut-card}) which are by definition clearly downward-closed under boundary inclusion.
Thus if they are dense towards ends, then their intersection with $\@H_\conn$ forms a proper walling.
\end{example}

In the Borel context, this yields

\begin{corollary}[of \cref{thm:cber-wallable-treeable,thm:cut-proper}]
\label{thm:cber-cuts-dense-treeable}
If a CBER $(X,E)$ admits a graphing $G \subseteq E$ with a Borel assignment $C |-> \@H(C) \subseteq \@H_\fin^G(C) \cap \@H_\conn^G(C) \subseteq 2^C$ of a walling which is dense towards ends in each component $C \in X/E$, then $E$ is treeable.
\qed
\end{corollary}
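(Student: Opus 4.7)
The plan is to assemble the final corollary from the two cited results essentially by a fiberwise verification. First, since $\mathcal{H}(C) \subseteq \mathcal{H}_\fin^G(C) \cap \mathcal{H}_\conn^G(C)$ is by hypothesis a walling on each class $C$ that is dense towards ends of $G|C$, I would apply \cref{thm:cut-proper} (the implication \ref{thm:cut-proper:dense}$\implies$\ref{thm:cut-proper:proper}) class by class to conclude that each $\mathcal{H}(C)$ is in fact a \emph{proper} walling on the set $C$ (axiom \ref{ax:walls-finnest} is automatic via \cref{thm:cut-nonnested}, and axioms \ref{ax:walls-finblock} and \ref{ax:walls-fincover} follow from the density hypothesis by \cref{thm:cut-proper}).

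Next, I would verify that this fiberwise properness assembles into a \emph{proper Borel walling} in the sense of \cref{def:walling}. The Borel assignment $C \mapsto \mathcal{H}(C)$ is given; the required Borel subset $\mathcal{H}_X(E) \subseteq 2^E_X$ is available either directly from the hypothesis, or, as suggested in \cref{rmk:walling-concrete}, by representing each cut $H \in \mathcal{H}(C)$ via its finite $G$-boundary $\partial_\v H$ in the standard Borel space of finite subsets of $X$ (using that $H \in \mathcal{H}_\fin^G \cap \mathcal{H}_\conn^G$ is uniquely recoverable from $(\partial_\iv H, \partial_\ov H)$ together with the specification of which component of $G$ restricted to $X \setminus \partial_\v H$ lies in $H$). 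Having established Borelness of $\mathcal{H}_X(E)$ and fiberwise properness, $E$ is properly wallable.

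Finally, I would invoke \cref{thm:cber-wallable-treeable}, which states that properly wallable CBERs are treeable, to conclude that $E$ is treeable.

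The only step that is more than bookkeeping is the second one, namely checking that the concrete ``boundary representation'' of cuts really does give a Borel structure on $\mathcal{H}_X(E)$ compatible with \cref{def:walling}. This is routine but is the point where one should be careful, since the definition of Borel walling is phrased in terms of the abstract bundle $2^E_X$ while concrete examples like cuts are naturally parameterized by their finite boundaries; the equivalence between the two Borel structures is precisely what \cref{rmk:walling-concrete} is alluding to, and it reduces to a standard Lusin--Novikov argument enumerating each $E$-class.
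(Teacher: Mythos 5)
Your proposal is correct and follows exactly the route the paper intends: the corollary is stated as an immediate consequence of \cref{thm:cut-proper} (density towards ends implies the walling is proper on each component, with \labelcref{def:walls-proper}\cref{ax:walls-finnest} automatic from \cref{thm:cut-nonnested}) together with \cref{thm:cber-wallable-treeable}, and the paper's remark following the corollary makes precisely your point about realizing the Borel walling via the finite boundary pairs $(\partial_\iv H, \partial_\ov H)$ as in \cref{rmk:walling-concrete}. No gaps.
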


\begin{corollary}[of \cref{thm:cut-boundary,thm:cber-cuts-dense-treeable}]
\label{thm:cber-dense-treeable}
If a CBER $(X,E)$ admits a graphing $G \subseteq E$ with a Borel assignment $C |-> \@H(C) \subseteq \@H_\fin^G(C)$ of a closed subpocset which is downward-closed under boundary inclusion and dense towards ends in each component $C \in X/E$, then $E$ is treeable.
\qed
\end{corollary}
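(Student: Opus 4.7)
The plan is to reduce this directly to \cref{thm:cber-cuts-dense-treeable} by passing through \cref{thm:cut-boundary}. Given the Borel assignment $C \mapsto \@H(C) \subseteq \@H_\fin^G(C)$, I would define a new assignment
\begin{equation*}
\@H'(C) := \@H(C) \cap \@H_\conn^G(C),
\end{equation*}
and then verify that $\@H'$ satisfies the hypotheses of \cref{thm:cber-cuts-dense-treeable}.

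The first task is to check that $C \mapsto \@H'(C)$ is again a Borel assignment in the sense of \cref{def:walling}, i.e., that $\@H'_X(E) \subseteq 2^E_X$ is Borel. This reduces to the observation that the condition ``both $H$ and $[x]_E \setminus H$ are $G$-connected'' is a Borel property of $(x, H) \in 2^E_X$: since each $E$-class is countable, it can be written as a countable conjunction, over pairs $y, z$ from the fiber lying on the same side of $H$, of the existence of a finite $G$-path connecting them on that side. Intersecting the Borel set $\@H_X(E)$ with this Borel condition preserves Borel-ness.

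The second task is immediate from \cref{thm:cut-boundary}: since for each $C$ the family $\@H(C)$ is a closed subpocset of $\@H_\fin^G(C)$ that is downward-closed under boundary inclusion and dense towards ends, the intersection $\@H'(C) = \@H(C) \cap \@H_\conn^G(C)$ is still dense towards the ends of $G|C$ (and is in fact a proper walling). In particular, $\@H'$ is componentwise a walling contained in $\@H_\fin^G \cap \@H_\conn^G$ and dense towards ends, which is exactly the hypothesis of \cref{thm:cber-cuts-dense-treeable}. Applying that corollary yields Borel treeability of $E$.

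Since all of the geometry has already been packaged into \cref{thm:cut-boundary,thm:cber-cuts-dense-treeable}, I do not anticipate any serious obstacle. The only point requiring any care is the preservation of the Borel structure under intersection with $\@H_\conn^G$, and that is a standard countable-combinatorial observation as sketched above.
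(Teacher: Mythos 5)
Your proposal is correct and is exactly the argument the paper intends: the corollary is stated with a \qed precisely because it amounts to intersecting each $\@H(C)$ with $\@H_\conn^G(C)$, invoking \cref{thm:cut-boundary} to see that the result is still a walling dense towards ends (indeed a proper walling), and then applying \cref{thm:cber-cuts-dense-treeable}. Your additional check that Borel-ness of the assignment is preserved under intersection with the connectedness condition is a routine but worthwhile detail that the paper leaves implicit.
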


Here ``Borel assignment $C |-> \@H(C) \subseteq 2^C$'' may be interpreted via the bundle in \cref{def:walling}.
However, as mentioned in \cref{rmk:walling-concrete}, since we are dealing with cuts in a graph, we may equivalently represent each nontrivial $H \in \@H^*(C)$ as the pair of finite sets $(\partial_\iv H, \partial_\ov H)$, and hence realize $\@H$ globally as a Borel subset of the standard Borel space of pairs of finite subsets of $E$-classes.

\subsection{Coarse equivalences and quasi-trees}

Recall \cref{def:coarse} of coarse equivalence.

\begin{lemma}
\label{thm:quasitree-coarse}
The class of connected locally finite graphs in which $\@H_{\diamle R}$ is dense towards ends for some $R < \infty$ is invariant under coarse equivalence.
\end{lemma}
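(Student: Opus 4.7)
My plan is as follows. Fix a coarse equivalence $f : X \to Y$ with a bornologous quasi-inverse $g : Y \to X$; by symmetry it suffices to show that if $\@H_{\diamle R}(X)$ is dense towards ends of $X$ for some $R < \infty$, then $\@H_{\diamle S}(Y)$ is dense towards ends of $Y$ for some $S < \infty$. The strategy is to pull back small-boundary neighborhoods of ends of $X$ to small-boundary neighborhoods of the corresponding ends of $Y$ by taking preimages under $g$.

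\Cref{thm:ends-coarse} supplies the three ingredients I need. First, $f$ and $g$ induce continuous maps $\^f, \^g$ on end compactifications that restrict to mutually inverse homeomorphisms on the sets of ends. Second, there is a constant $S = S(R) < \infty$ such that $\diam(\partial_\v^{G_Y} g^{-1}(H)) \le S$ whenever $\diam(\partial_\v^{G_X} H) \le R$. Third, for any $B \in \@H_\fin(Y)$, the symmetric difference $F := (fg)^{-1}(B) \triangle B \subseteq Y$ has bounded diameter, hence is finite in the locally finite graph $(Y, G_Y)$.

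Now fix an end $V \in \^Y \setminus Y$ and a clopen neighborhood $\^B \ni V$ with $B \in \@H_\fin(Y)$. Set $U := \^g(V) \in \^X \setminus X$; since $V = \^f(U)$, we have $A := f^{-1}(B) \in U$. Apply the density of $\@H_{\diamle R}(X)$ to the clopen neighborhood $\^{A \setminus g(F)}$ of $U$ (a neighborhood since $g(F)$ is finite and $U$ is an end) to produce $H \in \@H_{\diamle R}(X)$ with $U \in \^H$ and $H \subseteq A \setminus g(F)$. Then $H' := g^{-1}(H)$ lies in $\@H_{\diamle S}(Y)$, contains $V$ (since $H \in U = \^g(V)$), and satisfies $H' \subseteq B$: indeed, $H' \subseteq g^{-1}(A) = (fg)^{-1}(B) \subseteq B \cup F$, while $H' \cap F = \emptyset$, because any $y \in H' \cap F$ would force $g(y) \in H \cap g(F) = \emptyset$. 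Thus $V \in \^{H'} \subseteq \^B$, as required.

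The main technical subtlety is that $fg$ differs from $1_Y$, so a naive $g^{-1}(H)$ may spill out of $B$ on the finite discrepancy set $F$; intersecting with $B$ afterwards would destroy the uniform diameter bound on the boundary. The preliminary step of using density in $X$ to shrink $H$ away from $g(F)$ is exactly what sidesteps this obstruction and makes the transported cut land cleanly inside $B$.
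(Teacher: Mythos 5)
Your proof is correct and takes essentially the same route as the paper's: transport a cut through the quasi-inverse using \cref{thm:ends-coarse} to control boundary diameters and end behavior, after pre-shrinking the given neighborhood so that the bounded discrepancy between $f \circ g$ (resp.\ $g \circ f$) and the identity cannot push the pulled-back cut outside it. The only immaterial differences are that the paper argues in the opposite direction and absorbs the discrepancy by passing to $\neg \Ball_{d(1_X, g \circ f)}(\neg A)$ rather than by deleting the finite set $g(F)$.
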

\begin{proof}
Let $(X,G), (Y,T)$ be connected locally finite graphs, $f : X -> Y$ be a coarse equivalence with quasi-inverse $g : Y -> X$, and suppose $\@H_{\diamle S}(Y)$ is dense towards ends for $S < \infty$.
By \cref{thm:ends-coarse}, pick $R < \infty$ so that for any $H \in \@H_{\diamle S}(Y)$, we have $f^{-1}(H) \in \@H_{\diamle R}(X)$.
Then for any $U \in \^X \setminus X$ and $A \in \@H_\fin(X)$ with $U \in \^A$, letting $B := \neg \Ball_{d(1_X, g \circ f)}(\neg A)$, we have $f^{-1}(g^{-1}(B)) \subseteq \Ball_{d(1_X, g \circ f)}(B) \subseteq A$, and $A \triangle B, B \triangle f^{-1}(g^{-1}(B))$ are finite, so $U \in {f^{-1}(g^{-1}(B))}^\wedge$, so $\^f(U) \in \^{g^{-1}(B)}$, so there is $g^{-1}(B) \supseteq H \in \@H_{\diamle S}(Y)$ with $\^f(U) \in H$, so $f^{-1}(H) \in \@H_{\diamle R}(X)$ with $U \in \^{f^{-1}(H)}$ and $f^{-1}(H) \subseteq f^{-1}(g^{-1}(B)) \subseteq A$.
\end{proof}

Recall (\cref{sec:intro-treelike}) that a graph $(X,G)$ is a \defn{quasi-tree} if it is quasi-isometric to a tree.

\begin{remark}
\label{rmk:quasitree-coarse}
A graph is a quasi-tree iff it is coarsely equivalent to a tree; this follows easily from the fact that graphs are quasi-geodesic (see \cite[p7]{Groggt2}).
\end{remark}

\begin{corollary}
\label{thm:quasitree-dense}
If $G$ is a locally finite quasi-tree, then $\@H_{\diamle R}$ is dense towards ends of $G$ for some $R < \infty$.
\end{corollary}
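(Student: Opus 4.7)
The plan is to reduce, via the coarse invariance property in Lemma~\ref{thm:quasitree-coarse}, to the case of a locally finite tree, where the conclusion is classical. By Remark~\ref{rmk:quasitree-coarse}, $G$ is coarsely equivalent to some tree $T$; the first step is to arrange that $T$ is itself locally finite by passing to the subtree $T' \subseteq T$ spanned by the image $f(X)$ of a coarse equivalence $f : X \to T$. Since $f(X)$ is $K$-dense in $T$ for some $K < \infty$, the inclusion $T' \hookrightarrow T$ is itself a coarse equivalence (it is an isometric embedding with $K$-dense image), so $f$ viewed as a map $X \to T'$ remains a coarse equivalence.

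The main technical point is to verify that $T'$ is locally finite. For any $v \in T'$, each neighbor $w$ of $v$ in $T'$ lies in a distinct component of $T' \setminus \{v\}$ containing some point of $f(X)$; combining $K$-density of $f(X)$ in $T$ with bornologousness of a quasi-inverse $g : T \to X$ and local finiteness of $G$, the number of such components can be bounded by the cardinality of $f^{-1}(B_{K+1}(v)) \subseteq X$, which is finite because coarse equivalences take bounded sets to bounded sets and $X$ is locally finite. Pinning down the correct form of this argument (in particular, producing a point of $f(X)$ close to $v$ in each component, which may require a little care since the nearest point of $f(X)$ to a given neighbor $w$ can lie through $v$ rather than away from it) is the main obstacle in the proof.

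Once $T'$ is locally finite, the single-edge cuts form a neighborhood basis at each end: given an end $U \in \^{T'} \setminus T'$ represented by a ray $(v_0, v_1, \ldots)$, the half-trees $H_n$ (the component of $T' \setminus \{(v_n, v_{n+1})\}$ containing $v_{n+1}$) form a nested sequence of clopen neighborhoods of $U$ in $\^{T'}$ shrinking to $\{U\}$, each with vertex boundary $\{v_n, v_{n+1}\}$ of diameter $1$. Hence $\@H_{\diamle 1}(T')$ is dense towards ends of $T'$.

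Finally, applying Lemma~\ref{thm:quasitree-coarse} to the coarse equivalence between $G$ and $T'$ transfers density of $\@H_{\diamle 1}$ in $T'$ to density of $\@H_{\diamle R}$ in $G$ for some $R < \infty$, completing the proof.
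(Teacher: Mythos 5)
Your proof is correct and follows the same route as the paper, which disposes of this corollary in one line by combining Lemma~\ref{thm:quasitree-coarse} with the observation that $\@H_{\diamle 1}$ is dense towards ends in a tree. The additional work you do to replace the target tree by a locally finite one (needed since Lemma~\ref{thm:quasitree-coarse} is stated for locally finite graphs) fills in a detail the paper leaves implicit, and the branch-counting step you flag does go through: walk distance $K+1$ from $v$ into a given branch before invoking $K$-density of $f(X)$, so that the nearby point of $f(X)$ is forced to lie in that branch and within distance $2K+1$ of $v$.
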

\begin{proof}
By \cref{thm:quasitree-coarse} and that $\@H_{\diamle 1}$ is clearly dense towards ends in a tree.
\end{proof}

\begin{corollary}
\label{thm:cber-quasitreeable-treeable}
If a CBER admits a locally finite graphing whose components are quasi-trees, then it is treeable.
\end{corollary}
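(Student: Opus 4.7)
The plan is to construct a proper Borel walling of $E$ from the given quasi-tree graphing and then invoke \cref{thm:cber-wallable-treeable}. Let $G \subseteq X^2$ be the assumed locally finite Borel graphing whose components are quasi-trees.

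For each $R \in \#N$, I would consider the Borel assignment
\[
    \@H_R : C \longmapsto \@H_{\diamle R}^G(C) \cap \@H_\conn^G(C)
\]
over $E$-classes $C \in X/E$. Each $\@H_R(C)$ is a walling of $C$ by \cref{ex:cut-diam}, and by \cref{thm:quasitree-dense}, every quasi-tree component $C$ admits some $R(C) < \infty$ such that $\@H_{\diamle R(C)}^G(C)$ is dense towards ends of $C$. Since $\@H_{\diamle R}$ is a closed subpocset of $\@H_\fin$ that is downward-closed under boundary inclusion, \cref{thm:cut-boundary} then upgrades $\@H_{R(C)}(C)$ to a proper walling of $C$.

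Borel-definability of each $\@H_R$ is routine in the concrete model discussed after \cref{thm:cber-dense-treeable}, where one represents each nontrivial $H \in \@H^*(C)$ by the pair $(\partial_\iv H, \partial_\ov H)$ of finite vertex subsets. In this representation, the conditions $\diam(\partial_\v H) \le R$, $H$ connected, and $\neg H$ connected are all Borel conditions on a pair of finite subsets of an $E$-class.

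Finally, since every component $C$ admits at least one member of the countable family $(\@H_R)_{R \in \#N}$ as a proper walling, \cref{rmk:walls-proper-borel} combines them into a single proper Borel walling of $E$, and \cref{thm:cber-wallable-treeable} concludes that $E$ is Borel treeable. The only point requiring any care is that the diameter bound $R(C)$ produced by \cref{thm:quasitree-dense} is not uniform across components, and this non-uniformity is precisely what the countable-combination trick of \cref{rmk:walls-proper-borel} is designed to absorb.
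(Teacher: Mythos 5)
Your proof is correct and follows essentially the same route as the paper: the paper's own proof cites \cref{thm:quasitree-dense}, \cref{thm:cber-dense-treeable}, and \cref{rmk:walls-proper-borel}, and your argument simply unpacks \cref{thm:cber-dense-treeable} into its ingredients (\cref{ex:cut-diam}, \cref{thm:cut-boundary}, and \cref{thm:cber-wallable-treeable}) while correctly handling the non-uniformity of $R$ across components via \cref{rmk:walls-proper-borel}.
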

\begin{proof}
By \cref{thm:quasitree-dense}, \cref{thm:cber-dense-treeable}, and \cref{rmk:walls-proper-borel}.
\end{proof}

\begin{remark}
\Cref{ex:dense-nonquasitree-nonbddtw} shows that the class of graphs in which $\@H_{\diamle R}$ is dense towards ends for some $R$ is strictly bigger than just quasi-trees.
\end{remark}

In fact, the following result gives a more explicit walling in a quasi-tree:

\begin{theorem}[{Krön--Möller \cite[2.8]{KMqi}}]
\label{thm:kron-moller}
A connected graph $(X,G)$ is a quasi-tree iff there is some $R < \infty$ such that for every $x \in X$, $S < \infty$, and component $H \subseteq \neg \Ball_S(x)$, we have $\diam(\partial_\v H) \le R$, i.e., all $H$ of this form, called \defn{radial cuts}, are in $\@H_{\diamle R}(X)$.
\end{theorem}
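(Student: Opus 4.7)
The plan is to prove both implications. The reverse direction $(\Leftarrow)$ is a direct construction using the hierarchy of radial cuts; the forward direction $(\Rightarrow)$ combines Manning's bottleneck criterion with the $\delta$-hyperbolicity of quasi-trees. The main obstacle is that the naive push-forward of a radial cut through a quasi-isometry to a tree only yields a diameter bound growing linearly in $S$ (with coefficient $L - 1/L$), so extracting the uniform $O(1)$ bound requires finer structural input from the quasi-tree hypothesis.

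For $(\Leftarrow)$, I would fix a basepoint $o \in X$ and build a rooted tree $T$ with vertex set $\{o\} \sqcup \bigsqcup_{S \ge 0} \mathcal{C}_S$, where $\mathcal{C}_S$ is the set of connected components of $\neg \Ball_S(o)$. Each $C \in \mathcal{C}_{S+1}$ attaches to its unique parent in $\mathcal{C}_S$ (the component containing it), and each $C \in \mathcal{C}_0$ attaches to the root $o$. Define $\pi \colon X \to V(T)$ by $\pi(o) := o$ and, for $y \ne o$, let $\pi(y)$ be the unique element of $\mathcal{C}_{d(y,o)-1}$ containing $y$. Writing $S^* = S^*(y,y')$ for the largest $S$ such that $y, y'$ lie in a common element of $\mathcal{C}_S$ (setting $S^* = -1$ if they never do), one has $d_T(\pi(y), \pi(y')) = d(y,o) + d(y',o) - 2(S^* + 1)$. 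The quasi-isometry estimates then follow from two observations: any $X$-path between $y$ and $y'$ must visit a vertex at distance $\le S^* + 1$ from $o$, giving $d_T(\pi(y), \pi(y')) \le d_X(y, y')$; conversely, the common component $C^* \in \mathcal{C}_{S^*}$ has $\diam(\partial_\v C^*) \le R$, so one can route from $y$ to $y'$ by descending level by level along nested components to reach $\partial_\iv C^*$, traversing the boundary in at most $R$ steps, and climbing back up, giving $d_X(y, y') \le d_T(\pi(y), \pi(y')) + R$.

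For $(\Rightarrow)$, I would exploit two structural consequences of being a quasi-tree: $X$ satisfies Manning's bottleneck criterion with some constant $\Delta$ (for any $a, b \in X$ and any path $\sigma$ from $a$ to $b$, every point of a geodesic $[a,b]$ lies within $\Delta$ of $\sigma$), and $X$ is $\delta$-hyperbolic for some $\delta$; both $\Delta$ and $\delta$ depend only on the quasi-isometry constants to a tree. Let $H$ be a component of $\neg \Ball_S(x)$ and $y_1, y_2 \in \partial_\v H$; after absorbing one edge, assume $y_i \in \partial_\iv H \subseteq H$. Applying Manning's to an $H$-path $\sigma$ from $y_1$ to $y_2$, every point of the geodesic $[y_1, y_2]$ lies within $\Delta$ of $\sigma \subseteq H$, hence has $X$-distance $> S - \Delta$ from $x$; in particular the closest point $c$ on $[y_1, y_2]$ to $x$ satisfies $d(c, x) > S - \Delta$. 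Now I would interpret this via the Gromov product: in any geodesic space $(y_1|y_2)_x \le d(x, [y_1, y_2])$, and in a $\delta$-hyperbolic space $(y_1|y_2)_x \ge d(x, [y_1, y_2]) - \delta$. Combining with $d(y_i, x) \le S+1$ gives
\begin{equation*}
d(y_1, y_2) = d(y_1, x) + d(y_2, x) - 2(y_1|y_2)_x < 2(S+1) - 2(S - \Delta - \delta) = 2 + 2\Delta + 2\delta,
\end{equation*}
a uniform bound $R = R(\Delta, \delta)$ independent of $x, S$, as desired.
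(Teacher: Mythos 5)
The paper does not prove this theorem---it is quoted from Kr\"on--M\"oller---so there is no internal argument to compare against; your proof therefore has to stand on its own, and it does. For ($\Leftarrow$), your level tree is precisely the structure tree of the radial cut system: the distance formula $d_T(\pi(y),\pi(y'))=d(y,o)+d(y',o)-2(S^*+1)$ is correct, the bound $d_T\le d_X$ follows because a geodesic from $y$ to $y'$ must meet $\Ball_{S^*+1}(o)$, and the descend--traverse--ascend routing gives $d_X\le d_T+R$. Two small points of polish: when $S^*=-1$ the set $\partial_\v C^*$ is empty and one should instead route through $o$ itself (which gives the even better bound $d_X\le d_T$), and you should record that $\pi$ is onto $V(T)$ (every $C\in\mathcal{C}_S$ equals $\pi(z)$ for any $z\in C$ with $d(z,o)=S+1$), so $\pi$ is a genuine quasi-isometry. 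For ($\Rightarrow$), the combination of Manning's bottleneck criterion with $\delta$-hyperbolicity is sound, and crucially only the easy halves of those facts are used (trees satisfy the bottleneck property with $\Delta=0$ and are $0$-hyperbolic, and both properties are quasi-isometry invariant), so there is no circularity with the statement being proved; the sandwich $d(x,[y_1,y_2])-\delta\le(y_1|y_2)_x\le d(x,[y_1,y_2])$ is standard. One could even dispense with hyperbolicity by applying the bottleneck property to the concatenated path $[y_1,x]\cup[x,y_2]$, whose points near the midpoint of $[y_1,y_2]$ must lie deep inside $\Ball_{S+1}(x)$, but your version is perfectly fine. Overall this is a complete, more metric-geometric proof of the quoted result, in contrast with Kr\"on--M\"oller's original combinatorial treatment via cut systems.
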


This easily yields another proof of \cref{thm:quasitree-dense}.
Another consequence is the following quantitative refinement of the above results:

\begin{lemma}
\label{thm:quasitree-cut-adj}
If $G$ is a locally finite quasi-tree, then for all sufficiently large $R < \infty$, there is an $S < \infty$, namely $S := R+1$, such that any pair of sets $H, K \in \@H^*_{\diamle R} \cap \@H_\conn$ with $K$ a successor of $H$ has $d(\partial_\v H, \partial_\v K) \le S$.
\end{lemma}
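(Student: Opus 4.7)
The plan is to argue by contradiction: assuming $d(\partial_\v H, \partial_\v K) > R+1$, I will construct a cut $L \in \@H^*_{\diamle R} \cap \@H_\conn$ strictly between $H$ and $K$, contradicting the successor hypothesis. First I would fix $R$ to be at least the Krön--Möller constant $R_0$ from \cref{thm:kron-moller}, so that every radial cut (component of $\neg \Ball_S(x)$ for any $x \in X$ and any $S$) has vertex boundary of diameter $\le R_0 \le R$, and hence automatically lies in $\@H_{\diamle R}$. This is the only sense in which $R$ needs to be ``sufficiently large''.

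The candidate for $L$ arises from a single radial cut. Pick any $x \in \partial_\iv H$ (nonempty since $H$ is nontrivial and connected with connected complement), and set $S := R+1$. Two properties of $\Ball_S(x)$ drive the argument: $\partial_\v H \subseteq \Ball_R(x) \subseteq \Ball_S(x)$ because $x \in \partial_\v H$ and $\diam \partial_\v H \le R$; and $\Ball_S(x) \cap \partial_\v K = \emptyset$ by the contradiction hypothesis. Since $\Ball_S(x)$ is connected, contains $x \in H \subseteq K$, and misses $\partial_\v K$, it lies inside $K$, so $\neg K \subseteq \neg \Ball_S(x)$. Connectedness of $\neg K$ then forces $\neg K$ into a single component $L^*$ of $\neg \Ball_S(x)$, and I take $L := \neg L^*$.

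The verification splits into three parts. (a) Containment $H \subseteq L \subseteq K$: the component $L^*$ is disjoint from $\partial_\v H \subseteq \Ball_S(x)$ and meets $\neg H$ via $\neg K$, so by connectedness $L^* \subseteq \neg H$, giving $H \subseteq L$; and $L^* \supseteq \neg K$ directly gives $L \subseteq K$. (b) Strictness: $L \supsetneq H$ because $x \in \partial_\iv H$ has a neighbor in $\neg H$ lying in $\Ball_S(x) \subseteq L$; for $L \subsetneq K$, I show $L^* \cap K \ne \emptyset$ by observing that any $y \in \partial_\iv K$ has $d(x,y) > S$ (so $y \in \neg \Ball_S(x)$) and is adjacent to some $y' \in \partial_\ov K \subseteq \neg K \subseteq L^*$, forcing $y \in L^* \cap K$. (c) $L \in \@H^*_{\diamle R} \cap \@H_\conn$: $L^*$ is connected as a component; $L = \neg L^*$ is connected because every other component of $\neg \Ball_S(x)$ attaches to $\Ball_S(x)$ through its boundary; and $\diam \partial_\v L = \diam \partial_\v L^* \le R_0 \le R$ by \cref{thm:kron-moller}.

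The main obstacle I expect is the strict containment $L \subsetneq K$: the other claims essentially unpack from the choice of $S$ and the Krön--Möller bound, but $L \subsetneq K$ requires carefully using that $\partial_\iv K$ sits far enough from $x$ to be in $\neg \Ball_S(x)$ while still being adjacent to $\neg K$, so as to pull vertices of $K$ into the component $L^*$.
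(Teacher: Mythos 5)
Your proof is correct and follows essentially the same route as the paper's: both argue that if the boundaries were farther than $S$ apart, a radial cut around a boundary vertex of $H$ (whose boundary has diameter $\le R$ by Krön--Möller) would yield an element of $\@H^*_{\diamle R} \cap \@H_\conn$ strictly between $H$ and $K$, contradicting the successor hypothesis. The only cosmetic differences are that the paper works with components of $\neg\Ball_R(x)$ and the set $\Ball_1(\neg K)$ rather than $\neg\Ball_S(x)$ and $\partial_\iv K$, but the intermediate cut produced is the same.
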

\begin{proof}
Take $R$ satisfying \cref{thm:kron-moller}.
Let $H \subsetneq K \in \@H^*_{\diamle R} \cap \@H_\conn$ with $d(\partial_\v H, \partial_\v K) > S$; thus $\Ball_S(H) \subseteq K$.
Pick any $x \in \partial_\v H$.
Then $\Ball_R(x)$ contains $\partial_\v H$ and is contained in $\Ball_{S-1}(H)$, so $\Ball_1(\Ball_R(x)) \subseteq K$, i.e., $\Ball_R(x) \cap \Ball_1(\neg K) = \emptyset$.
Thus $\Ball_1(\neg K)$ is contained in a single component $L$ of $\neg \Ball_R(x)$, with $H \cap L = \emptyset$ since $H \cap \neg K = \emptyset$ and $\partial_\v H \subseteq \Ball_R(x)$ whence every component of $\neg \Ball_R(x)$ is contained in or disjoint from $H$.
Then $\neg L \in \@H^*_{\diamle R} \cap \@H_\conn$ by \cref{thm:kron-moller} with $H \subsetneq H \cup \Ball_R(x) \subseteq \neg L \subseteq \neg \Ball_1(\neg K) \subsetneq K$, so $K$ is not a successor of $H$.
\end{proof}

\begin{corollary}
\label{thm:quasitree-bddeg}
If $G$ is a bounded degree quasi-tree, then for all sufficiently large $R < \infty$, we have:
\begin{enumerate}[label=(\alph*)]
\item \label{thm:quasitree-bddeg:dense}
$\@H_{\diamle R}$ is dense towards ends of $G$.
\item \label{thm:quasitree-bddeg:median-bddeg}
$\@U^\circ(\@H_{\diamle R} \cap \@H_\conn)$ is a bounded degree median graph with hyperplanes of bounded size (or equivalently diameter).
\item \label{thm:quasitree-bddeg:qi}
$x |-> \^x : X -> \@U^\circ(\@H_{\diamle R} \cap \@H_\conn)$ is a quasi-isometry.
\end{enumerate}
\end{corollary}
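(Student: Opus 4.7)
The plan is to verify (a), (b), (c) in turn using the machinery already in place. Part (a) is essentially immediate: \cref{thm:quasitree-dense} yields some $R$ for which $\@H_{\diamle R}$ is dense towards ends (a property preserved as $R$ grows), and \cref{thm:cut-boundary} then transfers density to $\@H_{\diamle R} \cap \@H_\conn$ since $\@H_{\diamle R}$ is closed and downward-closed under boundary inclusion.

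For (b), let $M := \@U^\circ(\@H_{\diamle R} \cap \@H_\conn)$. The bound on hyperplane sizes flows from \cref{thm:med-hyp-fin}: if $K$ is non-nested with $H$, then by the argument in the proof of \cref{thm:cut-nonnested} the set $\partial_\v K$ meets a path in $X$ between two vertices of $\partial_\v H$, which together with $\diam(\partial_\v H) \le R$ confines $\partial_\v K$ to a ball of radius $O(R)$ around any fixed vertex of $\partial_\v H$. Bounded degree $D$ caps that ball at $\le D^{O(R)}$ vertices, and the cuts in $\@H_{\diamle R} \cap \@H_\conn$ whose boundary sits in a given finite set number at most exponentially in its size; hence each hyperplane is bounded in size, and a fortiori in diameter. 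For the vertex-degree bound: by \cref{thm:poc-dual} the neighbors of $U \in M$ are in bijection with the minimal elements of $U \setminus \{X\}$, and any two distinct minimal elements $H, K \in U$ must be non-nested (the alternatives $H \subsetneq K$ or $H \subseteq \neg K$ contradict minimality of $K$ or the fact that $U$ is an orientation, respectively); thus the neighbors of $U$ form a clique under non-nestedness, capped by the hyperplane bound above.

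For (c), the Lipschitz bound comes from noting that each edge $(x, y) \in G$ sits in $\partial_\ie H$ for at most $O(1)$ half-spaces $H \in \@H_{\diamle R} \cap \@H_\conn$, since $\partial_\v H \subseteq \Ball_R(x)$ and there are boundedly many such $H$ by the same ball/exponential-counting argument as in (b); hence $d_M(\^x, \^y) \le M \cdot d_G(x, y)$. The quasi-isometric embedding bound $d_M(\^x, \^y) \ge d_G(x, y)$ is supplied by radial cuts: for each $s \in \{0, 1, \ldots, d_G(x,y)-1\}$, take $H_s$ to be the component of $\neg \Ball_s(x)$ containing $y$; by \cref{thm:kron-moller} we have $H_s \in \@H_{\diamle R}$, the complement $\neg H_s$ is connected (since $\Ball_s(x)$ is connected and every other component of $\neg \Ball_s(x)$ abuts it), so $H_s \in \@H_\conn$, and the $H_s$ form a strictly decreasing chain because the $(s+1)$-st vertex of any $x$-$y$ geodesic lies in $H_s \setminus H_{s+1}$. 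These $d_G(x, y)$ distinct separating cuts give the bound via \cref{thm:half-dist}.

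The main obstacle is coboundedness of the image $\{\^x : x \in X\}$ in $M$. My plan is to exploit the structure of minimal elements: for $U \in M$, the (uniformly boundedly many) minimal elements $H_1, \ldots, H_k$ are pairwise non-nested, so all their boundaries $\partial_\v H_j$ sit in a common bounded ball $B \subseteq X$ by (b). Picking $x \in H_1$ close to $B$, I would bound $d_M(U, \^x) = |\{H \in U : x \notin H\}|$ by controlling, for each $j \ne 1$ with $x \notin H_j$, the number of $H \supseteq H_j$ with $x \notin H$ --- such $H$ separate $x$ from every vertex of $\partial_\v H_j$, and the already-established Lipschitz bound combined with $d_G(x, \partial_\v H_j) = O(R)$ caps their number uniformly. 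Should this direct counting prove too crude, a fallback is to pass through the subpocset $\@H'$ of radial cuts from a fixed basepoint, which is pairwise nested and whose dual $\@U^\circ(\@H')$ is by \cref{thm:nested-tree} a tree $T$; the Kr\"on--M\"oller quasi-isometry $X \to T$ then factors as $X \to M \to T$ (the second map being the median surjection induced by $\@H' \subseteq \@H_{\diamle R} \cap \@H_\conn$), and showing that the fibers of $M \to T$ have uniformly bounded diameter --- which they should, being governed by the ``local'' non-radial cuts at each tree vertex and hence by the local structure of a bounded degree quasi-tree --- yields coboundedness and completes the proof.
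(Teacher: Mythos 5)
Your argument has one genuine gap, and it occurs exactly where the paper's preceding \cref{thm:quasitree-cut-adj} --- which you never invoke --- is needed. In part (b) you claim that two distinct minimal elements $H, K$ of an orientation $U$ must be non-nested, disposing of the alternatives $H \subsetneq K$ and $H \subseteq \neg K$. But nestedness has a further case, $\neg K \subseteq H$ (equivalently $\neg H \subseteq K$), which contradicts neither minimality nor the orientation axioms: in the path $a - b - c$, the orientation $\^b$ has minimal elements $\{a,b\}$ and $\{b,c\}$, which are nested since $\neg\{b,c\} = \{a\} \subseteq \{a,b\}$ (indeed in any tree \emph{all} half-spaces are pairwise nested by \cref{thm:nested-tree}, yet interior vertices have degree $\ge 2$). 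In this missed case $H$ is a successor of $\neg K$, and a priori such a successor could have boundary arbitrarily far from $\partial_\v H$ --- properness only gives finitely many successors, not boundedly many. So neither your degree bound in (b) nor the ``common bounded ball'' step of your coboundedness plan in (c) goes through as written. The repair is precisely \cref{thm:quasitree-cut-adj}: for $R$ large enough to satisfy \cref{thm:kron-moller}, successors in $\@H^*_{\diamle R} \cap \@H_\conn$ have boundaries within distance $R+1$. With that, all pairs of adjacent half-spaces (in the sense of \cref{thm:hyp-adj}) have boundaries at bounded distance, the adjacency graph on hyperplanes has bounded degree, and the proof of \cref{thm:med-hyp-locfin} yields the vertex-degree bound --- this is how the paper proceeds. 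That lemma is the one place the quasi-tree hypothesis enters beyond density, so omitting it is not a cosmetic slip.

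The rest of your argument is sound, and your lower bound in (c) is cleaner than the paper's: the strictly decreasing chain of radial cuts $H_s$ (the component of $\neg\Ball_s(x)$ containing $y$) produces $d_G(x,y)$ distinct elements of $\@H^*_{\diamle R} \cap \@H_\conn$ separating $y$ from $x$, whence $d(\^x,\^y) \ge d_G(x,y)$ by \cref{thm:half-dist}, whereas the paper interpolates between two radial cuts using \cref{thm:quasitree-cut-adj}. Your attention to coboundedness is also welcome (the paper's proof does not spell it out), and your first plan --- observing that every $H \in U \setminus \^x$ lies above a minimal element of $U$ not containing $x$, then counting separators --- does work once the minimal elements of $U$ are confined to a common bounded ball via the successor lemma as above.
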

\begin{proof}
The above lemma shows that for $H, K \in \@H^*_{\diamle R} \cap \@H_\conn$, if one is a successor of the other, then their boundaries are at bounded distance apart.
Also, the proof of \cref{thm:cut-nonnested} shows that for such $H, K$ which are non-nested, there is a vertex in $\partial_\v K$ which lies on a geodesic $p_{xy}$ between two points $x, y \in \partial_\v H$, hence $H, K$ have boundaries at distance $\le R$ apart.
Thus, $H, K$ which are adjacent in the pocset $\@H_{\diamle R} \cap \@H_\conn$ (in the sense of \cref{thm:hyp-adj}) have boundaries at bounded distance apart.
Since these boundaries also have diameter $\le R$, and $G$ has bounded degree, the adjacency graph on $\@H^*_{\diamle R} \cap \@H_\conn$ has bounded degree.
From the proof of \cref{thm:med-hyp-locfin}, we see that $\@U^\circ(\@H_{\diamle R} \cap \@H_\conn)$ has bounded degree, and also from \cref{thm:med-hyp-fin} that for each of its half-spaces $H$, the median subgraph $\partial_\iv H$ has boundedly many half-spaces, which implies bounded size of $\partial_\iv H$ since $\partial_\iv H `-> 2^{\@H_\cvx(\partial_\iv H)}$ (\cref{thm:half-sep}).

Finally, to show \cref{thm:quasitree-bddeg:qi}: for $x, y \in X$, we have
\begin{equation*}
d(\^x, \^y) = \abs{\{H \in \@H_{\diamle R} \cap \@H_\conn \mid x \in H \not\ni y\}}
\end{equation*}
by \cref{thm:half-dist}.
We may upper-bound this for $x \mathrel{G} y$ since any such $H$ must have $x \in \partial_\v H$, whence $d(\^x, \^y)$ is upper-bounded for all $x, y$ by a multiple of $d(x, y)$.
And we may lower-bound $d(\^x, \^y)$ by a multiple of $d(x, y)$ using \cref{thm:quasitree-cut-adj}, by letting $\neg H \subseteq \neg \{x\}$ be the component (radial cut) containing $y$, so that $\diam(\partial_\v H)$ is bounded by the degree bound of $G$ (which we may assume to be $\le R$); then letting $H \subseteq K \subseteq \neg \{y\}$ be the component containing $\partial_\v H$; and then using \cref{thm:quasitree-cut-adj} to find a sequence $H = H_0 \subsetneq H_1 \subsetneq \dotsb \subsetneq H_n = K$ in $\@H_{\diamle R} \cap \@H_\conn$ with consecutive boundaries at bounded distance, thus with $n$ at least a multiple of $d(x, y)$.
\end{proof}

\begin{corollary}
\label{thm:cber-quasitreeable-bddeg}
Let $(X,E)$ be a CBER, $G \subseteq E$ be a quasi-treeing with components of bounded degree.
Then there is a componentwise bounded degree Borel forest $(Y,T)$ and a Borel reduction $(X,E) -> (Y,\#E_T)$ which is componentwise a quasi-isometry.
\end{corollary}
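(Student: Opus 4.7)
The plan is to compose Theorem \ref{thm:quasitree-bddeg}, which turns a bounded degree quasi-tree into a bounded degree median graph with bounded-size hyperplanes via a quasi-isometry, with the bounded degree strengthening of Theorem \ref{thm:cber-median}, which turns such a median graphing into a Borel subtreeing whose inclusion is again a quasi-isometry. Theorem \ref{thm:quasitree-bddeg} provides, for each component $C \in X/E$, some threshold $R_C < \infty$ (depending on the degree bound and the quasi-tree constants of $C$) past which the walling $\@H_R(C) := \@H_{\diamle R}^G(C) \cap \@H_\conn^G(C)$ is dense towards ends, the dual median graph $\@U^\circ(\@H_R(C))$ has bounded degree and bounded-diameter hyperplanes, and the principal orientations map $x |-> \^x$ is a quasi-isometry from $C$ into that median graph.

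Since both $R_C$ and the degree bound of $G|C$ may vary across components, I first uniformize them in a Borel manner. For each pair $(n,d) \in \#N^2$, the set of components $C$ on which $G|C$ has maximum degree $\le d$ and on which $\@H_n(C)$ satisfies conclusions (a)--(c) of Theorem \ref{thm:quasitree-bddeg} with constants determined by $n, d$ is Borel and $E$-invariant, as each such condition is quantitatively expressible in terms of finite local geometric data (boundary sizes and pairwise distances in balls of bounded radius). Choosing the least such $(n,d)$ componentwise yields a Borel partition $X = \bigsqcup_{n,d} X_{n,d}$ into $E$-invariant pieces with globally uniform constants. On each $X_{n,d}$, following \cref{def:walling} and \cref{rmk:walling-concrete}, I realize the Borel median graphing $M_{n,d}$ on a standard Borel vertex space $Y_{n,d}$ by coding each half-space $H \in \@H_n^*(C)$ as the finite pair $(\partial_\iv H, \partial_\ov H)$ and each clopen orientation as its finite set of minimal elements. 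By \ref{thm:quasitree-bddeg}(b), $M_{n,d}$ has globally bounded degree and bounded-diameter hyperplanes, so the bounded degree half of Theorem \ref{thm:cber-median} produces a Borel subtreeing $T_{n,d} \subseteq M_{n,d}$ whose inclusion into $M_{n,d}$ is a componentwise quasi-isometry; in particular $T_{n,d}$ is componentwise of bounded degree.

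Setting $(Y,T) := \bigsqcup_{n,d} (Y_{n,d}, T_{n,d})$ and $f(x) := \^x$ for $x \in X_{n,d}$ then produces the desired componentwise bounded degree Borel forest together with a Borel map $f : X -> Y$. By \ref{thm:quasitree-bddeg}(c) combined with the quasi-isometric inclusion $T_{n,d} \hookrightarrow M_{n,d}$, the map $f$ is componentwise a quasi-isometry from $(X,G)$ into $(Y,T)$, and $f$ is a Borel reduction $(X,E) -> (Y, \#E_T)$ because $E$-equivalent points $x, x' \in C$ both land in the connected median graph $\@U^\circ(\@H_n(C))$, hence in the same $T_{n,d}$-component, while $E$-inequivalent points land in distinct median components by construction. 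The main obstacle is the Borel bookkeeping for the component-dependent constants $R_C$ and degree bounds; once this is resolved by the partition into $X_{n,d}$, the conclusion is a direct composition of the two previously established results.
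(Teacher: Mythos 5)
Your proposal is correct and follows essentially the same route as the paper: the paper's proof is precisely the composition of the quasi-isometry $x \mapsto \^x$ from Corollary \ref{thm:quasitree-bddeg} with the quasi-isometric Borel subtreeing from the bounded-degree case of Theorem \ref{thm:cber-median}. Your extra care with uniformizing the component-dependent constants via a Borel partition into invariant pieces is the same bookkeeping the paper delegates to Remark \ref{rmk:walls-proper-borel}.
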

\begin{proof}
The Borel reduction to a treeable CBER constructed in \cref{thm:cber-quasitreeable-treeable}, ultimately \cref{thm:cber-wallable-treeable}, is componentwise given by the quasi-isometry $x |-> \^x$ to a median graph of the above result, followed by the identity map to the subforest given by \cref{thm:cber-median}.
\end{proof}

Finally, we point out that even though we only considered quasi-trees which are graphs above, we can easily generalize to proper pseudometric spaces:

\begin{lemma}
Let $(X,d)$ be a proper pseudometric space with a coarse equivalence $f : X -> Y$ to a graph $(Y,T)$.
Then for some $R < \infty$, $(X,d)$ is coarsely equivalent (via the identity map) to the distance $\le R$ graph on $X$.
\end{lemma}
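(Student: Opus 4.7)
The plan is to build the distance $\le R$ graph on $X$ for a suitable $R$ by using the quasi-inverse of $f$ to ``lift'' paths in $Y$ to walks in $X$ with uniformly small jumps. The identity map will then be bornologous in both directions almost by construction.

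First, I would fix a bornologous quasi-inverse $g : Y -> X$ of $f$, together with two finite constants: $D := d(1_X, g \circ f) < \infty$ (from the definition of quasi-inverse) and $S < \infty$ such that $d_Y(y,y') \le 1 \implies d(g(y), g(y')) \le S$ (from $g$ being bornologous at input scale $1$). Then I set $R := \max(D, S)$, let $G_R \subseteq X^2$ be the distance $\le R$ graph on $X$ (edges between distinct points at $d$-distance $\le R$), and let $d_R$ denote its path metric.

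The key step is a one-sided path-lifting estimate. Given $x, x' \in X$, since $d$ is finite-valued and $f$ is bornologous, $n := d_Y(f(x), f(x'))$ is finite; pick a $Y$-geodesic $f(x) = y_0, y_1, \dotsc, y_n = f(x')$. Then the sequence
\[
x,\ g(y_0),\ g(y_1),\ \dotsc,\ g(y_n),\ x'
\]
has consecutive $d$-distances $\le R$ (the two end jumps by the choice of $D$; the interior ones by the choice of $S$, since $d_Y(y_i,y_{i+1}) \le 1$), so it is a walk of length $n+2$ in $G_R$. Denoting by $\psi_f$ a bornologous control function for $f$, this yields
\[
d_R(x,x') \le d_Y(f(x),f(x')) + 2 \le \psi_f(d(x,x')) + 2.
\]
In particular $d_R$ is everywhere finite, and the identity $(X,d) -> (X,d_R)$ is bornologous.

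The other direction is immediate: any $G_R$-walk $x = z_0, z_1, \dotsc, z_m = x'$ realizing $d_R(x,x') = m$ gives $d(x,x') \le mR$ by the triangle inequality, so the identity $(X,d_R) -> (X,d)$ is $R$-Lipschitz, hence bornologous. The identity is thus its own bornologous quasi-inverse, proving coarse equivalence. There is no substantive obstacle; the only bit of care is to choose $R$ large enough to simultaneously absorb both the ``fuzziness'' $D$ of the quasi-inverse and the bornologous jump bound $S$ at unit scale, which is exactly why we take $R = \max(D,S)$.
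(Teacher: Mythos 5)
Your proof is correct and follows essentially the same route as the paper: same choice of $R = \max(d(1_X, g\circ f), S)$, and your explicit lifted walk $x, g(y_0), \dotsc, g(y_n), x'$ is just an unpacking of the paper's observation that $g$ is $1$-Lipschitz into the distance-$\le R$ graph and that $1_X$ is at bounded distance from $g \circ f$. The reverse direction ($R$-Lipschitz identity) is identical.
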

\begin{proof}
Let $g : Y -> X$ be a quasi-inverse of $f$, and let $S < \infty$ be such that $d(y,y') \le 1 \implies d(g(y),g(y')) \le S$.
Let $R := \max(d_X(1_X, g \circ f), S)$, and let $G$ be the distance $\le R$ graph on $X$.
Then $g : (Y,T) -> (X,G)$ is $1$-Lipschitz, whence $g \circ f : (X,d) -> (X,G)$ is bornologous, whence so is $1_X$ since $d_G(1_X, g \circ f) \le 1$.
And $1_X : (X,G) -> (X,d)$ is clearly $R$-Lipschitz.
\end{proof}

\begin{corollary}
If $X$ is a proper pseudometric space coarsely equivalent to a (graph-theoretic) tree, then the distance $\le R$ graph on $X$ is a quasi-tree for some $R$.
\qed
\end{corollary}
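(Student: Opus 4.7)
The plan is to combine the preceding lemma with the characterization of quasi-trees via coarse equivalence (\cref{rmk:quasitree-coarse}). By hypothesis, there is a coarse equivalence $f : X \to Y$ where $(Y,T)$ is a tree (viewed as a graph with its path metric). First I would apply the immediately preceding lemma to this $f$ to obtain a constant $R < \infty$ such that the identity map $1_X : (X,d) \to (X, G_R)$ is a coarse equivalence, where $G_R$ denotes the distance-$\le R$ graph on $X$.

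Next, I would compose coarse equivalences: since $1_X : (X,d) \to (X,G_R)$ and $f : (X,d) \to (Y,T)$ are both coarse equivalences, the composite $f \circ 1_X^{-1} : (X,G_R) \to (Y,T)$ (using a bornologous quasi-inverse of $1_X$) is a coarse equivalence from $(X,G_R)$ to the tree $(Y,T)$. Note that this only requires the routine fact that coarse equivalences between pseudometric spaces are closed under composition and inversion up to bounded distance, which is standard (see e.g., \cite{Roecoarse}).

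Finally, by \cref{rmk:quasitree-coarse}, a graph is a quasi-tree if and only if it is coarsely equivalent to a tree. Hence $G_R$ is a quasi-tree, as required.

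There is no real obstacle here; the corollary is essentially a bookkeeping exercise combining the previous lemma (which promotes an abstract coarse equivalence to the canonical graph $G_R$ on $X$) with the known equivalence between the quasi-isometric and coarse notions of ``tree-likeness'' for graphs. The only point worth being careful about is that the preceding lemma is stated for a coarse equivalence into a graph, which is exactly what we have, so it applies verbatim.
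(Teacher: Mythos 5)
Your proposal is correct and matches the paper's (implicit) argument: the corollary is left as immediate from the preceding lemma together with \cref{rmk:quasitree-coarse}, exactly as you combine them. The only cosmetic simplification is that since the lemma's coarse equivalence is the identity map on $X$, the composite you need is just $f$ itself viewed as a map $(X,G_R) \to (Y,T)$.
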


\begin{corollary}
\label{thm:cber-quasitreeable-metric}
If a CBER $(X,E)$ admits a Borel classwise proper pseudometric $d : E -> [0,\infty)$ with each class coarsely equivalent to a tree, then $E$ is treeable.
If $d$-balls of each radius are of bounded cardinality, then $E$ is even Borel reducible to a forest via a componentwise quasi-isometry.
\end{corollary}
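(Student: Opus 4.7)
The plan is to reduce this to the graph-theoretic \cref{thm:cber-quasitreeable-treeable,thm:cber-quasitreeable-bddeg} by replacing $d$ with the family of Borel locally finite graphs $G_R := \{(x,y) \in E : x \ne y \AND d(x,y) \le R\}$ for each $R \in \#N$. Since $d$ is classwise proper, each $G_R$ is locally finite. By the preceding lemma combined with \cref{rmk:quasitree-coarse}, every class $C \in X/E$ admits some $R$ such that $G_R|C$ is a quasi-tree coarsely equivalent to $(C,d)$ via the identity map.

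For the treeability claim, I would apply \cref{rmk:walls-proper-borel}. For each pair $(R,S) \in \#N^2$, the Borel assignment $C |-> \@H_{\diamle S}^{G_R}(C) \cap \@H_\conn^{G_R}(C) \subseteq 2^C$ is always a walling on $C$ (by \cref{ex:cut-diam,thm:cut-walling}), and for each fixed $C$ at least one choice of $(R,S)$ makes the walling proper by \cref{thm:quasitree-dense} together with \cref{thm:cut-boundary}. Combining these countably many candidate wallings yields a single proper Borel walling of $E$, and \cref{thm:cber-wallable-treeable} then gives treeability.

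For the quasi-isometric forest reduction assuming $d$-balls of each radius have uniformly bounded cardinality, each $G_R$ now has uniformly bounded degree. Partition $X$ into $E$-invariant Borel sets $(X_R)_{R \in \#N}$ where $X_R$ consists of the classes on which $R$ is minimal with $G_R|C$ a quasi-tree; Borelness of this partition follows from the Krön--Möller criterion \cref{thm:kron-moller}, which expresses ``$G_R|C$ is a quasi-tree'' as a Borel statement about the radial cuts of $G_R$. On each $X_R$, the graph $G_R|X_R$ is a componentwise bounded degree quasi-treeing of $E|X_R$, so \cref{thm:cber-quasitreeable-bddeg} supplies a Borel reduction to a componentwise bounded degree Borel forest via a componentwise quasi-isometry with respect to $G_R$. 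Since under uniform ball-finiteness the identity $(C,d) -> (C, G_R|C)$ upgrades from the preceding lemma's coarse equivalence to a genuine quasi-isometry (the bornologous bounds become linear), composing yields a componentwise quasi-isometry with respect to $d$, and the disjoint union of these reductions over $R$ produces the desired global reduction.

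The main obstacles are the Borelness of the partition $(X_R)$, which needs the Krön--Möller translation of ``quasi-tree'' into a condition on radial cuts, and the upgrade from coarse equivalence to quasi-isometry for the identity $(C,d) -> (C, G_R|C)$ in the second part, which relies on uniform local finiteness to tighten the $\forall\exists$ bornologous estimates into linear ones.
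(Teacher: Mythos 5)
The treeability half of your argument is exactly the paper's: pass to the distance\nobreakdash-$\le R$ graphs $G_R$, note that for each class some pair $(R,S)$ makes $\@H_{\diamle S}^{G_R} \cap \@H_\conn^{G_R}$ a proper walling, and combine the countably many candidate assignments via \cref{rmk:walls-proper-borel}. That part is correct, and your elaboration of the second half (the $E$-invariant partition $(X_R)_R$, its Borelness via the Krön--Möller criterion \cref{thm:kron-moller}, and the piecewise application of \cref{thm:cber-quasitreeable-bddeg}) is also sound as far as it goes.

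The gap is your final claim that uniform ball-finiteness upgrades the identity $(C,d) \to (C,G_R|C)$ from a coarse equivalence to a quasi-isometry. It does not. Take $C = \#N$ with $d(m,n) := \sqrt{\abs{m-n}}$ (a metric, since $t \mapsto \sqrt{t}$ is concave and vanishes at $0$). Balls of radius $r$ have cardinality $\le 2r^2+1$ uniformly, and the identity to the standard ray is a coarse equivalence since $d(m,n) \le r \iff \abs{m-n} \le r^2$; so all hypotheses of the corollary hold. But here $G_R$ is the distance-$\le R^2$ graph on $\#N$, so $d_{G_R}(m,n) = \lceil \abs{m-n}/R^2 \rceil \approx d(m,n)^2/R^2$, which is not linearly bounded in $d(m,n)$: the bornologous bound stays quadratic, not linear. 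In fact this $(C,d)$ is not large-scale geodesic (any chain from $0$ to $n$ with $d$-steps $\le M$ has at least $n/M^2 = d(0,n)^2/M^2$ steps), hence is not quasi-isometric to \emph{any} tree, so no argument can produce a componentwise quasi-isometry with respect to $d$ in this example. What your construction actually delivers is a reduction that is a componentwise quasi-isometry with respect to the auxiliary graph metric $d_{G_R}$, equivalently a componentwise \emph{coarse equivalence} with respect to $d$. (For what it is worth, the paper's own one-sentence proof of the second statement makes the same unjustified jump, so the defect lies as much in the statement as in your proof; but as written, your asserted reason --- ``the bornologous bounds become linear'' --- is false and should not be relied upon.)
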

\begin{proof}
By the above, for each class, there is some $R$ such that the distance $\le R$ graph has $\@H_{\diamle S}$ dense towards ends for some $S$; apply \cref{rmk:walls-proper-borel}.
The last statement follows from \cref{thm:cber-quasitreeable-bddeg}.
\end{proof}

\subsection{Tree decompositions and bounded tree-width}

\begin{definition}
Let $(X,G), (Y,T)$ be two connected graphs.
For a binary relation $F \subseteq X \times Y$, let $\^F \subseteq \^X \times \^Y$ denote the closure of the image of $F$ under the canonical map $X \times Y -> \^X \times \^Y$.

Note that a special case is when $F$ is the graph of a function $f : X -> Y$.
In that case, recall from \cref{def:ends} that $\^f$ is a function $\^X -> \^Y$ iff preimage $f^{-1}$ preserves $\@H_\fin$.

In general, the domain of $\^F$ will be a compact set in $\^X$, thus will be all of $\^X$ iff the domain of $F$ is dense in $\^X$.
For locally finite $G$, this just means that the domain of $F$ is all of $X$.
\end{definition}

\begin{definition}
\label{def:treedec}
$F$ as above is a \defn{tree decomposition} if $T$ is a tree, each image set $F(x) \subseteq Y$ is connected, and $x \mathrel{G} x' \implies F(x) \cap F(x') \ne \emptyset$.
If $F^{-1}(y) \subseteq X$ is finite for each $y \in Y$, we say $F$ has \defn{finite width} and that $G$ has \defn{finite tree-width}; if $\abs{F^{-1}(y)} \le N$ for each $y$, we say $F$ has \defn{width $\le N-1$} and that $G$ has \defn{tree-width $\le N-1$}; if this holds for some $N \in \#N$, we say that $F$ has \defn{bounded width} and $G$ has \defn{bounded tree-width}.
\end{definition}

For background on tree decompositions and tree-width, see \cite{Diegraph}.

\begin{lemma}
\label{thm:treedec-fintrees}
Suppose $G$ is a locally finite graph and $F$ as above is a tree decomposition.
Then there is a tree decomposition $F' \subseteq F$ (in particular, $F'$ has width $\le$ that of $F$) such that $F'(x)$ is finite for each $x \in X$.
\end{lemma}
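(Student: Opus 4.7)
My plan is to shrink each bag $F(x) \subseteq Y$ to a finite subtree of $T$ that still witnesses all of the required edge intersections. The key observation is that a connected subgraph of a tree is itself a subtree, and so is \emph{convex} in $T$ (the unique geodesic between any two of its points lies inside it); therefore the convex hull in $T$ of any finite subset of $F(x)$ is a finite connected subtree still contained in $F(x)$.

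Concretely: for each edge $e = \{x,x'\} \in G$, choose a single witness $y_e \in F(x) \cap F(x')$, using the hypothesis that this intersection is nonempty (in a Borel setting this selection can be performed by Lusin--Novikov). For each $x \in X$, form the finite set
\[
A_x := \{y_e : e \in G,\ x \in e\} \cup S_x \;\subseteq\; F(x),
\]
where $S_x$ is a chosen singleton in $F(x)$ if $x$ is $G$-isolated and $F(x) \ne \emptyset$, and $S_x := \emptyset$ otherwise; this set is finite because $G$ is locally finite. Let $F'(x) \subseteq T$ be the convex hull of $A_x$ in the tree $T$, i.e., the union of all geodesics in $T$ between pairs of points of $A_x$.

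I would then verify the four relevant properties. (a)~$F'(x) \subseteq F(x)$, because $F(x)$ is a subtree of $T$, hence convex, so the convex hull of $A_x \subseteq F(x)$ stays in $F(x)$; in particular $F' \subseteq F$ as subsets of $X \times Y$. (b)~Each $F'(x)$ is a finite connected subtree (or empty), being the convex hull of a finite set in $T$. (c)~For every edge $e = \{x,x'\} \in G$, the witness $y_e$ lies in $A_x \cap A_{x'} \subseteq F'(x) \cap F'(x')$, so the edge-intersection axiom is preserved. (d)~Since $F'^{-1}(y) \subseteq F^{-1}(y)$ for every $y \in Y$, the width of $F'$ is bounded by that of $F$.

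There is no real obstacle; the statement follows once one recognises that connected subgraphs of trees are convex and that convex hulls of finite sets in trees are finite subtrees. The only minor bookkeeping is the handling of $G$-isolated vertices $x$ with $F(x) \ne \emptyset$, which is why we include the auxiliary singleton $S_x$ to keep $F'(x)$ nonempty when desired.
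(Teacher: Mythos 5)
Your proof is correct and uses essentially the same idea as the paper's: exploit local finiteness to pick finitely many witnesses per bag and replace each bag by the convex hull of those witnesses in $T$, which stays inside the (connected, hence convex) original bag. The only difference is organizational — you shrink all bags simultaneously, sharing one witness $y_e$ per edge between its two endpoints, whereas the paper shrinks the bags one vertex at a time in a sequential induction; your shared-witness trick is exactly what makes the one-shot version sound.
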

\begin{proof}
Let $X = \{x_0, x_1, \dotsc\}$ be a bijective enumeration.
Let $F_0 := F$, and inductively for each $n$, let $F_{n+1} \subseteq F_n$ be given by $F_{n+1}(x) := F_n(x)$ for $x \ne x_n$, while $F_{n+1}(x_n) \subseteq F_n(x_n)$ is given as follows: for each $y \in \Ball_{\le1}(x_n)$, pick some $z_y \in F_n(x_n) \cap F_n(y)$; then let $F_{n+1}(x_n) \subseteq Y$ be the convex hull of these $z_y$'s.
Then each $F_n$ is a tree decomposition, whence so is $F' := \bigcapdown_n F_n$, since the definition of tree decomposition only requires checking $F'(x), F'(y)$ for each edge $x \mathrel{G} y$.
\end{proof}

\begin{lemma}
\label{thm:treedec-ends}
Suppose $G$ is a locally finite graph and $F$ as above is a finite width tree decomposition.
Then there is a finitely branching convex subtree $Y' \subseteq Y$ such that $F' := F \cap (X \times Y')$ is still a tree decomposition (with width $\le$ that of $F$).
Moreover, $\^F \subseteq \^X \times \^Y$ is a function on ends of $X$, and already maps each such end to an end of $Y'$.
\end{lemma}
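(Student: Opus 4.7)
The plan is to first apply \cref{thm:treedec-fintrees} to replace $F$ with a sub-tree-decomposition $F_0 \subseteq F$ whose subtrees $F_0(x) \subseteq Y$ are all finite, preserving the width bound; then set
\begin{equation*}
Y' := \bigcup_{x \in X} F_0(x) \subseteq Y.
\end{equation*}
Assuming $G$ is connected (otherwise work componentwise), I expect $Y'$ to be a convex subtree of $Y$: for $y \in F_0(x_0)$ and $y' \in F_0(x_n)$, a $G$-path $x_0, x_1, \dotsc, x_n$ yields a connected union $\bigcup_i F_0(x_i) \subseteq Y$ containing $y, y'$ (consecutive $F_0(x_i), F_0(x_{i+1})$ overlap), so $Y'$ is a connected subgraph of the tree $Y$, hence convex.

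To show $Y'$ is finitely branching, fix $y \in Y'$; each $T$-neighbor of $y$ in $Y'$ picks out a distinct component $B$ of $Y \setminus \{y\}$ meeting $Y'$. For any such $B$, pick $x$ with $F_0(x) \cap B \neq \emptyset$; since $F_0(x)$ is connected in $Y$ and $\{y\}$ separates $B$ from its complement, either $F_0(x) \subseteq B$ or $y \in F_0(x)$. If only the former holds for $B$, then any $G$-neighbor $x'$ of such an $x$ has $F_0(x') \cap F_0(x) \subseteq B$ and $y \notin F_0(x')$, forcing $F_0(x') \subseteq B$; hence the $G$-component of $x$ lies entirely in $\{x' : F_0(x') \subseteq B\}$, contradicting $G$-connectedness and $F_0^{-1}(y) \neq \emptyset$. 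So every \emph{used} branch $B$ is witnessed by some $x \in F_0^{-1}(y)$ with $F_0(x) \cap B \neq \emptyset$, of which there are finitely many since $F_0^{-1}(y)$ is finite (by finite width) and each $F_0(x)$ is finite. Setting $F' := F \cap (X \times Y')$, each $F'(x) = F(x) \cap Y'$ is a nonempty intersection of two convex subsets of the tree $Y$ (it contains $F_0(x)$), hence itself convex and in particular connected; $F'(x) \cap F'(x') \supseteq F_0(x) \cap F_0(x') \neq \emptyset$ for $G$-adjacent $x, x'$; and the width of $F'$ is bounded by that of $F$.

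For the ``moreover'' clause, existence of $V$ with $(U, V) \in \^F$ is immediate from compactness of $\^Y$: pick $x_n \to U$ and $y_n \in F(x_n)$, and extract a convergent subnet of $(x_n, y_n)$. For uniqueness, suppose $(U, V_1), (U, V_2) \in \^F$ with $V_1 \neq V_2$, separated by $B \in \@H_\fin^T(Y)$ with $B \in V_1$ and $\neg B \in V_2$. Consider
\begin{equation*}
A^+ := \{x \mid F(x) \subseteq B\} \cup F^{-1}(\partial_\v^T B), \qquad A^- := \{x \mid F(x) \subseteq \neg B\} \cup F^{-1}(\partial_\v^T B).
\end{equation*}
If $x$ has $F(x) \subseteq B$ and $G$-neighbor $x' \notin A^+$, then $F(x') \cap B \supseteq F(x) \cap F(x') \neq \emptyset$, $F(x') \not\subseteq B$, and $F(x') \cap \partial_\v^T B = \emptyset$; but connectedness of $F(x')$ then forces it to meet $\partial_\v^T B$, a contradiction. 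So $\partial_\iv^G A^+ \subseteq F^{-1}(\partial_\v^T B)$, which is finite by finite width; combined with local finiteness of $G$, $A^+ \in \@H_\fin^G(X)$, and symmetrically for $A^-$. Moreover $A^+ \cup A^- = X$ and $A^+ \cap A^- = F^{-1}(\partial_\v^T B)$ is finite, so $U$ (non-principal) contains exactly one of $A^\pm$, say $A^+$; but then the witnessing condition for $(U, V_2) \in \^F$ at the clopen neighborhood $\^C \times \neg \^B$ with $C := A^+ \setminus F^{-1}(\partial_\v^T B)$ demands some $x \in C$ with $F(x) \cap \neg B \neq \emptyset$, impossible since $x \in C$ forces $F(x) \subseteq B$.

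Finally, to identify $V$ as an end of $Y'$, the inclusion $\iota : Y' \hookrightarrow Y$ (with $Y'$ carrying the induced subgraph) gives a Boolean algebra homomorphism $\iota^* : \@H_\fin^T(Y) \to \@H_\fin(Y')$, $A \mapsto A \cap Y'$, which is surjective: any $B' \in \@H_\fin(Y')$ extends to $A \in \@H_\fin^T(Y)$ by including each component $D$ of $Y \setminus Y'$ in $A$ iff its (unique, by convexity of $Y'$ in $Y$) attachment point in $Y'$ lies in $B'$. Dually $\^\iota : \^{Y'} \hookrightarrow \^Y$ is a continuous injection whose image equals the closure of $Y'$ in $\^Y$, i.e., those $V$ for which every $A \in V$ meets $Y'$. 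Applying the existence argument to $F_0$ produces $V_0$ with $(U, V_0) \in \^{F_0} \subseteq \^F$, and uniqueness forces $V_0 = V$; since $y_n \in F_0(x_n) \subseteq Y'$, the end $V$ is a limit of $Y'$-points, hence in this closure, whence $V = \^\iota(V')$ for a unique $V' \in \^{Y'}$, which must be an end of $Y'$ because $V' = \^y$ for $y \in Y'$ would give $V = \^\iota(\^y) = \^y$, contradicting that $V$ is an end of $Y$. The main obstacles I anticipate are the verification $A^\pm \in \@H_\fin^G(X)$ (which requires both finite width and local finiteness of $G$ in tandem) and pinning down the image of $\^\iota$ as the closure of $Y'$.
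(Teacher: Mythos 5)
Your route differs from the paper's — the paper prunes from $Y$ every branch $\cone_y{y'}$ with $F^{-1}(\cone_y{y'}) \subseteq F^{-1}(y)$, getting finite branching from the fact that $X \setminus F^{-1}(y)$ has only finitely many components, whereas you build $Y'$ from below as $\bigcup_x F_0(x)$ after \cref{thm:treedec-fintrees} — and your arguments for convexity, finite branching, the tree-decomposition axioms for $F'$, and single-valuedness of $\^F$ at ends (your sets $A^{\pm}$ play the role of the paper's finite-boundary sets $F^{-1}(\cone_y{y'})$) are all correct. However, there is one genuine gap at the end: you never prove that the unique $V$ with $(U,V) \in \^F$ is a \emph{nonprincipal} ultrafilter. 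Your existence argument produces some $V \in \^Y$, your uniqueness argument shows there is only one, and your final sentence rules out $V' = \^y$ by "contradicting that $V$ is an end of $Y$" — but that is precisely the unproved fact. Since the closure of $Y'$ in $\^Y$ contains every principal $\^y$ with $y \in Y'$, nothing you have established excludes $V = \^y$.

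This is the step the paper dispatches with "it cannot map an end to a vertex, since each $F^{-1}(y)$ is finite." In your setup the clean fix is to run this inside $\^{Y'}$ rather than $\^Y$: because $Y'$ is finitely branching, $\{y\}$ is clopen in $\^{Y'}$ for each $y \in Y'$, so if the point $V' \in \^{Y'}$ corresponding to $V$ were $\^y$, then $\^{(\neg F_0^{-1}(y))} \times \{\^y\}$ would be a neighborhood of $(U, V')$ (as $F_0^{-1}(y)$ is finite and $U$ is nonprincipal) meeting the image of $F_0$, i.e., some $x \notin F_0^{-1}(y)$ would satisfy $y \in F_0(x)$, which is absurd. (Arguing directly in $\^Y$ is more delicate, since $Y$ need not be locally finite and $\{y\}$ need not be clopen there.) With this one addition your proof is complete.
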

\begin{proof}
For each $y \in Y$, since $F$ is a tree decomposition, we have a partition
\begin{align*}
X \setminus F^{-1}(y) = \bigsqcup_{y' \mathrel{T} y} (F^{-1}(\cone_y{y'}) \setminus F^{-1}(y)),
\end{align*}
which is invariant with respect to the induced subgraph on $X \setminus F^{-1}(y)$, whence only finitely many pieces are nonempty.
Since $F$ has finite width, it follows that each $F^{-1}(\cone_y{y'}) \subseteq X$ has finite boundary.
This implies that $\^F$ is a function on ends: it cannot map an end to a vertex, since each $F^{-1}(y)$ is finite; and it cannot map an end $U \in \^X \setminus X$ to two distinct ends in $\^Y \setminus Y$, since they are separated by some half-space $\cone_y{y'} \subseteq Y$ whence $U$ belongs to either $F^{-1}(\cone_y{y'})$ or its complement.
Now if the $y'$th piece of the above partition is empty, that means $F^{-1}(\cone_y{y'}) \subseteq F^{-1}(y)$, whence we may remove $\cone_y{y'}$ from the tree $Y$ and still maintain that we have a tree decomposition $F \cap (X \times (Y \setminus \cone_y{y'})) = {\proj_{Y \setminus \cone_y{y'}}} \circ F$; moreover, $\^F$ did not map any end of $X$ into $\^{\cone_y{y'}}$ (again since $F^{-1}(y)$ is finite).
Let $Y' \subseteq Y$ be the result of removing all such $\cone_y{y'}$ with $F^{-1}(\cone_y{y'}) \setminus F^{-1}(y) = \emptyset$.
\end{proof}

\begin{proposition}
\label{thm:bddtw-dense}
If $G$ is a locally finite connected graph with tree-width $\le N-1$, then $\@H_{\cardle{N}}$ is dense towards ends of $G$.
\end{proposition}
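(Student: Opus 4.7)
The plan is to use the tree decomposition to transport a neighborhood basis of an end of $G$ from the tree. Fix a tree decomposition $F \subseteq X \times Y$ of width $\le N-1$, so every bag $F^{-1}(y)$ has cardinality $\le N$. First I apply \cref{thm:treedec-fintrees} to reduce to the case where each $F(x) \subseteq Y$ is finite, and then \cref{thm:treedec-ends} to pass to a finitely branching convex subtree $Y' \subseteq Y$ such that the induced map $\^F$ is a function sending each end of $G$ to an end of $Y'$.

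Now fix an end $U \in \^X \setminus X$ and a neighborhood $\^A$ of $U$ with $A \in \@H_\fin^G(X)$; I must produce $H \in \@H_{\cardle N}$ with $U \in \^H$ and $H \subseteq A$. Let $V := \^F(U) \in \^{Y'} \setminus Y'$ and choose a ray $y_0, y_1, \dotsc$ in $Y'$ converging to $V$, so that the cones $\cone_{y_n}{y_{n+1}}$ form a neighborhood basis of $V$. I consider the pulled-back cuts
\begin{equation*}
    K_n := F^{-1}(\cone_{y_n}{y_{n+1}}) = \{x \in X \mid F(x) \cap \cone_{y_n}{y_{n+1}} \ne \emptyset\}.
\end{equation*}
The central claim is $\partial_\iv K_n \subseteq F^{-1}(y_n) \cap F^{-1}(y_{n+1})$, which immediately gives $\abs{\partial_\iv K_n} \le N$ and hence $K_n \in \@H_{\cardle N}$. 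To see this, suppose $(x', x) \in G$ with $x \in K_n$ and $x' \notin K_n$; then $F(x')$ is disjoint from $\cone_{y_n}{y_{n+1}}$, while $F(x)$ meets $\cone_{y_n}{y_{n+1}}$ as well as the complementary side (via $F(x) \cap F(x') \ne \emptyset$). Since $F(x)$ is connected in the tree $Y'$ and crosses the edge $\{y_n, y_{n+1}\}$, it must contain both endpoints, so $x \in F^{-1}(y_n) \cap F^{-1}(y_{n+1})$. From the definition of $\^F$, the membership $\cone_{y_n}{y_{n+1}} \in V$ translates directly into $U \in \^{K_n}$.

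The final step is to shrink $K_n$ into $A$. Since $\partial_\v^G A$ is finite and each $F(x)$ is finite, for all sufficiently large $n$ we have $F(x) \cap \cone_{y_n}{y_{n+1}} = \emptyset$ simultaneously for every $x \in \partial_\v^G A$, so $K_n \cap \partial_\v^G A = \emptyset$. Consequently no $G$-edge connects $K_n \cap A$ with $K_n \cap \neg A$, and setting $H := K_n \cap A$ yields $H \subseteq A$, $U \in \^H = \^{K_n} \cap \^A$, and $\partial_\iv H \subseteq \partial_\iv K_n$, so $H \in \@H_{\cardle N}$, as required.

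The only substantive obstacle is the boundary bound $\partial_\iv K_n \subseteq F^{-1}(y_n) \cap F^{-1}(y_{n+1})$; this is where both the connectedness of the bags $F(x)$ and the width hypothesis are genuinely used. Everything else is a routine shrinking argument based on the finiteness of $\partial_\v^G A$ and of each bag, together with the continuity properties of $\^F$ already established in \cref{thm:treedec-ends}.
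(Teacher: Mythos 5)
Your proof is correct and takes essentially the same route as the paper's: both reduce via \cref{thm:treedec-fintrees,thm:treedec-ends} to a decomposition with finite bags over a finitely branching tree, then pull back a half-space of that tree separating $\^F(U)$ from $F(\partial_\v A)$, using the connectedness of the bags $F(x)$ to bound the boundary of the resulting cut by a single bag $F^{-1}(y)$ of size $\le N$. The only (immaterial) difference is in how the cut is trimmed to fit inside $A$: the paper takes the connected component of $X \setminus F^{-1}(y)$ containing $U$, bounding the \emph{outer} boundary, whereas you take the full preimage of a cone along a ray and intersect with $A$, bounding the \emph{inner} boundary.
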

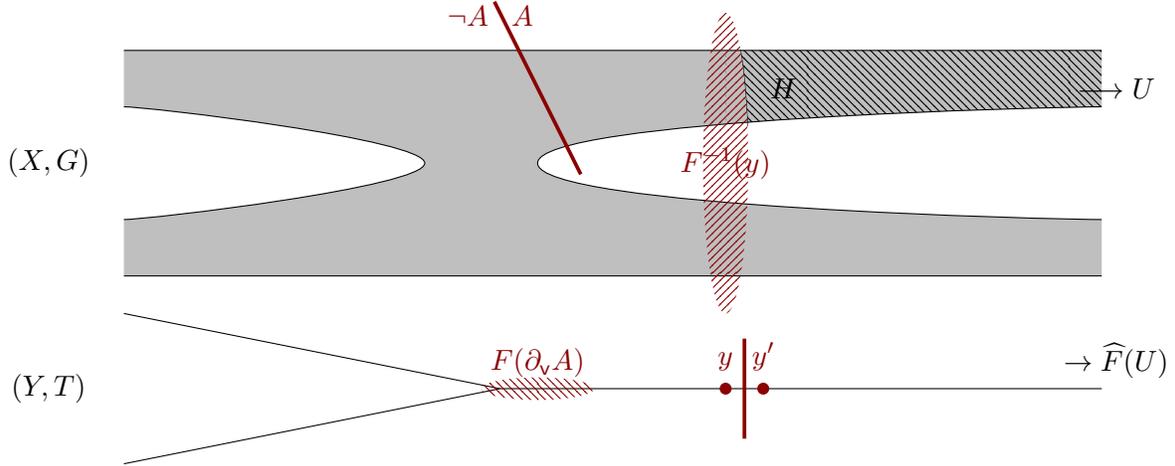
\begin{figure}[htb]
\centering
\begin{tikzpicture}
\node(G) {$(X,G)$};
\coordinate (GO) at ($(G) + (6,0)$);
\coordinate (GL) at ($(GO) + (-5,0)$);
\coordinate (GL1) at ($(GL) + (0,1)$);
\coordinate (GL2) at ($(GL) + (GL) - (GL1)$);
\coordinate (GR) at ($(GO) + (8,0)$);
\coordinate (GR1) at ($(GR) + (0,1)$);
\coordinate (GR2) at ($(GR) + (GR) - (GR1)$);
\def\toparc{($(GR1) + (0,.5)$)
    to ($(GL1) + (0,.5)$)}
\def\leftarc{($(GL1) + (0,-.25)$)
    to[out=0,in=90,looseness=.2] ($(GO) + (-1,0)$)
    to[out=-90,in=0,looseness=.2] ($(GL2) + (0,.25)$)}
\def\botarc{($(GL2) + (0,-.5)$)
    to ($(GR2) + (0,-.5)$)}
\def\rightarc{($(GR2) + (0,.25)$)
    to[out=180,in=-90,looseness=.2] ($(GO) + (.5,0)$)
    to[out=90,in=180,looseness=.2] ($(GR1) + (0,-.25)$)}
\fill[graph interior]
    \toparc -- \leftarc -- \botarc -- \rightarc -- cycle;
\draw \toparc \leftarc \botarc \rightarc;
\node at (GR1) {$\quad \longrightarrow U$};
\draw[graph cut]
    ($(GO) + (0,2)$) -- ($(GO) + (1,0)$);
\node[graph cut, draw=none, below right] at ($(GO) + (0,2.25)$) {$A$};
\node[graph cut, draw=none, below left] at ($(GO) + (0,2.25)$) {$\neg A$};
\node(T) at ($(G) + (0,-4)$) {$(Y,T)$};
\coordinate (TO) at (GO |- T);
\draw (TO)
    edge +($(GL1) - (GO)$)
    edge +($(GL2) - (GO)$)
    edge +($(GR) - (GO)$);
\node[above] at (GR |- T) {$\quad -> \^F(U)$};
\path[pattern=north west lines,pattern color=DarkRed] ($(TO) + (.5,0)$) ellipse[x radius=.75,y radius=.15] node[graph cut, draw=none, above] {$F(\partial_\v A)$};
\begin{scope}[graph cut, draw=none]
\node(y)[thick, dot, label=above:$y$] at ($(TO) + (3,0)$) {};
\node(y')[thick, dot, label=above:$y'$] at ($(y) + (.5,0)$) {};
\draw ($.5*(y)+.5*(y')$) +(0,.5) -- +(0,-.5);
\gdef\yfurc{(GO -| y) ellipse[x radius=.3,y radius=2]}
\path[pattern=north east lines, pattern color=DarkRed] \yfurc node {$F^{-1}(y)$};
\end{scope}
\begin{scope}[even odd rule]
\clip ($(GO -| y) + (0,.5)$) rectangle ($(GR1) + (0,.5)$);
\clip (GO -| y) rectangle ($(GR1) + (0,1)$) \yfurc;
\draw \yfurc;
\pattern[pattern=north west lines] \rightarc -- \toparc -- cycle;
\node[xshift=2em] at (y |- GR1) {$H$};
\end{scope}
\end{tikzpicture}
\caption{The sides $H$ of the parts $F^{-1}(y)$ of a tree decomposition $F$ are dense towards ends.}
\label{fig:bddtw-dense}
\end{figure}
\begin{proof}
By the preceding lemmas, let $F \subseteq X \times Y$ be a tree decomposition with $Y$ locally finite, each $F(x) \subseteq Y$ finite, and width $\le N-1$.
Let $U \in \^X \setminus X$ and $A \in \@H_\fin(X)$ with $U \in \^A$.
Then the end $\^F(U) \in \^Y$ and the finite set $F(\partial_\v A) \subseteq Y$ are separated by some half-space $\cone_y{y'} \subseteq Y$ such that $\^F(U) \in \^{\cone_y{y'}}$ and $F(\partial_\v A) \cap (\{y\} \cup \cone_y{y'}) = \emptyset$.
It follows that $U \in \^{F^{-1}(\cone_y{y'})} \setminus F^{-1}(y)$, which is a union of some of the components of $X \setminus F^{-1}(y)$ (as in the proof of the preceding lemma), while $\partial_\v A \cap (F^{-1}(y) \cup F^{-1}(\cone_y{y'})) = \emptyset$.
Thus the component $H$ of $X \setminus F^{-1}(y)$ containing $U$ is contained in $A$, since it contains $U \in \^A$ and is disjoint from $\partial_\v A$; and $H \in \@H_{\cardle{N}}$ since $\partial_\ov H \subseteq F^{-1}(y)$ which has size $\le N$.
(See \cref{fig:bddtw-dense}.)
\end{proof}

\begin{corollary}
\label{thm:cber-bddtw-treeable}
If a CBER admits a locally finite graphing with components of bounded tree-width, then it is treeable.
\end{corollary}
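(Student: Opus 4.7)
The plan is to derive this corollary by combining the pointwise density result \cref{thm:bddtw-dense} with the general Borel treeability criterion \cref{thm:cber-dense-treeable}, mediated through the countable combination principle \cref{rmk:walls-proper-borel}. Specifically, given a locally finite Borel graphing $G \subseteq E$ with each component of bounded tree-width, the natural candidate walling to use is $\@H_{\cardle{N}}$ from \cref{ex:cut-card}: by \cref{thm:bddtw-dense}, in any component $C$ of tree-width $\le N-1$, the family $\@H_{\cardle{N}}^G(C)$ is dense towards ends of $G|C$. So I would first treat the case where there is a uniform bound $N$, and then bootstrap via \cref{rmk:walls-proper-borel} to handle the case where the bound varies by component.

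For the uniform case, the steps are: (i) verify that $\@H_{\cardle{N}}$ is a closed subpocset of $\@H_\fin$ that is downward-closed under boundary inclusion --- both are immediate from the definition, since complementing preserves the minimum of $\abs{\partial_\iv H}$ and $\abs{\partial_\ov H}$, the condition is clopen in $2^C$, and shrinking $\partial_\ie H$ only decreases these cardinalities; (ii) check that the assignment $C \mapsto \@H_{\cardle{N}}^G(C)$ is Borel in the sense of \cref{def:walling}, which is clear since $H \in \@H_{\cardle{N}}^G([x]_E)$ is defined by a Borel condition on $(x,H) \in 2^E_X$ using the graphing $G$; (iii) invoke \cref{thm:cber-dense-treeable} directly to conclude treeability.

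To handle the general non-uniform case, observe that $\{x \in X : \text{the $G$-component of $x$ has tree-width} \le N-1\}$ is an $E$-invariant Borel set for each $N$ (the existence of a suitable tree decomposition is a $\Sigma^1_1$ condition which, being $E$-invariant and covering all of $X$ by assumption, can be handled by restricting to each of the countably many saturated pieces $X_N := \{x : \text{tree-width of $[x]_E$} = N\}$, or simply by applying \cref{rmk:walls-proper-borel} directly to the countable family $(\@H_{\cardle{N}})_{N \in \#N}$, since on each component at least one $N$ witnesses properness).

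The main obstacle, if any, is the bookkeeping around the ``Borelness'' of the family $C \mapsto \@H_{\cardle{N}}^G(C)$ and the partition of $X$ by tree-width, but this is genuinely routine: since the cardinality condition on $\partial_\iv H$ and $\partial_\ov H$ is uniformly Borel in $(x,H)$, the corresponding set in the bundle $\@H_X(E)$ is Borel, and we already have \cref{thm:cut-boundary} to upgrade ``dense towards ends'' plus ``closed, boundary-downward-closed subpocset'' to a proper walling. So the proof reduces to assembling these pieces and citing \cref{thm:cber-wallable-treeable} (via \cref{thm:cber-dense-treeable}) in a single line.
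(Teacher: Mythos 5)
Your proposal is correct and follows exactly the same route as the paper, whose proof is the one-line citation of \cref{thm:bddtw-dense}, \cref{thm:cber-dense-treeable}, and \cref{rmk:walls-proper-borel}. The additional details you supply (that $\@H_{\cardle{N}}$ is a closed, boundary-downward-closed subpocset with a Borel assignment, and that the non-uniform bound is handled by applying \cref{rmk:walls-proper-borel} to the countable family $(\@H_{\cardle{N}})_N$ rather than by trying to show Borelness of tree-width directly) are precisely the intended reading of those citations.
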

\begin{proof}
By \cref{thm:bddtw-dense}, \cref{thm:cber-dense-treeable}, and \cref{rmk:walls-proper-borel}.
\end{proof}

\section{Graphs with a distinguished end and hyperfiniteness}
\label{sec:end-hypf}

In this final section, we show that the treeability results in the preceding sections may be adapted to instead show hyperfiniteness, when the graph or wallspace structure we start with is ``one-ended'', or more generally has one distinguished end per component selected in a Borel way.

\subsection{Just one end}
\label{sec:oneended}

The following gives a simple, self-contained reformulation of the machinery in \cref{sec:walls,sec:cuts} in the one-ended case, that does not depend on familiarity with those or any other earlier sections in this paper.
In particular, the words ``end'' and ``wallspace'' here should be treated as merely part of the name for now (their relevance will become clear later).

\begin{definition}
\label{def:walls-oneended}
A \defn{one-ended proper wallspace} is an infinite set $X$ equipped with a set $\@I^* \subseteq 2^X$ of finite nonempty subsets such that
\begin{enumerate}[label=(\roman*)]
\item \label{def:walls-oneended:cofinal}
$\@I$ is cofinal among finite subsets of $X$, i.e., every finite $F \subseteq X$ is contained in some $I \in \@I^*$;
\end{enumerate}
and any of the following equivalent conditions hold:
\begin{enumerate}[resume*]
\refitem{(ii)} \label{def:walls-oneended:finsep}
$\@I^*$ is finitely separating: for any $x, y \in X$, there are only finitely many $J \in \@I^*$ with $x \in J \not\ni y$;
\refitem{(ii$'$)} \label{def:walls-oneended:fincut}
for any $I \in \@I^*$, there are only finitely many $J \in \@I^*$ with both $I \cap J$ and $I \setminus J$ nonempty;
\refitem{(ii$''$)} \label{def:walls-oneended:finnest}
for any $I \in \@I^*$, there are only finitely many $J \in \@I^*$ non-nested with $I$.
\end{enumerate}
\end{definition}

The terminology and the equivalence of these conditions are justified by \cref{thm:walls-oneended} below.

\begin{example}
Let $(X,G)$ be a one-ended locally finite quasi-tree, $\@I^*$ be the collection of all sets which are the union of some $\Ball_S(x)$ and all finite components of its complement, i.e., the complements of the radial cuts as in \cref{thm:kron-moller} which are neighborhoods of the unique end.
Then \cref{def:walls-oneended:cofinal} clearly holds, by taking $S -> \infty$ for fixed $x$.
And \cref{def:walls-oneended:finsep} also holds using \cref{thm:kron-moller}, since any $I \in \@I^*$ separating $x, y$ must have a boundary point in $\Ball_{d(x,y)}(x)$.
(Via \cref{thm:walls-oneended}, this is a special case of \cref{thm:quasitree-dense}.)
\end{example}

\begin{example}
Let $(X,G)$ be a one-ended connected locally finite graph with tree-width $\le N-1$, $\@I^*$ be the set of all finite $I \subseteq X$ with both $I, \neg I$ connected and $\abs{\partial_\iv I} \le N$.
This corresponds via \cref{thm:walls-oneended} to $\@H_{\cardle{N}} \cap \@H_\conn$, which is a proper walling by \cref{thm:bddtw-dense} (and its proof which shows that it is enough to take $\abs{\partial_\iv I}$ rather than $\min(\abs{\partial_\iv I}, \abs{\partial_\ov I})$).
\end{example}

\begin{example}
\label{ex:poset-oneended}
Let $(X,\le)$ be an infinite poset, and let $\@I^*$ be the set of all principal ideals $\down x := \{y \in X \mid y \le x\}$, for all $x \in X$.
The conditions in \cref{def:walls-oneended} translate to:
\begin{enumerate}
\item[(0)]
all principal ideals $\down x \subseteq X$ are finite;
\refitem{(i)}
$X$ is directed, i.e., every finite subset has an upper bound;
\refitem{(ii)}
for any $x, y \in X$, there are only finitely many $x \le z \not\ge y$;
\refitem{(ii$'$)}
for any $x \in X$, there are only finitely many $y \not\ge x$ such that $x, y$ have a lower bound.
\end{enumerate}
Indeed, such posets can be regarded as an alternate formalization of the conditions in \ref{def:walls-oneended}, since given any $\@I^*$ obeying those conditions, the poset $(\@I^*,\subseteq)$ will obey these conditions.
\end{example}

\begin{example}
For any infinite set $X$, given a sequence of finite equivalence relations $F_0 \subseteq F_1 \subseteq \dotsb$ with $\bigcupup_n F_n = X^2$ (i.e., a witness to hyperfiniteness of the countable set $X$), letting $\@I^*$ be the set of equivalence classes of all the $F_n$, we clearly have \ref{def:walls-oneended}\cref{def:walls-oneended:finnest}.
Thus, \cref{def:walls-oneended} can be regarded as a generalization of the definition of hyperfiniteness.
\end{example}

\begin{theorem}
\label{thm:walls-oneended-hyperfinite}
Let $(X, \@I^*)$ be a one-ended proper wallspace.
We may canonically construct a sequence of FERs $F_0 \subseteq F_1 \subseteq \dotsb$ with $\bigcupup_n F_n = X^2$ (i.e., a witness to hyperfiniteness).
\end{theorem}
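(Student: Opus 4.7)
The plan is to observe that the cofinality axiom \cref{def:walls-oneended:cofinal} alone suffices for this conclusion: it forces $X$ to be at most countable and exhibits it as the union of a nested chain of finite sets drawn from $\@I^*$, which directly yields a witness to hyperfiniteness. The second axiom and its reformulations \cref{def:walls-oneended:finsep}--\cref{def:walls-oneended:finnest} are not needed for this particular implication (their equivalence would be verified in a separate lemma); they will power the more delicate consequences of \cref{def:walls-oneended} elsewhere in the paper.

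First I would fix a bijective enumeration $X = \{x_0, x_1, x_2, \ldots\}$. This is possible because any increasing sequence of finite equivalence relations with union $X^2$ forces $|X| \le \aleph_0$; in the Borel bundle setting of \cref{def:walling} (to which this theorem will ultimately be applied classwise), such an enumeration on each equivalence class is produced uniformly via the Lusin--Novikov uniformization theorem. Next, I would inductively build a nested chain $I_0 \subseteq I_1 \subseteq \cdots$ in $\@I^*$ with $\bigcupup_n I_n = X$: set $I_{-1} := \emptyset$ and, at stage $n$, apply \cref{def:walls-oneended:cofinal} to the finite set $I_{n-1} \cup \{x_n\}$ to pick some $I_n \in \@I^*$ containing it, selecting $I_n$ via Lusin--Novikov in the Borel bundle setting for a canonical choice.

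Finally, I would define $F_n \subseteq X^2$ to be the equivalence relation whose unique non-singleton class is $I_n$, with all other points forming singleton classes. Each $F_n$ is an FER since $I_n$ is finite; the inclusion $F_n \subseteq F_{n+1}$ is immediate from $I_n \subseteq I_{n+1}$; and $\bigcupup_n F_n = X^2$ because every pair $(x,y) \in X^2$ lies inside $I_n \times I_n$ once $n$ is large enough. Thus $(F_n)_n$ is the desired witness to hyperfiniteness.

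The combinatorics here is entirely routine; the only point requiring any care is the canonicity demanded by \cref{rmk:canonical}, namely that the two invocations of Lusin--Novikov (to enumerate $X$ and to select $I_n$) can be arranged so that the construction proceeds Borel-uniformly across equivalence classes of an ambient CBER. This is standard, and reflects the authors' claim that the one-ended case of hyperfiniteness fits comfortably on two pages.
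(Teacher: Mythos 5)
The combinatorial content here is indeed trivial, but that is exactly why your argument does not prove the theorem: the entire content is in the word ``canonically,'' and your construction is not canonical in the sense of \cref{rmk:canonical}. Your chain $I_0 \subseteq I_1 \subseteq \dotsb$ is built from a bijective enumeration $X = \{x_0, x_1, \dotsc\}$ together with a choice of $I_n \in \@I^*$ at each stage. Lusin--Novikov produces an enumeration of each $E$-class \emph{depending on a basepoint}: different points of the same class yield different enumerations, hence different chains, hence your ``$F_n$'' is not even a well-defined relation on the class. A Borel, basepoint-independent enumeration of every class (equivalently, a Borel choice of ``first point'') is exactly a Borel selector, i.e., smoothness of the ambient CBER --- precisely the kind of operation that \cref{rmk:canonical} warns is forbidden, and which fails for the non-smooth CBERs to which this theorem is meant to apply.

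A sanity check shows your argument proves too much: condition \cref{def:walls-oneended:cofinal} alone is satisfied by taking $\@I^*$ to be \emph{all} finite nonempty subsets of $X$, which is a Borel assignment on any CBER whatsoever; if your construction were canonical, it would show that every CBER is hyperfinite, which is false. The separation axiom \labelcref{def:walls-oneended}\cref{def:walls-oneended:finsep} is therefore essential, and the paper's proof uses it to give a choice-free definition: $x \mathrel{F_n} y$ iff no $I \in \@I^*$ with $\abs{I} > n$ separates $x$ from $y$. Axiom \cref{def:walls-oneended:finsep} yields $\bigcup_n F_n = X^2$ (only finitely many $I$ separate a given pair, so take $n$ larger than all their sizes), while axiom \cref{def:walls-oneended:cofinal} yields finiteness of each $F_n$-class (any $I \ni x$ with $\abs{I} > n$ contains $[x]_{F_n}$, and such $I$ exist since $X$ is infinite). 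No enumeration or selection is needed, and that is the whole point.
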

\begin{proof}
Let $x \mathrel{F_n} y  \coloniff  \forall I \in \@I^*\, (\abs{I} > n \implies (x \in I \iff y \in I))$.
By \ref{def:walls-oneended}\cref{def:walls-oneended:finsep}, $\bigcupup_n F_n = X^2$.
And each $F_n$ is a finite equivalence relation, since for any $x \in X$, by \ref{def:walls-oneended}\cref{def:walls-oneended:cofinal}, there is some $I \in \@I^*$ with $x \in I$ and $\abs{I} > n$, whence $[x]_{F_n} \subseteq I$.
\end{proof}

\begin{remark}
\label{rmk:walls-oneended-hyperfinite-rank}
In the above proof, it is not essential to take $\abs{I} > n$ in the definition of $F_n$.
Another canonical choice is to take $I$ of rank $\ge n$ in the well-founded poset $\@I^*$; in other words, we let $\@I_0 := \@I^*$ and $\@I_{n+1} := \@I_n \setminus \{\text{minimal elements of } \@I_n\}$, and take the $I \in \@I_n$ in defining $F_n$.
In this case, the above proof can be regarded as producing a one-ended tree from $\@I^*$ through a ``leaf-pruning'' procedure.
This is best explained from the median graph perspective; see \cref{rmk:median-oneended-pruning}.
\end{remark}

\begin{definition}
A \defn{one-ended proper walling} of a CBER $(X,E)$ is a Borel set $\@I^*$ of finite nonempty subsets of $E$-classes whose restriction to each $E$-class satisfies \cref{def:walls-oneended}.
\end{definition}

\begin{corollary}
If a CBER admits a one-ended proper walling, then it is hyperfinite.
\qed
\end{corollary}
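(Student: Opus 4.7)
The plan is to directly Borelize the pointwise construction from the proof of \cref{thm:walls-oneended-hyperfinite}. Given a one-ended proper walling $\@I^*$ of $(X,E)$, define for each $n \in \#N$ the binary relation $F_n \subseteq X^2$ by
\begin{equation*}
x \mathrel{F_n} y  \coloniff  x \mathrel{E} y \AND \forall I \in \@I^*\, \bigl(I \subseteq [x]_E \AND \abs{I} > n \implies (x \in I \iff y \in I)\bigr).
\end{equation*}
Applying \cref{thm:walls-oneended-hyperfinite} to each $E$-class separately, each $F_n$ restricts to a finite equivalence relation on every class, the sequence is increasing, and $\bigcupup_n F_n = E$. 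It therefore suffices to verify that each $F_n$ is Borel.

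For Borelness, note that $(x,y) \in E \setminus F_n$ iff there exists $I \in \@I^*$ with $I \subseteq [x]_E$, $\abs{I} > n$, and $x \in I \not\ni y$. Since $\@I^*$ is given as a Borel set in the standard Borel bundle of finite subsets of $E$-classes (in the sense of \cref{def:walling}, or concretely realized as a Borel subset of the standard Borel space of finite subsets of $X$ as in \cref{rmk:walling-concrete}), the set of triples $(x,y,I) \in X \times X \times \@I^*$ satisfying these conditions is Borel. By axiom \cref{def:walls-oneended:finsep} of \cref{def:walls-oneended} applied classwise, the fiber over each pair $(x,y)$ of this set is finite; hence the projection $E \setminus F_n$ to $X^2$ is Borel by the Lusin--Novikov uniformization theorem. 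So each $F_n$ is Borel, and $(F_n)_n$ is a witness to hyperfiniteness of $E$.

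There is no substantive obstacle here beyond the routine bookkeeping: the corollary is an immediate Borel-uniformization-style consequence of \cref{thm:walls-oneended-hyperfinite}, with axioms \cref{def:walls-oneended:cofinal} and \cref{def:walls-oneended:finsep} providing exactly the finiteness needed to keep the quantifier over $\@I^*$ compatible with standard descriptive set-theoretic operations. Optionally, one could instead use the rank-based construction from \cref{rmk:walls-oneended-hyperfinite-rank}, which is equally Borel since the rank of an element of $\@I^*$ in each classwise poset is Borel-definable and takes only countably many values.
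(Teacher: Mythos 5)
Your proof is correct and is essentially the paper's intended argument: the paper states this corollary with no proof precisely because the construction in \cref{thm:walls-oneended-hyperfinite} is canonical, and your Borelization of it (defining $F_n$ classwise and checking Borelness via the finite fibers guaranteed by \labelcref{def:walls-oneended}\cref{def:walls-oneended:finsep} together with Lusin--Novikov) is the routine implementation the paper leaves implicit.
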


\begin{corollary}
\label{thm:cber-quasitreeing-bddtw-oneended}
If a CBER admits a one-ended locally finite graphing which is either a quasi-treeing or has bounded tree-width, then it is hyperfinite.
\qed
\end{corollary}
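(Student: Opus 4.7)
The plan is to reduce to the preceding corollary by constructing a Borel one-ended proper walling of $E$, possibly after a countable $E$-invariant Borel partition of $X$.

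For each component $C$ of the given graphing $G$, \cref{thm:quasitree-dense} (in the quasi-tree case) or \cref{thm:bddtw-dense} (in the bounded tree-width case) provides a parameter $R < \infty$ (resp.\ $N < \infty$) such that $\@H_{\diamle R}^G(C) \cap \@H_\conn^G(C)$ (resp.\ $\@H_{\cardle N}^G(C) \cap \@H_\conn^G(C)$) is dense towards the unique end of $C$, the intersection with $\@H_\conn$ being justified by \cref{thm:cut-boundary}. Because $C$ is one-ended, each such cut $H$ has exactly one of $H, \neg H$ finite, so I would set
\begin{equation*}
\@I^*_R(C) := \{I \subseteq C : I \text{ finite nonempty}, \ I \text{ and } \neg I \text{ connected}, \ \neg I \ne \emptyset, \ \diam(\partial_\v I) \le R\}
\end{equation*}
(with the analogous definition of $\@I^*_N(C)$ using $\min(\abs{\partial_\iv I}, \abs{\partial_\ov I}) \le N$ in the bounded tree-width case). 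Translating each cut $H$ to its finite complement $I = \neg H$, the density-towards-end condition becomes exactly axiom \cref{def:walls-oneended:cofinal} of \cref{def:walls-oneended} (cofinality among finite subsets of $C$), and axiom \cref{def:walls-oneended:finsep} (finite separation) follows immediately from \cref{thm:graph-walling-fin}. Thus $\@I^*_R(C)$ (resp.\ $\@I^*_N(C)$) endows $C$ with a one-ended proper wallspace structure.

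Since $R$ (resp.\ $N$) may depend on $C$, I would partition $X$ into countably many $E$-invariant Borel pieces $(X_k)_{k \in \#N}$ according to the smallest parameter value for which $\@I^*$ becomes a one-ended proper walling on each class; verifying this condition componentwise is Borel in the spirit of \cref{rmk:walls-proper-borel}. Since each $I \in \@I^*$ is a finite subset of an $E$-class, $\@I^*$ lives naturally in the standard Borel space of finite subsets of $E$-classes. Applying the preceding corollary on each piece $X_k$ and combining over the countable $E$-invariant Borel partition (which preserves hyperfiniteness) yields that $E$ is hyperfinite. The only mild obstacle is the routine Borel bookkeeping required to set up the parameter stratification, since the geometric content of the one-ended case has already been packaged into the preceding development.
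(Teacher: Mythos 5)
Your proposal is correct and matches the paper's intended argument: the paper proves this corollary via the two examples following \cref{def:walls-oneended} (radial cuts for quasi-trees, which it notes is a special case of \cref{thm:quasitree-dense}, and the cuts with $\abs{\partial_\iv I} \le N$ for bounded tree-width via \cref{thm:bddtw-dense}), combined with the preceding corollary, with the componentwise-varying parameter handled exactly as you describe in the spirit of \cref{rmk:walls-proper-borel}. The only nitpick is that \cref{thm:graph-walling-fin} is merely an equivalence, so finite separation really comes from \cref{ex:cut-diam,ex:cut-card} (or \cref{thm:cut-walling}); this is a citation detail, not a gap.
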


\begin{remark}
Similarly, if a CBER $E$ admits a Borel structuring by posets obeying the conditions in \cref{ex:poset-oneended}, then it is hyperfinite.
Conversely, given any one-ended proper walling $\@I^*$ of $E$, we may produce a new CBER Borel bireducible with $E$ which is instead structured by such posets, namely the restrictions of $\@I^*$ to each $E$-class, as described in \cref{ex:poset-oneended}.
\end{remark}

We now show that \cref{def:walls-oneended} indeed corresponds to what the terminology suggests:

\begin{proposition}
\label{thm:walls-oneended}
Let $X$ be an infinite set.
\begin{enumerate}[label=(\alph*)]
\item
For a set $\@I^*$ of finite nonempty subsets of $X$, the conditions in \cref{def:walls-oneended} are indeed equivalent; and if they hold, then putting $\@I := \@I^* \sqcup \{\emptyset\}$, the collection $\@H := \@I \sqcup \neg(\@I)$ (where $\neg(\@I) := \{\neg I \mid I \in \@I\}$) is a proper walling such that the median graph $\@U^\circ(\@H)$ is one-ended.
\item
Conversely, for a proper walling $\@H \subseteq 2^X$ such that $\@U^\circ(\@H)$ is one-ended, every set in $\@H$ is either finite or cofinite; and the set $\@I^*$ of nonempty finite sets in $\@H$ satisfies \cref{def:walls-oneended}.
\end{enumerate}
\end{proposition}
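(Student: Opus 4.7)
The plan is to handle the equivalences within \cref{def:walls-oneended} first, then tackle parts (a) and (b) separately. For the equivalences: conditions \cref{def:walls-oneended:fincut} and \cref{def:walls-oneended:finnest} differ only in that the latter also demands $J \setminus I \ne \emptyset$, and the ``extra'' $J$'s counted in the former (those with $J \subsetneq I$) are subsets of the finite set $I$, hence finite in number; so these two are unconditionally equivalent. Equivalence with \cref{def:walls-oneended:finsep} uses \cref{def:walls-oneended:cofinal}: forward, apply \cref{def:walls-oneended:cofinal} to $\{x, y\}$ to obtain $I \supseteq \{x, y\}$, so that any $J$ separating $x, y$ has both $I \cap J$ and $I \setminus J$ nonempty; conversely, for fixed finite $I$, any $J$ with $I \cap J, I \setminus J \ne \emptyset$ separates some pair $(x, y) \in I \times I$, with finitely many such $J$ per pair by \cref{def:walls-oneended:finsep}.

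For part (a), I verify the four proper walling axioms for $\@H := \@I \sqcup \neg(\@I)$. Finite separation follows from \cref{def:walls-oneended:finsep} plus its $\neg$-symmetric analog; \ref{def:walls-proper}\cref{ax:walls-finblock} follows from \cref{def:walls-oneended:cofinal} (any $\@H$-block containing $x$ lies in any $I \ni x$); \ref{def:walls-proper}\cref{ax:walls-finnest} follows from \cref{def:walls-oneended:finnest} since complementation preserves non-nestedness. The main technical step is \ref{def:walls-proper}\cref{ax:walls-fincover}, which I deduce via \cref{thm:med-hyp-locfin} by showing $\@U^\circ(\@H)$ is locally finite: the neighbors of any vertex $V$ are the minimal elements of $V \setminus \{X\}$, split into those in $\@I^*$ and those in $\neg(\@I^*)$. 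Minimal $\@I^*$-elements of $V$ are pairwise incomparable and intersecting (disjoint ones would force their complements into $V$ by upward-closure), so \cref{def:walls-oneended:fincut} applied to any chosen one bounds them. Minimal elements $\neg M \in V$ (for $V$ a vertex, hence $V \ne U_\infty := \neg(\@I)$, so that $V$ contains some $I \in \@I^*$) must satisfy $M \cap I \ne \emptyset$, since otherwise $I \subsetneq \neg M$ would be a smaller element of $V$; pigeonholing in the finite $I$ and applying \cref{def:walls-oneended:fincut} once more bounds them. For one-endedness, I identify $U_\infty$ as an orientation and show it is the unique non-clopen one via the criterion: $V \in \@U(\@H)$ is clopen iff some finite $F \subseteq X$ meets every nontrivial $H \in V$. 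For $V \ne U_\infty$, take $F := I$ for any $I \in V \cap \@I^*$ (any $H \in V$ with $H \cap I = \emptyset$ would force $\neg I \in V$ by upward-closure); for $U_\infty$, the criterion fails by \cref{def:walls-oneended:cofinal}. By \cref{thm:med-ends}, the ends of $\@U^\circ(\@H)$ correspond to non-clopen orientations, so $\@U^\circ(\@H)$ is one-ended.

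For part (b), let $U_\infty$ denote the unique end of $\@U^\circ(\@H)$. For any $H \in \@H$, exactly one of $H, \neg H$ lies in $U_\infty$; WLOG $\neg H \in U_\infty$, so $\^H$ is a clopen subset of the end compactification $\^{\@U^\circ(\@H)} \cong \@U(\@H)$ (\cref{thm:med-ends}) avoiding the unique end, hence finite in $\@U^\circ(\@H)$; by finite $\@H$-blocks, $H$ is finite in $X$. For \cref{def:walls-oneended:cofinal}: given a nonempty finite $F \subseteq X$, the finite image $\{\^x : x \in F\} \subseteq \@U^\circ(\@H)$ is disjoint from $U_\infty$, so by \cref{thm:med-ends-half-basis} there is a half-space neighborhood $\^K$ of $U_\infty$ disjoint from it, giving $F \subseteq \neg K$ with $\neg K$ finite by the dichotomy above, so $\neg K \in \@I^*$. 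Condition \cref{def:walls-oneended:finsep} for $\@I^*$ is inherited from finite separation of $\@H$. The main obstacle throughout is the verification of \ref{def:walls-proper}\cref{ax:walls-fincover} in part (a), which does not correspond to any single condition of \cref{def:walls-oneended}; the detour through local finiteness of the dual median graph, combined with the identification of $U_\infty$ and \cref{def:walls-oneended:fincut}, is what makes it work.
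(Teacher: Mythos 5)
Your proof is correct, and its overall skeleton matches the paper's: the same chain of implications among \cref{def:walls-oneended:finsep}, \cref{def:walls-oneended:fincut}, \cref{def:walls-oneended:finnest} (using that $\neg I \cap \neg J \ne \emptyset$ automatically since $X$ is infinite), the same easy verifications of finite separation, \labelcref{def:walls-proper}\cref{ax:walls-finblock}, and \labelcref{def:walls-proper}\cref{ax:walls-finnest}, and essentially the same part (b). Where you genuinely diverge is in \labelcref{def:walls-proper}\cref{ax:walls-fincover}: the paper verifies it directly by a three-case analysis on the successors $K$ of $H \in \@I^* \sqcup \neg(\@I^*)$ (the case $\neg K \in \@I^*$ being handled by choosing $H \subsetneq I \in \@I^*$ via \cref{def:walls-oneended:cofinal} and invoking \cref{def:walls-oneended:fincut}), whereas you prove local finiteness of $\@U^\circ(\@H)$ by counting minimal elements of a clopen orientation and then cite \cref{thm:med-hyp-locfin} together with finite hyperplanes (from \labelcref{def:walls-proper}\cref{ax:walls-finnest} via \cref{thm:med-hyp-fin}). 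Both routes rest on the same use of \cref{def:walls-oneended:fincut}; yours trades the explicit successor analysis for the already-established median-graph machinery, which is a perfectly legitimate economy. Your uniqueness-of-the-end argument (every orientation other than $\neg(\@I)$ is clopen) is also slightly different from the paper's (any non-clopen orientation is a ``nonprincipal filter'' by \cref{thm:med-ends-half-basis}, hence avoids $\@I^*$), but equally valid.

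Two small points you should make explicit. First, when bounding the minimal elements $\neg M \in V$ with $M \in \@I^*$: you establish $M \cap I \ne \emptyset$, but to invoke \cref{def:walls-oneended:fincut} for $I$ you also need $I \setminus M \ne \emptyset$; this is immediate because $I \subseteq M$ together with $I \in V$ and upward-closure would force $M \in V$, contradicting $\neg M \in V$ — but ``pigeonholing in the finite $I$'' alone does not bound the number of such $M$. Second, your clopenness criterion (``$V$ is clopen iff some finite $F$ meets every nontrivial $H \in V$'') is true but not quoted from anywhere in the paper; it deserves a one-line justification, e.g., that for an orientation $V$ openness and closedness are equivalent via the homeomorphism $\neg$, and $V$ is open at its only possible non-isolated point $X$ iff it contains $\{H \in \@H \mid H \supseteq F\}$ for some finite $F$, which is exactly your condition.
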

\begin{proof}
First, we verify that \cref{def:walls-oneended:finsep}, \cref{def:walls-oneended:fincut}, and \cref{def:walls-oneended:finnest} in \cref{def:walls-oneended} are equivalent, given \cref{def:walls-oneended:cofinal}.
From \cref{def:walls-oneended:finsep}, we get that for any $I \in \@I^*$, there are only finitely many $J \in \@I^*$ separating one of the finitely many pairs $x, y \in I$, whence there are only finitely many $J \in \@I^*$ with $I \cap J$ and $I \setminus J \ne \emptyset$, yielding \cref{def:walls-oneended:fincut}.
Clearly \cref{def:walls-oneended:fincut} implies \cref{def:walls-oneended:finnest}.
From \cref{def:walls-oneended:finnest}, we get \cref{def:walls-oneended:fincut} since there are only finitely many $J \subseteq I$ and since any $I, J \in \@I^*$ have $\neg I \cap \neg J \ne \emptyset$ by cofiniteness.
And from \cref{def:walls-oneended:fincut}, we get \cref{def:walls-oneended:finsep} by taking $\{x,y\} \subseteq I \in \@I^*$ using \cref{def:walls-oneended:cofinal}.

Now let $\@I^*$ satisfy \ref{def:walls-oneended}.
Then the corresponding $\@H := \@I \sqcup \neg(\@I)$ is a walling since \cref{def:walls-oneended:finsep} easily implies that $\@H$ is also finitely separating.
To show that $\@H$ is a proper walling (\cref{def:walls-proper}):
\begin{itemize}
\item
\cref{def:walls-oneended:finnest} easily implies that every $H \in \@H$ has only finitely many $K \in \@H$ non-nested with $H$.
\item
For any $x \in X$, by \cref{def:walls-oneended:cofinal}, there is some $x \in I \in \@I^*$, whence there are only finitely many $y$ in the $\@H$-block of $x$, as all such $y$ must be in $I$.
\item
Let $H \in \@H^* = \@I^* \sqcup \neg(\@I^*)$; we must show there are only finitely many successors $H \subsetneq K \in \@H^*$.
\begin{itemize}
\item  If $\neg H \in \@I^*$, then $\neg K \subseteq \neg H$, of which there are only finitely many; so suppose $H \in \@I^*$.
\item  All the successors $K \in \@I^*$ of $H$ are pairwise intersecting and incomparable, whence there are only finitely many such $K$ by \cref{def:walls-oneended:fincut}.
\item  Finally, if $\neg K \in \@I^*$ with $K$ a successor of $H$, then in particular, $H \in \@I^*$ is maximal disjoint from $\neg K$.
By \cref{def:walls-oneended:cofinal}, there is some $H \subsetneq I \in \@I^*$, so $\neg K$ intersects $I$ by maximality of $H$, and also $I \cap K \supseteq H \ne \emptyset$, so there are only finitely many such $K$ by \cref{def:walls-oneended:fincut}.
\end{itemize}
\end{itemize}
Thus $\@U^\circ(\@H)$ is a locally finite median graph with finite hyperplanes.
Let $U := \neg(\@I) \in \@U(\@H) \cong \^{\@U^\circ(\@H)}$ (\cref{thm:med-ends}).
Then $U \subseteq \@H$ is not clopen, or else it would have a minimal element, i.e., $\@I^*$ would have a maximal element, contradicting \cref{def:walls-oneended:cofinal}; thus $U$ is an end of $\@U^\circ(\@H)$.
And it is the only end, since any other end $V \in \@U(\@H) \setminus \@U^\circ(\@H)$ must be a nonprincipal filter in $\@H$ by \cref{thm:med-ends-half-basis}, hence $V \subseteq \neg(\@I) = U$, whence $V = U$ since both are orientations.

Conversely, let $\@H$ be a proper walling such that $\@U^\circ(\@H)$ is one-ended.
Then every $H \in \@H$ must be finite or cofinite, or else $\^H, \neg \^H \subseteq \@U(\@H)$ would each contain an end by compactness.
So the set $U \subseteq \@H$ of cofinite sets is the unique end; and $\@I^* := \@H^* \setminus U$ satisfies \ref{def:walls-oneended}\cref{def:walls-oneended:finsep} since $\@H$ does, and satisfies \ref{def:walls-oneended}\cref{def:walls-oneended:cofinal} since any finitely many vertices in $\@U^\circ(\@H)$, in particular of the form $\^x$ for $x \in I$ where $I \in \@I^*$, may be separated from $U$ again by \cref{thm:med-ends-half-basis}.
\end{proof}

\begin{remark}
\label{rmk:median-oneended-pruning}
The proof of \cref{thm:walls-oneended-hyperfinite}, modified to take $I \in \@I^*$ of increasing well-founded rank as in \cref{rmk:walls-oneended-hyperfinite-rank}, may be understood in terms of the associated median graph $\@U^\circ(\@H)$ (where $\@H := \@I \sqcup \neg(\@I)$ and $\@I := \@I^* \sqcup \{\emptyset\}$ as above) as follows.

Let us first assume, by replacing the original set $X$ with $\@U^\circ(\@H)$, that $X$ is itself a one-ended median graph with finite hyperplanes, and $\@H = \@H_\cvx(X)$ is the set of half-spaces; thus $\@I \subseteq \@H$ is the set of finite half-spaces.
Then to construct the witness to hyperfiniteness $F_0 \subseteq F_1 \subseteq \dotsb$ in \cref{thm:walls-oneended-hyperfinite}, we let $\@I_0 := \@I^*$, then repeatedly delete all vertices in $X$ belonging to a minimal half-space $I \in \@I_n$ (the ``leaves''), letting $\@I_{n+1} \subseteq \@I_n$ be the remaining half-spaces, and connect these deleted vertices in $F_{n+1}$ to their projection onto the remaining (convex) set of vertices, yielding a one-ended tree (see \cref{fig:oneended-pruning}).
\begin{figure}[htb]
\centering
\begin{tikzpicture}
\begin{scope}[rotate=-45]
\foreach \i in {0,...,6} {
    \draw[graph edge]
        (\i,\i) rectangle +(1,1)
        (\i,\i+1) rectangle +(1,1)
        (\i,\i+2) rectangle +(1,1)
        (\i+1,\i) rectangle +(1,1);
}
\node at (8,8) {$\dotsb$};
\foreach \i in {0,...,3} {
    \draw[graph cut cross]
        ({max(\i-2,0)},{\i+.5}) -- ({\i+2},{\i+.5});
    \draw[graph cut cross]
        ({\i+.5},{max(\i-1,0)}) -- ({\i+.5},{\i+3});
    \draw[graph cut]
        ({max(\i-2,0)},{\i+.5}) -- ({\i+2},{\i+.5});
}
\foreach \i in {0,...,3} {
    \draw[graph tree]
        (\i,\i) -- +(1,1)
        ({\i+1},\i) -- +(0,1)
        ({\i+2},\i) -- +(0,1)
        (\i,{\i+1}) -- +(1,0)
        (\i,{\i+2}) -- +(1,0)
        (\i,{\i+3}) -- +(1,0);
}
\end{scope}
\end{tikzpicture}
\caption{Pruning ``leaf half-spaces'' from a one-ended median graph to get a one-ended tree.
The thick highlighted edges form the $4$th equivalence relation $F_4$ in the resulting witness to hyperfiniteness.}
\label{fig:oneended-pruning}
\end{figure}

If $\@H$ is not already the half-spaces of a median graph, then the $F_n$ instead identify elements $x \in X$ whose corresponding vertices $\^x$ in the induced median graph $\@U^\circ(\@H)$ are identified by stage $n$ in the tree as above.
However, in this case there might not be a canonical way of representing the $F_n$-classes as elements of $X$.
\end{remark}

\subsection{One selected end}
\label{sec:end}

Finally, we consider a generalization of the preceding subsection to the case where the graph/wallspace may have more than one end, but one particular end has been selected.
We will show that this is still enough to yield hyperfiniteness, essentially by ``moving towards'' the selected end, in a manner similar to (though not directly generalizing) \cref{rmk:median-oneended-pruning}.
Unlike in the preceding subsection, here we find it necessary to use the full machinery of median graphs throughout.

\begin{definition}
Let $(X,G)$ be a median graph with finite hyperplanes, so that $\^X \cong \@U(\@H_\cvx(X))$ (\cref{thm:med-ends}), and let $U \in \@U(\@H_\cvx(X))$.
(We will usually think of $U$ as an end; however, the following also works when $U$ is a ``vertex'', i.e., a principal orientation.)
For each $x \in X$, let
\begin{align*}
\@A_{U,x} := \max(U \setminus \^x) = \{H \in U \mid x \in \partial_\ov H\}
\end{align*}
be the set of maximal half-spaces containing $U$ but not $x$, or equivalently (by \cref{thm:poc-dual}), half-spaces containing $U$ with $x$ on the outer boundary.
\end{definition}

Note that we do \emph{not} need to assume that $G$ is locally finite here.

\begin{lemma}
\label{thm:end-corner}
For each $x$, $\@A_{U,x}$ is a finite set of pairwise non-nested half-spaces.
Thus,
\begin{align*}
T_U(x) := \proj_{\bigcap \@A_{U,x}}(x)
\end{align*}
(exists and) is the corner opposite to $x$ of a cube cut by precisely the hyperplanes in $\@A_{U,x}$, with
\begin{equation*}
\^{T_U(x)} \setminus \^x = \@A_{U,x}.
\end{equation*}
\end{lemma}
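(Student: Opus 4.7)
The plan is to verify the lemma in sequence: the alternate characterization of $\@A_{U,x}$ claimed in the definition, its pairwise non-nestedness, finiteness, the existence of $T_U(x)$, and finally the $n$-cube/distance claims.

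For the equality $\max(U \setminus \^x) = \{H \in U \mid x \in \partial_\ov H\}$: the inclusion $\supseteq$ uses \cref{thm:half-edge}, since if $H \in U$ has a neighbor $y \in H$ of $x$, any strict enlargement $H \subsetneq K \in U$ with $x \notin K$ would force the edge $(x, y)$ (now with $x \notin K \ni y$) to lie on the hyperplane of $K$, giving $K = \cone_x y = H$; for $\subseteq$, if $H \in \max(U \setminus \^x)$ but $d(x, H) \ge 2$, taking $y'$ as the penultimate vertex on a geodesic from $x$ to $\proj_H(x)$ produces $K := \cone_x y' \supsetneq H$, which is still in $U \setminus \^x$ by upward-closure, contradicting maximality. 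Distinct $H, K \in \@A_{U,x}$ are then pairwise non-nested: $H \cap K \neq \emptyset$ because both are in $U$ (else $H \subseteq \neg K \Rightarrow \neg K \in U$), $\neg H \cap \neg K \ni x$, and maximality rules out $H \subseteq K$ or vice versa. Finiteness follows since, fixing any $H_0 \in \@A_{U,x}$, all other elements are non-nested with $H_0$, which \cref{thm:med-hyp-fin} bounds in number.

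Helly's theorem \cref{thm:helly} applied to the finitely many pairwise intersecting $H_i \in U$ yields $\bigcap \@A_{U,x} \neq \emptyset$, so $T_U(x)$ exists by \cref{thm:proj}. Writing $\@A_{U,x} = \{H_0, \dotsc, H_{n-1}\}$, the lower bound $d(x, T_U(x)) \ge n$ is immediate from \cref{thm:half-dist}. For the matching upper bound $d(x, T_U(x)) \le n$, I would induct on $n$ using the following key median-graph square observation: for non-nested $H, K$ both with a vertex $v$ on the outer boundary, taking any $w \in H \cap K$ (nonempty since $H, K$ are non-nested) and setting $z := \ang{\proj_H(v), \proj_K(v), w}$, one checks via the interval-defining conditions for $z$ and the equalities $d(\proj_H(v), w) = d(\proj_K(v), w) = d(v, w) - 1$ (combined with $d(\proj_H(v), \proj_K(v)) = 2$) that $z$ is a common neighbor of $\proj_H(v)$ and $\proj_K(v)$ lying in $H \cap K$ by convexity, so $v, \proj_H(v), z, \proj_K(v)$ spans a square in $G$.

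In the inductive step, setting $y_0 := \proj_{H_0}(x)$, the square observation applied to each pair $(H_0, H_i)$ exhibits a neighbor of $y_0$ in $H_i$, so $y_0 \in \partial_\ov H_i$ for every $i \neq 0$; the inductive hypothesis then applies at $y_0$ with $H_1, \dotsc, H_{n-1}$, and via \cref{thm:proj-proj}, $\proj_{\bigcap_{i \neq 0} H_i}(y_0) = \proj_{\bigcap H_i}(x) = T_U(x)$, so induction gives $d(y_0, T_U(x)) = n - 1$ and hence $d(x, T_U(x)) \le 1 + (n - 1) = n$. The main obstacle is precisely this inductive bookkeeping together with the square observation, which is the core geometric content. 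Once $d(x, T_U(x)) = n$, \cref{thm:half-dist} forces $|\^{T_U(x)} \setminus \^x| = n = |\@A_{U,x}|$, upgrading the trivial inclusion $\@A_{U,x} \subseteq \^{T_U(x)} \setminus \^x$ to equality; since the $n$ separating half-spaces are pairwise non-nested, \cref{thm:nonnested} embeds a Hamming $n$-cube into $X$, which by \cref{thm:half-sep} (bounding the interval's size by $2^n$) must coincide with $[x, T_U(x)]$, placing $x$ and $T_U(x)$ at opposite corners of a cube cut by precisely the hyperplanes in $\@A_{U,x}$.
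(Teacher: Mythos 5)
Your overall route matches the paper's: non-nestedness via the four corners (two corners witnessed by $U$ being an orientation and by $x$, the other two by incomparability of distinct maximal elements of $U \setminus \^x$), finiteness from the finite-hyperplane standing hypothesis, existence of the projection from Helly, and the cube ultimately via \cref{thm:nonnested}. The main difference is that where the paper simply invokes ``(the proof of) \cref{thm:nonnested}'' to obtain a cube with corner $x$ and reads off $\^{T_U(x)} \setminus \^x = \@A_{U,x}$ from it, you rebuild the cube by hand through the square observation and an induction establishing $d(x,T_U(x)) = \abs{\@A_{U,x}}$, then identify the interval with the Hamming cube. That is more work, but it is correct, and it actually pins down a detail the paper's citation leaves implicit, namely that the cube produced by the construction in \cref{thm:nonnested} has $x$ itself (and not merely some point of $\bigcap \neg H_i$) as a corner.

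There is one genuine slip, in your verification of $\max(U\setminus\^x) \subseteq \{H \in U \mid x \in \partial_\ov H\}$ --- a fact the paper folds into the definition of $\@A_{U,x}$ (citing \cref{thm:poc-dual}), but which your square observation does rely on, since it needs $x$ to lie on the outer boundary of each $H_i$. You take $y'$ to be the \emph{penultimate} vertex of a geodesic $x = x_0, \dotsc, x_m = \proj_H(x)$ and form $K := \cone_x{y'}$. When $m \ge 3$, the pair $(x,y')$ is not an edge, and $\cone_x{y'}$, while convex by \cref{thm:cvx-up}, need not be a half-space: its complement can fail to be convex (already in a single $4$-cycle with $x$ and $y'$ opposite). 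So the step ``$K$ is still in $U \setminus \^x$ by upward-closure'' is not licensed, as $K$ need not belong to the pocset at all. The repair is to take instead the vertex $x_1$ adjacent to $x$ on the geodesic: $\cone_x{x_1}$ is a genuine half-space by \cref{thm:half-edge}; it contains $H$ since $\betw{x--x_1--\proj_H(x)--z}$ for every $z \in H$; it contains $x_1 \notin H$, so the containment is strict; and it lies in $U \setminus \^x$ by upward-closure, contradicting maximality of $H$. With that one-line repair the rest of your argument goes through.
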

\begin{proof}
For distinct $H, K \in \@A_{U,x}$, the corners $H \cap K$ and $\neg H \cap \neg K$ contain $U, x$ respectively, while $H \cap \neg K$ and $\neg H \cap K$ are nonempty since $H, K$ are distinct maximal elements of $U \setminus \^x$, hence incomparable; thus $H, K$ are non-nested.
Thus by (the proof of) \cref{thm:nonnested}, $x$ is a corner of a cube cut by all the hyperplanes in $\@A_{U,x}$, whose other corners are given by projections of $x$ onto finite intersections of half-spaces in $\@A_{U,x}$.
It follows that $\@A_{U,x}$ is finite, or else any $H \in \@A_{U,x}$ would have an infinite-dimensional cube on its boundary.
Thus $\bigcap \@A_{U,x}$ is a convex neighborhood of $U$, in particular nonempty, and so $T_U(x)$ exists and is defined by the last equation $\^{T_U(x)} \setminus \^x = \@A_{U,x}$ since it is the opposite corner of a cube with precisely the hyperplanes in $\@A_{U,x}$.
\end{proof}

By an \defn{orbit} of the transformation $T_U : X -> X$, we mean a connected component of its graph, i.e., $\bigcup_{m \in \#N} T_U^{-m}(\{T_U^n(x) \mid n \in \#N\})$ for some $x \in X$.
The set $\{T_U^n(x) \mid n \in \#N\}$ is called the \defn{forward orbit} of $x$, while $\bigcup_{m \in \#N} T_U^{-m}(x)$ is the \defn{backward orbit} of $x$.

\begin{corollary}
\label{thm:end-corner-monotone}
$U \setminus \^x \supseteq U \setminus \^{T_U(x)} \supseteq U \setminus \^{T_U^2(x)} \supseteq \dotsb$ (with strict inclusion unless $U$ is a vertex), and $\lim_{n -> \infty} \^{T_U^n(x)} = U$.
Thus, the graph of $T_U$ gives each $T_U$-orbit the structure of a directed tree converging to $U$ (or simply a rooted tree, in case $U$ is a vertex).
\end{corollary}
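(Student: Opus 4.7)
The plan is to prove each of the three assertions in turn, using \cref{thm:end-corner} as the key input.

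First, for the monotone chain of inclusions, I would iterate \cref{thm:end-corner}. Since $\^{T_U(x)} \setminus \^x = \@A_{U,x}$ (and symmetrically $\^x \setminus \^{T_U(x)} = \neg(\@A_{U,x})$, as the opposite corners of a cube), and $\@A_{U,x} \subseteq U$ with $\neg(\@A_{U,x}) \subseteq \neg U$ by definition, we have $\^{T_U(x)} \cap U = (\^x \cap U) \cup \@A_{U,x} \supseteq \^x \cap U$, equivalently $U \setminus \^{T_U(x)} \subseteq U \setminus \^x$. Applying this with $T_U^n(x)$ in place of $x$ gives the full chain.

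Second, for strictness: the $n$th inclusion is proper iff $\@A_{U, T_U^n(x)} \ne \emptyset$, which by the maximality characterization fails iff $U \subseteq \^{T_U^n(x)}$, iff (both being orientations) $U = \^{T_U^n(x)}$. Hence if $U$ is not a vertex, every inclusion is strict. If $U = \^y$ for a vertex $y$, then $y \in \bigcap \@A_{U, T_U^{n-1}(x)}$ since each element contains $U = \^y$, so by \cref{thm:proj} $T_U^n(x) \in [T_U^{n-1}(x), y]$, and $d(T_U^{n-1}(x), y)$ decreases by $\abs{\@A_{U, T_U^{n-1}(x)}} \ge 1$ at each step until $T_U^n(x) = y$, after which the chain is constant.

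For the convergence $\lim_n \^{T_U^n(x)} = U$, the monotonicity on both sides ($\^{T_U^n(x)} \cap U$ increasing, $\^{T_U^n(x)} \cap \neg U$ decreasing) yields pointwise convergence of $\^{T_U^n(x)}$ in $2^{\@H_\cvx(X)}$ to $W := (\bigcup_n \^{T_U^n(x)} \cap U) \cup (\bigcap_n \^{T_U^n(x)} \cap \neg U)$, which is routinely an orientation and thus a point of $\@U(\@H_\cvx(X)) \cong \^X$ by \cref{thm:med-ends}. To conclude $W = U$, I would suppose otherwise and pick some $H \in U \setminus W$, so $T_U^n(x) \notin H$ for all $n$. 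The set $U \setminus W$ is downward-closed in $U$; chains in it have upper bounds (the union of any strictly ascending chain of half-spaces is again a non-trivial half-space in $U \setminus W$, but by isolatedness of nontrivial elements in $\@H_\cvx(X)$ such a chain must stabilize), so Zorn yields maximal elements, which are pairwise non-nested (same argument as in \cref{thm:end-corner}) and hence finite by \cref{thm:med-hyp-fin}. Pick a maximal $K$ above $H$. By maximality, every strict extension $K' \supsetneq K$ in $U$ lies in $W$, so $T_U^{n_{K'}}(x) \in K'$ for some $n_{K'}$. Using \cref{thm:half-edge-square} together with convexity of $K'$ (trace a geodesic in $K'$ from some point of $K' \setminus K$ into $K$, then propagate across hyperplane-parallel squares), every such $K'$ contains a successor $L$ of $K$, giving $n_{K'} \le n_L$. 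Since $K$ has finitely many successors (by \cref{thm:med-hyp-locfin}), $N := \max_L n_L$ is finite, and for $n \ge N$ every strict extension of $K$ contains $T_U^n(x)$, making $K$ maximal in $U \setminus \^{T_U^n(x)}$; hence $K \in \@A_{U, T_U^N(x)}$, so $T_U^{N+1}(x) \in K \supseteq H$, contradicting $H \in U \setminus W$.

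The main obstacle is the last convergence step, specifically the uniform finite-threshold argument: that the supremum of entry-times into all strict extensions of $K$ is bounded by the maximum of the (finitely many) entry-times into its successors. This rests on the two structural facts that every strict extension of a half-space contains one of its successors (a purely geometric statement about median graphs via parallel edges of hyperplanes) and that successors are finite in number (from \cref{thm:med-hyp-locfin}, under local finiteness of $G$, which holds in all applications of this corollary in the paper).
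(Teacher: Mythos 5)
Your monotonicity and strictness arguments are correct and essentially match the paper's, which derives the whole chain from the single identity $U \setminus \^x = (U \setminus \^{T_U(x)}) \sqcup \@A_{U,x}$, exactly as you do. The problem is in the convergence step, and it is precisely the step you flag as the main obstacle: your uniform threshold $N$ is the maximum of the entry times $n_L$ over the successors $L$ of $K$, and you justify finiteness of the set of successors by \cref{thm:med-hyp-locfin}, which requires $G$ to be locally finite. But the corollary is stated and used without local finiteness: the paper explicitly remarks, just before \cref{thm:end-corner}, that local finiteness is \emph{not} assumed, and the downstream results \cref{thm:end-finoverhypf-hypsm} and \cref{thm:cber-median-end-hypf} assume only finite hyperplanes. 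So your parting claim that local finiteness ``holds in all applications of this corollary in the paper'' is not accurate; and without it a half-space can have infinitely many successors (in an infinite star, a leaf singleton has every co-leaf half-space as a successor), so $\max_L n_L$ need not exist and your argument breaks.

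The gap is repairable without the successor detour: the strict extensions of $K$ lying in $U \setminus \^x$ already form a finite set, since each contains any fixed $z \in K$ and omits $x$, so \cref{thm:half-dist} bounds their number. Every strict extension of $K$ in $U$ is either one of these finitely many (each of which lies in $W$ by maximality of $K$, hence contains $T_U^n(x)$ for all large $n$ by monotonicity) or already contains $x$ (hence contains every $T_U^n(x)$). This yields your threshold $N$ after which $K \in \@A_{U,T_U^N(x)}$, giving the contradiction. That said, the paper's own proof avoids the limit orientation $W$, Zorn, and the contradiction entirely: since $U \setminus \^{T_U^{n+1}(x)}$ is $U \setminus \^{T_U^n(x)}$ minus its set of maximal elements, and every chain in $U \setminus \^x$ above a given $H$ has length at most $d(x,H)$ by \cref{thm:half-dist}, each $H \in U \setminus \^x$ becomes maximal and is removed within $d(x,H)$ steps. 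That direct well-foundedness argument is both much shorter and automatically free of any local finiteness assumption; I would recommend adopting it, or at least replacing your appeal to \cref{thm:med-hyp-locfin} by the finite-separation bound above.
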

\begin{proof}
From $\^{T_U(x)} \setminus \^x = \@A_{U,x} \subseteq U$,
we get $U \setminus \^x = (U \setminus \^{T_U(x)}) \sqcup \@A_{U,x}$;
thus $U \setminus \^{T_U(x)}$ is $U \setminus \^x$ with its set of maximal elements $\@A_{U,x}$ removed.
It follows that each $H \in U \setminus \^x$ will be removed in some $U \setminus \^{T_U^n(x)}$ (namely $n := d(x, H)$, by considering a geodesic of length $n$ from $x$ to $H$ so that one edge will be removed at each step), i.e., $T_U^n(x)$ eventually enters every neighborhood of $U$, i.e., $\lim_{n -> \infty} \^{T_U^n(x)} = U$.
\end{proof}

\begin{remark}
If we define an \defn{$\ell^\infty$-neighbor of $x$} to mean any vertex $y \in X$ such that $x, y$ are opposite corners of a cube, or equivalently (by \cref{thm:nonnested}) the hyperplanes separating them are pairwise non-nested, then it is easy to see that $T_U(x)$ is the $\ell^\infty$-neighbor of $x$ ``nearest $U$'', i.e., ``between'' all other $\ell^\infty$-neighbors of $x$ and $U$, in the sense of \cref{thm:poc-dual}.
(Indeed, note that the inequality $U \setminus \^x \supseteq U \setminus \^{T_U(x)}$ means that $\betw{\^x--\^{T_U(x)}--U}$ according to the betweenness relation of \cref{thm:poc-dual}, except that $U$ here is an end, not a vertex.
This can be made precise by regarding $\@U(\@H_\cvx(X))$ as the profinite median algebra completion of $X$; see \cite{Bowmed}.)
See \cref{fig:end-corner}.

\begin{figure}[htb]
\centering
\begin{tikzpicture}[dot/.append style={minimum width=4pt}]
\begin{scope}[rotate=-45]
\foreach \i in {0,...,6} {
    \draw[graph edge]
        (\i,\i) rectangle +(1,1)
        (\i,\i+1) rectangle +(1,1)
        (\i,\i+2) rectangle +(1,1)
        (\i+1,\i) rectangle +(1,1);
}
\node at (8.5,8.5) {$\dotsb \quad \to U$};
\draw[graph cut cross]
    (2.5,1) -- (2.5,5) node[right,fill=white,inner sep=0pt,xshift=4pt,yshift=1pt]{$\scriptstyle \in \@A_{U,x}$};
\draw[graph cut cross]
    (2,4.5) -- (6,4.5);
\draw[graph cut]
    (2.5,1) -- (2.5,5);
\node[below,fill=white,inner sep=0pt,yshift=-3pt] at (3,5) {$\scriptstyle T_U(x)$};
\draw[graph tree]
    (0,2) -- (7,9)
    (0,1) -- (7,8)
    (0,0) -- (7,7)
    (1,0) -- (8,7);
\foreach \i in {0,...,6} {
    \draw[graph tree]
        ({\i+2},\i) -- +(0,1)
        (\i,{\i+3}) -- +(1,0);
}
\node[dot] at (2,4) {};
\node[below] at (2,4) {$\scriptstyle x$};
\node[dot] at (3,5) {};
\end{scope}
\end{tikzpicture}
\caption{The orbits (thick highlighted edges) of $T_U$, for the one-ended median graph in \cref{fig:oneended-pruning}.}
\label{fig:end-corner}
\end{figure}
\end{remark}

\begin{lemma}
\label{thm:end-corner-boundary}
For any $H \in \@H_\cvx(X) \setminus U$ and $x \in H$, there is an $n \in \#N$ such that $T_U^n(x) \in \partial_\ov H$.
Thus in particular (by finite hyperplanes), $H$ intersects only finitely many $T_U$-orbits.
\end{lemma}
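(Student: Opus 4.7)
The plan is to use \cref{thm:end-corner-monotone} to locate the ``first crossing'' of the forward orbit into $\neg H$, and then exploit the cube structure at that crossing step guaranteed by \cref{thm:end-corner} to show the landing point is adjacent to $H$. Since $H \notin U$, we have $\neg H \in U$, so by $\lim_{n \to \infty} \^{T_U^n(x)} = U$ we eventually get $T_U^n(x) \in \neg H$. Let $n$ be the least such index; since $x \in H$, we have $n \ge 1$, and writing $y := T_U^{n-1}(x) \in H$, we have $T_U(y) \in \neg H$, i.e., $\neg H \in \^{T_U(y)} \setminus \^y$. By \cref{thm:end-corner}, this difference is exactly $\@A_{U,y}$, so $\neg H \in \@A_{U,y}$.

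The key step is to observe that, again by \cref{thm:end-corner}, $y$ and $T_U(y)$ are opposite corners of a cube whose cutting hyperplanes are precisely the elements of $\@A_{U,y}$. In this cube, $T_U(y)$ has a neighbor (in the cube, and hence in $G$) obtained by flipping across the hyperplane $\partial_\ie \neg H$; this neighbor lies on the $y$-side, i.e., in $H$. Therefore $T_U^n(x) = T_U(y) \in \partial_\ov H$, proving the first claim. I expect this cube-flipping step to be the main content, since it is the only place where the combinatorial structure from \cref{thm:end-corner} is actually used beyond the monotonicity of \cref{thm:end-corner-monotone}.

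For the ``in particular'' statement, finiteness of the hyperplane $\partial_\ie H$ implies finiteness of $\partial_\ov H$ (each vertex of $\partial_\ov H$ is the $\neg H$-endpoint of some edge in $\partial_\ie H$). The first part shows that every $T_U$-orbit meeting $H$ contains a point in $\partial_\ov H$; since distinct orbits are disjoint, at most $\abs{\partial_\ov H} < \infty$ orbits can meet $H$.
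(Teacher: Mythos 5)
Your proof is correct and takes essentially the same route as the paper's: locate the least $n$ with $T_U^n(x) \notin H$ and use the cube structure from \cref{thm:end-corner} to see that this first crossing point lands on $\partial_\ov H$. You merely spell out the cube-flipping step and the counting of orbits via the finite set $\partial_\ov H$, both of which the paper leaves implicit.
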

\begin{proof}
Since $x \in H$ but $\lim_{n -> \infty} \^{T_U^n(x)} = U \not\in \^H$, there is a least $n \in \#N^+$ such that $T_U^n(x) \not\in H$, whence $T_U^n(x) = T_U(T_U^{n-1}(x)) \in \partial_\ov H$ since it is a corner of a cube opposite to $T_U^{n-1}(x) \in H$.
\end{proof}

\begin{remark}
An alternative to $T_U$, that also satisfies \cref{thm:end-corner-monotone,thm:end-corner-boundary} which are all we need for this argument, is to simply move each $x \in X$ across \emph{one} arbitrarily chosen hyperplane in $\@A_{U,x}$.
This has the disadvantage of being not completely canonical (i.e., automorphism-invariant, despite still working in the Borel context), but the advantage of producing a subforest of the original graph $G$.
\end{remark}

\begin{lemma}
\label{thm:end-corner-root}
For any nonempty $C \subseteq X$ contained in a single $T_U$-orbit and also contained in some $H \in \@H_\cvx(X) \setminus U$, the intersection of the forward $T_U$-orbits of all $x \in C$ is the forward $T_U$-orbit of a single vertex $r_C \in X$, which we call the \defn{$T_U$-root of $C$}.
\end{lemma}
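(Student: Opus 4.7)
The plan is to leverage the directed tree structure on the single $T_U$-orbit $O$ containing $C$ (\cref{thm:end-corner-monotone}): each forward orbit is a ray converging to $U$, and any two forward orbits in $O$ share a common tail. Consequently, any finite subset of $O$ has a well-defined nearest common ancestor (join) in this tree, making the statement immediate for finite $C$. The main obstacle is to rule out that, for infinite $C$, the joins of finite subsets ``escape to $U$'' so that the intersection of all forward orbits is empty.

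To prevent this escape, I will use the hypothesis that $C \subseteq H$ for some $H \in \@H_\cvx(X) \setminus U$. Applying \cref{thm:end-corner-boundary} to each $y \in C$ yields a least index $n_y$ with $s_y := T_U^{n_y}(y) \in \partial_\ov H$. Moreover, since $U \setminus \^{T_U^n(y)}$ is a decreasing sequence by \cref{thm:end-corner-monotone} and $\neg H \in U$, the set of $n$ with $T_U^n(y) \in H$ is a prefix of $\#N$---once the forward orbit leaves $H$ it cannot re-enter---so the forward orbit of $y$ from step $n_y$ onward lies in $\neg H$. The finite-hyperplanes assumption then forces $\partial_\ov H$ to be finite, so $\{s_y \mid y \in C\}$ is a finite subset of $O$, and therefore has a genuine nearest common ancestor $v \in O$. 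Since $v$ lies on the forward orbit of each $s_y$, which in turn lies on the forward orbit of $y$, we obtain $v \in \bigcap_{y \in C} \{T_U^n(y) \mid n \in \#N\}$, establishing nonemptiness.

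Finally, fixing any $x_0 \in C$, the intersection is a nonempty subset of the ray $\{T_U^k(x_0) \mid k \in \#N\}$ that is closed under $T_U$ (each forward orbit is), hence has the form $\{T_U^k(x_0) \mid k \geq K\}$ for $K$ the least index at which $T_U^K(x_0)$ belongs to every forward orbit; setting $r_C := T_U^K(x_0)$ completes the proof. The crux of the argument is the finiteness of the exit set $\{s_y\}$, which is what forces the joins to stabilize at a vertex of $O$ rather than tending to $U$.
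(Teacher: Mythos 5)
Your proof is correct and follows essentially the same route as the paper's: push each point of $C$ forward until it exits $H$ (landing on the finite set $\partial_\ov H$ by \cref{thm:end-corner-boundary} and finiteness of hyperplanes), then take the common ancestor of the resulting finite set of vertices in the directed tree structure of the orbit. Your final step, characterizing $r_C$ as the minimal element of the (now nonempty) intersection, handles uniformly the case split the paper makes when the exit set is a singleton.
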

\begin{proof}
For each $x \in C$, let $n_x \in \#N$ such that $T_U^{n_x}(x) \in \partial_\ov H$.
Then $\{T_U^{n_x}(x) \mid x \in C\} \subseteq \partial_\ov H$ is a finite set of vertices.
If it is a single vertex, then the $T_U$-root of $C$ is in its backward $T_U$-orbit.
Otherwise, the first vertex in the forward $T_U$-orbits of all the $T_U^{n_x}(x)$ is the $T_U$-root of $C$.
\end{proof}

\begin{theorem}
\label{thm:end-finoverhypf-hypsm}
Given a median graph $(X,G)$ with finite hyperplanes and a selected end $U$, either
\begin{enumerate}[label=(\roman*)]
\item  $T_U$ has only finitely many orbits (yielding a ``witness to finite-index-over-hyperfiniteness''); or
\item  $T_U$ has infinitely many orbits, in which case we may canonically construct a sequence of equivalence relations $E_0 \subseteq E_1 \subseteq \dotsb$ with $\bigcupup_n E_n = X^2$ and an assignment to each $E_n$-class $C \in X/E_n$ of a finite nonempty set of vertices $R_{n,C} \subseteq X$ (a ``witness to hypersmoothness'').
\end{enumerate}
\end{theorem}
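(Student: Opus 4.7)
The plan is to dichotomize on whether $T_U$ has finitely or infinitely many orbits on $X$, each case producing the asserted combinatorial witness directly.

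In Case~(i), let $F \subseteq X^2$ be the $T_U$-orbit equivalence relation; by assumption it has finite index in $X^2$. I would witness that $F$ itself is hyperfinite via $F_n := \{(x,y) \in F : T_U^n(x) = T_U^n(y)\}$: each $F_n$-class is the backward $T_U$-subtree $T_U^{-n}(\{T_U^n(x)\})$ of depth $n$, which is finite by local finiteness of the directed-tree structure of \cref{thm:end-corner-monotone} (inherited from finite hyperplanes and local finiteness of $G$), while $F_n \subseteq F_{n+1}$ and $\bigcup_n F_n = F$ because any two vertices in a common orbit share a forward $T_U$-iterate. The triple $(F, (F_n)_n, X/F)$ is then the desired witness to finite-index-over-hyperfiniteness.

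For Case~(ii), I would construct a decreasing sequence of half-spaces shrinking toward $U$ and then leverage \cref{thm:end-corner-boundary,thm:end-corner-root}. Fix a Borel enumeration $X = \{x_0, x_1, \dotsc\}$, and using the half-space neighborhood basis from \cref{thm:med-ends-half-basis} pick a decreasing chain $H_0 \supsetneq H_1 \supsetneq \dotsb$ of half-spaces containing $U$ with $\{x_0, \dotsc, x_n\} \subseteq \neg H_n$ for every $n$. Let $\mathcal{O}_n := \{O \text{ a } T_U\text{-orbit} : O \cap \neg H_n \ne \emptyset\}$: by \cref{thm:end-corner-boundary} applied to the non-$U$ half-space $\neg H_n$, this set is finite, while every orbit eventually enters $\mathcal{O}_n$ (since every vertex eventually lies in $\neg H_n$), so $C_n^* := \bigcup \mathcal{O}_n$ grows to exhaust $X$.

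Now define $x \mathrel{E_n} y$ iff $x = y$ or $x, y \in C_n^*$. This produces $E_n \subseteq E_{n+1}$ with $\bigcup_n E_n = X^2$, each $E_n$ having a single nontrivial class $C_n^*$ and otherwise singletons. For the selection, set $R_{n,\{x\}} := \{x\}$ and $R_{n, C_n^*} := \{r_O : O \in \mathcal{O}_n\}$, where $r_O$ is the unique $T_U$-root of the nonempty intersection $O \cap \neg H_n$ furnished by \cref{thm:end-corner-root}; both are finite nonempty subsets of $X$, yielding the witness to hypersmoothness. The principal difficulty lies in this case---specifically in arranging the chain $H_n$ to shrink toward $U$ fast enough for $C_n^*$ to exhaust $X$ while keeping each $\mathcal{O}_n$ finite---but this is exactly the interplay between \cref{thm:med-ends-half-basis} and \cref{thm:end-corner-boundary}, and once it is set up, the remaining verifications amount to routine bookkeeping with the $T_U$-calculus developed earlier in the section.
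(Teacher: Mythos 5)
There are two genuine problems here, one in each case.

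In case (i), your witness $(F_n)$ does not exhaust the orbit equivalence relation $F$ of $T_U$. Two vertices in a common orbit satisfy $T_U^m(x) = T_U^{m'}(y)$ for \emph{some} $m, m'$, but generally not for $m = m'$: since $U$ is an end, \cref{thm:end-corner-monotone} gives strict inclusions, so $T_U$ has no periodic points, and hence e.g.\ $x$ and $T_U(x)$ are orbit-equivalent while $T_U^n(x) \ne T_U^n(T_U(x)) = T_U^{n+1}(x)$ for every $n$. Thus $\bigcup_n F_n$ is only the ``synchronous tail'' relation, a proper subrelation of $F$. Moreover, finiteness of the $F_n$-classes would require $T_U$ to be finite-to-one, which needs local finiteness of $G$ --- explicitly \emph{not} assumed in this theorem. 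This is precisely why the paper does not attempt a direct witness in case (i) and instead invokes \cite[8.2]{DJKhyp} for the orbit equivalence relation of a Borel function when proving \cref{thm:cber-median-end-hypf}; fortunately, the statement of case (i) demands nothing beyond the dichotomy itself, so this error is excisable.

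The more serious issue is that your case (ii) construction is not canonical in the sense of \cref{rmk:canonical}, which is the load-bearing content of the word ``canonically'' in the statement. Fixing an enumeration $X = \{x_0, x_1, \dotsc\}$ and choosing a descending chain $H_0 \supsetneq H_1 \supsetneq \dotsb$ of half-space neighborhoods of $U$ both amount to selecting distinguished points/objects in each component; such selections cannot be made Borel-uniformly across the classes of a non-smooth CBER, so the resulting $E_n$ would not be Borel and the application in \cref{thm:cber-median-end-hypf} (``running the construction on each component'') would collapse. The paper's definition avoids all choices: $x \mathrel{E_n} y$ iff $x$ and $y$ lie on the same side of \emph{every} half-space $H$ with $U \notin H$ that meets more than $n$ of the $T_U$-orbits. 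One then checks, using that there are infinitely many orbits together with \cref{thm:med-ends-half-basis}, that each $E_n$-class is contained in some half-space avoiding $U$, so that \cref{thm:end-corner-boundary,thm:end-corner-root} supply the finitely many roots exactly as in your argument. Your underlying idea --- exhaust $X$ by sets meeting only finitely many orbits and record the $T_U$-roots --- is the right one; the implementation must simply be choice-free.
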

\begin{proof}
Suppose $T_U$ has infinitely many orbits.
Define $E_n \subseteq X^2$ by
\begin{align*}
x \mathrel{E_n} y  \coloniff  \forall H \in \@H_\cvx(X) \setminus U\, (H \text{ intersects more than $n$ $T_U$-orbits} \implies (x \in H \iff y \in H)).
\end{align*}
By \cref{thm:end-corner-boundary,thm:half-dist}, $\bigcupup_n E_n = X^2$.
Since $T_U$ has infinitely many orbits, for any $n \in \#N$, each $x \in X$ belongs to some $H \in \@H_\cvx(X) \setminus U$ intersecting more than $n$ $T_U$-orbits, as we may separate $U$ from $x$ and $n$ other points in distinct orbits by a half-space (by \cref{thm:med-ends-half-basis}).
Thus each $E_n$-class $C \in X/E_n$ is contained in some $H \in \@H_\cvx(X) \setminus U$, and so for each of the finitely many (by \cref{thm:end-corner-boundary}) $T_U$-orbits $O \subseteq X$ intersecting $C$, $O \cap C$ has a $T_U$-root $r_{O \cap C}$ by \cref{thm:end-corner-root}.
Let $R_{n,C}$ be the set of all of these finitely many $T_U$-roots.
\end{proof}

\begin{corollary}
\label{thm:cber-median-end-hypf}
If a CBER $(X,E)$ admits a median graphing $G \subseteq E$ with finite hyperplanes, together with a Borel selection $(U_C)_{C \in X/E}$ of one end in each component, then $E$ is hyperfinite.
\end{corollary}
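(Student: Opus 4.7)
The plan is to apply \cref{thm:end-finoverhypf-hypsm} componentwise in a Borel-uniform manner, split the space into two $E$-invariant Borel pieces according to which of the two cases holds in each component, and then conclude hyperfiniteness on each piece via classical tools.

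First, I would check that the construction of $T_{U_C}$ from \cref{thm:end-corner}, applied uniformly with the Borel end-selection $C \mapsto U_C$, assembles into a single Borel endomorphism $T : X \to X$ of $E$. The finite set $\@A_{U_C, x}$ and the projection onto $\bigcap \@A_{U_C, x}$ are definable from $G|C$ and $U_C$ using only finitary operations (by the finite hyperplanes assumption), so the construction is Borel. Similarly, the further data produced by \cref{thm:end-finoverhypf-hypsm} --- the $T$-orbit count in each component, the equivalence relations $E_n$, and the finite selectors $R_{n,C}$ --- translates to the Borel setting in a routine way. In particular, whether a component has finitely or infinitely many $T$-orbits is a Borel $E$-invariant property.

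Next, I would dichotomize: let $X_1 \subseteq X$ be the $E$-invariant Borel set of points in components where $T$ has only finitely many orbits (case (i) of \cref{thm:end-finoverhypf-hypsm}), and put $X_2 := X \setminus X_1$. On $X_1$, the $T$-orbit equivalence relation $F \subseteq E|X_1$ has finite index in $E|X_1$, and by \cref{thm:end-corner-monotone} the graph of $T$ is a Borel subtreeing of $F$ whose components are one-ended trees with the distinguished end $U_{[x]_E}$ chosen in a Borel fashion. By the Dougherty--Jackson--Kechris theorem on treeings with a Borel selection of one end per component \cite{DJKhyp}, $F$ is hyperfinite; since finite-index superrelations of hyperfinite CBERs are hyperfinite, so is $E|X_1$.

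On $X_2$, \cref{thm:end-finoverhypf-hypsm} gives a Borel exhausting sequence $E_0 \subseteq E_1 \subseteq \dotsb$ of subequivalence relations of $E|X_2$, each of which is smooth via the selector sending an $E_n$-class $C$ to the minimum element of $R_{n,C}$ under a fixed Borel linear order on $X$. Hence $E|X_2$ is hypersmooth, and therefore hyperfinite by the classical fact that an increasing union of smooth (equivalently, hyperfinite) CBERs is hyperfinite \cite{DJKhyp}. Combining witnesses on $X_1$ and $X_2$ yields hyperfiniteness of $E$. I expect the only substantive obstacle here to be the routine bookkeeping of verifying Borelness in the first step; this should go through without difficulty since \cref{thm:end-finoverhypf-hypsm} is built entirely from canonical finitary operations on the median graph and the selected end.
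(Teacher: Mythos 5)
Your proposal is correct and follows essentially the same route as the paper: the same Borel dichotomy into components with finitely versus infinitely many $T_U$-orbits, hyperfiniteness on the first piece via Dougherty--Jackson--Kechris plus finite index, and hypersmoothness via the finite root sets $R_{n,C}$ on the second (the paper phrases the smoothness of each $E_n$ via the countable-to-one homomorphism $x \mapsto R_{n,[x]_{E_n}}$ and Lusin--Novikov rather than a min-selector, but this is the same argument, noting that your selected point need not lie in the $E_n$-class itself). One caution: your parenthetical ``smooth (equivalently, hyperfinite)'' misstates a known open problem --- whether an increasing union of hyperfinite CBERs is hyperfinite is the union problem --- but this is harmless here since you only apply the (true) fact that an increasing union of smooth CBERs is hyperfinite.
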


Here by a ``Borel selection of one end in each component'', we mean that $\bigsqcup_{x \in X} U_{[x]_E} \subseteq \bigsqcup_{x \in X} \@H_\cvx(G|[x]_E) =: \@H_\cvx(G)$ is Borel in the bundle of wallings as in \cref{def:walling}, or equivalently, that $\bigsqcup_{C \in X/E} (U_C \setminus \{\emptyset, C\})$ is Borel in the global space of all nontrivial half-spaces in all components where a half-space is identified with the pair of finite sets given by its inner and outer boundaries.
Equivalently by \cref{thm:med-ends}, if we represent ends $U_C$ as ultrafilters of boundary-finite sets (rather than orientations of half-spaces), this means that $\bigsqcup_{x \in X} U_{[x]_E} \subseteq \bigsqcup_{x \in X} \@H_\fin(G|[x]_E)$ is Borel, or the corresponding condition for the global space of all nontrivial cuts.

\begin{proof}
The transformation $T_U(x) := T_{U_{[x]_E}}(x)$ defined componentwise as in \cref{thm:end-corner} is easily seen to be Borel, and we have an $E$-invariant Borel partition $X = Y \sqcup Z$ into the components on which $T_U$ has finitely many or infinitely many orbits respectively.
On $Y$, the orbit equivalence relation of $T_U$ is hyperfinite by \cite[8.2]{DJKhyp}, and $E$ is finite index over it, hence hyperfinite by \cite[1.3]{JKLcber}.
On $Z$, by running the construction of \cref{thm:end-finoverhypf-hypsm} on each component, we get a sequence of CBERs $E_0 \subseteq E_1 \subseteq \dotsb \nearrow E$ together with $E_n$-invariant Borel maps $x |-> R^n_{[x]_{E_n}}$ taking each $E_n$-class $C$ to a finite nonempty subset of $[C]_E$; such maps are countable-to-1 Borel homomorphisms from $E_n$ to equality (on the space of finite subsets of $X$), hence each $E_n$ is smooth by Lusin--Novikov uniformization (see also \cite[3.37]{Kcber} or \cite[5.8]{CKstr}), whence $E$ is hypersmooth and so hyperfinite by \cite[5.1]{DJKhyp}.
\end{proof}

\begin{corollary}
\label{thm:cber-cuts-end-hypf}
If a CBER $(X,E)$ admits a graphing $G \subseteq E$ with a Borel walling of cuts $C |-> \@H(C)$ dense towards ends in each component $C \in X/E$ (as in \cref{thm:cber-cuts-dense-treeable}), as well as a Borel selection $(U_C)_{C \in X/E}$ of an end in each component, then $E$ is hyperfinite.
\end{corollary}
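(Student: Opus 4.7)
The plan is to reduce the statement to \cref{thm:cber-median-end-hypf} by passing to the dual median graphing, just as \cref{thm:cber-cuts-dense-treeable} was derived from \cref{thm:cber-median} via \cref{thm:cber-wallable-treeable}, while additionally Borelly transporting the selected end through the duality.

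First, mimicking the proof of \cref{thm:cber-wallable-treeable}, I would build the dual Borel median graphing $G'$ on the standard Borel space $X' := \bigsqcup_{C \in X/E} \@U^\circ(\@H(C))$, each clopen orientation being represented by its finite set of minimal half-spaces as in \cref{rmk:walling-concrete}. Since $\@H$ is dense towards ends, \cref{thm:cut-proper} gives that $\@H$ is actually a proper walling, whence by \cref{thm:cut-nonnested} and \cref{thm:med-hyp-locfin}, $G'$ is locally finite with finite hyperplanes. The principal-orientation map $\iota : X \to X'$, $x \mapsto \^x$, is then a finite-to-one Borel reduction from $E$ to the orbit equivalence relation $E'$ of $G'$.

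Next I would transport the ends. By \cref{thm:cut-proper:inj}, the induced map $\^C^G \to \@U(\@H(C))$ of \cref{rmk:graph-walls}, given by $U \mapsto U \cap \@H(C)$, restricts to a bijection between the ends of $G|C$ and the ends of the dual median graph on $\@U^\circ(\@H(C))$. Consequently the given Borel selection $(U_C)_C$ yields a Borel selection $U'_C := U_C \cap \@H(C)$ of one end per $E'$-class, and Borelness of $(U'_C)_C$ is immediate from whichever standard coding of $(U_C)_C$ is being used (e.g., as a Borel subset of the bundle of ultrafilters on $\@H_\fin^G(C)$). Applying \cref{thm:cber-median-end-hypf} to $(X', E', G', (U'_C)_C)$ produces a witness to hyperfiniteness of $E'$, and pulling this witness back along the finite-to-one Borel reduction $\iota$ gives one for $E$.

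The only real subtlety I foresee is confirming that the transported object $U'_C$ really is an end of $G'$ rather than a principal orientation, i.e., a ``vertex''. This is precisely the content of the ``takes ends to ends'' half of \cref{thm:cut-proper:inj}, which crucially uses the density-towards-ends hypothesis; without it, one would only have a surjection of ends onto some quotient, not the desired bijection. All remaining steps are routine Borel bookkeeping already handled in the proofs of \cref{thm:cber-wallable-treeable,thm:cber-median-end-hypf}.
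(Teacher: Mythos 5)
Your proposal is correct and follows essentially the same route as the paper: both pass to the dual median graphing $\@U^\circ(\@H(C))$ as in \cref{thm:cber-wallable-treeable}, transport the selected end via $U \mapsto U \cap \@H(C)$ using the bijection on ends from \cref{thm:cut-proper} (via \cref{rmk:graph-walls}), apply \cref{thm:cber-median-end-hypf}, and pull hyperfiniteness back along the Borel (bi)reduction $x \mapsto \^x$. The subtlety you flag --- that the transported orientation is genuinely an end rather than a vertex --- is exactly the point the paper's proof also rests on \cref{thm:cut-proper} for.
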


Here, as above, ``Borel selection of one end'' can be naturally interpreted by treating ends as ultrafilters of boundary-finite sets.
It is also easily seen to mean equivalently that the set of infinite $G$-rays converging to a chosen end is a Borel set in $X^{\#N}$, which is the definition used in e.g., \cite{Mends}.

\begin{proof}
The dual median graph $\@U^\circ(\@H(C))$ for each $C \in X/E$ constructed in \cref{thm:cber-cuts-dense-treeable}, ultimately \cref{thm:cber-wallable-treeable}, has ends in natural bijection with those of $G|C$ by \cref{thm:cut-proper}, namely via pulling back each end (ultrafilter) $U \in \^C \subseteq 2^{\@H_\fin(C)}$ along the inclusion $\@H(C) `-> \@H_\fin(C)$ to get an orientation in $\@U(\@H(C)) \subseteq 2^{\@H(C)}$ (\cref{rmk:graph-walls}); this easily implies that we may transport the Borel end selection $(U_C)_{C \in X/E}$ of $G$ to a Borel end selection of the Borel median graphing constructed in \cref{thm:cber-wallable-treeable}, which is thus hyperfinite by \cref{thm:cber-median-end-hypf}, whence so is the Borel bireducible $E$.
\end{proof}

\begin{corollary}[of \cref{thm:cber-cuts-end-hypf}, \cref{thm:quasitree-dense}, and \cref{thm:bddtw-dense}]
\label{thm:cber-quasitreeing-bddtw-end-hypf}
If a CBER admits a locally finite graphing which is either a quasi-treeing or has bounded tree-width, as well as a Borel selection of an end in each component, then it is hyperfinite.
\qed
\end{corollary}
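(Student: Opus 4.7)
The plan is to reduce to \cref{thm:cber-cuts-end-hypf} by producing, in each component, a Borel walling of cuts dense towards ends, in exactly the manner that the treeability analogues \cref{thm:cber-quasitreeable-treeable} and \cref{thm:cber-bddtw-treeable} reduce to \cref{thm:cber-cuts-dense-treeable} via \cref{rmk:walls-proper-borel}. The only new ingredient over those proofs is the presence of the Borel end selection, which simply gets transported to the constructed walling.

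Explicitly, in the quasi-tree case, for each $i \in \#N$ consider the Borel assignment $\@H_i(C) := \@H_{\diamle i}^G(C) \cap \@H_\conn^G(C)$ to each component $C$, realized globally in the standard Borel bundle of cuts represented by their finite boundaries (per \cref{rmk:walling-concrete}). Being closed in $2^C$, each $\@H_i(C)$ is automatically a walling in the sense of \cref{thm:cut-walling}, and by \cref{thm:cut-boundary}, $\@H_i(C)$ is dense towards ends as soon as $\@H_{\diamle i}(C)$ is. Thus by \cref{thm:quasitree-dense}, every component $C$ admits some index $i$ with $\@H_i(C)$ dense towards ends of $G|C$. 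By \cref{thm:cut-proper}, within $\@H_\fin \cap \@H_\conn$ density towards ends is equivalent to being a proper walling, which is a countable conjunction of Borel ``finitely many'' conditions per \cref{def:walls-proper}; hence the set $Y_i \subseteq X$ of points whose component has $\@H_i$ proper is Borel and $E$-invariant. Taking $X_i := Y_i \setminus \bigcup_{j<i} Y_j$ gives a countable Borel $E$-invariant partition of $X$ on whose $i$-th piece $\@H_i$ provides a Borel walling of cuts dense towards ends of $G$. The bounded tree-width case proceeds identically with $\@H_i(C) := \@H_{\cardle i}^G(C) \cap \@H_\conn^G(C)$ and \cref{thm:bddtw-dense} in place of \cref{thm:quasitree-dense}.

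On each piece $X_i$, apply \cref{thm:cber-cuts-end-hypf} to the restrictions of $G$, $\@H_i$, and the given Borel end selection to $X_i$; this yields that $E$ restricted to $X_i$ is hyperfinite. Since $E$ is then a disjoint union of countably many hyperfinite Borel equivalence relations over a Borel $E$-invariant partition of $X$, it is itself hyperfinite (a standard fact about CBERs, e.g., by concatenating witnesses to hyperfiniteness on each piece). I expect no substantive new obstacle beyond what is already addressed in the treeability analogues; the only point of care is the Borelness of the partition sets $Y_i$, which is routine given the explicit arithmetic form of properness in \cref{def:walls-proper} together with the Borel representation of cuts via their finite boundaries.
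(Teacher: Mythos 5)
Your proposal is correct and follows essentially the same route as the paper: the corollary is obtained by feeding the dense families of cuts from \cref{thm:quasitree-dense} (resp.\ \cref{thm:bddtw-dense}) into \cref{thm:cber-cuts-end-hypf}, handling the unbounded parameter $R$ (resp.\ $N$) exactly as in \cref{rmk:walls-proper-borel}, with the end selection simply carried along. Your variant of passing to a countable invariant Borel partition and concatenating witnesses to hyperfiniteness, rather than merging the wallings into one, is an equally valid way to finish.
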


\def\MR#1{}
\bibliographystyle{amsalpha}
\bibliography{refs}

\medskip
\noindent
Department of Mathematics\\
University of Michigan\\
Ann Arbor, MI, USA\\
\nolinkurl{ruiyuan@umich.edu}

\medskip
\noindent
Department of Mathematics and Statistics\\
McGill University\\
Montreal, QC, Canada\\
\nolinkurl{antoine.poulin@mail.mcgill.ca}\\
\nolinkurl{anush.tserunyan@mcgill.ca}

\medskip
\noindent
Department of Mathematical Sciences\\
Carnegie Mellon University\\
Pittsburgh, PA, USA\\
\nolinkurl{rant2@andrew.cmu.edu}

\end{document}